\setlist[enumerate]{label=\textup{(\arabic*)}}
\newtheorem{theorem}{Theorem}[section]
\newtheorem{lemma}[theorem]{Lemma}
\newtheorem{proposition}[theorem]{Proposition}
\newtheorem{corollary}[theorem]{Corollary}
\newtheorem{example}[theorem]{Example}
\theoremstyle{definition}
\newtheorem{definition}[theorem]{Definition}
\theoremstyle{remark}
\newtheorem{remark}[theorem]{Remark}
\newtheorem{remarks}[theorem]{Remarks}
\numberwithin{equation}{section}
\tikzset{
    labl/.style={anchor=south, rotate=90, inner sep=.5mm}
}
\tikzset{
  symbol/.style={
    draw=none,
    every to/.append style={
      edge node={node [sloped, allow upside down, auto=false]{$#1$}}}
  }
}
\begin{document}

\title[The Tate conjecture for Gushel--Mukai varieties]{The Tate Conjecture for even dimensional Gushel--Mukai varieties in characteristic $p\geq 5$}

\author{Lie Fu}
\address{Universit\'e de Strasbourg, IRMA $\&$  USIAS, Strasbourg, France}
\email{lie.fu@math.unistra.fr}
\thanks{L.F.\ is partially supported by the Radboud Excellence Initiative and by the Agence Nationale de la Recherche (ANR) under projects ANR-20-CE40-0023 and ANR-16-CE40-0011.}

\author{Ben Moonen}
\address{Radboud University Nijmegen, IMAPP, Nijmegen, The Netherlands}
\email{b.moonen@science.ru.nl}

\begin{abstract} 
We study Gushel--Mukai (GM) varieties of dimension $4$ or~$6$ in characteristic~$p$. Our main result is the Tate conjecture for all such varieties over finitely generated fields of characteristic $p\geq 5$. In the case of GM sixfolds, we follow the method used by Madapusi Pera in his proof of the Tate conjecture for K3 surfaces. As input for this, we prove a number of basic results about GM sixfolds, such as the fact that there are no nonzero global vector fields. For GM fourfolds, we prove the Tate conjecture by reducing it to the case of GM sixfolds by making use of the notion of generalised partners plus the fact that generalised partners in characteristic~$0$ have isomorphic Chow motives in middle degree. Several steps in the proofs rely on results in characteristic~$0$ that are proven in our paper~\cite{FuMoonen-GM1}.  
\end{abstract}

\maketitle

\section{Introduction}

\subsection{}
Gushel--Mukai (GM) varieties form a class of Fano varieties that are interesting for various reasons, not the least of which is their link to hyperk\"ahler geometry. Over fields of characteristic~$0$ they have been extensively studied in recent years; see for instance the work of Iliev--Manivel~\cite{IlievManivel11}, Debarre--Iliev--Manivel \cite{DIM1}, \cite{DIM2}, a series of papers \cite{DK-GMClassif}, \cite{DK-Kyoto}, \cite{DK-Moduli}, \cite{DK-Intermediate} by Debarre and Kuznetsov, and the work of Kuznetsov--Perry~\cite{KuzPerry} and Perry--Pertusi--Zhao~\cite{PPZ}, as well as the many references contained in these papers. By contrast, it seems the study of GM varieties over fields of positive characteristic is largely unexplored terrain. Let us note here that the definition of GM varieties as the smooth projective varieties that can be realised (at least \'etale locally on the base) as intersections
\[
X = \CGr(2,V_5) \cap \PP(W) \cap Q
\]
of the cone over the Grassmannian~$\Grass(2,V_5)$ with a linear space and a quadric, is meaningful over an arbitrary base scheme. We refer to Section~\ref{sec:Basics} for further discussion of some basic properties of GM varieties.

The main goal of the present paper is to explore GM varieties over fields of characteristic~$p$, and to prove, at least when $p \geq 5$, the Tate Conjecture. Our main result is as follows.

\begin{theorem}
\label{thm:TCGMcharp}
Let $k$ be a field of characteristic $p\geq 5$ which is finitely generated over its prime field. Let $Y$ be a Gushel--Mukai variety over~$k$ of dimension~$4$ or~$6$.
\begin{enumerate}
\item\label{TateGMl} For $\ell \neq p$ the Galois representation on $H^\bullet_{\et}(Y_{\kbar},\QQ_\ell)$ is completely reducible, and the cycle class maps
\[
\CH^i(Y_{\kbar}) \otimes \QQ_\ell \to \Tate\bigl(H^{2i}_{\et}(Y_{\kbar},\QQ_\ell(i))\bigr)
\]
are surjective. Here $\Tate\bigl(H^{2i}_{\et}(Y_{\kbar},\QQ_\ell(i))\bigr)$ denotes the space of Tate classes in $H^{2i}_{\et}\bigl(Y_{\kbar},\QQ_\ell(i)\bigr)$; see~\emph{Section~\ref{subsec:TateDef}}.

\item\label{TateGMp} Assume, moreover, that $k$ is a finite field. Then the cycle class maps
\[
\CH^i(Y_{\kbar}) \otimes \QQ_p \to \Tate\bigl(H^{2i}_{\crys}(Y/K_0)(i)\bigr)
\]
are surjective. Here $K_0$ denotes the fraction field of~$W(k)$ and we denote by $\Tate\bigl(H^{2i}_{\crys}(Y/K_0)(i)\bigr)$ the space of crystalline Tate classes in $H^{2i}_{\crys}(Y/K_0)\bigl(i\bigr)$; see~\emph{Section~\ref{subsec:TateDef}}.
\end{enumerate}
\end{theorem}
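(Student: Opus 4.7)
The plan splits naturally into two cases, treating GM sixfolds first and then reducing GM fourfolds to sixfolds. For a GM sixfold $Y$ over $k$, the interesting piece of $H^6(Y)$ should, after removing Lefschetz classes, carry a polarised Hodge structure of K3 type, by analogy with cubic fourfolds and in line with the motivic picture set up in~\cite{FuMoonen-GM1}. This suggests following the Madapusi Pera blueprint for K3 surfaces. First, over~$\CC$, I would construct an appropriate moduli stack $\scrM$ of polarised GM sixfolds together with a Kuga--Satake period morphism $\mathcal{P}\colon \scrM \to \Sh$ into a suitable $\GSpin$ Shimura variety, and verify that $\mathcal{P}$ is \'etale via an infinitesimal Torelli theorem. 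Next I would spread $\scrM$ and $\mathcal{P}$ to smooth integral models over~$\ZZ_{(p)}$ for $p\geq 5$, using the vanishing of global vector fields on GM sixfolds (stated in the abstract) to keep the moduli problem Deligne--Mumford. With the integral Kuga--Satake morphism in hand, Tate classes on $Y_0/\FF_q$ become Tate classes on the integral canonical model of the $\GSpin$ Shimura variety, to which Madapusi Pera's theorem applies; the lifting argument proceeds by finding a CM point on the Shimura side and invoking the characteristic-$0$ Hodge/Tate results recorded in~\cite{FuMoonen-GM1}. Complete reducibility of the Galois representation and the crystalline statement in part~(2) follow from the same Shimura-variety framework, using Kisin's integral $p$-adic comparison isomorphisms, available for $p\geq 5$.

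For GM fourfolds the route is indirect: every GM fourfold~$Y$ admits a GM sixfold $X$ as a \emph{generalised partner}, and in characteristic~$0$ one has an isomorphism of middle-degree rational Chow motives $\motH^6(X)\simeq \motH^4(Y)(1)$ (on suitable transcendental pieces) from~\cite{FuMoonen-GM1}. Given $Y/k$ in characteristic~$p$, I would lift $Y$ to a smooth family $\tilde Y/R$ over a DVR $R$ of mixed characteristic using unobstructedness of GM deformations, and then construct a partner $\tilde X/R$ either by spreading a generic characteristic-zero partner of $\tilde Y_\eta$ or, more intrinsically, by working in the relative moduli of EPW data that controls the partnership. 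The characteristic-$0$ motivic isomorphism, applied to the generic fibres, transports to an identification of the corresponding pieces of the $\ell$-adic and crystalline realisations by the standard specialisation results. Specialising to the closed fibre then identifies Tate classes on $Y_{\kbar}$ with Tate classes on (a specialisation of)~$X$, reducing the statement to part~(1).

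The main obstacle, I expect, is the integral Kuga--Satake step for sixfolds: one has to find a level structure turning $\scrM$ into a fine moduli space, prove extension of the period morphism across characteristic~$p$, and match it with the universal Kuga--Satake abelian scheme on the integral Shimura variety. This was the technical heart of Madapusi Pera's K3 paper, and here one additionally has to establish the motivic and Hodge-theoretic prerequisites for GM sixfolds in characteristic~$0$. A secondary difficulty is the partner construction over a DVR for the fourfold case: partners are parameterised by EPW-sextic strata, and one must ensure that a partner of the generic fibre extends without bad reduction, which likely requires fine control over EPW data from the Debarre--Kuznetsov classification.
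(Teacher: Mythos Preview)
Your outline follows the paper's strategy closely: Madapusi Pera's blueprint for sixfolds, then reduction of fourfolds to sixfolds via generalised partners and the Chow-motivic isomorphism of~\cite{FuMoonen-GM1}. Two points need correction.

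First, the period morphism $\tilde\iota\colon\tilde\scrM\to\tilde\scrS_N$ is \emph{not} \'etale: here $h^1(X,\scrT_X)=25$ while the orthogonal Shimura variety has dimension~$20$, so an infinitesimal Torelli statement (injectivity of the tangent map) is neither true nor what is needed. What the argument requires is \emph{smoothness} of~$\tilde\iota$, i.e.\ surjectivity of the tangent map, so that a chosen lift of the Shimura point to characteristic~$0$ can be covered by a lift of the moduli point. The paper obtains this from a direct computation showing that the cup-product map $H^1(X,\scrT_X)\to\Hom\bigl(H^{3,3}(X)_{00},H^{2,4}(X)\bigr)$ is surjective.

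Second, the lifting step is not ``finding a CM point''. The mechanism is Madapusi Pera's theory of special endomorphisms: a Tate class corresponds (via Kuga--Satake) to a special endomorphism~$f_0$ of the abelian variety~$A_0$; the key input is that the formal deformation locus of~$(A_0,f_0)$ inside the Shimura variety is cut out by a single equation and is flat over~$W(k)$, hence $f_0$ lifts over some finite extension of~$W(k)$. One also needs Madapusi Pera's theorem that $L(A_{0,\bar k})\otimes\QQ_\ell\to\Tate(\tilde\motL_{\ell})$ is an isomorphism, which uses Kisin's results on mod~$p$ points and an auxiliary enlargement of the lattice. No CM hypothesis enters.

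For fourfolds, your plan is right in spirit, but the paper avoids the ``spread a generic partner and hope for good reduction'' difficulty you anticipate: it lifts the \emph{Lagrangian datum} $(V_6,V_5,A,\epsilon)$ of~$X$ to a dvr~$R$ directly, then uses O'Grady's result to choose a second hyperplane $V_5'\subset V_6$ with $A\cap\wedge^3 V_5'=0$ already over the residue field, lifts~$V_5'$ arbitrarily, and builds the partner sixfold~$\scrX'$ explicitly over~$R$ from the GM datum attached to $(\scrV_6,\scrV_5',\scrA)$. Smoothness of~$\scrX'/R$ is then automatic from the criterion that~$A$ contains no decomposable vectors. This is more robust than working with EPW strata.
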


\subsection{}
Almost the entire paper is devoted to the proof of the main theorem for GM sixfolds; see \ref{subsec:GM4proof} below for a sketch of how GM fourfolds are handled. Motivically, a GM sixfold decomposes as the direct sum of a purely algebraic part (i.e., a direct sum of Tate motives) and a ``variable part'' in degree~$6$ that is of K3 type, in the sense that over the complex numbers its Hodge realisation is of K3 type. Recall that the Tate Conjecture for K3 surfaces over finite fields is now known in all cases, due to the work of Nygaard~\cite{Nygaard} and Ogus--Nygaard~\cite{NygOgus}, Maulik~\cite{Maulik}, Charles~\cite{Charles}, \cite{Charles2}, Madapusi Pera~\cite{MP-K3}, and Madapusi Pera--Kim~\cite{KimMP}.

Our proof of Theorem~\ref{thm:TCGMcharp} for GM sixfolds closely follows the approach used by Madapusi Pera in his proof of the Tate conjecture for K3 surfaces \cite{MP-K3}, which is based on a careful study of families of motives (in the sense of compatible systems of cohomological realizations) that live over integral canonical models of certain Shimura varieties. This takes up Sections \ref{sec:FamMotivesChar0}--\ref{sec:TCGM6} of the paper. 

In a nutshell, the idea of the proof is to show that, for $Y/k$ as in Theorem~\ref{thm:TCGMcharp} with $\dim(Y) = 6$, the space of Tate classes in $H^6(Y)\bigl(3\bigr)$ is spanned by Tate classes~$\xi$ that can be lifted to characteristic~$0$, i.e., for which there exists a lifting of~$Y$ to a GM sixfold~$\scrY$ over a dvr of mixed characteristic $(0,p)$ such that $\xi$ lifts to a Tate class on the generic fibre of~$\scrY$. (Note that for different Tate classes~$\xi$ different liftings of~$Y$ are used; in general there will not exist a single~$\scrY$ to which all Tate classes can be lifted.) This is enough to deduce the Tate conjecture, as the Tate conjecture for GM varieties in characteristic~$0$ is known; see our paper~\cite{FuMoonen-GM1}.

While we globally follow the path laid out by Madapusi Pera, there are some parts that we approach differently. In Section~\ref{sec:MatchCryst}, for instance, we use some results from integral crystalline cohomology as a substitute for an argument of Madapusi Pera that is based on the density of the ordinary locus. (Though we think it is true that for GM sixfolds the ordinary locus in characteristic~$p$ is again dense in the moduli space, such a result does not yet seem to be available in the literature.) In addition, there are some steps in the argument for which we provide details that in \cite{MP-K3} and~\cite{MP-IntCanMod} are omitted or merely sketched.

\subsection{}
Though the work of Madapusi Pera provides us with a clear strategy for the proof of Theorem~\ref{thm:TCGMcharp}, the method requires some basic cohomological results about GM sixfolds that turn out to be not entirely straightforward to prove. We need, for instance, that GM sixfolds in characteristic~$p$ have no nonzero global vector fields. In characteristic~$0$ the proof of this result is not hard (see \cite{KuzPerry}, Appendix~A); but the argument uses in an essential way that group schemes in characteristic~$0$ are reduced, and therefore does not carry over to characteristic~$p$. 

Our proof of the cohomological results that we need, which takes up Sections \ref{sec:CohomGr}--\ref{sec:GlobalVectFields}, exploits that a GM sixfold~$X$ can be described as a double cover of the Grassmannian variety $\Grass(2,V_5)$, branched along a smooth quadric section $Y \subset \Grass(2,V_5)$ (which is a GM fivefold). Via a number of standard short exact sequences and their associated cohomology sequences, we are able to reduce all calculations to questions about cohomology groups of homogeneous sheaves on the Grassmannian. We have been able to carry out these calculations only when $p$ is not too small with respect to the weights that appear in the homogeneous sheaves that we need to consider, and it is in this part of the work that the restriction to characteristic $p\geq 5$ in our main theorem arises. Moreover, our proof that $H^0(X,\scrT_X)$ vanishes is based on a brute force approach (see Section~\ref{sec:GlobalVectFields}) that requires some computer algebra. While in comparison with an earlier version of this paper we have been able to reduce the dependency on Magma calculations to a minimum, it would be of interest to find a proof by pure thought.

\subsection{}
\label{subsec:GM4proof}
As will be clear from the above summary, it would be cumbersome to use the same arguments to prove Theorem~\ref{thm:TCGMcharp} for GM fourfolds. In principle, all arguments in Sections~\ref{sec:FamMotivesChar0}--\ref{sec:TCGM6} still work for GM fourfolds, so this is not where the problem lies. However, geometrically GM fourfolds are less accessible than sixfolds, and this makes it a challenge to prove all required cohomological results. (For instance, it is not known to the authors if $H^0(X,\scrT_X)$ vanishes for GM fourfolds in characteristic~$p$.) 

Fortunately, there is a way to reduce the Tate conjecture for GM fourfolds to the case of GM sixfolds. This relies on the idea, which plays an important role in the work of Debarre and Kuznetsov, that GM varieties can be described in terms of suitable (multi-)linear algebra data, known as ``GM data sets'', which in turn are in correspondence with other data $(V_6,V_5,A)$, known as ``Lagrangian data sets''. The idea, then, is that for a given GM fourfold~$X$ over a field~$k$ of characteristic $p>2$, we can, at least after passing to a finite extension of~$k$, find a lifting of~$X$ to a GM fourfold~$\scrX$ over a dvr~$R$ of characteristic~$(0,p)$ with residue field~$k$, as well as a GM sixfold~$\scrX^\prime$ over~$R$, such that the generic fibres $\scrX_K$ and~$\scrX^\prime_K$ are generalised partners, which means that $\bigl(V_6(\scrX_K),A(\scrX_K)\bigr) \cong \bigl(V_6(\scrX^\prime_K),A(\scrX^\prime_K)\bigr)$. (We refer to Section~\ref{sec:TCGM4} for details.) As proven in our paper~\cite{FuMoonen-GM1}, the fact that $\scrX_K$ and~$\scrX^\prime_K$ are generalised partners implies that (possibly after extending the base field) we have an isomorphism of Chow motives $\frh^4(\scrX_K)\bigl(2\bigr) \cong \frh^6(\scrX^\prime_K)\bigl(3\bigr)$. By specialisation this gives us an isomorphism of Chow motives $\frh^4(X)\bigl(2\bigr) \cong \frh^6(X^\prime)\bigl(3\bigr)$, with $X^\prime$ the special fibre of~$\scrX^\prime$, and the Tate conjecture for~$X$ then follows from the (now proven) Tate conjecture for the GM sixfold~$X^\prime$.

\subsection{}
To conclude this introduction, let us remark that for sufficiently general GM varieties (of dimension~$4$ or~$6$), an alternative approach to the Tate Conjecture in characteristic $p \geq 5$ is to establish a motivic connection to a suitable hyperk\"ahler variety (a double EPW sextic) and then use a result of Charles (\cite{Charles}, Theorem~4). It is not clear to us if this approach can work for all even-dimensional GM varieties, as in general the corresponding double EPW sextic is singular.

\subsection{Acknowledgement.} We thank the referee for his or her pertinent remarks on the paper and for several corrections and useful suggestions.

\section{Basics on Gushel--Mukai varieties}
\label{sec:Basics}

\subsection{}
In what follows, we use the geometric interpretation of projective spaces and projective bundles; so if $W$ is a vector space, $\PP(W)$ is the variety of lines in~$W$. (Likewise for projective bundles.)

If $S$ is a scheme and $V$ is a vector bundle of rank $r\geq 3$ on~$S$, we denote by $\Grass(2,V) \subset \PP(\wedge^2 V)$ the Grassmannian of $2$-planes in~$V$ (in its Pl\"ucker embedding). Abbreviating $\Grass = \Grass(2,V)$ and writing $\pi \colon \Grass \to S$ for the structural morphism, we have the universal subbundle and quotient bundle
\[
0 \tto \scrS \tto \pi^* V \tto \scrQ \tto 0\, ,
\]
and we define $\scrO_{\Grass}(1) = \det(\scrQ)$. The Pl\"ucker embedding is given by the complete linear system~$|\scrO_{\Grass}(1)|$.

We write $\CGr(2,V) \subset \PP(\scrO_S \oplus \wedge^2 V)$ for the cone over~$\Grass(2,V)$. If it is clear from the context which~$V$ we mean, we abbreviate $\CGr(2,V)$ to~$\CGr$.

\begin{definition}\label{def:GMVariety}
Let $S$ be a scheme on which $2$ is invertible. By a \emph{Gushel--Mukai (GM) variety of dimension $n \in \{3,4,5,6\}$ over~$S$} we mean a smooth proper $S$-scheme $f \colon X \to S$ of relative dimension~$n$ with geometrically integral fibres such that \'etale locally on~$S$, writing $V_5 = \scrO_S^5$, there exist:
\begin{itemize}
\item a submodule $W \subset  \scrO_S \oplus \wedge^2 V_5$ which is locally on~$S$ a direct summand of rank~$n+5$,
\item a relative quadric $Q \subset \PP(\scrO_S \oplus \wedge^2 V_5)$,
\end{itemize}
such that
\begin{equation}
\label{eq:CGrPWQ}
X \cong \CGr(2,V_5) \cap \PP(W) \cap Q
\end{equation}
as $S$-schemes.
\end{definition}

In everything that follows, whenever we consider Gushel--Mukai varieties, it is assumed that $2$ is invertible on the base scheme.

Gushel--Mukai varieties over a field come in two flavours: if $S$ is the spectrum of a field and $X$ is given as in~\eqref{eq:CGrPWQ} we say that $X$ is:
\begin{itemize}
\item \emph{of Mukai type} if $\PP(W)$ does not pass through the vertex $O \in \CGr$ of the cone, i.e., if $k \oplus (0) \subset k \oplus \wedge^2 V_5$ is not contained in~$W$; 

\item \emph{of Gushel type} if $\PP(W)$ passes through the vertex~$O$. 
\end{itemize}
In the work of Debarre--Kuznetsov (see for instance \cite{DK-GMClassif}, \cite{DK-Kyoto}), GM varieties of Mukai type (resp.\ of Gushel type) are called ``ordinary'' (resp.\ ``special''). We prefer to avoid this terminology, as ``ordinary'' already has a well-established meaning in algebraic geometry over fields of characteristic~$p$. 
%It can be shown that the distinction between the two types is intrinsic.

As GM sixfolds play a main role in this paper, it is perhaps good to point out that if $k$ is a field and $X/k$ is a GM sixfold which is given in the above form, we have $W = k \oplus \wedge^2 V_5$, so that $X$ is just a quadric section of~$\CGr$. In particular, GM sixfolds are necessarily of Gushel type.

\subsection{}
Let $X$ be a GM variety over a field~$k$ which is given in the form \eqref{eq:CGrPWQ}. The assumption that $X$ is smooth over~$k$ implies that $X$ does not pass through the vertex of~$\CGr$. Further, $\CGr \cap \PP(W)$ and $Q \cap \PP(W)$ are nonsingular at all points of~$X$, of dimensions~$n+1$ and~$n+3$ respectively. A resolution of~$\scrO_X$ by a complex of locally free $\scrO_{\PP(W)}$-modules is obtained by taking the tensor product of resolutions of $Q \cap \PP(W)$ and $\CGr \cap \PP(W)$; this yields a resolution
\begin{multline}
\label{eq:ResolOX}
\qquad 0 \tto \scrO(-7) \tto \bigl(V_5^\vee \oplus k\bigr) \otimes \scrO(-5) \tto \bigl(V_5 \otimes \scrO(-4)\bigr) \oplus  \bigl(V_5^\vee \otimes \scrO(-3)\bigr)\\
\tto \bigl(V_5 \oplus k\bigr) \otimes \scrO(-2) \tto \scrO \tto \scrO_X \tto 0\qquad
\end{multline}
(See also formula~(2.4) in~\cite{DK-GMClassif}.)

\begin{lemma}
\label{lem:LiftingX}
Let $X$ be a Gushel--Mukai variety over an algebraically closed field~$k$ of characteristic~$p$. Then $X$ can be lifted to a Gushel--Mukai variety over~$W(k)$.
\end{lemma}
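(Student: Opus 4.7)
The plan is to lift the defining data $(V_5, W, Q)$ of~$X$ individually over~$W(k)$, form the closed subscheme
\[
\tilde X := \CGr(2,\tilde V_5) \cap \PP(\tilde W) \cap \tilde Q \;\subset\; \PP(\tilde W)
\]
by applying the recipe of~\eqref{eq:CGrPWQ} to the lifted data, and verify that this is a smooth Gushel--Mukai lift of~$X$. Concretely I would take $\tilde V_5 := W(k)^{\oplus 5}$; choose a $k$-basis of $W \subset k \oplus \wedge^2 V_5$ (which is a direct summand by definition) and lift its elements to vectors in $W(k) \oplus \wedge^2 \tilde V_5$, letting $\tilde W$ be the $W(k)$-span of those lifts; and lift the quadratic form cutting out~$Q$ to a quadratic form over $W(k)$ cutting out~$\tilde Q$. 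By construction the special fibre of $\tilde X$ is~$X$.

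The crux is to check that $\tilde X \to \Spec W(k)$ is flat and smooth of relative dimension~$n$ along the special fibre. I would treat this as a local question at each $x \in X$. Since $X$ is a smooth subscheme of codimension~$4$ in the smooth $(n{+}4)$-dimensional ambient~$\PP(W)$, it is a local complete intersection at~$x$, so the ideal $I \subset \scrO_{\PP(W),x}$ of~$X$ is generated by four elements $f_1,\ldots,f_4$ which form a regular sequence. Now $I$ is generated by the restrictions to $\PP(W)$ of the five Pl\"ucker quadrics of~$\CGr(2,V_5)$ together with the defining equation of~$Q$, so the $f_i$ can be chosen as $\scrO_{\PP(W),x}$-linear combinations of these six generators; lifting the coefficients of these combinations yields four elements $\tilde f_1,\ldots,\tilde f_4$ in the ideal $\tilde I \subset A := \scrO_{\PP(\tilde W),x}$ of~$\tilde X$. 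The sequence $(p, f_1,\ldots,f_4)$ is regular in~$A$ because its quotient is the regular local ring~$\scrO_{X,x}$; by Cohen--Macaulayness of~$A$, the sequence $(\tilde f_1,\ldots,\tilde f_4)$ is regular and its quotient is $W(k)$-flat with smooth special fibre, hence smooth over~$W(k)$. Finally, $\tilde I/(\tilde f_1,\ldots,\tilde f_4)$ is a finitely generated $A$-module whose mod-$p$ reduction vanishes (since the $f_i$ generate~$I$), so Nakayama's lemma gives $\tilde I = (\tilde f_1,\ldots,\tilde f_4)$ locally at~$x$, and $\tilde X$ is smooth and $W(k)$-flat at~$x$.

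Given this local smoothness along~$X$, the rest is routine. No irreducible component of the proper scheme~$\tilde X$ can be supported on the closed fibre (such a component would be contained in~$X$ and contradict local flatness at its points), so $\tilde X$ is $W(k)$-flat and smooth globally. Cohomology and base change from $H^0(X,\scrO_X) = k$ gives $H^0(\tilde X,\scrO_{\tilde X}) = W(k)$, so the smooth generic fibre is geometrically connected and thus geometrically integral; projectivity of $\tilde X$ over~$W(k)$ is built in. The main obstacle is the middle paragraph: one has to use the codimension-$4$ LCI property of~$X$ in~$\PP(W)$ to cut the six natural generators of $\tilde I$ down to a regular sequence of four, and then apply Nakayama to promote the resulting containment of ideals into an equality. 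This is where the smoothness of $X$ enters in an essential way.
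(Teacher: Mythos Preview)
Your overall approach---lift the data $(V_5, W, Q)$ arbitrarily and form the intersection over~$W(k)$---is exactly what the paper's one-line proof intends by ``clear from the description~\eqref{eq:CGrPWQ}''. However, the Nakayama step in your second paragraph does not go through as written. From ``the $f_i$ generate~$I$'' you only obtain that the image of $M := \tilde I/(\tilde f_1,\dots,\tilde f_4)$ in $B/pB$ vanishes (where $B = A/(\tilde f_1,\dots,\tilde f_4)$), i.e.\ $M \subset pB$. This is \emph{not} the same as $M/pM = 0$: take $A = W(k)[\![x,y]\!]$ and $\tilde I = (x,\, x+py^2)$; then $I = (x)$, one may choose $\tilde f_1 = x$, and $M = (py^2) \subset B = W(k)[\![y]\!]$ has $M/pM \cong k \neq 0$. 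The obstruction is precisely possible $p$-torsion in $A/\tilde I = \scrO_{\tilde X, x}$, so to run your Nakayama argument you would first need $\scrO_{\tilde X, x}$ to be $W(k)$-flat---which is the very thing you are trying to prove.

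The missing ingredient is the Cohen--Macaulay property of the cone over the Grassmannian. The scheme $\CGr(2,\tilde V_5)$ is CM over~$W(k)$: its affine cone is the Pfaffian variety of a generic $5\times 5$ skew matrix, which is Gorenstein of codimension~$3$ over any base by Buchsbaum--Eisenbud. Since $X$ is smooth of dimension~$n$, the special fibre $\CGr_k \cap \PP(W)$ has pure dimension~$n+1$ (a component of larger dimension would meet~$Q$ in dimension $>n$), so the $6-n$ linear forms cutting out~$\PP(\tilde W)$ form a regular sequence on the CM scheme~$\CGr$, and $\CGr \cap \PP(\tilde W)$ is CM of dimension~$n+2$; for the same reason the equation of~$\tilde Q$ is a nonzerodivisor on it, and $\tilde X$ is CM of dimension~$n+1$. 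Now $p$ is a nonzerodivisor on the CM local ring~$\scrO_{\tilde X,x}$ (its quotient $\scrO_{X,x}$ has dimension~$n$), giving flatness directly---after which your regular-sequence construction becomes unnecessary. Equivalently, one observes that the resolution~\eqref{eq:ResolOX} lifts to~$W(k)$ and remains exact because the codimensions are correct on the special fibre; this is presumably what the paper has in mind.
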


\begin{proof}
This is clear from the description~\eqref{eq:CGrPWQ}.
\end{proof}

\begin{lemma}
\label{lem:Hi(OX)}
Let $f\colon X \to S$ be a Gushel--Mukai variety. Then $R^i f_*\scrO_X = 0$ for all $i \geq 1$.
\end{lemma}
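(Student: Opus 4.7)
The plan is to compute $Rf_*\scrO_X$ directly from the resolution~\eqref{eq:ResolOX}. The vanishing of the~$R^i f_*\scrO_X$ can be checked \'etale locally on~$S$, so by Definition~\ref{def:GMVariety} we may assume that $X$ is presented as in~\eqref{eq:CGrPWQ}. The resolution~\eqref{eq:ResolOX} is written in the excerpt for the fibrewise setting, but the same construction works relatively: take the Koszul resolution $0 \to \scrO(-2) \to \scrO \to \scrO_{Q \cap \PP(W)} \to 0$ for the quadric section, tensor it with the (relative) resolution of $\scrO_{\CGr \cap \PP(W)}$ coming from the Pl\"ucker equations of the Grassmannian, and note that the resulting complex of locally free $\scrO_{\PP(W)}$-modules is exact because $Q \cap \PP(W)$ and $\CGr \cap \PP(W)$ meet transversally along~$X$ (which is smooth of the expected dimension).

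Write $\pi \colon \PP(W) \to S$ for the structural projection; note that $\PP(W)$ has relative dimension $N = n+4$ over~$S$, where $n = \dim(X) \in \{3,4,5,6\}$, so in particular $N \geq 7$. Since $f = \pi \circ j$ with $j \colon X \hookrightarrow \PP(W)$ the closed immersion, I can compute $Rf_* \scrO_X \cong R\pi_*(\text{resolution})$ by pushing forward the complex~\eqref{eq:ResolOX} termwise. The standard formula for cohomology of line bundles on a projective bundle gives $R^i \pi_* \scrO(d) = 0$ unless either $i = 0$ with $d \geq 0$, or $i = N$ with $d \leq -N-1$. The twists appearing in~\eqref{eq:ResolOX} are $0, -2, -3, -4, -5, -7$, and since $-7 > -N-1 \geq -11$, all twists $d \in \{-2,-3,-4,-5,-7\}$ fall in the ``middle'' range where $R^i \pi_* \scrO(d)$ vanishes identically. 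Thus the only term contributing to $R\pi_*$ is the rightmost $\scrO$, which yields $\scrO_S$ in degree~$0$.

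From the hypercohomology spectral sequence (which in this case degenerates trivially since only a single $E_1$-entry is nonzero), we conclude that $Rf_* \scrO_X \cong \scrO_S$ concentrated in degree~$0$, and in particular $R^i f_* \scrO_X = 0$ for all $i \geq 1$ (and as a bonus $f_* \scrO_X = \scrO_S$, consistent with the geometric integrality of the fibres). There is no real obstacle here: the result is essentially a direct consequence of the existence of the resolution~\eqref{eq:ResolOX} together with the numerical fact that all nontrivial twists in this resolution lie strictly between $-N-1$ and~$0$.
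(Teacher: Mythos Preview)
Your proof is correct and follows essentially the same approach as the paper's: both use the resolution~\eqref{eq:ResolOX} together with the numerical observation that all nontrivial twists appearing lie strictly between $-(n+5)$ and~$0$, so that only the final $\scrO$ contributes. The paper reduces first to the case where $S$ is the spectrum of a field and then invokes the vanishing on~$\PP(W)$, while you carry out the same computation directly in the relative setting via $R\pi_*$; this is a cosmetic difference only.
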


\begin{proof}
It suffices to prove this when $S$ is the spectrum of a field. Now use \eqref{eq:ResolOX} and the fact that $\omega_{\PP(W)} \cong \scrO(m)$ with $m < -7$ (in fact $m = -(n+5)$, where $n = \dim(X) \in \{3,4,5,6\}$).
\end{proof}

\begin{proposition}
\label{prop:PicX}
Let $X/S$ be a Gushel--Mukai variety of relative dimension~$n$. Then we have an isomorphism $\Pic_{X/S} \cong \ZZ_S$ (the constant group scheme~$\ZZ$ over~$S$) such that $\omega_{X/S}$ corresponds to the section $-(n-2)$. If $X$ is given as in~\eqref{eq:CGrPWQ}, the pullback of~$\scrO_{\PP(W)}(1)$ corresponds to the section~$1$.
\end{proposition}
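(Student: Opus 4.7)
The plan is to combine the cohomological vanishing of Lemma~\ref{lem:Hi(OX)} (which controls deformation theory) with the lifting provided by Lemma~\ref{lem:LiftingX} to reduce the computation of $\Pic_{X/S}$ to characteristic zero, and then to read off $\omega_{X/S}$ from the resolution~\eqref{eq:ResolOX}.

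First I would observe that, since $R^1f_*\scrO_X = R^2f_*\scrO_X = 0$ by Lemma~\ref{lem:Hi(OX)}, standard deformation theory gives that $\Pic_{X/S}$ is representable by an étale group scheme over~$S$: the vanishing of $R^1$ makes $\Pic_{X/S}$ unramified (so $\Pic^0 = 0$), and the vanishing of $R^2$ clears the obstruction to smoothness. Any two local presentations~\eqref{eq:CGrPWQ} of~$X$ produce line bundles $\scrO_{\PP(W)}(1)|_X$ differing only by a bundle pulled back from~$S$, so these local classes glue to a well-defined global section of~$\Pic_{X/S}$; once we know this section generates each geometric fibre, it furnishes the isomorphism $\ZZ_S \isomarrow \Pic_{X/S}$.

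The key step is thus to show that over an algebraically closed field $k$ of characteristic~$p$ the Picard group of a GM variety~$X$ is infinite cyclic, generated by~$\scrO_{\PP(W)}(1)|_X$. For this I would lift $X$ to a GM variety $\scrX/W(k)$ using Lemma~\ref{lem:LiftingX}. The vanishings $H^1(X,\scrO_X) = H^2(X,\scrO_X) = 0$ give a unique lift of any line bundle through each truncation $\scrX \otimes W(k)/p^n$, and Grothendieck's existence theorem then yields $\Pic(\scrX) \isomarrow \Pic(X)$. The restriction $\Pic(\scrX) \to \Pic(\scrX_K)$ to the generic fibre (with $K$ the fraction field of $W(k)$) is injective, its kernel being generated by the principal Cartier divisor $X \subset \scrX$. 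Combined with the characteristic-zero result that the Picard group of a GM variety is $\ZZ \cdot [\scrO_{\PP(W)}(1)|_X]$ (from~\cite{FuMoonen-GM1}, or alternatively via Grothendieck--Lefschetz in the Mukai case and via the double cover description in the Gushel case), this gives the desired conclusion.

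Finally, to identify $\omega_{X/S}$ I would apply Grothendieck duality to the length-$4$ resolution~\eqref{eq:ResolOX} of $\scrO_X$ by locally free $\scrO_{\PP(W)}$-modules. Since the leftmost term is $\scrO(-7)$ and $\omega_{\PP(W)} \cong \scrO(-(n+5))$, we obtain
\[
\omega_{X/S} \;\cong\; \mathcal{E}xt^4_{\scrO_{\PP(W)}}\bigl(\scrO_X,\omega_{\PP(W)}\bigr) \;\cong\; \scrO_X(7) \otimes \omega_{\PP(W)}|_X \;\cong\; \scrO_X\bigl(-(n-2)\bigr),
\]
which corresponds to the section $-(n-2)$ of $\ZZ_S$. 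The main obstacle I anticipate is the characteristic-zero Picard computation: if not quoted directly from~\cite{FuMoonen-GM1}, then the Mukai and Gushel types must be handled separately, since their ambient geometries differ.
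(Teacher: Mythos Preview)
Your argument is correct and follows essentially the same route as the paper: use $H^1(X,\scrO_X)=H^2(X,\scrO_X)=0$ to see that $\Pic_{X/S}$ is \'etale and that line bundles lift along $\scrX/W(k)$, then reduce to the known characteristic-zero statement. Two minor differences: the paper cites \cite{DK-GMClassif}, Lemma~2.29, for the characteristic-zero input (not \cite{FuMoonen-GM1}), and it passes from fibrewise $\Pic\cong\ZZ$ to the global isomorphism using the everywhere-nonzero global section~$\omega_{X/S}$ rather than gluing the \'etale-local $\scrO(1)$-classes; your explicit identification of $\omega_{X/S}$ via Grothendieck duality on the resolution~\eqref{eq:ResolOX} is a clean addition that the paper leaves implicit.
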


\begin{proof}
Over~$\CC$ this is well-known; see \cite{DK-GMClassif}, Lemma~2.29. This implies the result over an arbitrary field~$k$ of characteristic~$0$, because $\omega_{X/k}$ gives a non-zero $k$-valued point of~$\Pic_{X/k}$, so that the latter cannot be a twisted form of~$\ZZ$. Next assume $\charact(k) = p>2$. Then $H^1(X,\scrO_X) = 0$ gives that $\Pic_{X/k}$ is an \'etale group scheme. Lift~$X_{\kbar}$ to a Gushel--Mukai variety~$\scrX$ over~$W(\kbar)$; then $H^2(X,\scrO_X) = 0$ implies that every line bundle on~$X_{\kbar}$ can be lifted to a line bundle on~$\scrX$. With the previous argument this again gives $\Pic_{X/k} \cong \ZZ$. The general case now follows because all fibres of~$\Pic_{X/S}$ are isomorphic to~$\ZZ$, \'etale locally on the basis it has a section that fibrewise gives~$1$, and there is a global section (given by~$\omega_{X/S}$) that is fibrewise non-zero.
\end{proof}

\begin{remark}
If the section $1 \in \Pic_{X/S}(S)$ is represented by a line bundle, we will denote that line bundle by~$\scrO_X(1)$. We will use this notation only in situations (for instance when working over an algebraically closed field) where it is clear that such a line bundle exists. The case $\dim(X) = 3$ is special in this regard, because then $\omega_{X/S} = \scrO_X(-1)$; but for $\dim(X) \in \{4,5,6\}$ there may in general be a Brauer obstruction for representing $1 \in \Pic_{X/S}(S)$ by an actual line bundle.
\end{remark}

\section{Cohomology of sheaves on the Grassmannian}
\label{sec:CohomGr}

\subsection{}
We work over a field~$k$ of characteristic~$p$. We will later assume $p$ is not too small.

Let $G = \SL_5$ over~$k$, let $B$, $B_- \subset G$ be the upper and lower triangular Borel subgroups and $T = B \cap B_-$ the diagonal maximal torus. Then $X^*(T) = \ZZ^5/\ZZ\cdot (1,\ldots,1)$. We write vectors in $X^*(T)$ in the form $[a_1,\ldots,a_5]$. Let $e_i \in X^*(T)$ be the class of the $i$th standard base vector in~$\ZZ^5$.

The roots of~$G$ are the vectors $e_i - e_j$ for $i \neq j$; the positive roots are those with $i<j$. The simple roots are the $\alpha_i = e_i - e_{i+1}$ for $i=1,\ldots,4$, and the half-sum of the positive roots is the vector
\[
\rho = [2,1,0,-1,-2]\, .
\]

Let $\langle\ ,\ \rangle \colon \bigl(X^*(T) \otimes \RR\bigr) \times \bigl(X_*(T) \otimes \RR\bigr) \to \RR$ denote the natural pairing. If $\alpha = e_i - e_j$ is a root and $\alpha^\vee$ is the corresponding coroot, we have $\bigl\langle[a_1,\ldots,a_5],\alpha^\vee\bigr\rangle = a_i - a_j$.

We identify the Weyl group~$W$ of~$G$ with the symmetric group~$\frS_5$ in the usual way, and for $w \in W$ we denote by~$\ell(w)$ the length of~$w$ (with respect to the set of simple reflections $s_i = (i\quad i+1)$).

Let $X^*(T)_+ \subset X^*(T)$ be the set of dominant weights. Concretely, a vector $[a_1,\ldots,a_5] \in X^*(T)$ is in $X^*(T)_+$ if and only if $a_1 \geq a_2 \geq \cdots \geq a_5$. (Note that $X^*(T)$ is the weight lattice, as $G$ is simply connected.)

\subsection{}
Consider the flag variety~$G/B_-$. For $\lambda \in X^*(T)$, we denote by $H^*(\lambda)$ the cohomology of the homogeneous line bundle~$\scrL(\lambda)$ on~$G/B_-$ given by~$\lambda$. We shall make use of some basic results about these cohomology groups, which we now recall.

Inside $X^*(T) \otimes \RR$, consider the shifted closed Weyl chamber
\[
\overline{\frC} = \bigl\{ \lambda \in X^*(T) \otimes \RR\bigm| 0 \leq \langle \lambda + \rho,\alpha^\vee\rangle\quad \text{for all $\alpha \in \Phi^+$} \bigr\}\, ,
\]
and the basic $p$-alcove
\[
\overline{\frA} = \bigl\{ \lambda \in X^*(T) \otimes \RR\bigm| 0 \leq \langle \lambda + \rho,\alpha^\vee\rangle \leq p \quad \text{for all $\alpha \in \Phi^+$} \bigr\}\, .
\]

Recall that we have a ``dot action'' of $W = \frS_5$ on $X^*(T) \otimes \RR$, given by the rule $w \bullet \lambda = w(\lambda + \rho) - \rho$.

The following result can be found in \cite{Jantzen}, Section~II.5.5.

\begin{proposition}
\label{prop:pAlcove}
Let $\mu \in X^*(T)$.
\begin{enumerate}
\item\label{pAlc1} If $\mu \in \overline{\frA}$ but $\mu \notin X^*(T)_+$ then $H^*(w\bullet \mu) = 0$ for all $w \in W$.
\item\label{pAlc2} If $\mu \in \overline{\frA}\cap X^*(T)_+$ then for all $w \in W$ and $i\geq 0$ we have
\[
H^i(w \bullet \mu) \cong \begin{cases} H^0(\mu) & \text{if $i = \ell(w)$;}\\ 0 & \text{otherwise.}\end{cases}
\]
\item\label{Kempf} (Kempf's vanishing theorem) If $\lambda$ is a dominant weight then $H^i(\lambda) = 0$ for all $i>0$.
\end{enumerate}
\end{proposition}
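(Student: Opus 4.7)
The plan is to derive parts~(1) and~(2) by induction on~$\ell(w)$ from part~(3), using the $\PP^1$-fibrations over minimal partial flag varieties. Part~(3), Kempf's vanishing theorem, is the deep input. Since it is a classical result in positive characteristic -- proved via Frobenius splitting of $G/B_-$ compatible with the Schubert stratification -- I would cite it from Jantzen~II.4.5 rather than reprove it.

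For each simple root $\alpha_i$, let $P_i \supset B_-$ be the corresponding minimal parabolic and let $\pi_i \colon G/B_- \to G/P_i$ be the resulting $\PP^1$-fibration. For any weight~$\lambda$, the restriction of $\scrL(\lambda)$ to a $\pi_i$-fibre is~$\scrO_{\PP^1}\bigl(\langle\lambda,\alpha_i^\vee\rangle\bigr)$. For any $w \in W$ with $\ell(s_iw) = \ell(w)-1$, the weights $w\bullet\mu$ and $s_iw\bullet\mu$ differ by a reflection that flips the sign of this pairing. A Leray-spectral-sequence argument on~$\pi_i$ yields a comparison
\[
H^j\bigl(w\bullet\mu\bigr) \;\cong\; H^{j-1}\bigl(s_iw\bullet\mu\bigr)
\]
when $\langle s_iw\bullet\mu + \rho,\alpha_i^\vee\rangle \geq 1$, and gives simultaneous vanishing of both sides when this pairing equals~$0$. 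Iterating along a reduced expression for~$w$ reduces everything to the base case $w = e$: for part~(2), this is Kempf's vanishing applied to the dominant~$\mu$; for part~(1), the hypothesis $\mu \notin X^*(T)_+$ combined with $\mu \in \overline{\frA}$ forces $\langle\mu + \rho,\alpha_i^\vee\rangle = 0$ for some simple~$\alpha_i$, which immediately gives $H^*(\mu) = 0$ since $\scrL(\mu)$ restricts to~$\scrO_{\PP^1}(-1)$ on each $\pi_i$-fibre.

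The role of the alcove condition $0 \leq \langle\mu + \rho,\alpha^\vee\rangle \leq p$ is precisely to keep the intermediate pairings in the range $[-p,p]$, where the naive $\PP^1$-pushforward formula is valid in characteristic~$p$ and no Frobenius-twist corrections appear. Outside this range, short exact sequences involving Frobenius kernels would intervene, and the simple alcove-translation formula would fail.

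The main obstacle is the combinatorial bookkeeping: choosing a reduced expression for~$w$ so that the controlling pairings behave well along the induction, and handling in part~(1) the subtlety that $\mu$ may lie on several walls of the shifted Weyl chamber simultaneously (in which case the simple reflections stabilising~$\mu$ under the dot action must be absorbed into~$w$ without changing the final cohomology). Jantzen organises all of this via the \emph{upper closure} of alcoves and affine Weyl group combinatorics, and my plan is to defer to Jantzen~II.5 for the detailed bookkeeping.
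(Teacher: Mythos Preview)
The paper does not give a proof of this proposition at all: it simply records the result with the attribution ``The following result can be found in \cite{Jantzen}, Section~II.5.5.'' So there is nothing to compare your argument against on the paper's side.

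That said, your sketch is essentially the standard proof and is the argument Jantzen gives. The induction on~$\ell(w)$ via the $\PP^1$-fibrations $\pi_i \colon G/B_- \to G/P_i$, the Leray comparison $H^j(w\bullet\mu) \cong H^{j-1}(s_iw\bullet\mu)$ under the appropriate positivity condition, and the role of the alcove bound in preventing Frobenius corrections are all correct. Your identification of Kempf's vanishing theorem as the deep input (proved in Jantzen~II.4) is also right. Since the paper itself defers entirely to Jantzen, your plan to ``defer to Jantzen~II.5 for the detailed bookkeeping'' is exactly what the paper does, only you have gone further by outlining what that bookkeeping consists of.
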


\begin{remark}
Given $\lambda \in X^*(T)$, there exists a $w \in W$ such that $w\bullet \lambda \in \overline{\frC}$; for this write $\lambda + \rho = [a_1,\ldots,a_5]$ and choose $w \in W = \frS_5$ such that $a_{w^{-1}(1)} \geq a_{w^{-1}(2)} \geq \cdots \geq a_{w^{-1}(5)}$. We then have $w\bullet \lambda \in  \overline{\frA}$ if and only if $a_{\max} - a_{\min} \leq p$, where $a_{\max}$ and~$a_{\min}$ are the maximum and minimum of $\{a_1,\ldots,a_5\}$.
\end{remark}

\subsection{}
Let $P \supset B_-$ be the standard parabolic subgroup (with respect to~$B_-$) corresponding to $\{\alpha_1,\alpha_2,\alpha_4\}$. Concretely, $P$ is the subgroup of matrices $A = (a_{ij})$ in~$G$ with the property that $a_{ij} = 0$ for all $1 \leq i\leq 3$ and $4\leq j\leq 5$. The homogeneous variety $G/P$ is the Grassmannian $\Grass = \Grass(2,5)$ of $2$-planes in a fixed $5$-dimensional space.

Let $\pi \colon G/B_- \to \Grass$ be the projection. If $V$ is a quasi-coherent sheaf on~$\Grass$ then $H^*(\Grass,V) \cong H^*(G/B_-,\pi^*V)$; see \cite{Demazure}, Corollary~1 to Theorem~3. 

Note that $\pi^*\scrO_{\Grass}(1) = \scrL(\lambda)$ with $\lambda = [1,1,1,0,0] = [0,0,0,-1,-1]$.

\begin{proposition}
\label{prop:CohomGr}
As above, let $\Grass = \Grass(2,5)$ over a field $k$ of characteristic~$p$.
\begin{enumerate}
\item\label{HTGr} We have $M_5(k)/k\cdot \id \isomarrow H^0(\Grass,\scrT_{\Grass})$ and $H^i(\Grass,\scrT_{\Grass}) = 0$ for all $i\geq 1$.
\item\label{HTGr-1-2} We have $H^*\bigl(\Grass,\scrT_{\Grass}(-1)\bigr) =  H^*\bigl(\Grass,\scrT_{\Grass}(-2)\bigr) = 0$.
\item\label{hiO(m)Grass} If $H^i\bigl(\Grass,\scrO_{\Grass}(m)\bigr) \neq 0$ then either $i=0$ and $m \geq 0$ or $i=6$ and $m\leq -5$.
\end{enumerate}
For the next assertions, assume $p\geq 5$.
\begin{enumerate}[resume]
\item\label{hijGrass} All Hodge numbers $h^{i,j}(\Grass) = h^j(\Grass,\Omega^i_{\Grass})$ are the same as in characteristic zero; concretely: $h^{i,j}(\Grass) = 0$ unless $0 \leq i=j \leq 6$, and the Hodge numbers $h^{i,i}(\Grass)$ are
\[
1\, ,\quad 1\, ,\quad  2\, ,\quad 2\, ,\quad 2\, ,\quad 1\, ,\quad 1\, .
\]

\item\label{Omega1-1} We have
\begin{align*}
& H^*\bigl(\Grass,\Omega^1_{\Grass}(-1)\bigr) = H^*\bigl(\Grass,\Omega^1_{\Grass}(-2)\bigr) = 0\, ,\\
& H^*\bigl(\Grass,\Omega^2_{\Grass}(-1)\bigr) = H^*\bigl(\Grass,\Omega^2_{\Grass}(-2)\bigr) = 0\, ,\\
& H^*\bigl(\Grass,\Omega^3_{\Grass}(-1)\bigr) = 0\, .
\end{align*}

\item\label{Om2(-3)}
We have $H^i\bigl(\Grass,\Omega^2_{\Grass}(-3)\bigr) = 0$ for $i \neq 5$, and $h^5\bigl(\Grass,\Omega^2_{\Grass}(-3)\bigr) = 5$.
\end{enumerate}
\end{proposition}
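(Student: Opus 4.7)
The plan is to reduce the computation to cohomology of line bundles on the flag variety $G/B_-$ and apply Proposition~\ref{prop:pAlcove}, following the approach used for the earlier parts of the proposition. Since $\Omega^1_{\Grass}\cong \scrS\otimes \scrQ^\vee$ and $p\ge 3$, the Cauchy-type decomposition yields
\[
\Omega^2_{\Grass}\cong \bigl(\Sym^2\scrS\otimes \wedge^2\scrQ^\vee\bigr)\oplus \bigl(\wedge^2\scrS\otimes \Sym^2\scrQ^\vee\bigr).
\]
Using $\wedge^2\scrS\cong \scrO(-1)$ together with $\wedge^2\scrQ^\vee\cong \scrQ\otimes \scrO(-1)$ (which holds because $\scrQ$ has rank~$3$ and $\det\scrQ=\scrO(1)$), twisting by $\scrO(-3)$ gives
\[
\Omega^2_{\Grass}(-3)\cong \bigl(\Sym^2\scrS\otimes \scrQ\bigr)(-4)\;\oplus\;\bigl(\Sym^2\scrQ^\vee\bigr)(-4),
\]
and it suffices to treat each summand separately.

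Pulling back along $\pi\colon G/B_-\to \Grass$ preserves cohomology, and each pulled-back summand carries a $B_-$-stable filtration whose graded pieces are line bundles $\scrL(\mu)$, with $\mu$ ranging over the $T$-weights of the underlying Levi representation. Explicitly, the first summand yields nine weights of the form $\mu=\nu+e_k+[0,0,0,4,4]$ with $\nu\in\{2e_4,\,e_4+e_5,\,2e_5\}$ and $k\in\{1,2,3\}$, while the second yields six weights $\mu=-e_i-e_j+[0,0,0,4,4]$ with $1\le i\le j\le 3$.

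For each of these fifteen weights I would form $\mu+\rho$ and apply Proposition~\ref{prop:pAlcove}. Since $p\ge 5$, the difference $a_{\max}-a_{\min}$ of the entries of $\mu+\rho$ never exceeds~$p$, so the Weyl-group conjugate $w\bullet\mu$ lies in~$\overline{\frA}$. Direct inspection shows that in fourteen cases $\mu+\rho$ has a repeated entry, so $w\bullet\mu$ lies on a wall of~$\overline{\frC}$ and is not dominant; part~(1) of Proposition~\ref{prop:pAlcove} then forces $H^*(\mu)=0$. The only exception is $\mu_0=[1,0,0,6,4]$, arising from $2e_4+e_1+[0,0,0,4,4]$: here $\mu_0+\rho=[3,1,0,5,2]$ has distinct entries, the sorting permutation~$w$ has length~$5$, and $w\bullet\mu_0=[3,2,2,2,2]\equiv[1,0,0,0,0]$ in $X^*(T)$, which is dominant. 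Part~(2) of Proposition~\ref{prop:pAlcove} together with Kempf vanishing then gives $H^i(\mu_0)=0$ for $i\ne5$ and $H^5(\mu_0)\cong H^0\bigl(\scrL([1,0,0,0,0])\bigr)$, the standard $5$-dimensional $\SL_5$-representation. Combining the contributions via the long exact sequences of the two filtrations yields the stated vanishing and $h^5=5$. The principal obstacle is the bookkeeping: enumerating the fifteen weights, computing their dot-action conjugates, and verifying that exactly one produces a nonzero contribution---which is precisely what accounts for the restriction $p\ge 5$.
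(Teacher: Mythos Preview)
Your proposal is correct and follows essentially the same approach as the paper: both reduce to line bundles on $G/B_-$ and apply Proposition~\ref{prop:pAlcove}, and your fifteen weights coincide (in $X^*(T)=\ZZ^5/\ZZ\cdot(1,\ldots,1)$) with the paper's twelve weights counted with multiplicity, with the same unique non-vanishing contribution $[1,0,0,6,4]\equiv[0,-1,-1,5,3]$. The only cosmetic difference is that you organise the enumeration via the Cauchy decomposition of $\Omega^2_{\Grass}$ (valid since $p\neq 2$), while the paper lists the weights directly as sums $\lambda_i+\lambda_j+[0,0,0,3,3]$.
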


\begin{proof}
Part \ref{HTGr} is \cite{Demazure}, Proposition~2. For \ref{HTGr-1-2} we use that $\pi^* \scrT_{\Grass}(-1)$ admits a filtration
\[
\pi^* \scrT_{\Grass}(-1) = \scrE_0 \supset \scrE_1 \supset \cdots \supset \scrE_5 \supset 0
\]
with graded pieces isomorphic to $\scrL(\lambda)$ for
\[
\lambda = e_1 + e_4\, ,\quad e_2 + e_4\, ,\quad e_3 + e_4\, ,\quad e_1+e_5\, ,\quad e_2+e_5\, ,\quad e_3+e_5\, .
\]
In all cases there exists a simple root~$\alpha$ such that $\langle \alpha^\vee,\lambda\rangle = -1$. (Take $\alpha = e_3 -e_4$ in the first two cases, $\alpha = e_2 -e_3$ in the third, and $\alpha = e_4 - e_5$ in the remaining three cases.) By \cite{Demazure}, Lemma~1(a), it follows that in each case $H^*\bigl(G/B,\scrL(\lambda)\bigr) = 0$, and this gives that the cohomology of~$\scrT_{\Grass}(-1)$ is zero. (In~\cite{Demazure} the assumption is made that $G$ is an adjoint group, so that only weights~$\lambda$ in the root lattice are allowed. However, the results are valid for all reductive groups~$G$.)

For $\scrT_{\Grass}(-2)$ we can use almost the same argument. This time the weights that occur are
\begin{align*}
&e_1 + 2e_4 + e_5\, , &&e_2 + 2e_4 + e_5\, , &&e_3 + 2e_4 + e_5\, ,\\
&e_1 + e_4 + 2e_5\, , &&e_2 + e_4 + 2e_5\, , &&e_3 + e_4 + 2e_5\, .
\end{align*}
For the last five of these, one again readily finds simple roots~$\alpha$ such that $\langle \alpha^\vee,\lambda\rangle = -1$. For $\lambda = e_1 + 2e_4 + e_5$ this does not work. However, in this case the vanishing of $H^*\bigl(G/B,\scrL(\lambda)\bigr)$ follows from \cite{Demazure}, Lemma~1(d), taking $\alpha = e_3-e_4$ and $\beta = e_2-e_3$.

\ref{hiO(m)Grass} We use that $\pi^*\scrO_{\Grass}(m) \cong \scrL(\lambda_m)$ with $\lambda_m = [m,m,m,0,0]$. For $m\geq 0$ this is a dominant weight, so Kempf's vanishing theorem gives the assertion. Next we observe that $\langle \lambda_{-1}, \alpha_3^\vee\rangle = -1$ and
\[
\langle \lambda_{-2}, \alpha_3^\vee\rangle = -2\, ,\quad
\langle \lambda_{-2}, \alpha_4^\vee\rangle = 0\, ,\quad
\langle \alpha_3, \alpha_4^\vee\rangle = -1\, .
\]
By \cite{Demazure}, Lemma~1, these imply that $H^*\bigl(\Grass,\scrO_{\Grass}(-1)\bigr) = H^*\bigl(\Grass,\scrO_{\Grass}(-2)\bigr) = 0$. As $\omega_{\Grass} = \scrO_{\Grass}(-5)$, the remaining cases follow by Serre duality.

For \ref{hijGrass}, it suffices to show that $h^{i,j}(\Grass)$ has the expected value for $i+j< \dim(\Grass) = 6$; Serre duality then gives the same for $i+j>6$, and because we know that each $\Omega^i_{\Grass}$ has the same Euler characteristic as in characteristic~$0$ this gives the stated result.

The strategy of proof is again to use that $\pi^*\Omega^i_{\Grass}$ has a filtration by homogeneous subbundles whose graded quotients are of the form~$\scrL(\lambda)$. The assumption that $p\geq 5$ will guarantee that for all weights~$\lambda$ that occur there is a $w \in W$ such that $w\bullet \lambda \in \overline{\frA}$, and we can apply Proposition~\ref{prop:pAlcove}. For instance, the sequence of weights that occurs in $\pi^*\Omega^1_{\Grass}$ is
\begin{align*}
& \lambda_1 = [-1,0,0,0,1]\, ,\quad \lambda_2 = [-1,0,0,1,0]\, ,\quad \lambda_3 = [0,-1,0,0,1]\, ,\\
& \lambda_4 = [0,-1,0,1,0]\, ,\quad \lambda_5 = [0,0,-1,0,1]\, ,\quad \lambda_6 = [0,0,-1,1,0]\, .
\end{align*}
The corresponding vectors $\lambda_j + \rho$ are
\begin{align*}
& [1,1,0,-1,-1]\, ,\quad [1,1,0,0,-2]\, ,\quad [2,0,0,-1,-1]\, ,\\
& [2,0,0,0,-2]\, ,\quad [2,1,-1,-1,-1]\, ,\quad [2,1,-1,0,-2]\, .
\end{align*}
For $j \neq 6$ we see that $\lambda_j \in \overline{\frA}$ but $\lambda_j \notin X^*(T)$; hence $H^*(\lambda_j) = 0$. For $j=6$, taking $w = (3\; 4)$ gives $w\bullet \lambda_6 = 0$ and Proposition~\ref{prop:pAlcove}\ref{pAlc2} gives that $h^1(\lambda_6) = 1$ and $h^i(\lambda_6) = 0$ for $i\neq 1$. In total this gives that $h^i(\Grass,\Omega^1_{\Grass}) = 0$ for $i\neq 1$ and $h^1(\Grass,\Omega^1_{\Grass}) = 1$, as desired.

For the cohomology of $\Omega^2_{\Grass}$ the argument is similar. There are twelve weights~$\lambda$ that occur (three of which have multiplicity~$2$), namely all combinations $\lambda_i + \lambda_j$ with $i\neq j$. In all but two cases, taking $w \in W$ such that $w\bullet \lambda \in \overline{\frA}$ results in a vector that is not a dominant weight, in which case $H^*(\lambda) = 0$ by Proposition~\ref{prop:pAlcove}\ref{pAlc1}. The two weights for which the behaviour is different are $\lambda = \lambda_4 + \lambda_6 = [0,-1,-1,2,0]$ and $\lambda = \lambda_5 + \lambda_6 = [0,0,-2,1,1]$; for these, taking $w = (2\; 3\; 4)$ (resp.\  $w = (3\; 5\; 4)$) gives $w\bullet \lambda = 0$, and in both cases this gives a $1$-dimensional contribution to $H^2(\Grass,\Omega^2_{\Grass})$, as $\ell(w) =2$.

For the cohomology of $\Omega^3_{\Grass}$ the calculation is even simpler. Again we write down all weights that occur; there are sixteen of them, two of which occur with multiplicity~$3$. We know we can only get a contribution to $H^j(\Grass,\Omega^3_{\Grass})$ with $j\leq 2$ if there exists an element $w \in W$ of length at most~$2$ for which $w\bullet \lambda \in \overline{\frA}$. This happens for eight of the weights but in none of these cases $w \bullet \lambda$ is a dominant weight. Hence $H^j(\Grass,\Omega^3_{\Grass}) = 0$ for $j\leq 2$.

Finally, for $\Omega^4_{\Grass}$ it never happens that there is an element $w \in W$ of length $\ell(w)$ at most~$1$ such that $w\bullet \lambda \in \overline{\frA}$, and for $\Omega^5_{\Grass}$ none of the six weights that occur is dominant. This concludes the proof of~\ref{hijGrass}.

For \ref{Omega1-1} the argument is the same: in the case of $\Omega^1_{\Grass}(-1)$ and $\Omega^1_{\Grass}(-2)$, the weights that occur are the $\lambda_i^\prime = \lambda_i + [0,0,0,1,1]$, respectively the $\lambda_i^\prime = \lambda_i + [0,0,0,2,2]$. In each case one again finds a Weyl group element~$w$ such that $w \bullet \lambda_i^\prime \in \overline{\frA}$ but $w \bullet \lambda_i^\prime$ is not dominant. To give at least some details, here is what happens for~$\Omega^2_{\Grass}(-2)$. As before, $\pi^*\Omega^2_{\Grass}(-2)$ has a filtration by homogeneous subbundles whose graded quotients are of the form~$\scrL(\lambda)$. In the next table we list all weights $\lambda$ that occur, we calculate $\lambda + \rho$, give an element $w \in W$ such that $w(\lambda+\rho)$ is dominant, and we give $w \bullet \lambda = w(\lambda+\rho) - \rho$.
\[
\begin{array}{ccccc}
\lambda & \lambda +\rho & w & w(\lambda+\rho) & w\bullet \lambda \\[6pt]
{}[-1,-1,0,4,2] & [1,0,0,3,0] & (1\ 2\ 3\ 4) & [3,1,0,0,0] & [1,0,0,1,2] \\
{}[-1,0,-1,4,2] & [1,1,-1,3,0] & (1\ 2\ 3\ 4) & [3,1,1,0,-1] & [1,0,1,1,1]\\
{}[-2,0,0,3,3] & [0,1,0,2,1] & (1\ 4)(3\ 5) & [2,1,1,0,0] & [0,0,1,1,2]\\
{}[-1,-1,0,3,3] & [1,0,0,2,1] & (1\ 2\ 4)(3\ 5) & [2,1,1,0,0] & [0,0,1,1,2]\\
{}[-1,0,-1,3,3] & [1,1,-1,2,1] & (1\ 2\ 3\ 5\ 4) & [2,1,1,1,-1] & [0,0,1,2,1]\\
{}[0,-1,-1,4,2] & [2,0,-1,3,0] & (1\ 2\ 3\ 5\ 4) & [3,2,0,0,-1] & [1,1,0,1,1]\\
{}[0,-2,0,3,3] & [2,-1,0,2,1] & (2\ 5\ 3\ 4) & [2,2,1,0,-2] & [0,1,1,1,1]\\
{}[0,-1,-1,3,3] & [2,0,-1,2,1] & (2\ 4)(3\ 5) & [2,2,1,0,-2] & [0,1,1,1,1]\\
{}[0,0,-2,3,3] & [2,1,-2,2,1] & (2\ 3\ 5\ 4) & [2,2,1,1,-2] & [0,1,1,2,0]\\
{}[-1,-1,0,2,4] & [1,0,0,1,2] & (1\ 2\ 4\ 3\ 5) & [2,1,1,0,0] & [0,0,1,1,2]\\
{}[-1,0,-1,2,4] & [1,1,-1,1,2] & (1\ 2\ 3\ 5) & [2,1,1,1,-1] & [0,0,1,2,1]\\
{}[0,-1,-1,2,4] & [2,0,-1,1,2] & (2\ 4\ 3\ 5) & [2,2,1,0,-1] & [0,0,1,1,1]
\end{array}
\]
Once again we see that in all cases $w\bullet \lambda \in \overline{\frA}$ but $w \bullet \lambda$ is not dominant. The remaining (but very similar) cases of $\Omega^2_{\Grass}(-1)$ and $\Omega^3_{\Grass}(-1)$ are left to the reader.

For \ref{Om2(-3)}, let us again give the weights that occur.
\[
\begin{array}{ccccc}
\lambda & \lambda +\rho & w & w(\lambda+\rho) & w\bullet \lambda \\[6pt]
{}[-1,-1,0,5,3] & [1,0,0,4,1] && [4,1,0,0,1] & [2,0,0,1,3] \\
{}[-1,0,-1,5,3] & [1,1,-1,4,1] && [4,1,1,1,-1] & [2,0,1,2,1]\\
{}[-2,0,0,4,4] & [0,1,0,3,2] && [3,2,1,0,0] & [1,1,1,1,2]\\
{}[-1,-1,0,4,4] & [1,0,0,3,2] && [3,2,1,0,0] & [1,1,1,1,2]\\
{}[-1,0,-1,4,4] & [1,1,-1,3,2] && [3,2,1,1,-1] & [1,1,1,2,1]\\
{}[0,-1,-1,5,3] & [2,0,-1,4,1] & (1\ 2\ 4)(3\ 5) & [4,2,1,0,-1] & [2,1,1,1,1]\\
{}[0,-2,0,4,4] & [2,-1,0,3,2] && [3,2,2,0,-2] & [1,1,2,1,1]\\
{}[0,-1,-1,4,4] & [2,0,-1,3,2] && [3,2,2,0,-2] & [1,1,2,1,1]\\
{}[0,0,-2,4,4] & [2,1,-2,3,2] && [3,2,2,1,-2] & [1,1,2,2,0]\\
{}[-1,-1,0,3,5] & [1,0,0,2,3] && [3,2,1,0,0] & [1,1,1,1,2]\\
{}[-1,0,-1,3,5] & [1,1,-1,2,3] && [3,2,1,1,-1] & [1,1,1,2,1]\\
{}[0,-1,-1,3,5] & [2,0,-1,2,3] && [3,2,2,0,-1] & [1,1,1,1,1]
\end{array}
\]
In most cases we find an element $w \bullet \lambda \in \overline{\frA}$ that is not dominant (in these cases we do not list~$w$). The only exception to this is found on the sixth line; in this case $w$ is an element of length $\ell(w) = 5$, and the line bundle on $G/B_-$ corresponding to $w\bullet \lambda = [2,1,1,1,1] = [1,0,0,0,0]$ has a $5$-dimensional space of global sections.
\end{proof}

\begin{remark}
It is not known to us if the results in this section (and the next) are still valid over fields of characteristic $2$ or~$3$. In principle this could be checked using Macaulay2 or similar computer algebra packages. While we have been able to do some of these verifications, to get complete results the required calculations seem to require more computing power than we have had at our disposal.
\end{remark}

\section{Cohomology of $6$-dimensional Gushel--Mukai varieties}
\label{sec:CohomGM6}

\subsection{}
\label{subsec:XYGrSetup}
Let $k$ be an algebraically closed field of characteristic~$p\geq 5$. We abbreviate $\Grass = \Grass(2,V_5)$, where $V_5$ is a $5$-dimensional $k$-vector space. Consider a $6$-dimensional GM variety~$X$ over~$k$, which we assume is given in the form $X = \CGr(2,V_5) \cap Q$ (cf.\ the comments made after Definition~\ref{def:GMVariety}). Projection from the vertex of the cone over~$\Grass$ gives a morphism $\gamma \colon X \to \Grass$ of degree~$2$. Let $\iota \colon Y \hookrightarrow X$ be the reduced ramification divisor and $\iota^\prime \colon Y^\prime \hookrightarrow \Grass$ the branch divisor, so that we have a commutative diagram
\[
\begin{tikzcd}
Y \ar[r,hook,"\iota"] \ar[d,"\wr"]& X \ar[d,"\gamma"]  \\
Y^\prime \ar[r,hook,"\iota^\prime"] & \Grass
\end{tikzcd}
\]
We shall later often drop the notational distinction between $Y$ and~$Y^\prime$. Of course, $Y \cong Y^\prime$ is a $5$-dimensional GM variety of Mukai type, and $Y^\prime \subset \Grass$ is cut out by a single quadratic equation; see Section~2.1 in~\cite{DK-Kyoto}.

Recall that on $\Grass$ we have a bundle~$\scrO_{\Grass}(1)$, namely the pull-back of the $\scrO(1)$ on $\PP(\wedge^2 V_5)$, or what is the same: the determinant of the universal quotient bundle. On $X$ we also have a line bundle~$\scrO_X(1)$: the unique ample generator of $\Pic(X) \cong \ZZ$. Similarly we have $\scrO_Y(1)$, and $\iota^*\scrO_X(1) = \scrO_Y(1)$.

We have $\gamma^* \scrO_{\Grass}(1) = \scrO_X(1)$. Further, $\gamma_*\scrO_X \cong \scrO_{\Grass} \oplus \scrO_{\Grass}(-1)$, with algebra structure given by a global section of $\scrO_{\Grass}(2)$ that has $Y^\prime$ as zero scheme. So $\scrO_{\Grass}(Y^\prime) = \scrO_{\Grass}(2)$ and $\scrO_X(Y) = \scrO_X(1)$.

\subsection{}
In the proof of the next results we make repeated use of the following exact sequences
\begin{align}
& 0 \tto \Omega^{i-1}_Y(m-2) \tto \Omega^i_{\Grass}(m)|_Y \tto \Omega^i_Y(m) \tto 0 \label{ses:OmYOmGr}\, ,\\
& 0 \tto \Omega^i_{\Grass}(m-2) \tto \Omega^i_{\Grass}(m) \tto \Omega^i_{\Grass}(m)|_Y \tto 0\, , \label{ses:OmGrOmGr}
\end{align}
where we drop the notational distinction between $Y$ and~$Y^\prime$. (We trust this will not lead to confusion.)

\begin{proposition}
Assume $p\geq 5$. All Hodge numbers $h^{i,j}(Y) = h^j(Y,\Omega^i_Y)$ of $Y$ are the same as in characteristic~$0$, i.e., the Hodge diamond of~$Y$ is given by
\[
\begin{smallmatrix}
&&&&& 1 &&&&&\\
&&&& 0 && 0 &&&&\\
&&& 0 && 1 && 0 &&&\\
&& 0 && 0 && 0 && 0 &&\\
& 0 && 0 && 2 && 0 && 0 &\\
0 && 0 && 10 && 10 && 0 && 0\\
& 0 && 0 && 2 && 0 && 0 &\\
&& 0 && 0 && 0 && 0 &&\\
&&& 0 && 1 && 0 &&&\\
&&&& 0 && 0 &&&&\\
&&&&& 1 &&&&&\\
\end{smallmatrix}
\]
\end{proposition}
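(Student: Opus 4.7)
Since $Y$ is smooth projective of dimension~$5$ with $\omega_Y \cong \scrO_Y(-3)$ (Proposition~\ref{prop:PicX}), Serre duality on~$Y$ gives $h^{i,j}(Y) = h^{5-i,5-j}(Y)$, so it suffices to compute $h^{i,j}(Y)$ for $i \in \{0,1,2\}$ and all~$j$. The plan is to exploit the short exact sequences \eqref{ses:OmYOmGr} and \eqref{ses:OmGrOmGr}, which arise from the fact that $Y$ is a smooth quadric section of $\Grass = \Grass(2,V_5)$, and to feed in the Grassmannian cohomology results from Proposition~\ref{prop:CohomGr}.

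The case $i = 0$ is immediate from Lemma~\ref{lem:Hi(OX)}. For $i = 1$, one first applies \eqref{ses:OmGrOmGr} with $m = 0$: the vanishing $H^*(\Grass,\Omega^1_\Grass(-2)) = 0$ from Proposition~\ref{prop:CohomGr}\ref{Omega1-1} yields $H^\bullet(Y,\Omega^1_\Grass|_Y) \cong H^\bullet(\Grass,\Omega^1_\Grass)$, which is known from~\ref{hijGrass}. Then \eqref{ses:OmYOmGr} with $m = 0$ will give $H^\bullet(Y,\Omega^1_Y) \cong H^\bullet(Y,\Omega^1_\Grass|_Y)$, provided $H^\bullet(Y,\scrO_Y(-2)) = 0$; this follows from the sequence $0 \to \scrO_\Grass(-4) \to \scrO_\Grass(-2) \to \scrO_Y(-2) \to 0$ combined with Proposition~\ref{prop:CohomGr}\ref{hiO(m)Grass}.

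For $i = 2$, the same strategy applied one level deeper will yield the result. First \eqref{ses:OmGrOmGr} with $m = 0$ computes $H^\bullet(Y,\Omega^2_\Grass|_Y)$, using \ref{hijGrass} and~\ref{Omega1-1}; then \eqref{ses:OmYOmGr} with $m = 0$ reduces the computation of $H^\bullet(Y,\Omega^2_Y)$ to that of $H^\bullet(Y,\Omega^1_Y(-2))$. This last group will in turn be obtained by applying the two sequences once more with twist $m = -2$: the term $H^\bullet(Y,\Omega^1_\Grass(-2)|_Y)$ vanishes by~\ref{Omega1-1} together with the auxiliary vanishing $H^\bullet(\Grass,\Omega^1_\Grass(-4)) = 0$, which follows from Serre duality on~$\Grass$ and the vanishing of $H^\bullet(\Grass,\scrT_\Grass(-1))$ given in~\ref{HTGr-1-2}; and $H^\bullet(Y,\scrO_Y(-4))$ is extracted from $0 \to \scrO_\Grass(-6) \to \scrO_\Grass(-4) \to \scrO_Y(-4) \to 0$ using~\ref{hiO(m)Grass} and Serre duality on~$\Grass$ (which is what produces the $10$-dimensional contribution in top degree). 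Chasing the resulting long exact sequences then delivers the expected Hodge numbers.

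The main obstacle is purely bookkeeping: at each step one must track carefully which terms in the long exact sequences contribute and which vanish, matching the resulting dimensions against the characteristic-zero Hodge diamond. The hypothesis $p \geq 5$ enters only through Proposition~\ref{prop:CohomGr}, which requires the relevant weights to lie in the basic $p$-alcove of Proposition~\ref{prop:pAlcove}.
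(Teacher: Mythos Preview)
Your proposal is correct and follows essentially the same route as the paper's proof: reduce by Serre duality to $i\le 2$, handle $i=0$ via Lemma~\ref{lem:Hi(OX)}, and for $i=1,2$ chase the sequences \eqref{ses:OmYOmGr} and \eqref{ses:OmGrOmGr} using exactly the Grassmannian inputs from Proposition~\ref{prop:CohomGr} that you cite. The only cosmetic difference is that the paper computes $h^i(Y,\scrO_Y(-4))$ by Serre duality on~$Y$ (reducing to $h^{4-i}(Y,\scrO_Y(1))$) rather than via your restriction sequence $0\to\scrO_\Grass(-6)\to\scrO_\Grass(-4)\to\scrO_Y(-4)\to 0$, but both give the same $10$-dimensional contribution in degree~$5$.
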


\begin{proof}
By Serre duality it suffices to calculate $h^{i,j}(Y)$ for $i\leq 2$. For $i=0$ we can use Lemma~\ref{lem:Hi(OX)}. The short exact sequence
\[
0 \tto \scrO_{\Grass}(-4) \tto \scrO_{\Grass}(-2) \tto \iota^\prime_*\scrO_Y(-2) \tto 0
\] 
together with Proposition~\ref{prop:CohomGr}\ref{hiO(m)Grass} gives $H^*\bigl(Y,\scrO_Y(-2)\bigr) = 0$. Sequence \eqref{ses:OmYOmGr} with $i=1$ and $m=0$ therefore gives $H^i(Y,\Omega^1_Y) \cong H^i(\Grass,\Omega^1_{\Grass}|_Y)$. Using sequence \eqref{ses:OmGrOmGr} with $i=1$ and $m=0$ together with Proposition~\ref{prop:CohomGr}\ref{Omega1-1} we then find that $h^{1,j}(Y) = 1$ if $j=1$, and $=0$ otherwise.

By Serre duality and Proposition~\ref{prop:CohomGr}\ref{HTGr-1-2} we have $H^*\bigl(\Grass,\Omega^1_{\Grass}(-4)\bigr) = 0$. Combined with part~\ref{Omega1-1} of that proposition we obtain $H^*\bigl(Y,\Omega^1_{\Grass}(-2)|_Y \bigr) = 0$. The sequence \eqref{ses:OmYOmGr} with $i=1$ and $m=-2$ then gives
\[
h^i\bigl(Y,\Omega^1_Y(-2)\bigr) = h^{i+1}\bigl(Y,\scrO_Y(-4)\bigr) = h^{4-i}\bigl(Y,\scrO_Y(1)\bigr) = \begin{cases} 10 & \text{for $i=4$,}\\ 0 & \text{otherwise.} \end{cases}
\]
(To see that $10$ is the correct value we can use the Euler characteristic.) Next use sequence \eqref{ses:OmGrOmGr} with $i=2$ and $m=0$. By Proposition~\ref{prop:CohomGr} we get
\[
h^i(Y,\Omega^2_{\Grass}|_Y) = h^i(\Grass,\Omega^2_{\Grass}) = \begin{cases} 2 & \text{for $i=2$,}\\ 0 & \text{otherwise,} \end{cases}
\]
and \eqref{ses:OmYOmGr} with $i=2$ and $m=0$ then gives the Hodge numbers~$h^{2,j}(Y)$.
\end{proof}

\subsection{}
Our next goal is to calculate the Hodge numbers of~$X$, using the exact sequences
\begin{align}
& 0 \tto \gamma^* \Omega^i_{\Grass} \tto \Omega^i_X \tto \iota_*\Omega^{i-1}_Y(-1) \tto 0\, . \label{ses:OmGrOmX}
\end{align}
As $\gamma_*\scrO_X = \scrO_{\Grass} \oplus \scrO_{\Grass}(-1)$, pushing down to~$\Grass$ gives an exact sequence
\begin{align}
& 0 \tto \Omega^i_{\Grass} \oplus \Omega^i_{\Grass}(-1) \tto \gamma_*\Omega^i_X \tto \iota^\prime_*\Omega^{i-1}_Y(-1) \tto 0\, . \label{ses:g*OmGrOmX}
\end{align}

We shall also use Raynaud's version of Kodaira--Akizuki--Nakano vanishing; this says that if $Z/k$ is a smooth projective variety which lifts to~$W_2(k)$ and $L$ is an ample line bundle on~$Z$, then $H^j(Z,\Omega^i_{Z/k}\otimes L^{-1}) = 0$ for $i+j < \min\{p,\dim(Z)\}$. See \cite{DeligneIllusie}, Corollary~2.8.

\begin{proposition}
Assume $p\geq 5$. The Hodge numbers $h^{i,j}(X)$ are the same as in characteristic~$0$, i.e., the Hodge diamond of~$X$ is given by
\[
\begin{smallmatrix}
&&&&&& 1 &&&&&&\\
&&&&& 0 && 0 &&&&&\\
&&&& 0 && 1 && 0&&&&\\
&&& 0 && 0 && 0 && 0&&&\\
&&0 && 0 && 2 && 0 && 0&&\\
& 0 && 0 && 0 && 0 && 0&& 0&\\
0 &&0 && 1 && 22 && 1 && 0&& 0\\
& 0 && 0 && 0 && 0 && 0&& 0&\\
&&0 && 0 && 2 && 0 && 0&&\\
&&& 0 && 0 && 0 && 0&&&\\
&&&& 0 && 1 && 0&&&&\\
&&&&& 0 && 0 &&&&&\\
&&&&&& 1 &&&&&&\\
\end{smallmatrix}
\]
\end{proposition}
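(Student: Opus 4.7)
The plan is to follow the same strategy as for $Y$, now using the exact sequences \eqref{ses:OmGrOmX} and \eqref{ses:g*OmGrOmX}. Since $\gamma$ is finite, $H^j(X,\Omega^i_X) \cong H^j(\Grass, \gamma_*\Omega^i_X)$, so the long exact sequence associated with \eqref{ses:g*OmGrOmX} expresses $H^j(X,\Omega^i_X)$ in terms of $H^j(\Grass, \Omega^i_{\Grass})$, $H^j(\Grass, \Omega^i_{\Grass}(-1))$, and $H^j(Y, \Omega^{i-1}_Y(-1))$. By Serre duality on $X$ (with $\omega_X = \scrO_X(-4)$ from Proposition~\ref{prop:PicX}), it suffices to establish the claim for $i \le 3$, and the case $i = 0$ is Lemma~\ref{lem:Hi(OX)}.

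For $i = 1$, Proposition~\ref{prop:CohomGr}\ref{Omega1-1} kills $H^*(\Omega^1_{\Grass}(-1))$, while the restriction sequence $0 \to \scrO_{\Grass}(-3) \to \scrO_{\Grass}(-1) \to \iota^\prime_*\scrO_Y(-1) \to 0$ combined with Proposition~\ref{prop:CohomGr}\ref{hiO(m)Grass} gives $H^*(Y, \scrO_Y(-1)) = 0$. Hence $H^j(X, \Omega^1_X) \cong H^j(\Grass, \Omega^1_{\Grass})$, producing $h^{1,1}(X)=1$ and other $h^{1,j}(X)=0$. For $i = 2$, Proposition~\ref{prop:CohomGr}\ref{Omega1-1} again eliminates the $\Omega^2_{\Grass}(-1)$ summand. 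The group $H^*(Y, \Omega^1_Y(-1))$ is computed using \eqref{ses:OmGrOmGr} with $i = 1$, $m = -1$: the vanishing $H^*(\Omega^1_{\Grass}(-1)) = 0$ together with $H^*(\Omega^1_{\Grass}(-3)) = 0$ (via Serre duality on $\Grass$ and Proposition~\ref{prop:CohomGr}\ref{HTGr-1-2}) gives $H^*(Y, \Omega^1_{\Grass}(-1)|_Y) = 0$; then \eqref{ses:OmYOmGr} yields $H^j(Y, \Omega^1_Y(-1)) \cong H^{j+1}(Y, \scrO_Y(-3))$, which by Serre duality on $Y$ (noting $\omega_Y = \scrO_Y(-3)$) is one-dimensional concentrated in degree~$5$. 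This produces $h^{2,2}(X) = 2$, $h^{2,4}(X) = 1$, and other $h^{2,j}(X) = 0$.

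The most involved computation is $i = 3$, and this is the main obstacle. The essential input is $H^*(Y, \Omega^2_Y(-1))$. First, combining \eqref{ses:OmGrOmGr} with $i = 2$, $m = -1$ with Proposition~\ref{prop:CohomGr}\ref{Omega1-1} and \ref{Om2(-3)} gives $h^4(Y, \Omega^2_{\Grass}(-1)|_Y) = 5$ with other cohomologies zero. Next, \eqref{ses:OmYOmGr} with $i = 2$, $m = -1$ reduces $H^*(Y, \Omega^2_Y(-1))$ to $H^*(Y, \Omega^1_Y(-3))$, which is in turn computed via one further application of \eqref{ses:OmYOmGr} (with $i = 1$, $m = -3$) together with standard computations of $H^*(\Grass, \scrO_{\Grass}(-5))$, $H^*(\Grass, \scrO_{\Grass}(-7))$, $H^*(\Grass, \Omega^1_{\Grass}(-3))$, and $H^*(\Grass, \Omega^1_{\Grass}(-5))$, all accessible via Proposition~\ref{prop:CohomGr} and Serre duality on $\Grass$. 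Careful bookkeeping of the resulting long exact sequences and their connecting homomorphisms yields the remaining Hodge numbers, with the key entry being $h^{3,3}(X) = 22$. The hypothesis $p \ge 5$ enters only through the inputs from Proposition~\ref{prop:CohomGr}, and as a sanity check one can use the lift from Lemma~\ref{lem:LiftingX} to verify that the alternating sums $\chi(X,\Omega^i_X)$ match the corresponding characteristic-zero values.
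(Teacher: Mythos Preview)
Your approach for $i \le 2$ is correct and in fact slightly more complete than the paper's: where the paper invokes Raynaud's vanishing (Deligne--Illusie) to get $H^j(Y,\Omega^{i-1}_Y(-1)) = 0$ for small $i+j$, you compute these groups directly via the restriction sequences on~$\Grass$, and you obtain $h^{2,j}(X)$ for every~$j$. Likewise, your chain of sequences for $i=3$ correctly yields $H^j(Y,\Omega^2_Y(-1)) = 0$ for $j\le 2$, hence $h^{3,j}(X) = 0$ for $j\le 2$ and, by Serre duality on~$X$, for $j\ge 4$.

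The gap is in the step where you claim that ``careful bookkeeping of the connecting homomorphisms'' produces $h^{3,3}(X) = 22$. Following your sequences, one finds $h^{3,3}(X) = 2 + h^3\bigl(Y,\Omega^2_Y(-1)\bigr)$, and the latter sits in
\[
0 \tto H^3\bigl(Y,\Omega^2_Y(-1)\bigr) \tto H^4\bigl(Y,\Omega^1_Y(-3)\bigr) \tto H^4\bigl(Y,\Omega^2_{\Grass}(-1)|_Y\bigr)\, ,
\]
with the last term $5$-dimensional. To compute $h^4\bigl(Y,\Omega^1_Y(-3)\bigr)$ you next run into
\[
0 \tto H^4\bigl(Y,\Omega^1_Y(-3)\bigr) \tto H^5\bigl(Y,\scrO_Y(-5)\bigr) \tto H^5\bigl(Y,\Omega^1_{\Grass}(-3)|_Y\bigr) \tto H^5\bigl(Y,\Omega^1_Y(-3)\bigr) \tto 0
\]
of dimensions $?\to 49 \to 24 \to {?} \to 0$. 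By Serre duality on~$Y$ one has $H^5\bigl(Y,\Omega^1_Y(-3)\bigr) \cong H^0(Y,\scrT_Y)^\vee$, so resolving this sequence is precisely Proposition~\ref{prop:CohomTY}, which is proved only later in the paper and requires substantial (computer-assisted) work. Even granting $H^0(Y,\scrT_Y)=0$, you would still need the map $25 \to 5$ in the first display to be surjective, which is not automatic. So this ``bookkeeping'' is not routine and cannot be completed with the tools in hand.

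The fix is exactly the argument you relegate to a sanity check: once $h^{3,j}(X) = 0$ for $j \ne 3$ is established, the Euler characteristic $\chi(X,\Omega^3_X)$---which agrees with its characteristic-zero value via the lift of Lemma~\ref{lem:LiftingX}---forces $h^{3,3}(X) = 22$. This is precisely what the paper does, and it bypasses the connecting-map issue entirely. You should promote that step from a check to the actual proof of the middle entry.
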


\begin{proof}
For $i=0$ we use Lemma~\ref{lem:Hi(OX)}. For $i=1$, use sequence~\eqref{ses:g*OmGrOmX} with $i=1$. We have $H^*\bigl(Y,\scrO_Y(-1)\bigr) = 0$ because $H^*\bigl(\Grass,\scrO_{\Grass}(-1)\bigr) = H^*\bigl(\Grass,\scrO_{\Grass}(-3)\bigr) = 0$ by Proposition~\ref{prop:CohomGr}\ref{hiO(m)Grass}. Further, $H^*\bigl(\Grass,\Omega^1_{\Grass}(-1)\bigr) = 0$ by part~\ref{Omega1-1} of that proposition. Hence
\[
h^{1,j}(X) = h^{1,j}(\Grass) = \begin{cases} 1 & \text{for $j=1$,}\\ 0 & \text{otherwise,} \end{cases}
\]

Next consider $\Omega^2_X$. Take sequence \eqref{ses:g*OmGrOmX} with $i=2$. If $j\leq 3$ Raynaud's vanishing theorem gives $H^j\bigl(Y,\Omega^1_Y(-1)\bigr) = 0$, and because $H^*\bigl(\Grass,\Omega^2_{\Grass}(-1)\bigr) = 0$ (see Prop.~\ref{prop:CohomGr}\ref{Omega1-1}) we get $h^{2,j}(X) = 0$ for $j=0,1,3$ and $h^{2,2}(X) = h^{2,2}(\Grass) = 2$.

For $j \leq 2$ we find, using sequence \eqref{ses:g*OmGrOmX} with $i=3$, that $H^j(X,\Omega^3_X) = 0$ because we have $H^j(\Grass,\Omega^3_{\Grass}) = H^j\bigl(\Grass,\Omega^3_{\Grass}(-1)\bigr) = 0$ by Proposition~\ref{prop:CohomGr}, and $H^j\bigl(Y,\Omega^2_Y(-1)\bigr) = 0$ by Raynaud's vanishing theorem.

The above cases, together with Serre duality, give all Hodge numbers except $h^{3,3}(X)$; it then follows that $h^{3,3}(X) = 22$ because the Euler characteristic of~$\Omega^3_X$ is the same as in characteristic~$0$.
\end{proof}

\begin{corollary}
\label{cor:dRCohGM}
Let $S$ be a scheme in which $6$ is invertible. Let $f\colon X \to S$ be a Gushel--Mukai variety of dimension~$6$.
\begin{enumerate}
\item All sheaves $R^jf_*\Omega^i_{X/S}$ are locally free, and their formation commutes with base change.
\item The relative de Rham cohomology sheaves $H^m_{\dR}(X/S) = R^mf_* \Omega^\bullet_{X/S}$ on $S$ are locally free, and their formation commutes with base change.
\item The Hodge--de Rham spectral sequence
\[
E_1^{i,j} =  R^jf_*\Omega^i_{X/S} \quad \Rightarrow\quad H^{i+j}_{\dR}(X/S)
\]
degenerates at the $E_1$-page.
\end{enumerate}
\end{corollary}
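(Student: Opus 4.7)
The plan is to deduce (1) from the constancy of Hodge numbers proved above, to establish fiberwise $E_1$-degeneration (whence (3)) via a lifting-and-crystalline argument, and then to extract (2) from (1) and (3).

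For (1): since $6$ is invertible on~$S$, every geometric fiber of~$f$ has residue characteristic~$0$ or~$\geq 5$, so the Hodge computations of the preceding propositions apply and show that the function $\bar s\mapsto \dim_{\kappa(\bar s)} H^j\bigl(X_{\bar s}, \Omega^i_{X_{\bar s}}\bigr)$ takes the constant value $h^{i,j}$. Since $f$ is proper and flat (smoothness implies flatness) and $\Omega^i_{X/S}$ is locally free, Grothendieck's cohomology-and-base-change theorem (EGA III, 7.9.10) yields (1) at once.

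For the degeneration in (3), it is enough to prove fiberwise $E_1$-degeneration at every geometric point~$\bar s$: once that is known, each differential $d_r$ (for $r\geq 1$) is, by induction on~$r$, a morphism between locally free sheaves on~$S$ that vanishes on every fiber, hence vanishes identically, so $E_{r+1}=E_r$ globally. In residue characteristic~$0$ this is classical Hodge theory. Assume $\charact \kappa(\bar s) = p\geq 5$, write $\kbar = \kappa(\bar s)$, $W = W(\kbar)$, $K = \mathrm{Frac}(W)$, and use Lemma~\ref{lem:LiftingX} to lift $X_{\bar s}$ to a smooth proper GM sixfold $\scrX\to\Spec W$. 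The generic fiber $\scrX_K$ satisfies the classical Hodge--de Rham degeneration, so $\dim_K H^m_{\dR}(\scrX_K/K) = \sum_{i+j=m} h^{i,j}$. By Berthelot's comparison and flat base change, the finitely generated $W$-module $H^m_{\crys}(X_{\bar s}/W)\cong H^m_{\dR}(\scrX/W)$ has rank $\sum h^{i,j}$; the base-change formula for crystalline cohomology, together with the identification $H^m_{\crys}(X_{\bar s}/\kbar) = H^m_{\dR}(X_{\bar s}/\kbar)$, then gives
\[
\dim_{\kbar} H^m_{\dR}(X_{\bar s}/\kbar) \;\geq\; \rk_W H^m_{\crys}(X_{\bar s}/W) \;=\; \sum_{i+j=m} h^{i,j}\, .
\]
Combined with the reverse inequality supplied by the Hodge--de Rham spectral sequence itself, equality holds, which is exactly fiberwise $E_1$-degeneration. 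Part (2) is then immediate: the Hodge filtration on $H^m_{\dR}(X/S)$ has locally free graded pieces $R^jf_*\Omega^i_{X/S}$, and a finite filtration by locally free base-change-compatible sheaves is itself locally free with formation commuting with base change.

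The main obstacle is the characteristic $p=5$ case of the fiberwise degeneration: since $\dim X = 6\not<5$, Deligne--Illusie's $E_1$-degeneration criterion does not apply, and the crystalline--de Rham comparison together with the lift supplied by Lemma~\ref{lem:LiftingX} must carry the entire burden of producing the needed lower bound on $\dim H^m_{\dR}(X_{\bar s}/\kbar)$.
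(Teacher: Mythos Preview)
Your argument is essentially correct and is the natural one (the paper itself states the corollary without proof, treating it as a routine consequence of the Hodge number computation together with the lifting lemma). In particular, the crystalline/lifting argument for fiberwise degeneration is exactly what is needed to handle $p=5$, where $\dim X = 6 > p$ and Deligne--Illusie does not apply directly.

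There is, however, a small gap in the passage from fiberwise to global $E_1$-degeneration. Your claim that a morphism between locally free $\scrO_S$-modules which vanishes on every fibre must vanish identically is false when $S$ is non-reduced: over $S = \Spec\bigl(k[\epsilon]/(\epsilon^2)\bigr)$, multiplication by~$\epsilon$ on~$\scrO_S$ is nonzero but restricts to zero on the unique fibre. So your inductive argument that $d_r = 0$ globally does not go through as written for arbitrary~$S$.

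A clean fix is to reorder slightly. Once fiberwise degeneration is established, you know that $s \mapsto \dim_{\kappa(s)} H^m\bigl(X_s,\Omega^{\geq i}_{X_s}\bigr)$ is constant (equal to $\sum_{j\geq i} h^{j,m-j}$) for \emph{every}~$i$, not just $i=0$. Since $\Omega^{\geq i}_{X/S}$ is a bounded complex of vector bundles and $f$ is smooth proper, cohomology and base change then gives that each $R^m f_* \Omega^{\geq i}_{X/S}$ is locally free and commutes with base change; in particular this already gives~(2) (take $i=0$). Now the maps $R^m f_* \Omega^{\geq i}_{X/S} \to R^{m-i} f_* \Omega^i_{X/S}$ from the triangle $\Omega^{\geq i+1} \to \Omega^{\geq i} \to \Omega^i[-i]$ are maps between locally free sheaves that are surjective on every fibre, hence surjective by Nakayama applied locally on~$S$. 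This forces the connecting maps in the long exact sequences to vanish, which is exactly $E_1$-degeneration over~$S$, giving~(3). Your deduction of~(2) from~(3) then becomes unnecessary.
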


\subsection{}
\label{subsec:vfieldxiA}
Next we want to prove some results about the cohomology of the tangent bundle of~$X$. As in the above, we start from results on the Grassmannian, then obtain results about the Gushel--Mukai fivefold~$Y$, and then deduce results about~$X$.

The group $\PGL(V_5)$ acts on $\Grass = \Grass(2,V_5)$. The isomorphism $\frpgl(V_5) = \End(V_5)/k\cdot \id \isomarrow H^0(\Grass,\scrT_{\Grass})$ of Proposition~\ref{prop:CohomGr}\ref{HTGr} is obtained by taking derivatives. For $A \in \End(V_5)$, let $\xi_A$ be the global vector field on~$\Grass$ given by the class of~$A$.

For the following results, the notation is as in Section~\ref{subsec:XYGrSetup}.

\begin{lemma}
\label{lem:CohomOY}
We have $h^0\bigl(Y,\scrO_Y(1)\bigr) = 10$ and $h^i\bigl(Y,\scrO_Y(1)\bigr) = 0$ for all $i\geq 1$. Similarly, $h^0\bigl(Y,\scrO_Y(2)\bigr) = 49$ and $h^i\bigl(Y,\scrO_Y(2)\bigr) = 0$ for all $i\geq 1$.
\end{lemma}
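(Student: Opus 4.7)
The plan is to push the computation onto the Grassmannian via the structure sequence
\[
0 \tto \scrO_{\Grass}(m-2) \tto \scrO_{\Grass}(m) \tto \scrO_Y(m) \tto 0\, ,
\]
which is available because $\scrO_{\Grass}(Y) = \scrO_{\Grass}(2)$ by Section~\ref{subsec:XYGrSetup}. I would apply it for $m=1$ and $m=2$ and simply read off the cohomology of $\scrO_Y(m)$ from the associated long exact sequence, using only the facts on~$\Grass$ already collected in Proposition~\ref{prop:CohomGr}.

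For $m=1$, Proposition~\ref{prop:CohomGr}\ref{hiO(m)Grass} gives $H^*(\Grass,\scrO_{\Grass}(-1)) = 0$, so the connecting map is zero and $H^i(Y,\scrO_Y(1)) \cong H^i(\Grass,\scrO_{\Grass}(1))$ for every~$i$. Kempf's vanishing (Proposition~\ref{prop:CohomGr}\ref{Kempf}) then kills these for $i\geq 1$, while the Plücker embedding identifies $H^0(\Grass,\scrO_{\Grass}(1))$ with $\wedge^2 V_5$, giving the desired dimension~$10$.

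For $m=2$, using $h^0(\Grass,\scrO_{\Grass}) = 1$ and $h^i(\Grass,\scrO_{\Grass}) = 0$ for $i\geq 1$, together with Kempf vanishing for $\scrO_{\Grass}(2)$, the long exact sequence at once yields $H^i(Y,\scrO_Y(2)) = 0$ for $i\geq 1$ as well as
\[
h^0(Y,\scrO_Y(2)) = h^0(\Grass,\scrO_{\Grass}(2)) - 1\, .
\]
The only non-formal step is therefore the identification $h^0(\Grass,\scrO_{\Grass}(2)) = 50$. By Kempf's theorem, $H^0(\Grass,\scrO_{\Grass}(2))$ is the costandard $\SL_5$-module of highest weight $[2,2,0,0,0]$, whose dimension is given by the Weyl dimension formula independently of the characteristic. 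Equivalently, $\Grass(2,V_5) \subset \PP(\wedge^2 V_5)$ is cut out by a $5$-dimensional space of Plücker quadrics, so $h^0(\Grass,\scrO_{\Grass}(2)) = \binom{11}{2} - 5 = 50$. No serious obstacle is expected: the entire lemma is a routine consequence of Proposition~\ref{prop:CohomGr}, and the assumption $p\geq 5$ enters only through that proposition.
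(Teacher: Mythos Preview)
Your proof is correct and is essentially the same as the paper's: both use the short exact sequence $0\to\scrO_{\Grass}(m-2)\to\scrO_{\Grass}(m)\to\scrO_Y(m)\to 0$ together with Proposition~\ref{prop:CohomGr}\ref{hiO(m)Grass} and the fact that $h^0(\Grass,\scrO_{\Grass}(2))=50$. One small slip: Kempf's vanishing is Proposition~\ref{prop:pAlcove}\ref{Kempf}, not part of Proposition~\ref{prop:CohomGr}; and note that in the paper's conventions the weight of~$\scrO_{\Grass}(2)$ is $[2,2,2,0,0]$ rather than $[2,2,0,0,0]$ (the two are dual under~$\SL_5$, so the dimension count is unaffected).
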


\begin{proof}
For the assertions about~$\scrO_Y(2)$, use the short exact sequence $0 \tto \scrO_{\Grass} \tto \scrO_{\Grass}(2) \tto \scrO_Y(2) \tto 0$ plus Proposition~\ref{prop:CohomGr}\ref{hiO(m)Grass}, and note that $h^0\bigl(\Grass,\scrO_{\Grass}(2)\bigr) = 50$. The same method works for~$\scrO_Y(1)$.
\end{proof}

\begin{proposition}
\label{prop:CohomTY}
Assume the base field~$k$ has characteristic $p \geq 5$ (or $\charact(k) =0$). Then $h^1(Y,\scrT_Y) = 25$ and $H^i(Y,\scrT_Y) = 0$ for all $i \neq 1$.
\end{proposition}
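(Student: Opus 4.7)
The plan is to combine the conormal sequence of the embedding $Y \hookrightarrow \Grass$ with a restriction sequence on~$\Grass$, reducing the proposition to an injectivity statement for a natural map out of~$\frpgl(V_5)$; this final injectivity will be the hard part.

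Since $Y$ is cut out in~$\Grass$ by a single section $q \in H^0(\Grass,\scrO_{\Grass}(2))$, we have $\scrN_{Y/\Grass} \cong \scrO_Y(2)$ together with two short exact sequences
\[
0 \to \scrT_Y \to \scrT_{\Grass}|_Y \to \scrO_Y(2) \to 0, \qquad 0 \to \scrT_{\Grass}(-2) \to \scrT_{\Grass} \to \scrT_{\Grass}|_Y \to 0,
\]
the second obtained by tensoring the ideal sequence $0 \to \scrO_{\Grass}(-2) \to \scrO_{\Grass} \to \scrO_Y \to 0$ of $Y \subset \Grass$ with $\scrT_{\Grass}$. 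Parts~\ref{HTGr} and~\ref{HTGr-1-2} of Proposition~\ref{prop:CohomGr} give $H^*(\Grass,\scrT_{\Grass}(-2)) = 0$, $H^0(\Grass,\scrT_{\Grass}) \cong M_5(k)/k\cdot\id \cong \frpgl(V_5)$, and $H^{\geq 1}(\Grass,\scrT_{\Grass}) = 0$; hence the second sequence yields $H^0(Y,\scrT_{\Grass}|_Y) \cong \frpgl(V_5)$ (of dimension~$24$) and $H^i(Y,\scrT_{\Grass}|_Y) = 0$ for $i \geq 1$. Combined with Lemma~\ref{lem:CohomOY}, the long exact sequence associated to the first of the two sequences above becomes
\[
0 \to H^0(Y,\scrT_Y) \to \frpgl(V_5) \xrightarrow{\ \varphi\ } H^0(Y,\scrO_Y(2)) \to H^1(Y,\scrT_Y) \to 0,
\]
with $H^i(Y,\scrT_Y) = 0$ for $i \geq 2$. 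Since $h^0(Y,\scrO_Y(2)) = 49$, the proposition will follow once $\varphi$ is shown to be injective: this then forces $h^0(Y,\scrT_Y) = 0$ and $h^1(Y,\scrT_Y) = 49 - 24 = 25$.

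The map~$\varphi$ admits the following explicit description. Under the isomorphism of Proposition~\ref{prop:CohomGr}\ref{HTGr}, an element $A \in \frpgl(V_5)$ corresponds to the global vector field $\xi_A$ on $\Grass$; since the surjection $\scrT_{\Grass}|_Y \twoheadrightarrow \scrO_Y(2)$ in the conormal sequence is contraction with $dq$, one has
\[
\varphi(A) \;=\; \xi_A(q)|_Y \;=\; (A\cdot q)|_Y,
\]
where $A\cdot q$ denotes the natural infinitesimal action of~$A$ on the irreducible $\SL(V_5)$-representation $H^0(\Grass,\scrO_{\Grass}(2))$. Since the kernel of the restriction $H^0(\Grass,\scrO_{\Grass}(2)) \twoheadrightarrow H^0(Y,\scrO_Y(2))$ is exactly the line $k\cdot q$, this gives
\[
\ker(\varphi) \;=\; \bigl\{A \in \frpgl(V_5) \,:\, A\cdot q \in k\cdot q\bigr\} \;=\; \mathrm{Lie}\bigl(\mathrm{Stab}_{\PGL(V_5)}([q])\bigr),
\]
the Lie algebra of the projective stabiliser of the line $[q] \in \PP\bigl(H^0(\Grass,\scrO_{\Grass}(2))\bigr)$.

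The hard part will be showing that this stabiliser Lie algebra vanishes for every smooth~$Y$. In characteristic zero this reduces to the well-known finiteness of $\Aut(Y)$ for a smooth GM fivefold (see \cite{DK-GMClassif}), but in characteristic $p$ the stabiliser group scheme could \emph{a priori} be non-reduced, so one must argue at the Lie algebra level. My plan is as follows. Suppose $0 \neq A \in \ker(\varphi)$, so $A\cdot q = \lambda q$ for some $\lambda \in k$, and let $A = A_s + A_n$ be its Jordan decomposition in $\frpgl(V_5)$, well-behaved since $p \geq 5$. Compatibility of Jordan decompositions with the $\frpgl(V_5)$-action on the irreducible representation $H^0(\Grass,\scrO_{\Grass}(2))$ shows that $q$ lies in the $\lambda$-eigenspace of $A_s$ and is annihilated by $A_n$, so $A_s$ and $A_n$ both lie in $\ker(\varphi)$ separately. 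Diagonalising $A_s$ on~$V_5$ decomposes $H^0(\Grass,\scrO_{\Grass}(2))$ into weight spaces indexed by pairs of pairs from the eigenbasis; the condition $A_s\cdot q = \lambda q$ then forces $q$ to involve only those Pl\"ucker monomials of a fixed total weight, whence many torus-fixed points of~$\Grass$ are forced to lie on~$Y$, and a direct tangent-space computation at one such fixed point should exhibit a singularity of~$Y$, contradicting smoothness. The case $A_n \neq 0$ is handled analogously via the fixed locus of the corresponding $\GG_a$-subgroup acting on~$Y$. The main practical difficulty will be making this case analysis uniform in the Jordan type of~$A$.
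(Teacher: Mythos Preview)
Your reduction to the injectivity of $\varphi$ is exactly the paper's Section~\ref{ssec:VFieldsGr}: same two short exact sequences, same appeal to Proposition~\ref{prop:CohomGr} and Lemma~\ref{lem:CohomOY}, and the same identification of $\ker(\varphi)$ with the Lie algebra stabiliser of~$[q]$. Your Jordan-decomposition step is likewise the paper's Lemma~\ref{lem:DiagA}. So the setup matches perfectly.

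The gap is in the plan for the diagonal case. You write that, once $q$ is confined to a single weight space, ``a direct tangent-space computation at one such fixed point should exhibit a singularity of~$Y$''. The paper tries exactly this (Proposition~\ref{prop:MQProps}\ref{SingPts}), but what it yields is only a \emph{constraint}: if the monomial~$x_{ij}^2$ is absent from~$q$ then some monomial of the form~$x_{ij}x_{il}$ or~$x_{ij}x_{jl}$ must be present. This does not by itself force a singularity. To close the argument the paper needs two further ingredients you do not mention: first, the observation (Proposition~\ref{prop:NoLiftAQ}) that any solution~$(A,Q)$ to $A\circ Q=0$ with $Y$ smooth \emph{cannot} lift to characteristic~$0$, which is then converted into a rank condition on an integer matrix encoding the weight constraints; second, an Euler-characteristic count (Proposition~\ref{prop:MQProps}\ref{Eigenval}) ruling out the case where all eigenvalues of~$A_s$ are distinct. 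Even with these in hand, the paper resorts to a Magma enumeration of the remaining finitely many weight configurations and, for a handful of surviving families at $p=5,7$, explicitly exhibits a singular point of the generic member. The authors state plainly that ``it would be desirable to find a proof by pure thought'', so your hope for a uniform case-free argument is one they share but could not realise.

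Your treatment of the nilpotent part is also thinner than the paper's. Rather than analysing $\GG_a$-fixed loci, the paper disposes of nilpotent~$A$ (inside the proof of Lemma~\ref{lem:DiagA}) by checking directly that for every nilpotent Jordan type the equation $A\circ Q=0$ has only solutions that lift to characteristic~$0$, which then contradicts Proposition~\ref{prop:NoLiftAQ}. So again the lifting obstruction is doing essential work that your sketch omits.
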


The proof of this result will be given in the next section.

\begin{remark}
\label{rem:p=5}
We do not know if the results in this section are still valid in characteristic $2$ and~$3$. As we shall explain, the only remaining obstruction is the verification that certain explicitly given varieties are singular. In principle this can be done using computer algebra, but the running time of these calculations exceeds our patience. This proposition is the only missing ingredient for the proof of Theorem~\ref{thm:TCGMcharp} in characteristic~$<5$.
\end{remark}

\begin{corollary}
With $Y/k$ as in \emph{Proposition~\ref{prop:CohomTY}}, the automorphism group scheme $\Aut_Y$ is finite \'etale.
\end{corollary}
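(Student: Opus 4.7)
The plan is to combine standard representability for $\Aut_Y$ with the vanishing $H^0(Y,\scrT_Y) = 0$ provided by Proposition~\ref{prop:CohomTY}. First, I would realise $\Aut_Y$ as an open subscheme of the Hilbert scheme of $Y \times Y$ parametrising graphs of automorphisms; this presents $\Aut_Y$ as a group scheme locally of finite type over~$k$. To upgrade this to finite type, I would exploit that $\Pic(Y) \cong \ZZ$ with ample generator $\scrO_Y(1)$ (Proposition~\ref{prop:PicX}): every automorphism of~$Y$ necessarily preserves $\scrO_Y(1)$, so the graphs of automorphisms all have the same Hilbert polynomial and $\Aut_Y$ is quasi-compact.

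Next, I would identify the Lie algebra of~$\Aut_Y$, namely the Zariski tangent space at the identity, with the space of global vector fields $H^0(Y,\scrT_Y)$. By Proposition~\ref{prop:CohomTY} this vanishes, so $\frm_e/\frm_e^2 = 0$ in the Noetherian local ring $\scrO_{\Aut_Y,e}$; Nakayama then gives $\scrO_{\Aut_Y,e} = k$. In other words, the identity is an isolated reduced point of~$\Aut_Y$, and hence by left translation the same holds at every $\kbar$-point of $(\Aut_Y)_{\kbar}$. Thus $(\Aut_Y)_{\kbar}$ is a disjoint union of copies of $\Spec \kbar$; combined with quasi-compactness, this is a finite disjoint union, so $\Aut_Y$ is finite étale over~$k$.

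There is no real obstacle: the entire content has been shifted into the cohomological input of Proposition~\ref{prop:CohomTY}, and both the representability and the boundedness step are standard. The only point worth emphasising is that the boundedness argument collapses immediately because $\Pic(Y) = \ZZ$ forces the polarisation to be fixed by every automorphism, so no auxiliary argument is needed to cut $\Aut_Y$ down to a quasi-compact piece of the Hilbert scheme.
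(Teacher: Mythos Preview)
Your argument is correct and is exactly the standard deduction the paper has in mind; the paper states the corollary without proof, treating it as an immediate consequence of $H^0(Y,\scrT_Y)=0$, and you have spelled out the details (representability via the Hilbert scheme, boundedness via $\Pic(Y)\cong\ZZ$, and the identification of the Lie algebra with global vector fields).
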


\begin{proposition}
\label{prop:CohomTX}
Let $X$ be a Gushel--Mukai variety of dimension~$6$ over a field~$k$ of characteristic $p \geq 5$ (or $p=0$). Then $h^1(X,\scrT_X) = 25$ and $H^0(X,\scrT_X) = 0$ for all $i \neq 1$.
\end{proposition}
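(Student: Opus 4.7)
The plan is to reduce the computation to cohomology of homogeneous bundles on~$\Grass$ by exploiting the double cover $\gamma\colon X\to\Grass$ branched along~$Y$. First I would dualize sequence~\eqref{ses:OmGrOmX} with $i=1$. Applying $\ShHom(-,\scrO_X)$, using the resolution $0\to\scrO_X(-2)\xrightarrow{\;s\;}\scrO_X(-1)\to\iota_*\scrO_Y(-1)\to 0$ (where $s$ is the section cutting out~$Y$) to compute $\scrE xt^1\bigl(\iota_*\scrO_Y(-1),\scrO_X\bigr)\cong \iota_*\scrO_Y(2)$, and invoking $\scrE xt^1(\Omega^1_X,\scrO_X)=0$, I obtain
\[
0\;\longrightarrow\;\scrT_X\;\longrightarrow\;\gamma^*\scrT_\Grass\;\longrightarrow\;\iota_*\scrO_Y(2)\;\longrightarrow\;0.
\]
Pushing down along the finite morphism~$\gamma$, using the projection formula together with $\gamma_*\scrO_X=\scrO_\Grass\oplus\scrO_\Grass(-1)$ and the identification $\gamma|_Y\colon Y\isomarrow Y^\prime$, yields
\[
0\;\longrightarrow\;\gamma_*\scrT_X\;\longrightarrow\;\scrT_\Grass\,\oplus\,\scrT_\Grass(-1)\;\longrightarrow\;\iota^\prime_*\scrO_Y(2)\;\longrightarrow\;0.
\]

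Taking cohomology on~$\Grass$ and feeding in Proposition~\ref{prop:CohomGr}\ref{HTGr}--\ref{HTGr-1-2} together with Lemma~\ref{lem:CohomOY} will at once give $H^i(X,\scrT_X)=0$ for all $i\geq 2$, and reduce the rest of the statement to the exactness of
\[
0\;\longrightarrow\;H^0(X,\scrT_X)\;\longrightarrow\;\frpgl(V_5)\;\xrightarrow{\;\delta\;}\;H^0\bigl(Y,\scrO_Y(2)\bigr)\;\longrightarrow\;H^1(X,\scrT_X)\;\longrightarrow\;0,
\]
where $\dim\frpgl(V_5)=24$ and $\dim H^0\bigl(Y,\scrO_Y(2)\bigr)=49$. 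So everything comes down to showing that~$\delta$ is injective, which simultaneously forces $h^1(X,\scrT_X)=49-24=25$.

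To prove injectivity of~$\delta$, the key point is that it factors as
\[
\frpgl(V_5)=H^0(\Grass,\scrT_\Grass)\;\xrightarrow{\;\text{restrict}\;}\;H^0(Y,\scrT_\Grass|_Y)\;\longrightarrow\;H^0\bigl(Y,\scrO_Y(2)\bigr),
\]
with the second map induced by the normal bundle sequence $0\to\scrT_Y\to\scrT_\Grass|_Y\to\scrO_Y(2)\to 0$; concretely both descriptions send $A\in\frpgl(V_5)$ to $\xi_A\cdot f|_Y$, where $f\in H^0\bigl(\Grass,\scrO_\Grass(2)\bigr)$ is the defining equation of~$Y^\prime\subset\Grass$. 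The first arrow is an isomorphism, by the exact sequence $0\to\scrT_\Grass(-2)\to\scrT_\Grass\to\scrT_\Grass|_Y\to 0$ combined with the vanishing of $H^0\bigl(\Grass,\scrT_\Grass(-2)\bigr)$ and $H^1\bigl(\Grass,\scrT_\Grass(-2)\bigr)$ from Proposition~\ref{prop:CohomGr}\ref{HTGr-1-2}; and the kernel of the second arrow is exactly $H^0(Y,\scrT_Y)$, which vanishes by Proposition~\ref{prop:CohomTY}. The real obstacle has therefore been absorbed into Proposition~\ref{prop:CohomTY}: once that input on the GM fivefold is available, the present proposition drops out from formal sheaf-theoretic manipulations on the double cover, and no further brute-force computation is needed.
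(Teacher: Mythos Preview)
Your proof is correct and follows essentially the same approach as the paper: the paper also uses the exact sequence $0\to\scrT_X\to\gamma^*\scrT_\Grass\to\iota_*\scrO_Y(2)\to 0$, pushes it down to~$\Grass$, and reduces to the injectivity of the map you call~$\delta$ (there called~$\beta$) via the same factorization through $H^0(Y,\scrT_\Grass|_Y)$ together with Proposition~\ref{prop:CohomGr}\ref{HTGr-1-2} and Proposition~\ref{prop:CohomTY}. The only difference is cosmetic: the paper states the tangent-bundle sequence directly, whereas you derive it by dualizing~\eqref{ses:OmGrOmX} and computing the relevant~$\scrE xt^1$.
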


\begin{proof}
In characteristic~$0$ the assertion can be found in \cite{KuzPerry}, Appendix~A; so we may assume $k=\kbar$ and $p \geq 5$. Then $X$ is given as in~\eqref{eq:CGrPWQ} and the notation of Section~\ref{subsec:XYGrSetup} applies.

Pushing down the short exact sequence
\[
0 \tto \scrT_X \tto \gamma^*\scrT_{\Grass} \tto \iota_*\scrO_Y(2) \tto 0
\] 
we get
\[
0 \tto \gamma_* \scrT_X \tto \scrT_{\Grass} \oplus \scrT_{\Grass}(-1) \tto \iota^\prime_* \scrO_{Y^\prime}(2) \tto 0\, .
\]
By Proposition~\ref{prop:CohomGr} and Lemma~\ref{lem:CohomOY} it follows that $H^i(X,\scrT_X) = 0$ for $i \geq 2$, and that we have an exact sequence
\[
0 \tto H^0(X,\scrT_X) \tto H^0(\Grass,\scrT_{\Grass})\xrightarrow{~\beta~} H^0\bigl(Y,\scrO_Y(2)\bigr) \tto H^1(X,\scrT_X) \tto 0\, .
\]
By Proposition~\ref{prop:CohomGr}\ref{HTGr} and Lemma~\ref{lem:CohomOY} it suffices to show that $\beta$ is injective.

The map~$\beta$ sends the class of a vector field~$\xi$ to the global section of the normal bundle $\scrN_{Y \subset \Grass} \cong \scrO_Y(2)$ that is obtained by projecting~$\xi|_Y$ to~$\scrN_{Y \subset \Grass}$. Because $H^0(Y,\scrT_Y) = 0$ by Proposition~\ref{prop:CohomTY}, it follows from the exact sequence $0 \tto \scrT_{\Grass}(-2) \tto \scrT_{\Grass} \tto \scrT_{\Grass}|_{Y} \tto 0$ together with Proposition~\ref{prop:CohomGr}\ref{HTGr-1-2} that $\beta$ is injective. This gives the proposition.
\end{proof}

\begin{corollary}
\label{cor:AutX}
With $X/k$ as in the proposition, the automorphism group scheme $\Aut_X$ is finite \'etale.
\end{corollary}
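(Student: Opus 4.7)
The plan is to deduce this corollary from the vanishing $H^0(X,\scrT_X) = 0$ established in Proposition~\ref{prop:CohomTX}, combined with standard representability results for the automorphism scheme of a polarised projective variety.

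First I would recall that $\Aut_X$ is representable by a group scheme locally of finite type over~$k$, with Lie algebra at the identity canonically isomorphic to $H^0(X,\scrT_X)$. To see that $\Aut_X$ is of finite type, I would invoke Proposition~\ref{prop:PicX}: since $\Pic_{X/k} \cong \ZZ$ with ample generator corresponding to~$\scrO_X(1)$, every automorphism of~$X$ automatically preserves this class (automorphisms permute the ample classes, and here the ample cone is a single ray with a distinguished primitive generator). Hence $\Aut_X$ coincides with the polarised automorphism functor $\Aut(X,\scrO_X(1))$, which is represented by a quasi-projective scheme of finite type over~$k$ by the classical results of Matsusaka and Mumford.

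Next I would show that $\Aut_X$ is étale over~$k$. Proposition~\ref{prop:CohomTX} tells us that $\mathrm{Lie}(\Aut_X) = H^0(X,\scrT_X) = 0$, so the maximal ideal $\frm_e$ of the local ring at the identity section satisfies $\frm_e/\frm_e^2 = 0$; by Nakayama $\frm_e = 0$ and $\scrO_{\Aut_X,e} = k$. Thus $\Aut_X$ is smooth of relative dimension~$0$ at the identity, and by group translation it is smooth of relative dimension~$0$ everywhere, i.e.\ étale over~$k$.

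The last step is purely formal: an étale scheme of finite type over a field is a finite disjoint union of spectra of finite separable extensions, hence finite. I do not anticipate any substantive obstacle here; all the real work lies upstream in Proposition~\ref{prop:CohomTX} and the Grassmannian calculations of Section~\ref{sec:CohomGr}. The corollary stated just after Proposition~\ref{prop:CohomTY} is proved by an identical argument, with $H^0(Y,\scrT_Y) = 0$ in place of the vanishing on~$X$.
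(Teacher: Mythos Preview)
Your argument is correct and is exactly the standard deduction the paper has in mind; the paper states the corollary without proof, treating it as an immediate consequence of $H^0(X,\scrT_X)=0$ together with the well-known facts about automorphism schemes that you spell out. One cosmetic point: over a general~$k$ the class~$\scrO_X(1)$ need not be represented by an actual line bundle (see the remark after Proposition~\ref{prop:PicX}), so it is cleaner to polarise by~$\omega_X^{-1}$, which always exists and is ample.
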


\begin{corollary}
\label{cor:DefoX}
Let $X/k$ be as in the proposition, with $k$ a perfect field of characteristic $p \geq 5$. Then the formal deformation functor $\Defo(X)$ is pro-representable by a formal scheme which is (non-canonically) isomorphic to the formal spectrum of $W[\![t_1,\ldots,t_{25}]\!]$.
\end{corollary}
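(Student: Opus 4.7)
The plan is to apply the standard deformation-theoretic formalism for smooth proper schemes, with cohomological input supplied by Proposition~\ref{prop:CohomTX}. Once the three groups $H^i(X,\scrT_X)$ for $i=0,1,2$ have been computed, the argument is essentially formal.

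First I would set up the functor $\Defo(X)$ on the category of Artinian local $W$-algebras with residue field~$k$, sending $A$ to the set of isomorphism classes of flat deformations of~$X$ over~$A$. The standard dictionary for deformations of a smooth proper variety (Grothendieck--Schlessinger, conveniently packaged via Illusie's cotangent complex) identifies the tangent space $\Defo(X)\bigl(k[\epsilon]\bigr)$ with $H^1(X, \scrT_X)$, places the obstruction to extending a deformation along a small surjection $A' \twoheadrightarrow A$ with kernel~$I$ inside $H^2(X, \scrT_X) \otimes_k I$, and identifies the infinitesimal automorphisms of any given first-order deformation with $H^0(X, \scrT_X)$. By Proposition~\ref{prop:CohomTX} the first group has $k$-dimension~$25$ while the second and third vanish.

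Next comes pro-representability: since $X$ is smooth and proper, Schlessinger's conditions (H1)--(H3) hold automatically, and the vanishing $H^0(X,\scrT_X)=0$ supplies the rigidity condition (H4); hence $\Defo(X)$ is pro-representable by a complete Noetherian local $W$-algebra $R$ with residue field~$k$. The vanishing $H^2(X,\scrT_X)=0$ then shows that every small extension admits a lift of any given deformation, i.e.\ $R$ is formally smooth over~$W$. Since the relative cotangent space $\frm_R/(\frm_R^2 + pR)$ has $k$-dimension $\dim_k H^1(X,\scrT_X) = 25$, choosing lifts of a basis yields a (non-canonical) isomorphism $R \cong W[\![t_1,\ldots,t_{25}]\!]$.

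There is no obstacle of substance here: all ingredients are standard, and the obstruction calculus works in mixed characteristic just as in equi-characteristic, needing only the flatness of $\scrX/A$ and the local freeness of $\Omega^1_{\scrX/A}$. Lemma~\ref{lem:LiftingX} provides an independent geometric verification that $X$ does lift to $W(k)$, consistent with---but not logically needed for---the formal-smoothness argument above.
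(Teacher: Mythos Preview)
Your argument is correct and is precisely the standard deformation-theoretic reasoning the paper has in mind: the corollary is stated without proof, as an immediate consequence of Proposition~\ref{prop:CohomTX} via Schlessinger's criteria, exactly as you have written. There is nothing to add.
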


The last result in this section concerns the calculation of a Kodaira--Spencer map; this will play a crucial role in our proof of the Tate conjecture. We retain the notation of~\ref{subsec:XYGrSetup}, but we drop the notational distinction between $Y$ and~$Y^\prime$ (and $\iota$ and~$\iota^\prime$).

It will be convenient to use the notation $H^{i,j}(X) = H^j(X,\Omega^i_X)$, and similarly for the other varieties involved. On $H^{3,3}$ we consider the pairing $H^{3,3} \times H^{3,3} \to H^{6,6} = k$ given by  cup-product. Define $H^{3,3}(X)_{00} \subset H^{3,3}(X)$ to be the orthogonal complement of the image of the map $\gamma^* \colon H^{3,3}(\Grass) \to H^{3,3}(X)$ (which, as we will see, is injective).

\begin{lemma}
\label{lem:HijX}
Assume $p \geq 5$.
\begin{enumerate}
\item\label{H1TXTY} The natural maps
\[
H^1(X,\scrT_X) \to H^1(Y,\scrT_X|_Y) \leftarrow H^1(Y,\scrT_Y)
\]
are isomorphisms.

\item\label{H33X00} The map $\gamma^* \colon H^{3,3}(\Grass) \to H^{3,3}(X)$ is injective and the sequence \eqref{ses:OmGrOmX} with $i=3$ gives rise to an isomorphism
\[
H^{3,3}(X)_{00} \isomarrow H^3\bigl(Y,\Omega^2_Y(-1)\bigr)\, .
\]

\item\label{H24X} The sequence \eqref{ses:OmGrOmX} with $i=2$ gives rise to an isomorphism
\[
H^{2,4}(X) \isomarrow H^4\bigl(Y,\Omega^1_Y(-1)\bigr)\, .
\]

\item The above isomorphisms fit into a commutative diagram
\[
\begin{tikzcd}
H^1(X,\scrT_X) \times H^{3,3}(X)_{00} \ar[r,"\cup"] \ar[d,"\wr"] & H^{2,4}(X) \ar[d,"\wr"]\\
H^1(Y,\scrT_Y) \times H^3\bigl(Y,\Omega^2_Y(-1)\bigr) \ar[r,"\cup"] & H^4\bigl(Y,\Omega^1_Y(-1)\bigr)
\end{tikzcd}
\]
\end{enumerate}
\end{lemma}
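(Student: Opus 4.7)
The plan rests on two tools used throughout. First, $\gamma$ is finite, so $H^*(X,\scrF) = H^*(\Grass, \gamma_*\scrF)$, and $\gamma_*\gamma^*\scrG = \scrG \oplus \scrG(-1)$ because $\gamma_*\scrO_X = \scrO_{\Grass} \oplus \scrO_{\Grass}(-1)$. Second, the double-cover involution~$\sigma$ fixes~$Y$ pointwise and acts by $-1$ on the conormal direction; this provides canonical splittings $\scrT_X|_Y = \scrT_Y \oplus \scrO_Y(1)$ and $\Omega^i_X|_Y = \Omega^i_Y \oplus \Omega^{i-1}_Y(-1)$.

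For~\ref{H1TXTY}, I will push forward the sequence $0 \to \scrT_X(-1) \to \scrT_X \to \iota_*(\scrT_X|_Y) \to 0$ to~$\Grass$. Using Proposition~\ref{prop:CohomGr}\ref{HTGr-1-2} and Lemma~\ref{lem:CohomOY}, one obtains $H^i(X, \scrT_X(-1)) \cong H^{i-1}(Y, \scrO_Y(1))$, which is $k^{10}$ for $i=1$ and vanishes otherwise. The involutive splitting, combined with $H^{\geq 1}(Y, \scrO_Y(1)) = 0$, gives $H^i(Y, \scrT_X|_Y) \cong H^i(Y, \scrT_Y)$ for $i \geq 1$ (this is the second isomorphism) and $h^0(Y, \scrT_X|_Y) = 10$. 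The long exact sequence of the first displayed sequence then reads $0 \to k^{10} \to k^{10} \to k^{25} \to H^1(Y, \scrT_X|_Y) \to 0$; the leftmost map is an injection of equidimensional spaces and so must be an isomorphism, whence $H^1(X, \scrT_X) \to H^1(Y, \scrT_X|_Y)$ is an isomorphism as well.

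For parts~\ref{H33X00} and~\ref{H24X}, I will apply $\gamma_*$ to~\eqref{ses:OmGrOmX} to obtain~\eqref{ses:g*OmGrOmX}. For $i = 3$: parts~\ref{hijGrass} and~\ref{Omega1-1} of Proposition~\ref{prop:CohomGr} give $H^3(\Omega^3_{\Grass}(-1)) = H^4(\Omega^3_{\Grass}) = H^4(\Omega^3_{\Grass}(-1)) = 0$, so the long exact sequence collapses to $0 \to H^{3,3}(\Grass) \xrightarrow{\gamma^*} H^{3,3}(X) \to H^3(Y, \Omega^2_Y(-1)) \to 0$. The identity $\gamma_*\gamma^* = 2\cdot\id$ (valid since $p\neq 2$) yields both injectivity of~$\gamma^*$ and non-degeneracy of the cup-product pairing restricted to $\gamma^*H^{3,3}(\Grass)$, so $H^{3,3}(X)_{00}$ maps isomorphically onto the quotient. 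For $i = 2$: parts~\ref{hijGrass} and~\ref{Omega1-1} give $H^4(\Omega^2_{\Grass}) = H^5(\Omega^2_{\Grass}) = 0$ and $H^*(\Omega^2_{\Grass}(-1)) = 0$, and the long exact sequence directly yields $H^{2,4}(X) \isomarrow H^4(Y, \Omega^1_Y(-1))$.

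The compatibility of cup products is the main obstacle. The strategy is to show that each of the three isomorphisms is realised by the uniform recipe ``restrict to~$Y$, then project onto the second summand of the involutive splitting.'' For the isomorphisms of~\ref{H33X00} and~\ref{H24X}, this amounts to verifying that the image of $\gamma^*\Omega^i_{\Grass}|_Y$ in $\Omega^i_X|_Y$ coincides with the $\sigma^*$-invariant summand~$\Omega^i_Y$: since $\gamma^*$ is $\sigma^*$-invariant the image lies in this summand, and a rank count pins it down as equal. Because cup product and contraction commute with restriction to~$Y$, commutativity of the square then reduces to a sheaf-level identity: in the decomposition $(\scrT_Y \oplus \scrO_Y(1)) \otimes (\Omega^3_Y \oplus \Omega^2_Y(-1)) \to \Omega^2_Y \oplus \Omega^1_Y(-1)$, the contraction of $\tau \in \scrT_Y$ with $\bar\alpha \in \Omega^2_Y(-1)$ is the intrinsic contraction $\tau \iprod \bar\alpha$ on~$Y$. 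While each of these steps is elementary, keeping track of the identifications across the long exact sequences of~\ref{H33X00} and~\ref{H24X} is the delicate part of the argument.
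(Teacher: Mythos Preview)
Your proof is correct and follows essentially the same route as the paper: the same exact sequences, the same vanishing inputs from Proposition~\ref{prop:CohomGr} and Lemma~\ref{lem:CohomOY}, and the same dimension counts. The one noteworthy variation is that you obtain the splitting $\Omega^1_X|_Y \cong \Omega^1_Y \oplus \scrO_Y(-1)$ from the eigenspace decomposition under the covering involution~$\sigma$, whereas the paper writes down an explicit splitting map coming from the restriction of $\Omega^1_X \to \iota_*\scrO_Y(-1)$; your involution viewpoint has the advantage of making the compatibility in part~(4) transparent (a $(+1)$-eigenvector contracted with a $(-1)$-eigenvector lands in the $(-1)$-eigenspace), which is precisely the ``diagram chase'' the paper leaves to the reader.
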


\begin{proof}
\ref{H1TXTY} First we show that the short exact sequence $0 \tto \scrO_Y(-1) \tto  \Omega^1_X|_Y \tto  \Omega^1_Y \tto 0$ splits: restriction of 
\[
0 \tto \gamma^*\Omega^1_{\Grass} \tto \Omega^1_X \tto \iota_*\scrO_Y(-1) \tto 0
\]
to~$Y$ gives a homomorphism $\Omega^1_X|_Y \to \scrO_Y(-1)$ which splits the sequence. Because $H^1\bigl(Y,\scrO_Y(1)\bigr) = 0$ by Lemma~\ref{lem:CohomOY}, it follows that $H^1(Y,\scrT_Y) \to H^1(Y,\scrT_X|_Y)$ is an isomorphism.

The sequence
\[
0 \tto \gamma_* \scrT_X(-1) \tto \scrT_{\Grass}(-1) \oplus \scrT_{\Grass}(-2) \tto \iota^\prime_* \scrO_{Y^\prime}(1) \tto 0
\]
together with Proposition~\ref{prop:CohomGr}\ref{HTGr-1-2} and Lemma~\ref{lem:CohomOY} gives that $H^2\bigl(X,\scrT_X(-1)\bigr) \cong H^1\bigl(Y,\scrO_Y(1)\bigr) = 0$. Using the sequence $0 \tto \scrT_X(-1) \tto \scrT_X \tto \scrT_X|_Y \tto 0$ and the fact that $h^1(X,\scrT_X) = 25 = h^1(Y,\scrT_X|_Y)$, it follows that $H^1(X,\scrT_X) \to H^1(Y,\scrT_X|_Y)$ is an isomorphism.

For \ref{H33X00} and \ref{H24X}, apply~$\gamma_*$ to \eqref{ses:OmGrOmX}, use that $H^j\bigl(Y,\Omega^i_Y(-1)\bigr) = 0$ for $i+j < 5$ (Raynaud's vanishing theorem), and use Proposition~\ref{prop:CohomGr} parts \ref{hijGrass} and~\ref{Omega1-1}. The last assertion is just a diagram chase.
\end{proof}

\begin{proposition}
\label{prop:KodSpencX}
Let $X$ be a $6$-dimensional Gushel--Mukai variety over a field~$k$ of characteristic $p \geq 5$ (or $p=0$). Then the map
\[
H^1(X,\scrT_X) \to \Hom\bigl(H^{3,3}(X)_{00},H^{2,4}(X)\bigr)
\]
given by cup-product is surjective.
\end{proposition}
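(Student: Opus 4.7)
The plan is to reduce via Lemma~\ref{lem:HijX} to a non-degeneracy statement on the branch fivefold~$Y$, and then identify the cup product pairing with the Serre duality pairing through a naturality argument involving connecting homomorphisms. By the commutative diagram in Lemma~\ref{lem:HijX}, together with the isomorphisms of parts \ref{H1TXTY}--\ref{H24X}, it suffices to show that the cup product
\[
H^1(Y,\scrT_Y) \times H^3\bigl(Y, \Omega^2_Y(-1)\bigr) \to H^4\bigl(Y, \Omega^1_Y(-1)\bigr)
\]
is non-degenerate in the second factor; since $\dim H^4(Y, \Omega^1_Y(-1)) = h^{2,4}(X) = 1$, this is equivalent to the desired surjectivity onto the Hom space.

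The key comparison uses the two instances of~\eqref{ses:OmYOmGr} with $(i,m)=(2,-1)$ and $(1,-1)$:
\begin{align*}
(B):\quad & 0 \to \Omega^1_Y(-3) \to \Omega^2_\Grass(-1)|_Y \to \Omega^2_Y(-1) \to 0, \\
(C):\quad & 0 \to \scrO_Y(-3) \to \Omega^1_\Grass(-1)|_Y \to \Omega^1_Y(-1) \to 0.
\end{align*}
First I would verify that contraction $\scrT_\Grass|_Y \otimes \Omega^j_\Grass|_Y \to \Omega^{j-1}_\Grass|_Y$, restricted to $\scrT_Y \subset \scrT_\Grass|_Y$, descends to the contraction $\scrT_Y \otimes \Omega^j_Y \to \Omega^{j-1}_Y$ on~$Y$; the decisive local check is that a vector field tangent to~$Y$ annihilates the conormal direction (as $\xi(f) = 0$ on~$Y$ for $f \in I_Y$). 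This produces a morphism of short exact sequences $\scrT_Y \otimes (B) \to (C)$, and the standard naturality of cup product with connecting homomorphisms then yields the identity
\[
\partial_C(\xi \cup \alpha) = \pm\, \bigl(\xi \cup \partial_B \alpha\bigr) \quad \text{in} \quad H^5(Y, \scrO_Y(-3)) = H^5(Y, \omega_Y) \cong k,
\]
for $\xi \in H^1(Y,\scrT_Y)$ and $\alpha \in H^3(Y, \Omega^2_Y(-1))$. The right-hand side is, by construction, the Serre duality pairing of $\xi$ with $\partial_B\alpha \in H^4(Y, \Omega^1_Y(-3))$.

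To conclude, it suffices to verify that $\partial_B\colon H^3(Y, \Omega^2_Y(-1)) \to H^4(Y, \Omega^1_Y(-3))$ is injective: for any nonzero~$\alpha$ we would then have $\partial_B\alpha \neq 0$, Serre non-degeneracy produces a~$\xi$ with $\xi \cup \partial_B \alpha \neq 0$, and the displayed identity forces $\xi \cup \alpha \neq 0$. Injectivity of $\partial_B$ follows, via the long exact sequence of~$(B)$, from the vanishing $H^3(Y, \Omega^2_\Grass(-1)|_Y) = 0$. Using the Koszul sequence $0 \to \Omega^2_\Grass(-3) \to \Omega^2_\Grass(-1) \to \Omega^2_\Grass(-1)|_Y \to 0$, this reduces to $H^*(\Grass, \Omega^2_\Grass(-1)) = 0$ from Proposition~\ref{prop:CohomGr}\ref{Omega1-1} and $H^4(\Grass, \Omega^2_\Grass(-3)) = 0$ from Proposition~\ref{prop:CohomGr}\ref{Om2(-3)}.

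The main obstacle is the compatibility identity in the second paragraph: one must carefully establish the pointwise contraction property (tangent against conormal) so that contraction descends to a morphism of short exact sequences, and then invoke the sign-sensitive naturality of cup product with connecting homomorphisms. Once this is in place, the remainder of the argument is a short chain of exact-sequence manipulations drawing on the vanishings already collected in Proposition~\ref{prop:CohomGr}.
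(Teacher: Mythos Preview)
Your proposal is correct and follows essentially the same approach as the paper's proof: both reduce via Lemma~\ref{lem:HijX} to the cup product on~$Y$, establish the injectivity of $\partial_B\colon H^3(Y,\Omega^2_Y(-1)) \to H^4(Y,\Omega^1_Y(-3))$ from the same vanishing inputs in Proposition~\ref{prop:CohomGr}, and then compare with the Serre duality pairing via the compatibility of contraction with the connecting maps of the sequences~\eqref{ses:OmYOmGr}. The only cosmetic difference is that the paper packages the compatibility as a commutative square and separately argues that $\partial_C\colon H^4(Y,\Omega^1_Y(-1)) \to H^5(Y,\omega_Y)$ is an isomorphism, whereas you bypass this step by reading off directly that $\partial_C(\xi\cup\alpha)\neq 0$ forces $\xi\cup\alpha\neq 0$.
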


\begin{proof}
The sequence $0 \tto \Omega^2_{\Grass}(-3) \tto \Omega^2_{\Grass}(-1) \tto \Omega^2_{\Grass}(-1)|_Y \tto 0$ gives an exact sequence
\[
H^3\bigl(\Omega^2_{\Grass}(-1)\bigr) \tto H^3\bigl(\Omega^2_{\Grass}(-1)|_Y \bigr) \tto H^4\bigl(\Grass,\Omega^2_{\Grass}(-3)\bigr)\, .
\]
As the outer terms vanish by Proposition~\ref{prop:CohomGr} it follows that $H^3\bigl(\Omega^2_{\Grass}(-1)|_Y \bigr) = 0$. By \eqref{ses:OmYOmGr} with $i=2$ and $m=-1$ we find that
\[
H^3\bigl(Y,\Omega^2_Y(-1)\bigr) \to H^4\bigl(Y,\Omega^1_Y(-3)\bigr)
\]
is injective. Now consider the commutative diagram
\[
\begin{tikzcd}
H^1(Y,\scrT_Y) \times H^3\bigl(Y,\Omega^2_Y(-1)\bigr) \ar[r] \ar[d,hook] & H^4\bigl(Y,\Omega^1_Y(-1)\bigr) \ar[d]\\
H^1(Y,\scrT_Y) \times H^4\bigl(Y,\Omega^1_Y(-3)\bigr) \ar[r] & H^5(Y,\omega_Y)
\end{tikzcd}
\]
(with $\omega_Y = \scrO_Y(-3)$), where the vertical maps come from the sequences~\eqref{ses:OmYOmGr}. By Serre duality the bottom row is a perfect pairing. Further, we have $h^4\bigl(Y,\Omega^1_Y(-1)\bigr) = h^{2,4}(X) = 1$. It now follows that the rightmost vertical map is an isomorphism (otherwise it would be zero, but this gives a contradiction), and it follows that
\begin{multline*}
H^1(Y,\scrT_Y) \isomarrow \Hom\bigl(H^4\bigl(Y,\Omega^1_Y(-3)\bigr), H^5(Y,\omega_Y) \bigr)\\ \longtwoheadrightarrow \Hom\bigl(H^3\bigl(Y,\Omega^2_Y(-1)\bigr), H^4\bigl(Y,\Omega^1_Y(-1)\bigr) \bigr)\, .
\end{multline*}
Combining this with the last part of Lemma~\ref{lem:HijX}, this gives what we want.
\end{proof}

%%%
%%%
\section{Global vector fields}
\label{sec:GlobalVectFields}

In this section we give a proof of Proposition~\ref{prop:CohomTY}. The proof relies on computer algebra calculations. It would be desirable to find a proof by pure thought.

The result in characteristic~$0$ is known, see \cite{KuzPerry}, Appendix~A. It therefore suffices to prove the proposition over an algebraically closed field of characteristic $p \geq 5$.

\subsection{}
\label{ssec:VFieldsGr}
Let $k$ be a field whose characteristic is not in $\{2,3\}$. Let $V_5$ be a $5$-dimensional $k$-vector space, and let
\[
\Grass = \Grass(2,V_5) \hookrightarrow \PP = \PP(\wedge^2 V_5)
\]
be the Grassmannian of $2$-planes in~$V_5$. The group $\GL(V_5)$ acts on $\Grass$, and by Proposition~\ref{prop:CohomGr}\ref{HTGr} this action induces an isomorphism $\frpgl(V_5) \isomarrow H^0(\Grass,\scrT_{\Grass})$. We write $\xi_A$ for the vector field given by a class $[A] \in \frpgl(V_5)$.

Let $X$ and~$Y$ be as in Section~\ref{subsec:XYGrSetup}. We use the short exact sequences
\begin{align*}
&0 \tto \scrT_Y \tto \scrT_{\Grass}|_{Y} \tto \scrO_Y(2) \tto 0\, ,\\
&0 \tto \scrT_{\Grass}(-2) \tto \scrT_{\Grass} \tto \scrT_{\Grass}|_{Y} \tto 0\, .
\end{align*}
Using the second sequence and the first two parts of Propositon~\ref{prop:CohomGr}, we find that $H^0(\Grass,\scrT_{\Grass}) \isomarrow H^0(Y,\scrT_{\Grass}|_{Y})$, which has dimension~$24$, and that $H^i(Y,\scrT_{\Grass}|_{Y}) = 0$ for $i>0$. Together with Lemma~\ref{lem:CohomOY} we find that $H^i(Y,\scrT_Y) = 0$ for $i>1$, and we are left with an exact sequence
\[
0 \tto H^0(Y,\scrT_Y) \tto H^0(Y,\scrT_{\Grass}|_{Y}) \xrightarrow{~\beta~} H^0\bigl(Y,\scrO_Y(2)\bigr) \tto H^1(Y,\scrT_Y) \tto 0\, .
\]
The map~$\beta$ is induced by the projection from $\scrT_{\Grass}|_{Y}$ to the normal bundle $\scrN_{Y \subset \Grass} \cong \scrO_Y(2)$. We therefore need to show that a nonzero global vector field~$\xi_A$ on~$\Grass$ cannot be tangent to~$Y$ at every point of~$Y$.

\subsection{}
\label{subsec:Kermu}
The algebraic group $\GL(V_5)$ acts on $\Sym^2(\wedge^2 V_5^\vee)$. Let
\[
I(\Grass) = H^0\bigl(\PP,\scrI_{\Grass}(2)\bigr) \subset \Sym^2(\wedge^2 V_5^\vee)
\]
be the subspace of quadrics that vanish on~$\Grass$, which is a $5$-dimensional subrepresentation. If $e_1,\ldots,e_5$ is an ordered basis of~$V_5$, the elements $x_{ij} = \check{e}_i \wedge \check{e}_j$ with $1 \leq i < j \leq 5$ form a basis of $\wedge^2 V_5^\vee$, and then $I(\Grass)$ is spanned by the Pl\"ucker quadrics $q_1,\ldots,q_5$ with
\[
q_1 = x_{23} x_{45} - x_{24} x_{35} + x_{25} x_{34}\, ,\quad \ldots\quad ,\quad q_5 = x_{12} x_{34} - x_{13} x_{24} + x_{14} x_{23}\, .
\]

The multiplication map $\wedge^2 V_5^\vee \otimes \wedge^2 V_5^\vee \to \wedge^4 V_5^\vee$ is symmetrical and therefore induces a morphism of $\GL(V_5)$-modules
\[
\mu \colon \Sym^2(\wedge^2 V_5^\vee) \to \wedge^4 V_5^\vee\, .
\]
As $\mu(q_i) = 3 \cdot \bigl(\check{e}_1 \wedge \cdots \wedge \widehat{\check{e}_i} \wedge \cdots \wedge \check{e}_5\bigr)$ and $3 \in k^*$ by assumption, it follows that we have a decomposition
\begin{equation}
\label{eq:Sym2Wedge2Dec}
\Sym^2(\wedge^2 V_5^\vee) \cong I(\Grass) \oplus \Ker(\mu)\, .
\end{equation}

\subsection{}
Let the $5$-dimensional Gushel--Mukai variety~$Y$ be given as $Y = \Grass \cap Q$ where $Q$ is a quadric in~$\PP$ that does not contain~$\Grass$. The line $k\cdot Q \subset \Sym^2(\wedge^2 V_5^\vee)/I(\Grass)$ is uniquely determined by~$Y$. (Here, and in what follows, we use the same letter~$Q$ for a quadric in~$\PP$ and a defining equation in $\Sym^2(\wedge^2 V_5^\vee)$. We trust this will not lead to confusion.) Identifying $\Sym^2(\wedge^2 V_5^\vee)/I(\Grass)$ with $\Ker(\mu) \subset \Sym^2(\wedge^2 V_5^\vee)$ using the decomposition~\eqref{eq:Sym2Wedge2Dec}, we can view $k \cdot Q$ as a line in~$\Ker(\mu)$.

Let $A \in \frgl(V_5)$, and let $\xi_A$ be the corresponding vector field on~$\Grass$. (See~\ref{ssec:VFieldsGr}.) Then we see that $\xi_A$ is everywhere tangent to~$Y$ if and only if the action of~$A$ on $\Sym^2(\wedge^2 V_5^\vee)/I(\Grass)$ preserves the line~$k\cdot Q$. In what follows we denote the action of~$\frgl(V_5)$ on $\Sym^2(\wedge^2 V_5^\vee)$ by $(A,Q) \mapsto A \circ Q$.

\begin{proposition}
\label{prop:AQ=0}
Let $Q \in \Sym^2(\wedge^2 V_5^\vee)$ be such that $Y = \Grass \cap Q$ is non-singular of dimension~$5$. Consider the following assertions:
\begin{enumerate}
\item\label{H0TYneq0} There exist nonzero global vector fields on~$Y$, i.e., $H^0(Y,\scrT_Y) \neq 0$.
\item\label{AQ=lambdaQ} There exists a non-scalar $A \in \frgl(V_5)$ and a constant $\lambda \in k^*$ such that $A \circ Q = \lambda \cdot Q$.
\item\label{AQ=0} There exists a non-scalar $A \in \frgl(V_5)$ such that $A \circ Q = 0$.
\end{enumerate}
Then \ref{H0TYneq0} $\Leftarrow$ \ref{AQ=lambdaQ} $\Leftrightarrow$ \ref{AQ=0}, and if $Q \in \Ker(\mu) \subset \Sym^2(\wedge^2 V_5^\vee)$ then also \ref{H0TYneq0} $\Rightarrow$~\ref{AQ=lambdaQ}.
\end{proposition}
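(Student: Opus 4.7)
The plan is to establish the easy equivalence (2)\,$\Leftrightarrow$\,(3) first by a trick of adjusting $A$ by a multiple of the identity, then to handle (2)\,$\Rightarrow$\,(1) by a direct translation of the setup in~\ref{ssec:VFieldsGr}--\ref{subsec:Kermu}, and finally to prove the harder direction (1)\,$\Rightarrow$\,(2) by using the $\GL(V_5)$-equivariant direct sum decomposition~\eqref{eq:Sym2Wedge2Dec} to upgrade a congruence mod $I(\Grass)$ to an equality. The starting observation is that the identity $\id \in \frgl(V_5)$ acts on $V_5^\vee$ as $-\id$, hence on $\wedge^2 V_5^\vee$ as $-2\,\id$, and therefore on $\Sym^2(\wedge^2 V_5^\vee)$ as $-4\,\id$; so $\id \circ Q = -4Q$. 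Since $\charact(k) \neq 2$, the map $c \mapsto A + c \cdot \id$ lets us freely shift the eigenvalue of~$Q$: if $A \circ Q = \lambda Q$ then $(A + (\lambda/4)\,\id) \circ Q = 0$, and conversely if $A \circ Q = 0$ then $(A - (\lambda/4)\,\id) \circ Q = \lambda Q$ for any $\lambda \in k^*$. Since $A + c\,\id$ is non-scalar whenever $A$ is, this proves (2)\,$\Leftrightarrow$\,(3).

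For (2)\,$\Rightarrow$\,(1), suppose $A \in \frgl(V_5)$ is non-scalar with $A \circ Q = \lambda Q$, $\lambda \in k^*$. The class $[A] \in \frpgl(V_5)$ is nonzero, and by Proposition~\ref{prop:CohomGr}\ref{HTGr} the associated global vector field $\xi_A \in H^0(\Grass,\scrT_{\Grass})$ is nonzero as well. By the discussion in~\ref{subsec:Kermu}, the condition $A \circ Q = \lambda Q$ (a fortiori the weaker congruence mod $I(\Grass)$) means exactly that $\xi_A$ is everywhere tangent to~$Y$, so it restricts to a global section of~$\scrT_Y$. That this restriction is nonzero follows from the isomorphism $H^0(\Grass,\scrT_{\Grass}) \isomarrow H^0(Y,\scrT_{\Grass}|_Y)$ established in the exact sequence of Section~\ref{ssec:VFieldsGr}.

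The main direction is (1)\,$\Rightarrow$\,(2) under the assumption $Q \in \Ker(\mu)$. Starting from a nonzero $\xi \in H^0(Y,\scrT_Y)$, the exact sequence of~\ref{ssec:VFieldsGr} combined with Proposition~\ref{prop:CohomGr}\ref{HTGr} writes $\xi = \xi_A|_Y$ for some non-scalar $A \in \frgl(V_5)$. Tangency of~$\xi_A$ to~$Y$ gives $A \circ Q \equiv \lambda Q \pmod{I(\Grass)}$ for some $\lambda \in k$. The key point is that $\Ker(\mu) \subset \Sym^2(\wedge^2 V_5^\vee)$ is a $\GL(V_5)$-subrepresentation, so $Q \in \Ker(\mu)$ implies $A \circ Q \in \Ker(\mu)$, whence $A \circ Q - \lambda Q \in I(\Grass) \cap \Ker(\mu) = 0$ by the direct sum decomposition~\eqref{eq:Sym2Wedge2Dec}. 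This yields the clean equality $A \circ Q = \lambda Q$. If $\lambda \neq 0$ we have~(2); if $\lambda = 0$ we apply the already-established implication (3)\,$\Rightarrow$\,(2). There is no serious obstacle here: the whole proof is a representation-theoretic unraveling of~\ref{ssec:VFieldsGr}--\ref{subsec:Kermu}. The one step that genuinely uses the hypothesis $Q \in \Ker(\mu)$ is the upgrade from a congruence modulo $I(\Grass)$ to an equality in $\Sym^2(\wedge^2 V_5^\vee)$, and it is precisely the availability of the $\GL(V_5)$-invariant complement $\Ker(\mu)$ to $I(\Grass)$ (for which one needs $\charact(k) \neq 3$, used to invert~$\mu(q_i)$) that makes this upgrade possible.
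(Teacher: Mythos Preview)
Your proof is correct and follows essentially the same approach as the paper's: the equivalence (2)\,$\Leftrightarrow$\,(3) via the identity $(A + a\,\id)\circ Q = A\circ Q - 4aQ$, the implication (2)\,$\Rightarrow$\,(1) via the tangency interpretation from~\ref{ssec:VFieldsGr}, and the implication (1)\,$\Rightarrow$\,(2) (for $Q \in \Ker(\mu)$) via the $\frgl(V_5)$-stability of $\Ker(\mu)$ combined with the direct sum~\eqref{eq:Sym2Wedge2Dec}. Your write-up is simply more explicit than the paper's terse version, which invokes the same ingredients in a single sentence by citing the isomorphism $\Ker(\mu)\cong \Sym^2(\wedge^2 V_5^\vee)/I(\Grass)$ as $\frgl(V_5)$-representations.
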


\begin{proof}
It follows from Section~\ref{ssec:VFieldsGr}, combined with what was said before the proposition that \ref{AQ=lambdaQ} implies~\ref{H0TYneq0}. If $Q \in \Ker(\mu) \subset \Sym^2(\wedge^2 V_5^\vee)$ then also the converse is true, because $\Ker(\mu) \cong \Sym^2(\wedge^2 V_5^\vee)/I(\Grass)$ as representations of~$\frgl(V_5)$. Because we work in characteristic $\neq 2$, the equivalence of \ref{AQ=lambdaQ} and~\ref{AQ=0} follows from the remark that $(A + a\cdot \mathrm{Id}) \circ Q = A\circ Q - (4a \cdot Q)$ for all $a \in k$ and $Q \in \Sym^2(\wedge^2 V_5^\vee)$.
\end{proof}

\subsection{}
Let $e_1,\ldots,e_5$ be the standard basis of $\VV = \ZZ^5$. As before, write $x_{ij} = \check{e}_i \wedge \check{e}_j$ with $1 \leq i < j \leq 5$; these elements (ordered lexicographically) give an ordered basis of~$\wedge^2 \VV^\vee$, and the monomials $x_{ij}x_{lm}$ with $(i,j) \leq_{\mathrm{lex}} (l,m)$ form a basis of $\Sym^2(\wedge^2 \VV^\vee)$. Let $\scrM$ be the set of these monomials.

Consider the standard action of the Lie algebra $\frgl_5$ (over~$\ZZ$) on~$\VV$, and its induced action on $\Sym^2(\wedge^2 \VV^\vee)$, which we again denote by $(A,Q) \mapsto A \circ Q$. For $m$, $m^\prime \in \scrM$, let $C_{m,m^\prime}$ denote the corresponding matrix coefficient, which is a polynomial with integral coefficients in the entries~$A_{i,j}$ of the matrix~$A$.

As above, let $Y$ be a non-singular $5$-dimensional Gushel--Mukai variety over~$k$ which is obtained as $Y = \Grass \cap Q$. Via the choice of a basis of~$V_5$ we identify $V_5 = \VV \otimes k$. For $A \in \frgl_{5,k}$ the equation $A \circ Q = 0$ can be written as a matrix equation over~$k$:
\begin{equation}
\label{eq:MatrixEq}
\Bigl(C_{m,m^\prime}(A_{i,j})\Bigr)_{m,m^\prime \in \scrM} \circ \Bigl(Q_m\Bigr)_{m\in \scrM} = 0\, ,
\end{equation}
where $(Q_m)_{m \in \scrM}$ denotes the expression of~$Q$ as a column vector with respect to the basis~$\scrM$. If $Q$ is given, this can be viewed as an system of equations for the matrix coefficients~$A_{i,j}$.

The following simple observation is crucial for what follows.

\begin{proposition}
\label{prop:NoLiftAQ}
Let $k = \kbar$ be an algebraically closed field of characteristic $p\geq 5$. Let $V_5 = \VV \otimes k$. Let $Q \in \Sym^2(\wedge^2 V_5^\vee)$ be such that $Y = \Grass \cap Q$ is non-singular of dimension~$5$. If $A \in \frgl_{5,k}$ is a non-scalar matrix such that the pair $(A,Q)$ satisfies~\eqref{eq:MatrixEq}, this solution does not admit a lift to characteristic~$0$, i.e., there does not exist a domain~$R$ with fraction field of characteristic~$0$ and a morphism $R \to k$ plus lifts $\tilde{A} \in \frgl_{5,R}$ of~$A$ and $\tilde{Q} \in \Sym^2(\wedge^2 (\VV\otimes R)^\vee)$ of~$Q$ such that the pair $(\tilde{A},\tilde{Q})$ again satisfies equation~\eqref{eq:MatrixEq}.
\end{proposition}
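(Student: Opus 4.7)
The plan is to argue by contradiction against the characteristic-zero vanishing $H^0(Y,\scrT_Y)=0$ for smooth Gushel--Mukai fivefolds, which is the characteristic-$0$ case of Proposition~\ref{prop:CohomTY} and which is known from \cite{KuzPerry}, Appendix~A. Suppose such a lift $(R,\tilde A,\tilde Q)$ exists; replacing $R$ by its localisation at $\frp := \Ker(R \to k)$, I may assume $R$ is local with residue field $\kappa \hookrightarrow k$ and fraction field $K$ of characteristic~$0$.

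The first technical step is to show that the generic fibre $\tilde Y_K := \Grass_K \cap \tilde Q_K$ is still a smooth Gushel--Mukai fivefold. The locus $U \subset \Sym^2(\wedge^2 \VV^\vee)_{\ZZ[1/2]}$ where $\Grass \cap Q$ is smooth of dimension~$5$ is open, and by the hypothesis on $Y$ the morphism $\tilde Q \colon \Spec R \to \Sym^2(\wedge^2 \VV^\vee)$ sends the closed point of $\Spec R$ into~$U$. Because $R$ is local, the preimage $\tilde Q^{-1}(U)$ is an open subset of $\Spec R$ containing the closed point, hence coincides with~$\Spec R$; in particular the generic point of $\Spec R$ maps into~$U$, so $\tilde Y_K$ is smooth of dimension~$5$ over~$K$.

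I would then check that $\tilde A_K \in \frgl_{5,K}$ is still non-scalar: a witness of non-scalarity of $A$ over~$k$ --- a nonzero off-diagonal entry, or a nonzero difference of two diagonal entries --- already lives in the subfield $\kappa$, so the corresponding entry or difference of $\tilde A$ is a unit of the local ring~$R$, and in particular is nonzero over~$K$. Passing to an algebraic closure~$\bar K$, we obtain a non-scalar matrix $\tilde A_{\bar K} \in \frgl_{5,\bar K}$ annihilating the defining quadric $\tilde Q_{\bar K}$ of the smooth Gushel--Mukai fivefold $\tilde Y_{\bar K}$ over the characteristic-$0$ field~$\bar K$. Proposition~\ref{prop:AQ=0}, in the direction \ref{AQ=0}$\Rightarrow$\ref{AQ=lambdaQ}$\Rightarrow$\ref{H0TYneq0}, then yields $H^0(\tilde Y_{\bar K}, \scrT_{\tilde Y_{\bar K}}) \neq 0$, contradicting the characteristic-$0$ vanishing.

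I do not expect a substantive obstacle here: the argument is essentially a reformulation of Proposition~\ref{prop:AQ=0} combined with openness of the smooth-Gushel--Mukai locus. The only mildly delicate point is to ensure that $\tilde Q$ really defines a \emph{smooth} Gushel--Mukai variety over~$K$ (so that the characteristic-$0$ non-existence result can be invoked), and the reduction to a local base~$R$ is precisely the manoeuvre that handles this.
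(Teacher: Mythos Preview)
Your argument is correct and follows essentially the same approach as the paper's proof: assume a lift exists, observe that the generic fibre $\tilde Y_K$ is a smooth GM fivefold over a characteristic-$0$ field (because it specializes to the smooth~$Y$), and then derive a contradiction with the characteristic-$0$ vanishing $H^0(\tilde Y_K,\scrT_{\tilde Y_K})=0$ from~\cite{KuzPerry}. You supply a bit more detail than the paper on the localization step and on why $\tilde A_K$ remains non-scalar, but the underlying idea is the same.
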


\begin{proof}
Suppose such a lift $(\tilde{A},\tilde{Q})$ exists. Let $K$ be the fraction field of~$R$. Then $\tilde{Y} = \Grass \cap \tilde{Q}$ defines a $5$-dimensional Gushel--Mukai variety over~$K$ (non-singular of dimension~$5$ because it specializes to~$Y$, which is non-singular of dimension~$5$), and $\tilde{A}$ gives a non-zero vector field on~$\tilde{Y}$. This is impossible, because by the results in \cite{KuzPerry}, Appendix~A, Gushel--Mukai varieties over~$K$ (of characteristic~$0$) have no nonzero global vector fields.
\end{proof}

\subsection{}
{}From now on, $k = \kbar$ denotes an algebraically closed field of characteristic $p\geq 5$. As before, let $V_5$ be a $5$-dimensional $k$-vector space, and consider a quadric $Q \in \Sym^2(\wedge^2 V_5^\vee)$ such that $Y = \Grass \cap Q$ is non-singular of dimension~$5$. (We continue to use the same symbol~$Q$ for both the homogeneous polynomial of degree~$2$ and the quadric in $\PP = \PP(\wedge^2 V_5)$ that it defines.)

\begin{lemma}
\label{lem:DiagA}
With $Q$ as above, if there exists a non-scalar $A \in \frgl(V_5)$ with $A \circ Q = 0$, there also exists a diagonalizable non-scalar $A \in \frgl(V_5)$ with $A \circ Q = 0$.
\end{lemma}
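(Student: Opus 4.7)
The plan is to prove the lemma by reducing via Jordan decomposition in $\frgl(V_5)$ to a residual nilpotent case, which is then to be handled using the truncated exponential available for $p \geq 5$. Write $\frh = \{B \in \frgl(V_5) : B \circ Q = 0\}$; by the identity $[A,B] \circ Q = A \circ (B \circ Q) - B \circ (A \circ Q)$ this is a Lie subalgebra of $\frgl(V_5)$. Let $\alpha \colon \frgl(V_5) \to \End(W)$ denote the derivation of the $\GL(V_5)$-action on $W = \Sym^2(\wedge^2 V_5^\vee)$.

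The main step is to show that $\frh$ is stable under Jordan decomposition: if $A = A_s + A_n$ is the decomposition of $A \in \frh$, then $A_s \in \frh$ (and hence also $A_n \in \frh$). The key input is the compatibility of Jordan decomposition with morphisms of algebraic groups, which gives $\alpha(A)_s = \alpha(A_s)$ in $\End(W)$. I would decompose $W = \bigoplus_\lambda W_\lambda$ into generalized eigenspaces of $\alpha(A)$ and write $Q = \sum Q_\lambda$ accordingly; the equation $\alpha(A)(Q) = 0$ then forces $(\lambda \mathrm{Id} + \alpha(A)_n|_{W_\lambda})(Q_\lambda) = 0$ for every $\lambda$, and since the operator in parentheses is invertible for $\lambda \neq 0$ (its inverse is a polynomial in the nilpotent operator $\alpha(A)_n|_{W_\lambda}$), one obtains $Q_\lambda = 0$ for $\lambda \neq 0$. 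Thus $Q \in W_0 = \ker \alpha(A_s)$, i.e., $A_s \in \frh$. Given now a non-scalar $A \in \frh$: if $A_s$ is non-scalar then $A_s$ is diagonalizable (since $k$ is algebraically closed) and we are done; otherwise $A_s$ is a scalar, and the identity $a \cdot \mathrm{Id} \circ Q = -4a \cdot Q$ (from the proof of Proposition~\ref{prop:AQ=0}) shows that the only scalar in $\frh$ is $0$, so $A_s = 0$ and $A = A_n$ is a nonzero nilpotent element of $\frh$.

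The remaining nilpotent case is the main obstacle. Here my plan is to exploit $p \geq 5$: since $A^5 = 0$ in $\frgl(V_5)$, the truncated exponential $\exp(tA) = \sum_{k=0}^{4} (tA)^k/k!$ is well-defined, and because $\alpha(A)^n(Q) = \alpha(A)^{n-1}\bigl(\alpha(A)(Q)\bigr) = 0$ for $n \geq 1$ one gets $\exp(tA) \cdot Q = Q$, embedding a nontrivial $\mathbb{G}_a$ into the stabilizer group $H = \mathrm{Stab}_{\GL(V_5)}(Q)$. The goal is then to extract a nontrivial torus from $H$, whose Lie algebra would supply the required diagonalizable non-scalar element of $\frh$. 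A Jacobson--Morozov $\mathfrak{sl}_2$-triple $(A,h,f)$ in $\mathfrak{sl}(V_5)$ (valid for $p \geq 5$) yields a semisimple nonzero $h$, but $h$ does not lie in $\frh$ unless $Q$ happens to be an $h$-weight vector, which is not automatic; so a direct application of Jacobson--Morozov will not suffice. The most promising route seems to be to work at the level of the algebraic group $H$ itself, combining $\mathbb{G}_a \subset H$ with a structural argument -- for instance a GIT-type stability statement for $[Q] \in \PP\bigl(W/I(\Grass)\bigr)$ reflecting smoothness of $Y = \Grass \cap Q$ -- so as to force the reductive quotient of $H^0$ to contain a nontrivial torus.
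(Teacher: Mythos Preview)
Your Jordan-decomposition reduction is correct and coincides with the paper's argument: the paper also passes from $A$ to its semisimple part $A' = A_s$, using exactly the observation that on each summand of the generalized-eigenspace decomposition of~$W$ the operator $\alpha(A_s)$ acts by a scalar and $\alpha(A-A_s)$ is nilpotent, so $A \circ Q = 0$ forces $A_s \circ Q = 0$; and the same computation with $a\cdot\id$ shows $A_s = 0$ once $A_s$ is scalar.

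The genuine gap is in the nilpotent case. First, the claim $\exp(tA)\cdot Q = Q$ is not justified in characteristic~$p$. What you need is that the group element $\rho(\exp(tA)) \in \GL(W)$ fixes~$Q$, where $W = \Sym^2(\wedge^2 V_5^\vee)$; but $\rho(\exp(tA))$ is \emph{not} $\exp\bigl(t\,\alpha(A)\bigr)$ in general, because $W$ is a quotient of~$(V_5^\vee)^{\otimes 4}$ and $\alpha(A)$ can have nilpotence index as large as~$17$, far beyond~$p$ for $p\in\{5,7,11,13\}$. Equivalently: from $\alpha(A)(Q)=0$ you correctly deduce that $f(t)=\rho(\exp(tA))\cdot Q$ has $f'(t)=0$, but in characteristic~$p$ this only says $f$ is a polynomial in~$t^p$, not that $f$ is constant. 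So you have not embedded a $\GG_a$ in the stabilizer~$H$; at best you obtain an infinitesimal subgroup such as~$\alpha_{p^n}$, which encodes nothing beyond $A\in\mathrm{Lie}(H)=\frh$. Second, even granting $\GG_a\subset H$, you acknowledge that extracting a torus is left open. A GIT stability statement for smooth~$Y$ would in fact force the stabilizer to be finite and thus exclude the nilpotent~$A$ outright rather than produce a diagonalizable one --- but no such statement is available here, and proving it would essentially amount to the vanishing $H^0(Y,\scrT_Y)=0$ toward which this lemma is a step.

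The paper resolves the nilpotent case by an entirely different and much more concrete device: Proposition~\ref{prop:NoLiftAQ}, which says that a pair $(A,Q)$ with $A\circ Q=0$, $A$ non-scalar, and $Y=\Grass\cap Q$ smooth cannot lift to characteristic~$0$ (by the known characteristic~$0$ vanishing of~$H^0(\tilde Y,\scrT_{\tilde Y})$). A nilpotent~$A$ in Jordan form has entries in~$\{0,1\}$ and lifts trivially to~$\ZZ$; the linear system $A\circ Q=0$ then has integer coefficient matrix, and one checks for the finitely many nilpotent Jordan types (trivial blocks may be ignored) that every solution~$Q$ over~$k$ lifts. This contradicts Proposition~\ref{prop:NoLiftAQ}, so the nilpotent case does not occur at all.
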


\begin{proof}
Choose a basis of~$V_5$ such that $A$ is in Jordan normal form, say
\[
A = \left(\begin{smallmatrix} B_1 &&&\\ &B_2&& \\ &&\ddots& \\ &&&B_r\end{smallmatrix}\right)\, ,
\]
with $B_i$ a Jordan block with eigenvalue~$\lambda_i$. (Here of course $1\leq r\leq 5$, and if $r=5$ then $A$ is a diagonal matrix and there is nothing left to prove.) Let $V_5 = E_1 \oplus \cdots \oplus E_r$ be the corresponding decomposition of~$V_5$ into generalised eigenspaces, and let $A^\prime$ be the transformation of~$V_5$ given by $\lambda_i \cdot \mathrm{Id}$ on~$E_i$.

We have an induced decomposition
\[
\wedge^2 V_5^\vee = U_1 \oplus \cdots \oplus U_s\, ,\qquad s = r + \binom{r}{2}\, ,
\]
where $U_1,\ldots,U_s$ are the spaces $\wedge^2 E_i^\vee$ and the $E_i^\vee \otimes E_j^\vee$ for $i<j$. This induces a decomposition
\begin{equation}
\label{eq:Sym2Dec}
\Sym^2\bigl(\wedge^2 V_5^\vee\bigr) = \bigl(\mathop{\oplus}\limits_i\; \Sym^2(U_i)\bigr)\, \bigoplus\; \bigl(\mathop{\oplus}\limits_{l<m}\; U_l \otimes U_m\bigr)\, .
\end{equation}
In the Lie algebra representation of $\frgl_5$ on~$\Sym^2\bigl(\wedge^2 V_5^\vee\bigr)$, the actions of $A$ and of~$A^\prime$ both respect the decomposition~\eqref{eq:Sym2Dec}. Moreover, on each summand $A^\prime$ acts by a scalar multiplication and $A - A^\prime$ is nilpotent. It follows that~$Q$, which by assumption is a solution of the equation $A \circ Q = 0$, is also a solution of the equation $A^\prime \circ Q = 0$.

The only thing left to show is that $A^\prime$ is not a scalar multiple of the identity. So suppose $A$ is of the form
\[
A = \left(\begin{smallmatrix} \lambda & * &&&\\ & \lambda & * && \\ &&\lambda&*&\\ &&&\lambda&* \\ &&&&\lambda \end{smallmatrix}\right)\, ,
\]
where the four off-diagonal entries marked as~$*$ are either $0$ or~$1$, and all remaining coefficients are zero. In this case the matrix of~$A$ in the Lie algebra action on $\Sym^2\bigl(\wedge^2 V_5^\vee\bigr)$ is upper triangular, with all diagonal coefficients equal to~$-4\lambda$. The assumption that there is a nonzero~$Q$ with $A \circ Q = 0$ therefore implies that $\lambda = 0$. In the remaining cases (choices of $* \in \{0,1\}$), one checks that for any solution~$Q$ of $A \circ Q = 0$ the pair $(A,Q)$ admits a lift to characteristic~$0$, violating Proposition~\ref{prop:NoLiftAQ}. (Of course, this last calculation can be simplified by using the decomposition~\eqref{eq:Sym2Dec}, and one readily sees that trivial Jordan blocks can be ignored, so that only a couple of cases have to be checked.)
\end{proof}

\subsection{}
By combining Proposition~\ref{prop:AQ=0} and Lemma~\ref{lem:DiagA}, to prove Proposition~\ref{prop:CohomTY} it suffices to show that if $Y = \Grass \cap Q$ is non-singular of dimension~$5$, there does not exist a nonzero diagonal matrix $A = \diag(-a_1,\ldots,-a_5)$ such that $A \circ Q = 0$. With respect to the basis $x_{ij}x_{lm}$ of $\Sym^2\bigl(\wedge^2 V_5^\vee\bigr)$, the action of~$A$ is diagonal:
\[
A \circ x_{ij}x_{lm} = (a_i+a_j+a_l+a_m) \cdot x_{ij}x_{lm}\, .
\]

As above, let $\scrM$ be the set of monomials $x_{ij}x_{lm}$ (with $(i,j) \leq_{\mathrm{lex}} (l,m)$). For $Q \in \Sym^2\bigl(\wedge^2 V_5^\vee\bigr)$, let $\scrM(Q) \subset \scrM$ be the subset of all monomials that occur in~$Q$ with nonzero coefficient. For $A = \diag(-a_1,\ldots,-a_5)$, the equation $A \circ Q = 0$ then becomes the system of linear equations
\begin{equation}
\label{eq:SystEqs}
a_i+a_j+a_l+a_m = 0 \qquad \text{for all $x_{ij}x_{lm} \in \scrM(Q)$.}
\end{equation}
Let $M_Q$ be the matrix of size $\# \scrM(Q) \times 5$ with integral coefficients such that \eqref{eq:SystEqs} is the equation
\begin{equation}
\label{eq:MQA=0}
M_Q \left(\begin{smallmatrix} a_1\\ a_2 \\a_3 \\ a_4\\ a_5 \end{smallmatrix}\right) = 0\, .
\end{equation}
Concretely: if $x_{12}x_{34} \in \scrM(Q)$ we get a row $(1,1,1,1,0)$ in~$M_Q$; if $x_{12}x_{13} \in \scrM(Q)$ we get a row $(2,1,1,0,0)$; if $x_{12}^2 \in \scrM(Q)$ we  get a row $(1,1,0,0,0)$; etc. In this last case, note that $2 \in k^*$ by assumption, so we can use the row $(1,1,0,0,0)$ rather than $(2,2,0,0,0)$. Observe that all possible rows are of one of the following three types:
\begin{itemize}
\item rows of type $(1,1,0,0,0)$, where the two coefficients~$1$ can be placed arbitrarily (ten rows of this type);
\item rows of type $(2,1,1,0,0)$, again with all permuted versions (thirty rows of this type);
\item rows of type $(1,1,1,1,0)$, again with all permuted versions (five rows of this type).
\end{itemize}
We denote by $E$ the matrix of size $45 \times 5$ that has all these rows. By construction, $M_Q$ is a sub-matrix of~$E$.

\begin{lemma}
\label{lem:rk/kge4}
Let $N$ be a sub-matrix of~$E$ of size $5\times 5$, and let $p$ be a prime number with $p\geq 5$. If $\rk_\QQ(N) = 5$ then $\rk_{\FF_p}(N) \geq 4$.
\end{lemma}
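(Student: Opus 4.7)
My plan is to show that $p^2 \nmid \det_\QQ(N)$ whenever $\det_\QQ(N) \neq 0$. Writing $d_1 \mid \cdots \mid d_5$ for the elementary divisors of $N$, the condition $\rk_{\FF_p}(N) \leq 3$ is equivalent to $p \mid d_4$; and since $d_4 \mid d_5$, this forces $p^2 \mid d_4 d_5 \mid \det_\QQ(N)$. Hence a nondivisibility bound on $\det_\QQ(N)$ will imply the lemma.

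Two a priori restrictions on $\det_\QQ(N)$ bring us most of the way. A \emph{parity argument}: every row of $E$ has even coordinate sum (equal to~$2$ for type~1 rows and to~$4$ for types~2 and~3), so modulo~$2$ all rows of $N$ lie in the $4$-dimensional even-weight subspace of $\FF_2^5$; thus $\rk_{\FF_2}(N) \leq 4$ and $\det_\QQ(N)$ is even. \emph{Hadamard's inequality}: each row of $E$ has $\ell_2$-norm at most~$\sqrt{6}$ (attained only by type~2 rows), so
\[
|\det_\QQ(N)| \leq (\sqrt{6})^5 = 36\sqrt{6} < 89.
\]

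For $p \geq 7$ the bound is then immediate: $p^2$ is odd and $2p^2 \geq 98 > 88$, so no nonzero even integer of absolute value at most~$88$ is divisible by~$p^2$, finishing the argument. For $p = 5$, the only even integer of absolute value at most~$88$ divisible by~$25$ is $\pm 50$, so the task reduces to showing $\det_\QQ(N) \neq \pm 50$ for every nonsingular~$N$.

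Refining Hadamard according to the type composition $(n_1, n_2, n_3)$ of the rows of $N$ gives $\prod_i \|r_i\|_2 = \sqrt{2}^{n_1} \sqrt{6}^{n_2} 2^{n_3}$, and a direct computation shows that this product exceeds~$50$ only when $(n_1, n_2, n_3)$ lies in $\{(0,5,0),\, (1,4,0),\, (0,4,1),\, (0,3,2)\}$. For these four remaining compositions I would complete the argument by an explicit case analysis, exploiting the $\frS_5$-action that simultaneously permutes the five coordinates and the $45$ rows of $E$ in order to reduce to a small number of orbit representatives; if necessary this residual verification can be carried out by a short computer-algebra computation. This last case analysis for $p = 5$ is the main obstacle, since the Hadamard bound alone is no longer sharp enough to conclude.
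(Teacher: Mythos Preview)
Your approach is correct and genuinely different from the paper's. The paper gives no conceptual argument at all: its entire proof is a Magma computation over all $5\times 5$ sub-matrices of~$E$. Your elementary-divisor reduction (if $\rk_{\FF_p}(N)\leq 3$ then $p\mid d_4\mid d_5$, hence $p^2\mid\det_\QQ(N)$), combined with the parity observation (all rows of~$E$ have even coordinate sum, so $2\mid\det_\QQ(N)$) and the Hadamard bound $|\det_\QQ(N)|\leq 36\sqrt{6}<89$, gives a clean, computation-free proof for all $p\geq 7$. This is a real improvement over the paper's treatment.

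For $p=5$ your refinement by type composition is also correct: the product of row norms exceeds~$50$ only for $(n_1,n_2,n_3)\in\{(0,5,0),(1,4,0),(0,4,1),(0,3,2)\}$, so only these four compositions could conceivably yield $\det_\QQ(N)=\pm 50$. You are honest that the residual case analysis is not carried out; this is the one gap in the proposal. In practice the $\frS_5$-symmetry cuts the search space drastically (for instance, in composition $(1,4,0)$ one may fix the type-1 row to be $(1,1,0,0,0)$), and a short machine check---far smaller than the paper's global enumeration---would finish it. So what the paper buys is a uniform one-line appeal to Magma; what your approach buys is a transparent argument for $p\geq 7$ and a much smaller residual computation for $p=5$, at the cost of leaving that last step unperformed.
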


\begin{proof}
This is verified using Magma. The code we use is available on the second author's webpage, or upon request.
\end{proof}

\begin{proposition}
\label{prop:MQProps}
Let $Q \in \Sym^2\bigl(\wedge^2 V_5^\vee\bigr)$ be such that $Y = \Grass \cap Q$ is non-singular of dimension~$5$ and suppose there exists a nonzero diagonal matrix $A = \diag(-a_1,\ldots,-a_5)$ such that $A \circ Q = 0$. Then the following properties hold.
\begin{enumerate}
\item\label{rk/Q} The matrix $M_Q$ has rank~$5$ over~$\QQ$.

\item\label{5x5sub} The matrix $M_Q$ contains a sub-matrix~$N$ of size $5\times 5$ such that $N$ has rank~$5$ over~$\QQ$ and has rank~$4$ over~$\FF_p$.

\item\label{SingPts} If $M_Q$ does not have a row $(1,1,0,0,0)$ then $M_Q$ has a row of the form $(2,1,*,*,*)$ or $(1,2,*,*,*)$; likewise for all rows of type $(1,1,0,0,0)$ with coefficients~$1$ in different positions.

\item\label{Eigenval} Not all eigenvalues of~$A$ are distinct, i.e., there exist $1\leq i < j \leq 5$ with $a_i = a_j$.
\end{enumerate}
\end{proposition}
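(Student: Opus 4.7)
The plan for all four parts revolves around Proposition~\ref{prop:NoLiftAQ}: any lift of the pair $(A,Q)$ to characteristic zero would yield a smooth Gushel--Mukai fivefold in characteristic zero carrying a nonzero global vector field, contradicting \cite{KuzPerry}, Appendix~A. For Part~(1) I will argue by contrapositive: assume $\rk_\QQ(M_Q) \leq 4$ and build an explicit lift. Every row of $M_Q$ has entries summing to $2$ or~$4$, so $M_Q\mathbf{1}\neq 0$, whence $\mathbf{1} \notin \ker_\QQ(M_Q)$. I pick a nonzero $v \in \ker_\ZZ(M_Q)$; it is automatically not a $\QQ$-multiple of~$\mathbf{1}$, so the diagonal matrix $\tilde A := \diag(-v_1,\ldots,-v_5)$ is non-scalar over~$\QQ$. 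I define a lift $\tilde Q$ of $Q$ over $R = \ZZ_{(p)}$ by lifting each coefficient of a monomial in $\scrM(Q)$ to a unit of~$R$ and setting all remaining coefficients to zero; since $\mathrm{row}(m)\cdot v = 0$ in $\ZZ$ for every $m \in \scrM(Q)$, this ensures $\tilde A\circ \tilde Q = 0$ over~$R$. Openness of the smooth locus in a proper flat family over a local base (and smoothness of~$Y$) then implies that the generic fibre $\tilde Y_K = \Grass_K \cap \tilde Q_K$ is smooth of dimension~$5$; and since the zero locus of $\xi_{\tilde A}$—the locus of $\tilde A$-invariant $2$-planes—has dimension at most~$4$ for any non-scalar diagonal~$\tilde A$, the restriction of $\xi_{\tilde A}$ is a nonzero global vector field on $\tilde Y_K$, giving the required contradiction and proving $\rk_\QQ(M_Q) = 5$.

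Part~(2) will be a formal consequence: Part~(1) produces a $5\times 5$ submatrix $N \subset M_Q$ with $\rk_\QQ(N) = 5$, so Lemma~\ref{lem:rk/kge4} yields $\rk_{\FF_p}(N) \geq 4$; on the other hand, $0 \neq a \in \ker_k(M_Q)$ and the $\FF_p$-nature of $M_Q$ force $\rk_{\FF_p}(M_Q) \leq 4$, hence $\rk_{\FF_p}(N) \leq 4$ and thus $\rk_{\FF_p}(N) = 4$. For Part~(3) I will perform a local smoothness check at the Pl\"ucker point $p_0 = [e_i \wedge e_j]$. The value $Q(e_i \wedge e_j)$ is the coefficient of $x_{ij}^2$ in $Q$, so $p_0 \in Y$ exactly when the $(1,1,0,0,0)$ row for the pair $\{i,j\}$ is missing from~$M_Q$. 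Working in the affine chart $x_{ij}\neq 0$ with coordinates $y_{kl} = x_{kl}/x_{ij}$, the tangent space $T_{p_0}\Grass \cong \Hom(\langle e_i, e_j\rangle,\,V_5/\langle e_i, e_j\rangle)$ is spanned by the six directions $\partial/\partial y_{ik}$ and $\partial/\partial y_{jk}$ for $k \notin \{i,j\}$, and the differential of~$Q$ at $p_0$ restricts to $2\sum_{\alpha\neq(i,j)} c_{(ij),\alpha}\,dy_\alpha$; so smoothness at $p_0$ will force some $c_{(ij),(ik)}$ or $c_{(ij),(jk)}$ to be nonzero, i.e.\ a row of type $(2,1,\ast,\ast,\ast)$ with $2$ in position~$i$ and $1$ in position~$j$, or of type $(1,2,\ast,\ast,\ast)$ with $1$ in position~$i$ and $2$ in position~$j$, to occur in~$M_Q$.

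For Part~(4) my first step will be to descend the kernel to $\FF_p$: since $\ker_k(M_Q) \subset \ker_k(N)$ are both $1$-dimensional they coincide, and because $N$ has integer entries with $\rk_{\FF_p}(N) = 4$, one has $\ker_k(N) = \ker_{\FF_p}(N) \otimes_{\FF_p} k$. Hence $a = c\cdot a_0$ for some $c \in k^\times$ and some nonzero $a_0 \in \FF_p^5$, and it suffices to show that two coordinates of $a_0$ agree. Assuming the contrary, Part~(3) forces, for each pair $(i,j)$, one of the three equations
\[
a_{0,i} + a_{0,j} = 0, \qquad 2a_{0,i} + a_{0,j} + a_{0,k} = 0, \qquad a_{0,i} + 2a_{0,j} + a_{0,k} = 0
\]
to hold in $\FF_p$ for some $k \notin \{i,j\}$. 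This ten-pair system, combined with smoothness of $Y$ at non-Pl\"ucker points (which further restricts which monomials can appear in $\scrM(Q)$), must then be shown to have no solution with pairwise distinct $a_{0,1},\ldots,a_{0,5}$. The cleanest implementation will be a Magma refinement of Lemma~\ref{lem:rk/kge4}: enumerate the finitely many $5 \times 5$ submatrices $N$ of $E$ with $\rk_\QQ(N) = 5$ and $\rk_{\FF_p}(N) = 4$ and verify that the line $\ker_{\FF_p}(N) \subset \FF_p^5$ is always spanned by a vector with two equal entries. This computational step is the main obstacle: Part~(3) alone is already compatible with examples such as $a_0 = (0,1,2,3,4) \in \FF_5^5$, so one genuinely needs either the extra input from smoothness at non-coordinate points or such a computer check to conclude.
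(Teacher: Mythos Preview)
Your arguments for Parts~(1)--(3) are correct and essentially coincide with the paper's proof; the paper phrases Part~(1) more tersely (invoking the Hermite normal form and Proposition~\ref{prop:NoLiftAQ}) but the underlying logic is identical, and Parts~(2) and~(3) match exactly.

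Part~(4), however, has a genuine gap. You correctly reduce to showing that the kernel vector $a_0 \in \FF_p^5$ cannot have pairwise distinct entries, but your proposed computational verification is neither carried out nor clearly feasible: you would need to show that for \emph{every} pair $(N,p)$ with $\rk_\QQ(N)=5$ and $\rk_{\FF_p}(N)=4$, the line $\ker_{\FF_p}(N)$ is spanned by a vector with a repeated entry, and you give no evidence that this holds. Indeed, you yourself observe that the constraints from Part~(3) alone are compatible with $a_0 = (0,1,2,3,4)$ in~$\FF_5$.

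The paper's proof of Part~(4) is entirely different and requires no computation. It is a Poincar\'e--Hopf style argument: if all $a_i$ are distinct, the vector field~$\xi_A$ on~$\Grass$ has precisely the ten coordinate points~$P_{ij}$ as its (reduced) zero locus. Since $A \circ Q = 0$, the restriction $\xi_A|_Y$ is a genuine section of~$\scrT_Y$, and its zero scheme consists of those~$P_{ij}$ lying on~$Y$, each with multiplicity one. By \cite{Fulton}, Section~14.1, this forces $0 \leq \deg\bigl(c_5(\scrT_Y)\bigr) \leq 10$. But $\deg\bigl(c_5(\scrT_Y)\bigr)$ equals the topological Euler characteristic of~$Y$, which is~$-12$ (readable from the Hodge diamond), a contradiction. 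This argument is short, conceptual, and works uniformly for all $p \geq 5$; it is also needed as an input to the later Magma filtering (Section~5.7 of the paper), so a computational proof of Part~(4) along your lines would risk circularity with that step.
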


\begin{proof}
\ref{rk/Q} If $\rk_\QQ(M_Q) < 5$ then by looking at the Hermite normal form of~$M_Q$ we see that \eqref{eq:MQA=0} has a non-trivial solution for $A = \diag(-a_1,\ldots,-a_5)$ over~$k$ such that $(A,Q)$ lifts to a solution in characteristic~$0$; this is impossible by Proposition~\ref{prop:NoLiftAQ}. 

For~\ref{5x5sub}, choose any sub-matrix~$N$ of~$M_Q$ of size $5\times 5$ with $\rk_\QQ(N) = 5$. By Lemma~\ref{lem:rk/kge4} together with the fact that $N \cdot (a_1,\ldots,a_5)^{\mathsf{t}} = 0$ we have $\rk_{\FF_p}(N) = 4$.

For~\ref{SingPts}, suppose the row $(1,1,0,0,0)$ does not occur in~$M_Q$, which means that the monomial $x_{1,2}^2$ has zero coefficient in~$Q$. The point with Pl\"ucker coordinates $(1:0:\cdots:0)$ is therefore a point of~$Y$. The tangent space to~$\Grass$ at this point is given by the equations $x_{34} = x_{35} = x_{45} = 0$. As $Y$ is non-singular, it follows that at least one of the monomials $x_{12}x_{1i}$ or $x_{12}x_{2i}$ with $i \in \{3,4,5\}$ has to occur in~$Q$ with nonzero coefficient. This means that $M_Q$ has a row of the form $(2,1,*,*,*)$ or $(1,2,*,*,*)$. The same argument applies to rows of type $(1,1,0,0,0)$ with coefficients~$1$ in different positions.

For \ref{Eigenval}, let $P_{ij}$ ($1\leq i<j \leq 5$) be the ten points of~$\Grass$ for which only one Pl\"ucker coordinate is non-zero. (So $P_{12} = (1:0:\ldots:0)$, etc.) If $A = \diag(-a_1,\ldots,-a_5)$ and all~$a_i$ are distinct, the zero scheme of the vector field $\xi = \xi_A$ on~$\Grass$ is the union of the points~$P_{ij}$. (Note that the topological Euler characteristic of~$\Grass$ is~$10$.) Therefore, the zero scheme~$Z(\xi|_Y)$ of~$\xi|_Y$ is the sum of the points~$P_{ij}$ that lie on~$Y$. (Note that $Z(\xi|_Y)$ has length~$1$ at each of these points because $Z(\xi|_Y)$ is a subscheme of~$Z(\xi)$.) On the other hand, our assumption that $A \circ Q = 0$ gives that $\xi|_Y$ is a vector field on~$Y$, and of course the zero scheme of~$\xi|_Y$ as a section of~$\scrT_Y$ is the same as the zero scheme of~$\xi|_Y$ as a section of~$\scrT_{\Grass}|_Y$. Using the results in \cite{Fulton}, Section~14.1, we find that $0 \leq \deg\bigl(c_5(Y)\bigr) \leq 10$. Because the topological Euler characteristic of~$Y$ is~$-12$ this gives a contradiction. (Cf.\ \cite{Fulton}, Example~18.3.7(c).)
\end{proof}

\subsection{}
The strategy of proof is now as follows. Using Magma, we inspect all $5\times 5$ sub-matrices~$N$ of~$E$ with $\rk_\QQ(N) = 5$. For each such matrix~$N$, we store all prime numbers $p\geq 5$ such that $\rk_{\FF_p}(N) < 5$; these are found as the prime numbers that divide the leading coefficients in the Hermite normal form of~$N$. For $(N,p)$ obtained in this way there is, up to scalars, a unique $A = \diag(-a_1,\ldots,-a_5)$ in~$\frgl_{5,k}$ for which $N \cdot (a_1,\ldots,a_5)^{\mathsf{t}} = 0$. (Use Lemma~\ref{lem:rk/kge4}.) Once~$A$ has been determined, we can form the sub-matrix $E_A$ of~$E$ consisting of all rows $X = (x_1,\ldots,x_5)$ of~$E$ for which $X \cdot (a_1,\ldots,a_5)^{\mathsf{t}} = 0$. We store all data $(N,p,A,E_A)$ that are found in this process.

Each row of the matrix~$E$ represents one of the monomials $x_{ij}x_{lm}$. If $(N,p,A,E_A)$ is as just described, let $\scrM_A \subset \scrM$ be the set of monomials that correspond to the rows in~$E_A$, so that $E_A$ is a matrix of size $\# \scrM_A \times 5$. Consider all quadratic homogeneous polynomials~$Q$ that can be written as a linear combination $Q = \sum_{X \in \scrM_A}\; c_X \cdot X$. These are parametrized by an affine space~$\AA$ of dimension~$\# \scrM_A$. In this way, each four-tuple $(N,p,A,E_A)$ gives rise to a family of quadrics $\{Q_t\}_{t\in \AA}$ such that the vector field~$\xi_A$ on~$\Grass$ restricts to a vector field on $Y_t = \Grass \cap Q_t$, for every~$t$.

Our aim is to prove that for every non-singular $5$-dimensional GM variety $Y = \Grass \cap Q$ over~$k$ as in Section~\ref{subsec:XYGrSetup} (still with $\mathrm{char}(k) = p \geq 5$) we have $H^0(Y,\scrT_Y) = 0$. Suppose this fails, i.e., there exists such a $Y = \Grass \cap Q$ with $H^0(Y,\scrT_Y) \neq 0$. We may assume $Q \in \Ker(\mu)$. By Proposition~\ref{prop:AQ=0}, Lemma~\ref{lem:DiagA} and Proposition~\ref{prop:MQProps}, there then exists an entry $(N,p,A,E_A)$ in our list such that the matrix~$M_Q$ corresponding to~$Q$ is a sub-matrix of~$E_A$. In particular, $E_A$ satisfies condition~\ref{SingPts} in Proposition~\ref{prop:MQProps} (with $M_Q$ replaced by~$E_A$), and by part~\ref{Eigenval} of that proposition, not all~$a_i$ are distinct. In the above process we may therefore discard all $(N,p,A,E_A)$ for which these conditions are not satisfied. For each of the remaining cases, let $K_A = \FF_p(t_X)_{X \in \scrM_A}$ be the function field of~$\AA$, and let $\scrQ = \sum_{X \in \scrM_A}\; t_X \cdot X$ be the generic member of the family of polynomials parametrized by~$\AA$. We are done if we can verify that in all cases $\Grass \cap \scrQ$ over~$K_A$ is singular. Indeed, our~$Y$ is isomorphic to a fibre in one of these families $\{\Grass \cap Q_t\}_{t \in \AA}$, and if $Y$ is non-singular, so is the generic fibre of that family.

\subsection{}
We carry out a calculation in Magma according to the strategy just described. (For the code, see the second author's webpage.) For $p \geq 11$, there are no solutions $(N,p,A,E_A)$ such that conditions~\ref{SingPts} and~\ref{Eigenval} in Proposition~\ref{prop:MQProps} are satisfied. This therefore already settles Proposition~\ref{prop:CohomTY} in case $p\geq 11$.

For $p=5$ there are, up to a renumbering or rescaling of the coordinates, four families of quadrics that require further checking, and for $p=7$ there is one family that requires inspection. These families are given below. In each case, we shall exhibit a singular point of the generic fibre $\scrQ \cap \Grass$ over an algebraic extension of the function field~$K_A$. (Notation as above.) We use Pl\"ucker coordinates in lexicographical order, so $(x_{12}:x_{13}:x_{14}:x_{15}:x_{23}:x_{24}:x_{25}:x_{34}:x_{35}:x_{45})$.

The families that are found for $p=5$ are the following ones.
\begin{enumerate}
\item\label{fam1} The family of quadrics given by
\begin{multline*}
\qquad t_1\, x_{14}x_{24} + t_2\, x_{23} x_{34} + t_3\, x_{25}^2 + t_4\, x_{15} x_{35} + t_5\, x_{14} x_{45} + t_6\, x_{34} x_{35}\\
+ t_7\, x_{12} x_{23} + t_8\, x_{12} x_{35} + t_9\, x_{13} x_{25} + t_{10}\, x_{15} x_{23} + t_{11}\, x_{13}^2 = 0 \, ,\qquad
\end{multline*}
with $A = \diag(-2,0,-3,-4,0)$. (Note that in this family we may get rid of one parameter by considering only quadric polynomials in $\Ker(\mu) \subset \Sym^2(\wedge^2 V_5^\vee)$; see Section~\ref{subsec:Kermu}.) A singular point of $\Grass \cap \scrQ$ is the point $(0:0:0:0:0:t_5:0:0:0:-t_1)$. (Non-zero Pl\"ucker coordinates in positions $24$ and~$45$. Note that $\scrQ$ is singular in this point.)

\item\label{fam3} The family of quadrics given by
\begin{multline*}
\qquad t_1\, x_{14}^2 + t_2\, x_{15}^2 + t_3\, x_{24} x_{45} + t_4\, x_{25} x_{45} + t_5\, x_{14} x_{15} + t_6\, x_{12} x_{23}\\
+ t_7\, x_{34} x_{45} + t_8\, x_{35} x_{45} + t_9 x_{13} x_{23} = 0\, ,
\qquad
\end{multline*}
with $A = \diag(-1,-3,-3,-4,-4)$. A singular point of $\Grass \cap \scrQ$ is the point $(t_9:-t_6:0:0:0:0:0:0:0:0)$. (Non-zero Pl\"ucker coordinates in positions $12$ and~$13$, and again $\scrQ$ is singular in this point.)

\item\label{fam4} The family of quadrics given by
\begin{multline*}
\qquad t_1\, x_{23} x_{24} + t_2\, x_{23} x_{25} + t_3\, x_{24} x_{45} + t_4\, x_{25} x_{45} + t_5\, x_{13} x_{23} + t_6\, x_{13} x_{45}\\
+t_7\, x_{14} x_{35} + t_8\,  x_{15} x_{34} + t_9\, x_{12}^2 + t_{10}\, x_{14} x_{34} + t_{11}\, x_{15} x_{35} = 0\, ,
\qquad
\end{multline*}
with $A = \diag(-2,-3,0,-4,-4)$. (In this family we could again get rid of one parameter.) Choose an element $\lambda$ in an algebraic extension of the function field $K_A = k(t_1,\ldots,t_{11})$ such that $t_{10}\, \lambda^2 + (t_7 + t_8)\, \lambda + t_{11} = 0$; then $(0:0:\lambda:1:0:0:0:0:0:0)$ is a singular point of $\Grass \cap \scrQ$. To see this, note that tangent equation at this point that comes from the first Pl\"ucker equation is $-\lambda X_{35} + X_{34} = 0$ (using $X_{ij}$ as coordinates on the tangent bundle), whereas the quadric~$\scrQ$ gives the tangent equation $(t_7\lambda + t_{11}) X_{35} + (t_{10}\lambda + t_8) X_{34} = 0$. By our choice of~$\lambda$ these equations are linearly dependent.

\item\label{fam5} The family of quadrics given by
\begin{multline*}
\qquad t_1\, x_{14} x_{45} + t_2\, x_{24}^2 + t_3\, x_{23} x_{25} + t_4\, x_{23} x_{34} + t_5\, x_{14} x_{15} + t_6\, x_{12} x_{24}\\
+ t_7\, x_{13} x_{23} + t_8\, x_{35} x_{45} + t_9\, x_{12}^2 + t_{10}\, x_{15} x_{35} = 0\, ,
\qquad
\end{multline*}
with $A = \diag(-2,-3,0,-2,-4)$. A singular point of $\Grass \cap \scrQ$ is the point $(0:t_4:0:0:0:0:0:-t_7:0:0)$. (Non-zero Pl\"ucker coordinates in positions $13$ and~$34$, and again $\scrQ$ is singular in this point.)

\end{enumerate}

\medskip

For $p=7$, there is, again up to a renumbering or rescaling of the coordinates, one family of quadrics that could give rise to a counterexample:
\begin{enumerate}[resume*]
\item\label{fam7} The family of quadrics given by
\begin{multline*}
t_1\, x_{24} x_{34} + t_2\, x_{12} x_{15} + t_3\, x_{13} x_{35} + t_4\, x_{25}^2 \\+ t_5\, x_{25} x_{45} + t_6\, x_{14} x_{15} + t_7\, x_{45}^2 + t_8\, x_{23} x_{24} = 0\, ,
\end{multline*}
with $A = \diag(0,-1,-4,-1,-6)$. A singular point of $\Grass \cap \scrQ$ is the point $(t_6:0:-t_2:0:0:0:0:0:0:0)$. (Non-zero Pl\"ucker coordinates in positions $12$ and~$14$, and again $\scrQ$ is singular in this point.)
\end{enumerate}
\medskip

As explained, the fact that in each of these cases $\Grass \cap \scrQ$ is singular implies that no member $\Grass \cap Q_t$ in the family is non-singular, so that the families we have found do not produce any counterexamples to Proposition~\ref{prop:CohomTY}. This therefore completes the proof of that proposition. \qed

\section{Families of motives and Shimura varieties (characteristic $0$)}
\label{sec:FamMotivesChar0}

Our proof of the Tate conjecture for Gushel--Mukai varieties closely follows Madapusi Pera's argument for K3 surfaces. As part of this, we want to consider families of motives. We interpret these as systems of cohomological realizations, similar to the approach in Jannsen's book~\cite{Jannsen} or \S1 of Deligne's paper~\cite{Deligne-pi1}. We will not in any way attempt to develop a full theory. In fact, we will just work with systems of realizations and we will not attempt to define which of these systems deserve to be called ``motives''.

As base space for such systems of realizations we allow algebraic stacks~$S$ that are globally the quotient of a smooth scheme by a finite group, say $S = [T/\Gamma]$. The reader who prefers to work only with schemes can, instead of working over~$S$, work with $\Gamma$-equivariant objects on~$T$.

\subsection{}
\label{subsec:SystReal}
Let $K$ be a number field, and, as above, let $S$ be the quotient stack of a smooth scheme of finite type over~$K$ by a finite group. By a system of realizations on~$S/K$ we then mean a package of data
\[
\motH = \Bigl(\motH_\tau,\motH_{\dR} = (\motH_{\dR},\nabla,\Fil^\bullet),\motH_{\et},i_{\dR,\tau},i_{\et,\tau}\Bigr)_{\tau \colon K \to \CC}
\]
of the following kind. The first three components are:
\begin{itemize}
\item local systems $\motH_\tau$ of $\QQ$-vector spaces on $(S\otimes_{K,\tau} \CC)_{\an}$, one for each complex embedding~$\tau$;
\item a flat vector bundle $(\motH_{\dR},\nabla)$ on~$S$, equipped with a finite exhaustive filtration~$\Fil^\bullet$ by subbundles;
\item a smooth $\AA_{\fin}$-sheaf $\motH_{\et}$ on $S_{\proet}$.
\end{itemize}
(In what follows we often omit $\nabla$ and $\Fil^\bullet$ from the notation.) In addition we have comparison isomorphisms. With the convention that for $F$ a sheaf on~$S$ we denote its pullback to~$S_\tau = S \otimes_{K,\tau} \CC$ by~$\tau^*F$, these are:
\begin{itemize}
\item for each embedding~$\tau$, an isomorphism of flat vector bundles 
\[
i_{\dR,\tau} \colon \bigl(\motH_\tau \otimes \scrO_{S_\tau},\id \otimes d\bigr) \isomarrow \tau^*(\motH_{\dR},\nabla)
\] 
on~$S_\tau$;
\item for each~$\tau$, an isomorphism $i_{\et,\tau} \colon \motH_\tau \otimes \AA_{\fin} \isomarrow \tau^* \motH_{\et}$ on~$S_\tau$.
\end{itemize}
To interpret these comparison isomorphisms correctly, some changes of topology are required; for instance, $\tau^*(\motH_{\dR},\nabla)$ is viewed as a flat vector bundle on~$S_\tau$ for the analytic topology. A way to give meaning to~$i_{\et,\tau}$ is to choose a base point $s \in S(\CC)$ and regard source and target of~$i_{\et,\tau}$ as $\AA_{\fin}$-representations of the topological fundamental group $\pi_1(S_{\tau},s)$.

Finally, we make the requirement that, for each~$\tau$, the local system~$\motH_\tau$ equipped with the filtration $\Fil^\bullet$ on~$\motH_\tau \otimes \scrO_{S_\tau}$ (via~$i_{\dR,\tau}$) is a pure Variation of Hodge Structure (VHS) with $\QQ$-coefficients over~$S_\tau$. (Here ``pure'' does not mean the VHS is pure of some weight, but that it is a direct sum of such, i.e., the weight filtration is split.)

In what follows, if $\motH$ is a system of realizations as above and $p$ is a prime number, $\motH_p$ denotes the $p$-adic component of~$\motH_\et$, and we may write $\motH_{\et} = \motH_p \times \motH^{(p)}$, where the prime-to-$p$ part $\motH^{(p)}$ is a sheaf of $\AA_{\fin}^p$-modules on~$S_{\proet}$.

With the obvious definitions of morphisms and of the tensor product and duals, the category $\Real(S;\QQ)$ of systems of realizations is a $\QQ$-linear Tannakian category. We have Tate objects~$\unitmot(n)$ in~$\Real(S;\QQ)$, and as usual we denote $\motH \otimes \unitmot(n)$ by~$\motH(n)$.

\begin{example}
\label{exa:HiMotive}
The basic example is that for a smooth projective morphism $f\colon X \to S$, we have an associated system of realizations $\motH^\bullet(X/S) = \oplus\; \motH^i(X/S)$. For $\tau$ a complex embedding, $\motH^i(X/S)_\tau$ is the variation of Hodge structure with underlying local system $R^if_{\tau,*}\QQ_{X\otimes_{K,\tau} \CC}$. The de Rham component is the relative de Rham bundle $R^if_* \Omega^\bullet_{X/S}$ with its Hodge filtration and Gauss--Manin connection. The \'etale component of $\motH^i(X/S)$ is the pro-\'etale sheaf $R^if_*\AA_{\fin}$ on~$S$.
\end{example}

\begin{remarks}
\begin{enumerate}
\item A further ingredient that may be included as part of the data is the ``Frobenius at infinity'', as in \cite{Deligne-pi1}, Section~1.4. As we shall make no use of this, we omit it.

\item If $S = \Spec(K)$, our notion agrees with a pure mixed realization (i.e., a pure object in the category of mixed realizations) in the sense of~\cite{Jannsen}. The main difference in presentation is that Jannsen fixes an algebraic closure $K \subset \Kbar$ and interprets~$\motH_{\et}$ as an $\AA_{\fin}$-module with continuous action of $\Gal(\Kbar/K)$ (or rather, he does this for all $\ell$-adic components separately). In Jannsen's approach one then needs to fix an algebraic closure~$\Kbar$ and one has a comparison isomorphism~$i_{\et,\bar\tau}$ for each $\bar\tau \colon \Kbar \to \CC$.

\item For us, the simplest way to define an absolute Hodge class in~$\motH$ is as a morphism $\unitmot \to \motH$. This is of course equivalent to other definitions found in the literature. Note that in our approach the \'etale component of~$\motH$ is a sheaf on~$S_{\proet}$, not just a geometric local system. Therefore, if we want to interpret an absolute Hodge class as a collection of elements in vector spaces, we should first rewrite~$\motH_{\et}$ as an $\AA_{\fin}$-module with Galois action.
\end{enumerate}
\end{remarks}

\subsection{}
We will also need to consider integral structures, which we approach as in \cite{Deligne-pi1}, 1.23--1.26; see also \cite{MP-K3},~2.4. There are two cases of interest for us: $\ZZ$-coefficients and $\ZZ_{(p)}$-coefficients. By a system of realizations with $\ZZ$-coefficients over~$S$ we mean data $\motH$ as above together with a subsheaf of $\Zhat$-modules
\[
\motH_{\Zhat,\et} \subset \motH_{\et}
\]
such that $\motH_{\Zhat,\et} \otimes_{\Zhat} \AA_{\fin} \isomarrow \motH_{\et}$. For each complex embedding~$\tau$ this then defines a $\ZZ$-submodule $\motH_{\ZZ,\tau} \subset \motH_\tau$ by taking
\[
\motH_{\ZZ,\tau} = \bigl\{x \in \motH_\tau \bigm| i_{\et,\tau}(x\otimes 1) \in \tau^*\motH_{\Zhat,\et} \bigr\}\, .
\]
We let $\Real(S;\ZZ)$ be the category of systems of realizations with $\ZZ$-coefficients.

Similarly we can define the category $\Real(S;\ZZ_{(p)})$ of systems of realizations over~$S$ with $\ZZ_{(p)}$-coefficients; in this case the extra datum is a $\ZZ_p$-subsheaf
\[
\motH_{\ZZ_p} \subset \motH_p
\]
such that $\motH_{\ZZ_p} \otimes \QQ_p \isomarrow \motH_p$. With a definition analogous to the one above, the choice of such a $\ZZ_p$-subsheaf gives rise to $\ZZ_{(p)}$-submodules $\motH_{\ZZ_{(p)},\tau} \subset \motH_\tau$, which are part of a $\ZZ_{(p)}$-VHS over~$S_\tau$.

\begin{example}
In the situation considered in Example~\ref{exa:HiMotive}, we can refine $\motH^i(X/S)$ to a system of realizations $\motH^i(X/S;\ZZ)$ with integral coefficients, taking as \'etale component the pro-\'etale sheaf $R^if_*\Zhat$ on~$S$, modulo torsion.
\end{example}

\subsection{}
\label{subsec:Discr}
In the discussion that follows, we will consider discriminant groups of integral quadratic forms. Let $L$ be a lattice, i.e., a finite free $\ZZ$-module equipped with a nondegenerate symmetric bilinear form $b\colon L \times L \to \ZZ$. We define the discriminant group of~$L$ to be
\[
\Discr(L) = L^\vee/L\, ,
\]
where $L^\vee = \bigl\{x \in L_\QQ \bigm| b(x,y) \in \ZZ\ \text{for all $y \in L$}\bigr\}$. This group~$\Discr(L)$ comes equipped with a nondegenerate symmetric bilinear pairing with values in~$\QQ/\ZZ$. We refer to~\cite{Nikulin} for basic results on this. We can extend these notions, in an obvious manner, to local systems of $\ZZ$-modules.

If the form~$b$ is nondegenerate (as will be the case in what follows), the set~$\Phi\Discr(L)$ is a finite group, and we define $\discr(L)$ to be its order.

\subsection{}
We next introduce some Shimura varieties. Let us first make some remarks on the notation we shall use. We fix a prime number~$p$.

Suppose $(G_\QQ,\scrD)$ is a Shimura datum with reflex field~$E$ such that $G_\QQ$ is the generic fibre of a reductive group~$G$ over~$\ZZ_{(p)}$. Let $K \subset G(\AA_{\fin})$ be a compact open subgroup, and let $S_K = \Sh_K(G,\scrD)$ be the corresponding Shimura variety over~$E$. If $K$ is not neat then we will interpret~$S_K$ as an algebraic stack: we can find a normal subgroup $K^\prime \subset K$ of finite index which is neat; then $S_{K^\prime}$ is a scheme over~$E$ (constructed in the usual way), and we define $S_K$ to be the quotient stack $[S_{K^\prime}/(K/K^\prime)]$.

The Shimura data that are relevant for us all have reflex field~$\QQ$, so we now assume $E= \QQ$. Let $K_p = G(\ZZ_p)$, which by our assumption on~$G$ is a hyperspecial subgroup of~$G(\QQ_p)$. If $K = K^pK_p$ for some compact open subgroup $K^p \subset G(\AA_{\fin}^p)$, we denote by $\scrS_K$ the integral canonical model of~$S_K$ over~$\ZZ_{(p)}$. (See~\cite{Kisin-IntCanMod}.) Again this is in general to be interpreted as a stack.

\subsection{}
\label{subsec:ShVars}
Let $(L,b)$ be a lattice of rank~$r\geq 3$ such that the corresponding quadratic form $Q(x) = b(x,x)$ has signature $(r-2,2)$. We have reductive group schemes
\[
G = \SO(L,Q)\qquad\text{and}\qquad
\tilde{G} = \GSpin(L,Q)
\]
over $\Spec\bigl(\ZZ[(2\cdot \discr(L))^{-1}]\bigr)$. As in \cite{Deligne-WeilK3}, Section~3, we have a homomorphism
\[
\tilde{G} \xrightarrow{~q~} G\, ,
\]
whose kernel is~$\GG_{\mult}$. 

We denote by $C(L) = C^+(L) \oplus C^-(L)$ the Clifford algebra. We write $H$ for $C(L)$ viewed as a representation of~$\tilde{G}$ via left multiplication in the Clifford algebra. There exists a symplectic form~$\psi$ on~$H$ such that the corresponding homomorphism $i\colon \tilde{G} \to \GL(H)$ factors through the group $\GSp(H,\psi)$ of symplectic similitudes. (Cf.\ \cite{MP-IntCanMod}, Lemma~1.7.) In what follows we abbreviate $\GSp(H,\psi)$ to~$\GSp$.

Let $\widetilde\scrD$ be the space of oriented negative definite $2$-planes in~$L_\RR$. As explained in \cite{MP-IntCanMod}, Section~3.1, $\widetilde\scrD$ can be viewed as a space of homomorphisms $\SS \to \tilde{G}_\RR$ (where, as usual, $\SS$ is $\CC^*$ viewed as a real algebraic group). Write $\scrD$ for the image of~$\widetilde\scrD$ in $\Hom(\SS,G_\RR)$, and let $\scrH \subset \Hom(\SS,\GSp_\RR)$ be the $\GSp(\RR)$-conjugacy class that contains $i(\widetilde{\scrD})$. We then have a diagram of Shimura data
\[
\begin{tikzcd}
(\tilde{G}_\QQ,\widetilde\scrD) \ar[d,"q"] \ar[r,"i"] & (\GSp_\QQ,\scrH) \\
(G_\QQ,\scrD)
\end{tikzcd}
\]
each of which has reflex field~$\QQ$.

{}From now on we assume that $p>2$ and that $p$ does not divide $\discr(L) = \#\Discr(L)$, so that the bilinear form~$b$ on~$L$ induces a perfect pairing on $L_{(p)} = L \otimes \ZZ_{(p)}$. (This is the only case we will need.) Define
\begin{equation}
\label{eq:KLDef}
K(L) = \Biggl\{ g \in G(\AA_{\fin}) \Biggm| 
\vcenter{
\setbox0=\hbox{$g$ acts trivially on $\Discr(L \otimes \Zhat)$}
\hbox to\wd0{\hfill $g(L\otimes \Zhat) = L\otimes \Zhat$ and\hfill}
\copy0
}\Biggr\}\, ,
\end{equation}
and
\[
\tilde{K}(L) = \tilde{G}(\AA_{\fin}) \cap \bigl(C(L) \otimes \Zhat\bigr)^*\, .
\]
It follows from \cite{MP-IntCanMod}, Lemma~2.6, that the homomorphism~$q$ restricts to a surjective homomorphism $\tilde{K}(L) \to K(L)$. We define
\[
\scrS = \text{the integral canonical model (over~$\ZZ_{(p)}$) of $\Sh_{K(L)}(G_\QQ,\scrD)$,}
\]
and
\[
\tilde{\scrS} = \text{the integral canonical model (over~$\ZZ_{(p)}$) of $\Sh_{\tilde{K}(L)}(\tilde{G}_\QQ,\widetilde\scrD)$,}
\]
Note that in this section, only the generic fibres of these Shimura stacks play a role; in the next two sections the $p$-adic aspects will become important.

For the third Shimura variety in the above picture, let $\scrK(L) \subset \GSp(\AA_{\fin})$ denote the stabilizer of the lattice $C(L) \otimes \Zhat \subset C(L) \otimes \AA_{\fin}$, and let $\scrK_p = \GSp(\ZZ_p)$ be its $p$-adic component. We define~$\scrA_L$ to be the integral canonical model (again over~$\ZZ_{(p)}$) of $\mathrm{Sh}_{\scrK(L)}(\GSp,\scrH)$.

The above diagram of Shimura data gives rise to a diagram
\begin{equation}
\label{eq:ShimVars/Q}
\begin{tikzcd}
\tilde{\scrS} \ar[d,"\pi"] \ar[r,"i"] & \scrA_L \\
\scrS
\end{tikzcd}
\end{equation}
of stacks over~$\ZZ_{(p)}$.

\begin{remark}
\label{rem:mu2gerbe}
The morphism $\pi \colon \tilde{\scrS} \to \scrS$ is a $\mu_2$-gerbe. To see this, choose an integer~$N$ that is not divisible by $2p\cdot \discr(L)$, and consider the subgroups $K_N(L) \subset K(L)$ and $\tilde{K}_N(L) \subset \tilde{K}(L)$ of elements that are the identity modulo~$N$ in $\GL(L \otimes \Zhat)$, respectively $(C(L) \otimes \Zhat)^*$. Let $\scrS_N \to \scrS$ and $\tilde{\scrS}_N \to \tilde{\scrS}$ be the corresponding covers, and let $\Delta = \Zhat^*/\bigl\{z\in \Zhat^* \bigm| z \equiv 1 \bmod N\bigr\} \cong (\ZZ/N\ZZ)^*$. (The~$\Zhat^*$ in this definition is to be viewed as the group of $\Zhat$-points of the centre of~$\tilde{G}$.) Then $\scrS_N \times_{\scrS} \tilde{\scrS} \cong [\tilde{\scrS}_N/\Delta]$, whereas $\scrS_N$ is the quotient of~$\tilde{\scrS}_N$ by the action of~$\Delta/\{\pm 1\}$. 
\end{remark}

\subsection{}
Via the interpretation of~$\scrA_L$ as a moduli stack for abelian varieties, the morphism $\tilde{\scrS} \to \scrA_L$ corresponds to an abelian scheme $A \to \tilde{\scrS}$, of dimension~$2^{r-1}$. Let $\motH = \motH^1(A/\tilde{\scrS}_{\QQ};\ZZ)$ be the family of realizations with $\ZZ$-coefficients over the characteristic~$0$ fibre of~$\tilde{\scrS}$ that is given by the cohomology in degree~$1$ of the fibres of~$A$.

In the proof of the Tate conjecture an important role is played by a sub-object $\tilde{\motL} \subset \ul{\End}(\motH)$ in $\Real(\tilde{\scrS}_\QQ;\ZZ)$ that is constructed by Madapusi Pera. We will briefly discuss the construction below. In fact, $\tilde{\motL} \subset \ul{\End}(\motH)$ descends to a pair $\motL \subset \motE$ in $\Real(\scrS_\QQ;\ZZ)$ and in the present section the system of realizations~$\motL$ plays a key role. Note that our notation is slightly different from the notation used in~\cite{MP-K3}: Madapusi Pera uses~$\motL$ both for~$\tilde{\motL}$ (which lives over~$\tilde{\scrS}_\QQ$) and for~$\motL$ (which lives over~$\scrS_\QQ$). The discussion is complicated further by the fact that $\tilde{\motL}$ is constructed in \cite{MP-IntCanMod} using $\ZZ_{(p)}$-coefficients, whereas it is used in~\cite{MP-K3} with $\ZZ$-coefficients. As furthermore the two papers \cite{MP-IntCanMod} and~\cite{MP-K3} do not use the same notation (e.g., the Shimura variety~$\Sh_K$ of~\cite{MP-IntCanMod} is the $\widetilde{\Sh}(L)$ of~\cite{MP-K3}), we hope our summary can help the reader to understand the construction.

A reason why we want to define~$\motL$ as an object with $\ZZ$-coefficients is that this makes results such as Proposition~\ref{prop:MPK3-4.3} (which then leads to Proposition~\ref{prop:descendQ}) easier to state. It should be noted that once Proposition~\ref{prop:descendQ} is proved (i.e., in the next sections), only the $\ZZ_{(p)}$-structure will be needed.

\subsection{}
We now give a brief explanation of how the system of realizations~$\motL$ is constructed. The easiest, and conceptually cleanest, way to understand the construction is to use that for a Shimura datum $(\scrG,X)$ of abelian type and a compact open subgroup $K \subset \scrG(\AA_{\fin})$, there exists a natural $\otimes$-functor $r_{(\scrG,X,K)}\colon \Rep(\scrG) \to \Real(\Sh_K(\scrG,X);\QQ)$, where $\Rep(\scrG)$ denotes the category of finite dimensional representations of~$\scrG$ (over~$\QQ$). Moreover, if $f \colon (\scrG_1,X_1) \to (\scrG_2,X_2)$ is a morphism of Shimura data of abelian type and $K_i \subset \scrG_i(\AA_{\fin})$ ($i=1,2$) are compact open subgroups with $f(K_1) \subset K_2$, we have an inclusion of reflex fields $E_2 = E(\scrG_2,X_2) \subset E_1 = E(\scrG_1,X_1)$ and a commutative diagram
\[
\begin{tikzcd}[column sep=huge]
\Rep(\scrG_2) \ar[d,"\rho \mapsto \rho \circ f"'] \ar[r,"r_{(\scrG_2,X_2,K_2)}"] & \Real(\Sh_{K_2}(\scrG_2,X_2)_{E_1};\QQ) \ar[d,"\Sh(f)^*"]\\
\Rep(\scrG_1) \ar[r,"r_{(\scrG_1,X_1,K_1)}"] & \Real(\Sh_{K_1}(\scrG_1,X_1);\QQ)
\end{tikzcd}
\]
where of course $\Sh(f) \colon \Sh_{K_1}(\scrG_1,X_1) \to \Sh_{K_2}(\scrG_2,X_2)_{E_1}$ is the morphism induced by~$f$. Using this mechanism, and ignoring integral coefficients for the moment, the system of realizations $\motH = \motH^1(A/\tilde{\scrS}_{\QQ};\QQ)$ over~$\tilde{\scrS}_\QQ$ comes from the natural representation $\tilde{G}_\QQ \to \GL(H_\QQ)$ with $H = C(L)$, on which $\tilde{G}$ acts through left multiplications. The fact that $\ul{\End}(\motH)$ descends to a system of realizations~$\motE$ over~$\scrS_\QQ$ then corresponds to the fact that the induced representation of~$\tilde{G}$ on $\ul{\End}(H_\QQ)$ factors through $q \colon \tilde{G}_\QQ \to G_\QQ$. We have an inclusion $L \hookrightarrow \ul{\End}(H)$, letting $L$ (viewed as a subspace of~$C(L)$) act on~$H$ by left multiplication. Then $L_\QQ \subset \ul{\End}(H_\QQ)$ is stable under the action of~$G_\QQ$, and the associated system of realizations is the system~$\motL$ (with $\QQ$-coefficients).

Unfortunately, the existence of the functors~$r_{(\scrG,X,K)}$ does not seem to be documented in the literature in the generality we need. For Shimura varieties of Hodge type, the existence readily follows from \cite{Kisin-IntCanMod}, Lemma~2.2.1. To extend the construction to Shimura varieties of abelian type requires some work, and since we only need it in one concrete example, this does not seem the right place to develop the theory in full. We therefore take a more direct approach, which in fact is closer to~\cite{MP-IntCanMod}.

We can use the mechanism just described (for the Shimura variety~$\tilde{\scrS}_\QQ$, which is of Hodge type) to construct a system of realizations $\tilde{\motL} \subset \ul{\End}(\motH)$ and a nondegenerate symmetric pairing $b_{\tilde{\motL}} \colon \tilde{\motL} \otimes \tilde{\motL} \to \unitmot$. (We will deal with the integral structure later.) Here is a more explicit version of this, following~\cite{MP-IntCanMod}. We write $\infty \colon \QQ \to \CC$ for the complex embedding of~$\QQ$. Considering the Betti realization~$\motH_\infty$ of $\motH = \motH^1(A/\tilde{\scrS}_{\QQ};\ZZ)$, the first step is the construction of an explicit projector~$\bm{\pi}_\infty$ that cuts out $\tilde{\motL}_\infty \subset \ul{\End}(\motH_\infty)$ over~$\tilde{\scrS}_{\CC}$. This is explained in \cite{MP-IntCanMod}, Section~1.3. (In fact, $\bm{\pi}_\infty$ is the unique projector with image~$L$ which is zero on~$L^\perp$.) Via the Betti--de Rham and Betti--\'etale comparison isomorphisms, this then gives de Rham and \'etale projectors $\bm{\pi}_{\dR,\CC}$ and~$\bm{\pi}_{\et,\CC}$ over~$\tilde{\scrS}_\CC$. It is shown in \cite{MP-IntCanMod}, Proposition~3.11, that these descend to projectors defined over~$\tilde{\scrS}$. This gives us the de Rham realization $\tilde{\motL}_{\dR}$ over~$\tilde{\scrS}$, with Hodge filtration induced from the Hodge filtration on~$\ul{\End}(\motH_{\dR})$, and the \'etale realization $\tilde{\motL}_{\dR}$, as a pro-\'etale sheaf of $\AA_{\fin}$-modules on~$\tilde{\scrS}$. By construction, these projectors (Betti, de Rham, \'etale) are compatible under the comparison isomorphisms for the system of realizations~$\ul{\End}(\motH)$, so we obtain a system of realizations $\tilde{\motL}$ over~$\tilde{\scrS}$.

The pairing $b_{\tilde{\motL}} \colon \tilde{\motL} \otimes \tilde{\motL} \to \unitmot$ can be defined as the restriction of the pairing $\ul{\End}(\motH) \otimes \ul{\End}(\motH) \to \unitmot$ given by $f_1 \otimes f_2 \mapsto 2^{1-r}\cdot \trace(f_1 \circ f_2)$, where we recall that $r = \rk(L)$. (The trace maps involved here of course have to be interpreted in each realization separately but we have the obvious compatibilities that make this definition meaningful.)

As $\ZZ$-structure on~$\tilde{\motL}$ we take the structure induced by the $\ZZ$-structure on~$\ul{\End}(\motH)$. This means that
\[
\tilde{\motL}_{\ZZ,\infty} = \tilde{\motL}_\infty \cap \ul{\End}\bigl(\motH_{\ZZ,\infty}\bigr)
\qquad\text{and}\qquad
\tilde{\motL}_{\Zhat,\et} = \tilde{\motL}_{\et} \cap \ul{\End}\bigl(\motH_{\Zhat,\et}\bigr)\, ,
\]
where the intersections are taken inside $\ul{\End}\bigl(\motH_\infty\bigr)$, respectively $\ul{\End}\bigl(\motH_{\et}\bigr)$. Note that the restriction of~$b_{\tilde{\motL}}$ to $\tilde{\motL}_{\ZZ,\infty}$ does not give a perfect pairing.

Finally, to see that $\tilde{\motL} \subset \ul{\End}(\motH)$ descend to systems of realizations $\motL \subset \motE$ over~$\scrS_\QQ$, we can use that $\tilde{\scrS} \to \scrS$ is a $\mu_2$-gerbe. With notation as in Remark~\ref{rem:mu2gerbe}, we can view~$\motH$ and~$\tilde{L}$ as $\tilde{G}(\ZZ/N \ZZ)$-equivariant objects over~$\tilde{\scrS}_{N,\QQ}$. In particular, we already have an action of the group~$\Delta$. What we need in order to descend $\tilde{\motL} \subset \ul{\End}(\motH)$ to $G(\ZZ/N \ZZ)$-equivariant objects over~$\scrS_{N,\QQ}$ (i.e., systems of realizations over~$\scrS_\QQ$), is an action of~$\Delta/\{\pm 1\}$. Hence it suffices to show that $-1 \in \Delta$ acts trivially, which can be checked on the Betti realizations. Now use that $-1 \in \Delta$ acts as $-\id$ on~$\motH$ and therefore acts trivially on~$\ul{\End}(\motH)$.

\subsection{}
\label{subsec:ConnectToGM}
So far in this section we have only discussed certain Shimura varieties and families of motives (or at least their realizations) over them. We now make the connection to Gushel--Mukai varieties.

In the general discussion above we now take as lattice
\begin{equation}
\label{eq:GM6Lattice}
L = E_8(-1)^{\oplus 2} \oplus U^{\oplus 2} \oplus I(-2)^{\oplus 2}\, ,
\end{equation}
where $I(-2) = \ZZ$ with form given by $b(1,1) = -2$. Recall that we write $b_L \colon L \otimes L \to \ZZ$ for the symmetric bilinear form on~$L$. If $Y$ is a complex $6$-dimensional Gushel--Mukai variety then $H^6(Y,\ZZ)_{00}$ with its intersection pairing is isomorphic to~$L$; see \cite{DK-Kyoto}, Proposition~3.9. As $\Discr(L) = (\ZZ/2\ZZ)^2$, the induced pairing on $L_{(p)} = L \otimes \ZZ_{(p)}$ is perfect for any prime $p>2$.

Let $V_5$ be a free $\ZZ_{(p)}$-module of rank~$5$. As before, we write $\CGr(2,V_5) \subset \PP(\ZZ_{(p)} \oplus \wedge^2 V_5)$ for the cone over the Grassmannian $\Grass = \Grass(2,V_5)$. Consider the scheme~$\scrQ$ over~$\ZZ_{(p)}$ of quadrics in $\PP(\ZZ_{(p)} \oplus \wedge^2 V_5)$. Let $\scrM^\flat \subset \scrQ$ denote the open subset of all quadrics~$Q$ such that $\CGr(2,V_5) \cap Q$ is smooth of dimension~$6$, and let $f^\flat \colon X^\flat \to \scrM^\flat$ be the tautological family of $6$-dimensional Gushel--Mukai varieties.

We have a morphism $\gamma \colon X^\flat \to \Grass_{\scrM^\flat} = (\Grass \times_{\Spec(\ZZ_{(p)})} \scrM^\flat)$. Over the characteristic~$0$ fibre~$\scrM^\flat_\QQ$ we have a system of realizations with $\ZZ$-coefficients $\motH^6\bigl(X^\flat/\scrM^\flat_\QQ;\ZZ\bigr)(3)$, and we define
\[
\motP^\flat = \motH^6\bigl(X^\flat/\scrM^\flat_\QQ;\ZZ\bigr)(3)_{00} \subset \motH^6\bigl(X^\flat/\scrM^\flat_\QQ;\ZZ\bigr)(3)
\]
to be the subobject that in each cohomological realization is given by taking the orthogonal complement of the image of $\motH^6\bigl(\Grass_{\scrM^\flat}/\scrM^\flat_\QQ;\ZZ\bigr)(3)$ under~$\gamma^*$. (For the \'etale component this has to be interpreted on the level of sheaves.) The intersection pairing induces a morphism
\[
b_{\motP^\flat} \colon \motP^\flat \otimes \motP^\flat \to \unitmot
\]
in $\Real(\scrM^\flat_\QQ;\ZZ)$.

We want to pass to a double cover $\scrM \to \scrM^\flat$ by choosing orientations. As we want to do this over~$\ZZ_{(p)}$, it is convenient to use the \'etale component of~$\motP^\flat$ for this. For concreteness we shall use the $2$-adic component. We then define an orientation of~$\motP^\flat$ over an $\scrM^\flat$-scheme~$T$ to be an isomorphism of \'etale sheaves $\theta \colon \det(\motP_2^\flat|_T) \isomarrow \det(L_2)_T$ (where $L_2 = L \otimes \ZZ_2$) such that the diagram
\[
\begin{tikzcd}[column sep=large]
\det(\motP_2^\flat|_T) \otimes \det(\motP_2^\flat|_T) \ar[d,"\theta \otimes \theta"] \ar[r,"\det(b_{\motP^\flat})"]& \ZZ_{2,T} \arrow[equal,d]\\
\det(L_2)_T \otimes \det(L_2)_T \ar[r,"\det(b_L)"]& \ZZ_{2,T}
\end{tikzcd}
\]
is commutative. (As we are jumping ahead a little bit, let us note that $\motP_2^\flat$ exists as an \'etale sheaf on the whole of~$\scrM^\flat$, so that this definition is meaningful in mixed characteristic. We shall return to this in Proposition~\ref{prop:ladicExtend}.) If $\theta$ is such an orientation then over~$T_\CC$ we have a trivialization $\det(\motP^\flat_{\ZZ,\infty}|_{T_\CC}) \isomarrow \det(L)_{T_\CC}$ (where $\infty \colon \QQ \to \CC$ is the unique complex embedding of~$\QQ$) that is compatible with~$\theta$ under~$i_{\et,\infty}$. We shall again denote this trivialization by~$\theta$.

The choice of an orientation defines a $(\ZZ/2\ZZ)$-torsor $\scrM \to \scrM^\flat$. Let
\[
f \colon X \to \scrM
\]
be the pullback of the family~$f^\flat$, let $\motP = \motH^6\bigl(X/\scrM_\QQ;\ZZ\bigr)(3)_{00}$ be the pullback of the family of realizations~$\motP^\flat$, let $b_{\motP} \colon \motP \otimes \motP \to \unitmot$ be the pairing induced by~$b_{\motP^\flat}$, and let $\theta_{\motP} \colon \det(\motP_2) \isomarrow \det(L_2)_\scrM$ be the tautological orientation. The same notation~$\theta_{\motP}$ will be used for the corresponding isometry $\det(\motP_{\ZZ,\infty}) \isomarrow \det(L)_{\scrM_\CC}$ over~$\scrM_\CC$.

\begin{remark}
The notation ``$\motP$'' is inspired by the fact that this family of motives is the analogue of what in the setting of K3 surfaces is the primitive cohomology in degree~$2$. It therefore plays the same role as the family of motives that in~\cite{MP-K3} is denoted by the letter~$\bm{P}$. (This part is also sometimes referred to as the ``variable part''.) 
\end{remark}

\subsection{}
Still with~$L$ as in~\eqref{eq:GM6Lattice}, we have the system of realizations~$\motL$ over~$\scrS_\QQ$ (with $\ZZ$-coefficients) and the symmetric bilinear form $b_{\motL} \colon \motL \otimes \motL \to \unitmot$. We write $\infty \colon \QQ \to \CC$ for the complex embedding of~$\QQ$. Over~$\scrS_\CC$ we then have a polarized $\ZZ$-VHS $(\motL_{\ZZ,\infty},\infty^*\Fil^\bullet,b_{\motL})$. The pullback of $(\motL_{\ZZ,\infty},b_{\motL})$ to the universal cover of~$\scrS_\CC$ is isomorphic to the constant sheaf~$L$ with its pairing~$b_L$. Over~$\scrS_\CC$ we inherit from this an orientation $\theta_{\motL} \colon \det(\motL_{\ZZ,\infty}) \isomarrow \det(L)_{\scrS_\CC}$ such that $\det(b_L) \circ (\theta_{\motL} \otimes \theta_{\motL}) = \det(b_\motL)$. The next lemma implies that this orientation in fact comes from an isomorphism of $2$-adic \'etale sheaves
\[
\theta_{\motL} \colon \det(\motL_{\ZZ_2}) \isomarrow \det(L_2)_{\scrS_\QQ}
\]
over~$\scrS_\QQ$, so we have an orientation of~$\motL$ over~$\scrS_\QQ$ in the sense discussed above. Further we have an isometry $\eta_{\motL} \colon \Discr(\motL_{\ZZ,\infty}) \isomarrow \Discr(L)_{\scrS_\CC}$ (notation as in~\ref{subsec:Discr}), which by the next lemma comes from an isometry of sheaves
\[
\eta_{\motL} \colon \Discr(\motL_{\ZZ_2}) \isomarrow \Discr(L)_{\scrS_\QQ}\, .
\]
(Note that $\Discr(\motL_{\ZZ_2}) \cong \Discr(\motL_{\ZZ,\infty})$, as the latter is a $2$-group.)

\begin{lemma}
The \'etale sheaves $\det(\motL_{\ZZ_2})$ and $\Discr(\motL_{\ZZ_2})$ over~$\scrS_\QQ$ are constant.
\end{lemma}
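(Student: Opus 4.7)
The plan is to show that both $\det(\motL_{\ZZ_2})$ and $\Discr(\motL_{\ZZ_2})$ have trivial monodromy, so that they are isomorphic to the constant étale sheaves with fibres $\det(L\otimes\ZZ_2)$ and $\Discr(L\otimes\ZZ_2)$ respectively. The single key input, from which both statements follow as short corollaries, is that the monodromy representation
\[
\rho\colon \pi_1^{\et}(\scrS_\QQ,\bar{s}) \to \mathrm{Aut}(\motL_{\ZZ_2,\bar{s}})
\]
factors through $K(L)_2 := K(L)\cap G(\QQ_2) \subset G(\ZZ_2) = \SO(L\otimes\ZZ_2)$.

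To verify this factoring, I would trace through the construction of $\motL$ given in the preceding discussion. On the level-$N$ cover $\tilde\scrS_N$ of Remark~\ref{rem:mu2gerbe}, the sheaf $\tilde\motL_{\ZZ_2}$ is by construction the direct summand of $\ul{\End}(\motH_{\ZZ_2})$ cut out by an integrally defined projector $\bm{\pi}_\et$; by construction, and because of the compatibility of the Shimura-theoretic level structure with the étale realization on the integral canonical model, its monodromy lies inside $\tilde K(L)_2$. Pushing forward under $q\colon\tilde G\to G$ and descending along the $\mu_2$-gerbe $\tilde\scrS\to\scrS$, one obtains that the monodromy of $\motL_{\ZZ_2}$ on $\scrS_\QQ$ lies in $K(L)_2$. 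Once this is granted, both conclusions are immediate: every element of $K(L)_2\subset \SO(L\otimes\ZZ_2)$ has determinant $1$, hence acts trivially on $\det(L\otimes\ZZ_2) \cong \ZZ_2$, giving $\det(\motL_{\ZZ_2}) \cong \ZZ_{2,\scrS_\QQ}$; and by the defining property \eqref{eq:KLDef} of $K(L)$, every element of $K(L)_2$ acts trivially on $\Discr(L\otimes\ZZ_2)$, giving $\Discr(\motL_{\ZZ_2}) \cong \Discr(L\otimes\ZZ_2)_{\scrS_\QQ}$.

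The main obstacle is establishing the factoring through $K(L)_2$ for the full \emph{arithmetic} (rather than merely geometric) monodromy. The Betti side is transparent, since monodromy over $\scrS_\CC$ obviously preserves the Betti lattice $\motL_{\ZZ,\infty}$ and its bilinear form and thus lies in $K(L)\cap G(\QQ)$; the substance is in ensuring that the Galois action on the $2$-adic étale realization is also captured by elements of $K(L)_2$. For our specific $L$ this is further helped by the fact that the reciprocity character of the orthogonal Shimura datum takes values in the centre $\{\pm 1\}$ of $G = \SO(L)$, and $-\mathrm{Id}$ does lie in $K(L)_2$, because it preserves $L\otimes\Zhat$ and acts trivially on $\Discr(L) = (\ZZ/2\ZZ)^2$ (where multiplication by~$-1$ is the identity).
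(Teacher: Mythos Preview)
Your approach is essentially the same as the paper's: reduce to showing that the (arithmetic) monodromy of $\motL_{\ZZ_2}$ lands in $K(L)_2$, and then read off both conclusions from the definition of $K(L)$ and the containment in $\SO$. The paper likewise passes to the $\GSpin$ Shimura variety~$\tilde\scrS_\QQ$, shows the arithmetic fundamental group acts on $\tilde\motL \subset \ul{\End}(\motH)$ through $\tilde K(L) = \tilde G(\AA_\fin)\cap (C(L)\otimes\Zhat)^*$, and then uses that $q(\tilde K(L)) = K(L)$.

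The one place where the paper is sharper than your sketch is in pinning down why the \emph{arithmetic} monodromy (not just the geometric one) factors through~$\tilde G(\AA_\fin)$. You gesture at ``compatibility of the Shimura-theoretic level structure with the \'etale realization'' and then, in your last paragraph, at the reciprocity character of the orthogonal datum. The paper instead invokes a concrete result, \cite{MP-IntCanMod}, Lemma~1.4(iii), combined with the observation that the action automatically preserves the integral structure on~$\ul{\End}(\motH)$ (since $\motH$ is the $H^1$ of an abelian scheme). That combination gives directly $\pi_1(Y,\bar y)\to \tilde K(L)$. Your reciprocity-character remark is not wrong in spirit, but it does not by itself yield the factoring through~$\tilde G(\AA_\fin)$; what is actually needed is the statement that the whole \'etale local system on the canonical model is obtained from the $\tilde G(\AA_\fin)$-torsor underlying the tower of Shimura varieties, which is precisely what Lemma~1.4(iii) of \cite{MP-IntCanMod} provides. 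With that reference in hand your argument is complete and coincides with the paper's.
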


\begin{proof}
For each connected component $Y \subset \tilde{\scrS}_\QQ$, choose a geometric base point~$\bar{y}$ and consider the action of the arithmetic fundamental group~$\pi_1(Y,\bar{y})$ on $\tilde{\motL}_{\bar{y}} \subset \ul{\End}(\motH_{\bar{y}})$. The action on $\ul{\End}(\motH_{\bar{y}})$ preserves the integral structure. Also, it follows from \cite{MP-IntCanMod}, Lemma~1.4(iii), that $\pi_1(Y,\bar{y})$ acts via the group~$\tilde{G}(\AA_{\fin})$. Hence $\pi_1(Y,\bar{y})$ acts through the group $\tilde{K}(L) = \tilde{G}(\AA_{\fin}) \cap \bigl(C(L) \otimes \Zhat\bigr)^*$, and because the image of this group in~$G$ is the group~$K(L)$ of~\eqref{eq:KLDef}, the assertion follows.
\end{proof}

The following result, which is Proposition~4.3 in~\cite{MP-K3} (and which goes back to at least~\cite{Milne-ShVarsMot}), expresses that the complex Shimura variety~$\scrS_\CC$ is a moduli space for such Variations of Hodge Structure.

\begin{proposition}
\label{prop:MPK3-4.3}
Let $(V,\Fil^\bullet,b_V)$ be a polarized $\ZZ$-VHS over a complex manifold~$T$ which is of type $(-1,1) + (0,0) + (1,-1)$ with Hodge numbers
\[
h^{-1,1} = 1\, ,\quad h^{0,0} = 20\, ,\quad h^{1,-1} = 1\, .
\]
Let further be given isometries 
\[
\theta_V \colon \det(V) \isomarrow \det(L)_T\quad \text{and}\quad
\eta_V \colon \Discr(V) \isomarrow \Discr(L)_T\, ,
\] 
such that for every point $t \in T(\CC)$ there exists an isometry $V_t \isomarrow L$ which induces the values at~$t$ of $\theta_V$ and~$\eta_V$. Then there exists a unique morphism $T \to \scrS_\CC$ such that $(V,\Fil^\bullet,b_V,\theta_V,\eta_V)$ is isomorphic to the pullback of $(\motL_{\ZZ,\infty},\infty^*\Fil^\bullet,b_{\motL},\theta_{\motL},\eta_{\motL})$.
\end{proposition}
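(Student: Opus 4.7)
The plan is to exhibit $\scrS_\CC$ as a fine moduli space (of analytic stacks) for polarised $\ZZ$-VHS of the prescribed type enriched by the level data $(\theta,\eta)$, and to build the required map as a period map.

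First I would unpack the standard complex-analytic description $\scrS_\CC = G(\QQ) \backslash \bigl(\scrD \times G(\AA_{\fin})/K(L)\bigr)$, in which $\scrD$ is the period domain of Hodge structures on $L_\RR$ of type $(-1,1)+(0,0)+(1,-1)$ polarised by~$b_L$ with a fixed orientation of the negative-definite 2-plane. A point $[z,g\cdot K(L)]$ corresponds to a polarised integral Hodge structure together with an isometry $V_0 \otimes \AA_{\fin} \isomarrow L \otimes \AA_{\fin}$ taken modulo $K(L)$. Crucially, the definition~\eqref{eq:KLDef} of $K(L)$ shows that quotienting by $K(L)$ collapses this adelic isometry to exactly the data of an isometry $\det(V_0 \otimes \Zhat) \isomarrow \det(L\otimes\Zhat)$ together with an isometry $\Discr(V_0) \isomarrow \Discr(L)$, i.e., to $(\theta,\eta)$. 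The tautological tuple on $\scrS_\CC$ so described is precisely $(\motL_{\ZZ,\infty},\infty^*\Fil^\bullet,b_{\motL},\theta_{\motL},\eta_{\motL})$, and the pointwise-isometry hypothesis ensures that the stalks of the given~$V$ lie in the genus of~$L$, so that $V$ defines a $\scrS_\CC$-valued point fibrewise.

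Second, I would build the global morphism via a period map. Pass to the universal cover $p\colon \widetilde T \to T$, fix $\widetilde t_0 \in \widetilde T$ over a base point $t_0 \in T$, and pick, using the hypothesis, an isometry $\psi_0 \colon V_{t_0} \isomarrow L$ inducing $\theta_V|_{t_0}$ and $\eta_V|_{t_0}$. The canonical trivialisation $p^*V \cong (V_{t_0})_{\widetilde T}$, composed with $\psi_0$, transports the Hodge filtration to a holomorphic map $\widetilde{\pi}\colon \widetilde T \to \check \scrD$ into the compact dual; Hodge type and polarisation then force it to factor through~$\scrD$. The monodromy representation $\rho\colon \pi_1(T,t_0) \to \Aut(V_{t_0},b_{V_{t_0}})$ preserves both $\theta_V$ and~$\eta_V$ (which are globally defined isometries on $T$), so its image through~$\psi_0$ lies in the arithmetic group $\Gamma(L) := K(L) \cap G(\QQ)$. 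Since $\widetilde\pi$ is $\rho$-equivariant, it descends to a holomorphic map $T \to \Gamma(L)\backslash \scrD$, which embeds as a union of components of $\scrS_\CC$ under the moduli interpretation above. Tracing through the construction, the pullback of the tautological tuple recovers $(V,\Fil^\bullet,b_V,\theta_V,\eta_V)$.

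For uniqueness, any second morphism $f' \colon T \to \scrS_\CC$ inducing the same data is fibrewise the classifying map for the same tuple, and two holomorphic maps from a connected complex manifold into an analytic Deligne--Mumford stack that coincide pointwise coincide as $1$-morphisms; concretely, one can pull $f$ and $f'$ back along a neat finite cover $\scrS^\prime_\CC \to \scrS_\CC$ on which the tautological data represent a honest functor and invoke the Yoneda lemma. The main obstacle in executing the plan is the precise matching of the level structure $(\theta,\eta)$ with the $K(L)$-orbit data built into the Shimura variety: it rests entirely on the observation that $K(L)$ was defined in~\eqref{eq:KLDef} as exactly the subgroup of $\SO(L\otimes \AA_\fin)$ that fixes the lattice $L\otimes \Zhat$ and acts trivially on its discriminant, so that the quotient by $K(L)$ records precisely $(\det,\Discr)$. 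Once this identification is clean, the period-map construction and the monodromy descent are standard.
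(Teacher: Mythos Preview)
The paper does not supply its own proof of this proposition; it is quoted directly from \cite{MP-K3}, Proposition~4.3, with further attribution to~\cite{Milne-ShVarsMot}. Your proposal outlines precisely the standard period-map construction that underlies this moduli interpretation of~$\scrS_\CC$, and it is correct in outline; in particular, your key observation---that the definition~\eqref{eq:KLDef} of~$K(L)$ as the stabiliser of~$L\otimes\Zhat$ in~$\SO(L)(\AA_\fin)$ acting trivially on the discriminant is exactly what makes a $K(L)$-orbit of adelic isometries equivalent to the pair~$(\theta,\eta)$---is the heart of the matter.
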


\subsection{}
We want to apply this proposition to define a morphism $\iota_\CC \colon \scrM_\CC \to \scrS_\CC$, which in the next proposition will then be shown to be defined over~$\QQ$. In order to do this, we first need to define an identification~$\eta_{\motP}$ of the discriminant group of~$\motP$ with the constant sheaf~$\Discr(L)$ over~$\scrM_\CC$.

Write $\motH(X) = \motH^6\bigl(X/\scrM_\QQ;\ZZ\bigr)(3)$ and $\motH(\Grass) = \motH^6\bigl((\Grass \times \scrM_\QQ)/\scrM_\QQ;\ZZ\bigr)(3)$. The morphism $\gamma \colon X \to (\Grass \times \scrM_\QQ)$ over~$\scrM_\QQ$ induces $\gamma^* \colon \motH(\Grass) \to \motH(X)$, which in all realizations is injective with torsion-free cokernel. Of course, $\motH(\Grass)$ is a constant system of realizations, in the sense that it is the pullback of~$\motH^6\bigl(\Grass/\QQ;\ZZ\bigr)(3)$, and in fact (still with $\infty \colon \QQ\to \CC$)
\[
\motH_\infty(\Grass) \cong I(2)^{\oplus 2}_{\scrM_\CC}
\]
where we recall that $I(2) = \ZZ$ with $b(1,1) = 2$.

As explained in \cite{DK-Kyoto}, Proposition~3.9, the fibres of $\motH_\infty(X)$ are isomorphic to $E_8(-1)^{\oplus 2} \oplus U^{\oplus 4}$, and the orthogonal complement of a primitively embedded copy of~$I(2)^{\oplus 2}$ is the lattice~$L$. As the system of realizations~$\motP$ is defined as the orthogonal complement of~$\gamma^*\motH(\Grass)$ inside~$\motH(X)$, it follows that we have isometries (with respect to the $\QQ/\ZZ$-valued discriminant pairings)
\begin{equation}
\label{eq:DiDiDi}
\Discr(\motP_{\ZZ,\infty}) \isomarrow \Discr\bigl(I(2)^{\oplus 2}\bigr)_{\scrM_\CC} \isomarrow \Discr(L)_{\scrM_\CC}\, ,
\end{equation}
and we define $\eta_{\motP} \colon \Discr(\motP_{\ZZ,\infty}) \isomarrow \Discr(L)_{\scrM_\CC}$ to be the composition. (In general, the maps in~\eqref{eq:DiDiDi} are isometries only if we change the sign of the discriminant pairing on the middle term; see \cite{Nikulin}, \S 1.6. In the case at hand, however, the discriminant groups are killed by~$2$ so changing the sign has no effect.)

By Proposition~\ref{prop:MPK3-4.3}, there is a unique morphism
\[
\iota_\CC \colon \scrM_\CC \to \scrS_\CC
\]
such that we have an isomorphism of $\ZZ$-VHS
\begin{equation}
\label{eq:psiC}
\psi_\CC \colon (\motP_{\ZZ,\infty},\Fil^\bullet) \isomarrow \iota_\CC^*(\motL_{\ZZ,\infty},\Fil^\bullet)\, ,
\end{equation}
which is compatible with the pairings~$Q$, the orientations~$\theta$, and the trivializations~$\eta$. (In order to simplify notation we here write $\Fil^\bullet$ instead of $\infty^*\Fil^\bullet$.)

\begin{proposition}
\label{prop:descendQ}
\begin{enumerate}
\item\label{iota/Q} The morphism $\iota_\CC$ is defined over~$\QQ$, i.e., there exists a (unique) morphism $\iota \colon \scrM_\QQ \to \scrS_\QQ$ that after extension of scalars to~$\CC$ gives back~$\iota_\CC$.

\item\label{psi/E} There exists a field extension $\QQ \subset E$ of degree at most~$2$ such that there is an isomorphism
\[
\psi \colon \motP|_{\scrM_E} \isomarrow \iota_E^*\bigl(\motL|_{\scrS_E}\bigr)\, ,
\]
\end{enumerate}
in $\Real(\scrM_E;\ZZ)$ which after extension of scalars to~$\CC$ gives back~\eqref{eq:psiC}.
\end{proposition}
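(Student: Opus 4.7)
The plan for both parts is Galois descent, relying essentially on the uniqueness clause of Proposition \ref{prop:MPK3-4.3}. The key observation is that the characterizing data on both sides ($\motP$, $b_{\motP}$, $\theta_{\motP}$, $\eta_{\motP}$, together with the Hodge filtration on $\motP_{\dR}$; and the analogous data for $\motL$) are all defined over $\QQ$. Indeed, the Hodge filtrations are filtrations of the algebraic de Rham bundles $\motP_{\dR}$ over $\scrM_\QQ$ and $\motL_{\dR}$ over $\scrS_\QQ$, while the orientations and discriminant trivializations live on the \'etale realizations, which are sheaves on $\scrM_\QQ$ and $\scrS_\QQ$; only after transporting through the Betti--\'etale and Betti--de Rham comparisons at~$\infty$ does one obtain the complex-analytic data that feeds into Proposition \ref{prop:MPK3-4.3}.

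For part~(1), I would fix $\sigma \in \Aut(\CC/\QQ)$ and compare $\iota_\CC$ with its Galois conjugate $\iota_\CC^\sigma$, obtained by pulling back along the $\sigma$-induced self-maps of $\scrM_\CC$ and $\scrS_\CC$. Because $\motP$, $b_{\motP}$, $\theta_{\motP}$, $\eta_{\motP}$ and the Hodge filtration on $\motP_{\dR}$ all descend to $\scrM_\QQ$, and the analogous universal data lives on $\scrS_\QQ$, the data classified by $\iota_\CC^\sigma$ agrees with the data classified by $\iota_\CC$; the uniqueness part of Proposition \ref{prop:MPK3-4.3} then forces $\iota_\CC^\sigma = \iota_\CC$. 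Since this holds for every $\sigma \in \Aut(\CC/\QQ)$, Grothendieck descent yields the desired morphism $\iota \colon \scrM_\QQ \to \scrS_\QQ$ with $\iota \otimes_\QQ \CC = \iota_\CC$.

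For part~(2), having $\iota$ defined over $\QQ$, the pullback $\iota^*\motL$ lies in $\Real(\scrM_\QQ;\ZZ)$, and $\psi_\CC$ identifies $\motP \otimes_\QQ \CC$ with $\iota^*\motL \otimes_\QQ \CC$. The strategy is to show that such an isomorphism is unique up to a global sign $\pm 1$; granted this, for each $\sigma \in \Aut(\CC/\QQ)$ we obtain $\sigma^*\psi_\CC = \epsilon(\sigma)\,\psi_\CC$ with $\epsilon(\sigma) \in \{\pm 1\}$, giving a continuous character $\epsilon \colon \Aut(\CC/\QQ) \to \{\pm 1\}$. Its kernel cuts out an at most quadratic extension $E/\QQ$ over which $\psi_\CC$ is $\Aut(\CC/E)$-invariant and therefore descends to the required $\psi$ in $\Real(\scrM_E;\ZZ)$. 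The main obstacle I expect is this rigidity statement: one must show that the only automorphisms of $\iota_\CC^*\motL$ preserving polarization, orientation and discriminant are $\pm 1$. This should reduce to the absolute irreducibility of the standard $G_\QQ = \SO(L)_\QQ$-representation on $L_\QQ$ combined with Zariski density of the $\motL$-monodromy in $G$, and it reflects conceptually the $\mu_2$-gerbe structure $\pi \colon \tilde{\scrS} \to \scrS$ of Remark \ref{rem:mu2gerbe} as the geometric origin of the possible quadratic ambiguity.
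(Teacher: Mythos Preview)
Your treatment of part~\ref{psi/E} is correct and matches the paper: once~$\iota$ is known to descend, the automorphisms of~$\motP|_{\scrM_\CC}$ compatible with $b_{\motP}$ and~$\theta_{\motP}$ form~$\{\pm\id\}$, and the resulting sign character cuts out an at most quadratic extension over which~$\psi_\CC$ descends.

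For part~\ref{iota/Q}, however, there is a genuine gap. You assert that because the de Rham bundle with its Hodge filtration, the \'etale sheaf, and the structures $b_{\motP}$, $\theta_{\motP}$, $\eta_{\motP}$ all descend to~$\scrM_\QQ$, the data classified by~$\iota_\CC^\sigma$ coincides with that classified by~$\iota_\CC$. But Proposition~\ref{prop:MPK3-4.3} characterizes~$\iota_\CC$ through the \emph{Betti} realization --- the polarized $\ZZ$-VHS $(\motP_{\ZZ,\infty},\Fil^\bullet)$ --- and the Betti local system is transcendental: it depends on the complex embedding and is not itself an object over~$\QQ$. Writing $j = {}^\sigma\iota_\CC$, what you must produce is an isomorphism of polarized $\ZZ$-VHS between $\motP_{\ZZ,\infty}$ and~$j^*\motL_{\ZZ,\infty}$, compatible with~$\theta$ and~$\eta$. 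Knowing that the de Rham and \'etale realizations descend does not furnish this; it only tells you that the de Rham and \'etale components of~$\psi_\CC$ can be transported along~$\sigma$.

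The missing ingredient is Deligne's theorem that Hodge classes on abelian varieties are absolutely Hodge. The paper invokes the fact that both~$\motP$ and~$\motL$ are families of \emph{abelian} motives --- for~$\motL$ this is by construction via Kuga--Satake, while for~$\motP$ it is a nontrivial input from~\cite{FuMoonen-GM1}. The isomorphism~$\psi_\CC$, viewed as a section of $\motP^\vee \otimes \iota_\CC^*\motL$, is then a Hodge class at the embedding~$\id$, hence absolutely Hodge. Concretely, the transported pair $(\alpha^*\psi_{\dR},\alpha^*\psi_{\et})$ is again Hodge at the embedding~$\alpha$, which is exactly the statement that there exists a Betti isomorphism $\psi_\alpha \colon \motM_{\ZZ,\alpha} \isomarrow \iota_\alpha^*\motM^\prime_{\ZZ,\alpha}$ compatible with the comparison maps. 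Pulling~$\psi_\alpha$ back along~$d_\scrM(\alpha)$ then yields the required isomorphism $\phi \colon \motP_{\ZZ,\infty} \isomarrow j^*\motL_{\ZZ,\infty}$, after which one still has to check compatibility with~$\theta$ and~$\eta$ (the paper does this by a careful diagram chase using that both are defined on the $2$-adic \'etale sheaf over~$\QQ$). Without the absolute Hodge step you have no mechanism to produce~$\psi_\alpha$, and the appeal to uniqueness in Proposition~\ref{prop:MPK3-4.3} has no input to act on.
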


The proof of the proposition will take up the rest of this section. This result is the analogue of \cite{MP-K3}, Corollary~5.4. As mentioned there, in the case of K3 surfaces the fact that $\iota_\CC$ is defined over~$\QQ$ was first proven by Rizov~\cite{Rizov}. His proof uses arguments for which we have no analogue in the setting of Gushel--Mukai varieties. As indicated by Madapusi Pera, there is also a proof which is based on the theory of absolute Hodge classes, following the ideas in~\cite{Milne-ShVarsMot}. This is the argument we shall use here. Unfortunately, \cite{MP-K3} only gives a brief hint of how the argument works, and all verifications are omitted. As this is really one of the key steps in the proof of the Tate conjecture, and as the proof can be a little confusing, we give full details.

\subsection{}
For $\alpha \colon \CC \to \CC$ an embedding of $\CC$ into~$\CC$, write $\scrM_\alpha = \scrM_\CC \otimes_{\CC,\alpha} \CC$ and $\scrS_\alpha = \scrS_\CC \otimes_{\CC,\alpha} \CC$, and let $\iota_\alpha\colon \scrM_\alpha \to \scrS_\alpha$ be the morphism obtained from $\iota_\CC \colon \scrM_\CC \to \scrS_\CC$ by base change via~$\alpha$. Because $\scrM_\CC$ and $\scrS_\CC$ are obtained as complexifications of schemes~$\scrM$ and~$\scrS$ over~$\QQ$ we have canonical (descent) isomorphisms
\[
d_\scrM(\alpha) \colon \scrM_\CC \isomarrow \scrM_\alpha\, ,\quad
d_\scrS(\alpha) \colon \scrS_\CC \isomarrow \scrS_\alpha\, .
\]
These are isomorphisms of schemes over~$\CC$. Note that, by definition of~$\scrM_\alpha$ and~$\scrS_\alpha$ as pullbacks, we also have projection morphisms
\[
\pr_\scrM(\alpha) \colon \scrM_\alpha \to \scrM_\CC\, ,\quad \pr_\scrS(\alpha) \colon \scrS_\alpha \to \scrS_\CC\, ,
\]
but these are \emph{not} the inverses of $d_\scrM(\alpha)$ and~$d_\scrS(\alpha)$. (In fact, they are not morphisms over~$\CC$.) If $F$ is any sheaf on~$\scrM_\CC$, we denote by $\alpha^*F$ its pullback under~$\pr_\scrM(\alpha)$; analogously for~$\scrS$.

For the duration of the proof we shall simplify notation, writing
\[
\motM = \motP|_{\scrM_\CC}\quad \text{and}\quad \motM^\prime = \motL|_{\scrS_\CC}\, .
\]
The data involved in these systems are the following.
\begin{itemize}
\item For every $\alpha \colon \CC \to \CC$ we have local systems with $\ZZ$-coefficients $\motM_{\ZZ,\alpha}$ on~$\scrM_\alpha$ and $\motM^\prime_{\ZZ,\alpha}$ on~$\scrS_\alpha$. These are given by
\[
\motM_{\ZZ,\alpha} = d_\scrM(\alpha)^{-1,*}\motP_{\ZZ,\infty}\, ,\quad
\motM^\prime_{\ZZ,\alpha} = d_\scrS(\alpha)^{-1,*}\motL_{\ZZ,\infty}\, .
\]

\item We have flat vector bundles equipped with a filtration by subbundles $(\motM_{\dR},\nabla,\Fil^\bullet)$ on~$\scrM_\CC$ and $(\motM^\prime_{\dR},\nabla,\Fil^\bullet)$ on~$\scrS_\CC$. These are given by
\[
(\motM_{\dR},\nabla,\Fil^\bullet) = \infty^*(\motP_{\dR},\nabla,\Fil^\bullet)\, ,\quad
(\motM^\prime_{\dR},\nabla,\Fil^\bullet) = \infty^*(\motL_{\dR},\nabla,\Fil^\bullet)\, .
\]
(Here $\infty^*$ means base change via $\scrM_\CC \to \scrM$, resp.\ $\scrS_\CC \to \scrS$.)

\item We have $\Zhat$-local systems in the pro-\'etale topology
\[
\motM_{\et} = \infty^*\motP_{\et}\, ,\quad
\motM^\prime_{\et} = \infty^*\motL_{\et}
\]
on $\scrM_\CC$, resp.~$\scrS_\CC$.

\item For every $\alpha$ we have isomorphisms of flat bundles
\[
i_{\dR,\alpha} \colon \motM_{\ZZ,\alpha} \otimes \scrO_{\scrM_\alpha} \isomarrow \alpha^*\motM_{\dR}\, ,\quad
i^\prime_{\dR,\alpha} \colon \motM^\prime_{\ZZ,\alpha} \otimes \scrO_{\scrS_\alpha} \isomarrow \alpha^*\motM^\prime_{\dR}
\]
as well as isomorphisms
\[
i_{\et,\alpha} \colon \motM_{\ZZ,\alpha} \otimes \Zhat \isomarrow \alpha^* \motM_{\et}\, ,\quad
i^\prime_{\et,\alpha} \colon \motM^\prime_{\ZZ,\alpha} \otimes \Zhat \isomarrow \alpha^* \motM^\prime_{\et}\, .
\]
These are obtained from the comparison isomorphisms $i_{\dR,\infty}$ and~$i_{\et,\infty}$ of~$\motP$ (resp.\ the comparison isomorphisms $i^\prime_{\dR,\infty}$ and~$i^\prime_{\et,\infty}$ of~$\motL$) by
\begin{multline*}
\qquad
i_{\dR,\alpha} = d_\scrM(\alpha)^{-1,*}(i_{\dR,\infty}) \colon \motM_{\ZZ,\alpha} \otimes \scrO_{\scrM_\alpha} \isomarrow\\
d_\scrM(\alpha)^{-1,*}\infty^* \motP_{\dR} = \pr_\scrM(\alpha)^* \infty^* \motP_{\dR} = \alpha^* \motM_{\dR}\qquad
\end{multline*}
and
\begin{multline*}
i_{\et,\alpha} = d_\scrM(\alpha)^{-1,*}(i_{\et,\infty}) \colon \motM_{\ZZ,\alpha} \otimes \Zhat \isomarrow \\
d_\scrM(\alpha)^{-1,*}\infty^* \motP_{\et} = \pr_\scrM(\alpha)^* \infty^* \motP_{\et} = \alpha^* \motM_{\et}\, ,
\end{multline*}
and similarly $i^\prime_{\dR,\alpha} = d_\scrS(\alpha)^{-1,*}(i^\prime_{\dR,\infty})$ and $i^\prime_{\et,\alpha} = d_\scrS(\alpha)^{-1,*}(i^\prime_{\et,\infty})$. In this construction we use that the two compositions
\[
\scrM_\alpha \xrightarrow{~d_\scrM(\alpha)^{-1}~} \scrM_\CC \tto \scrM_\QQ
\qquad\text{and}\qquad
\scrM_\alpha \xrightarrow{~\pr_\scrM(\alpha)~} \scrM_\CC \tto \scrM_\QQ
\]
are equal; analogously with $\scrS$ instead of~$\scrM$.
\end{itemize}

\subsection{}
We now use that $\motM$ and~$\motM^\prime$ are families of abelian motives, which means they are cut out by absolute Hodge (or even motivated) projectors in systems of realizations coming from abelian schemes. For $\motM^\prime = \motL|_{\scrS_\CC}$ this is clear from its construction, for~$\motM$ this is proven in our paper~\cite{FuMoonen-GM1}, Section~6. This allows us to use Deligne's result, documented in~\cite{DMOS}, which says that classes (in our setting: sections) in the de Rham or \'etale realizations that are Hodge with respect to one complex embedding, are Hodge with respect to every embedding. The generalisation of this result to families of abelian motives presents no difficulties, as we can just apply Deligne's result fibrewise.

The isomorphism $\psi$ of \eqref{eq:psiC} has a de Rham component $\psi_{\dR} \colon \motM_{\dR} \isomarrow \iota_\CC^* \motM^\prime_{\dR}$. If we take $\alpha = \id \colon \CC \to \CC$ then we also have the isomorphism $\psi_{\Betti} \colon \motM_{\ZZ,\id} \isomarrow \iota_\CC^*\motM^\prime_{\ZZ,\id}$ ($\Betti$ for ``Betti''), and we can define $\psi_{\et} \colon \motM_{\et} \isomarrow \iota_\CC^*\motM^\prime_{\et}$ as the $\Zhat$-linear extension of~$\psi_{\Betti}$ via the comparison isomorphisms $i_{\et,\id}$ and~$i^\prime_{\et,\id}$. If we view these components as sections of the system of realizations $\motM^\vee \otimes \iota_\CC^*\motM^\prime$ then, by construction, $(\psi_{\dR},\psi_{\et})$ is Hodge at the complex embedding $\alpha = \id$.

The fact that Hodge sections are absolute Hodge sections therefore implies that for every $\alpha \colon \CC \to \CC$ there exists an isomorphism
\[
\psi_\alpha \colon \motM_{\ZZ,\alpha} \isomarrow \iota_\alpha^* \motM^\prime_{\ZZ,\alpha}
\]
such that the diagrams
\begin{equation}
\label{diag:dRalpha}
\begin{tikzcd}[column sep=huge]
\motM_{\ZZ,\alpha} \otimes \scrO_{\scrM_\alpha} \arrow{d}[swap]{\psi_\alpha \otimes \id} \ar[r,"i_{\dR,\alpha}"] & \alpha^* \motM_{\dR} \ar[d,"\pr_\scrM(\alpha)^*(\psi_{\dR})"]\\
\iota_\alpha^*\bigl(\motM^\prime_{\ZZ,\alpha} \otimes \scrO_{\scrS_\alpha}\bigr) \ar[r,"\iota_\alpha^*(i^\prime_{\dR,\alpha})"]& \iota_\alpha^*\bigl(\alpha^*\motM^\prime_{\dR}\bigr) = \alpha^*\bigl(\iota_\CC^*\motM^\prime_{\dR} \bigr)
\end{tikzcd}
\end{equation}
and
\begin{equation}
\label{diag:etalpha}
\begin{tikzcd}[column sep=huge]
\motM_{\ZZ,\alpha} \otimes \Zhat \arrow{d}[swap]{\psi_\alpha \otimes \id} \ar[r,"i_{\et,\alpha}"] & \alpha^*\motM_{\et} \ar[d,"\pr_\scrM(\alpha)^*(\psi_{\et})"]\\
\iota_\alpha^*\bigl(\motM^\prime_{\ZZ,\alpha} \otimes \Zhat\bigr) \ar[r,"\iota_\alpha^*(i^\prime_{\et,\alpha})"] & \iota_\alpha^*\bigl(\alpha^*\motM^\prime_{\et}\bigr) = \alpha^*\bigl(\iota_\CC^*\motM^\prime_{\et} \bigr)
\end{tikzcd}
\end{equation}
are commutative. (Recall that we use $\alpha^*F$ as a shorthand for $\pr_\scrM(\alpha)^*F$ or $\pr_\scrS(\alpha)^*F$.)

\subsection{}
To prove part~\ref{iota/Q} of the proposition we take an arbitrary $\alpha \colon \CC \to \CC$ and we consider the composition
\[
j \colon \scrM_\CC \xrightarrow{~d_\scrM(\alpha)~} \scrM_\alpha \xrightarrow{~\iota_\alpha~} \scrS_\alpha \xrightarrow{~d_\scrS(\alpha)^{-1}~} \scrS_\CC\, .
\]
(So $j = {}^\alpha\iota_\CC$.) We are done if we can show that there exists an isomorphism of polarized $\ZZ$-VHS
\[
\phi \colon (\motP_{\ZZ,\infty},\Fil^\bullet,b_{\motP}) \isomarrow j^*(\motL_{\ZZ,\infty},\Fil^\bullet,b_{\motL})\, ,
\]
which is compatible with the orientations~$\theta$ and the discriminant rigidifications~$\eta$ on both sides. Indeed, if such an isomorphism~$\phi$ exists then $j = \iota_\CC$ by the uniqueness in Proposition~\ref{prop:MPK3-4.3}, and since $\alpha$ is arbitrary it follows that $\iota_\CC$ is defined over~$\QQ$.

To construct the desired isomorphism~$\phi$, we take as Betti component 
\[
\phi_{\Betti} \colon \motP_{\ZZ,\infty} \isomarrow j^* \motL_{\ZZ,\infty}
\] 
the isomorphism
\[
\phi_{\Betti} = d_\scrM(\alpha)^*(\psi_\alpha) \colon \motP_{\ZZ,\infty} = d_\scrM(\alpha)^*\motM_{\ZZ,\alpha} \longrightarrow j^* \motL_{\ZZ,\infty} = d_\scrM(\alpha)^* \iota_\alpha^* \motM^\prime_{\ZZ,\alpha} \, .
\]
As de Rham component we take
\begin{multline*}
\qquad \phi_{\dR} = d_\scrM(\alpha)^* \pr_\scrM(\alpha)^*(\psi_{\dR}) \colon \infty^*\motP_{\dR} = d_\scrM(\alpha)^* \pr_\scrM(\alpha)^* \motM_{\dR} \\
\longrightarrow j^*\infty^* \motL_{\dR} = d_\scrM(\alpha)^* \pr_\scrM(\alpha)^* \iota_\CC^* \motM^\prime_{\dR}\, .\qquad
\end{multline*}
To see that $\phi_{\Betti}$ and $\phi_{\dR}$ are compatible, in the sense that $\phi_{\dR} \circ i_{\dR,\infty} = j^*(i^\prime_{\dR,\infty}) \circ (\phi_{\Betti} \otimes \id)$, apply $d_\scrM(\alpha)^*$ to diagram~\eqref{diag:dRalpha}. As it is clear that $\phi_{\dR}$ is compatible with the Hodge filtrations and polarizations, we have found an isomorphism~$\phi$ of $\ZZ$-VHS as desired. It only remains to be verified that $\phi$ is compatible with the isomorphims $\theta$ and~$\eta$ on both sides.

\subsection{}
\label{subsec:thetacompat}
To see that $\phi$ is compatible with the isomorphisms~$\theta$, first note that~$\motM_2$, the $2$-adic component of~$\motM_\et$, is the pull-back of~$\motP_2$ to~$\scrM_\CC$, and that by construction we have an orientation~$\theta_{\motP}$ of~$\motP$ over~$\scrM$. We therefore have a canonical isomorphism $d_\scrM(\alpha)^* \pr_\scrM(\alpha)^* \det(\motM_2) \cong \det(\motM_2)$, and under this isomorphism we have $d_\scrM(\alpha)^* \pr_\scrM(\alpha)^*(\theta_\motP) = \theta_\motP$. Likewise, we have an isomorphism $d_\scrS(\alpha)^* \pr_\scrS(\alpha)^* \det(\motL_2) \cong \det(\motL_2)$, and $d_\scrS(\alpha)^* \pr_\scrS(\alpha)^*(\theta_\motL) = \theta_\motL$ under this isomorphism.

Because $\psi$ is compatible with orientations, the diagram
\[
\begin{tikzcd}
\det(\motM_2) \ar[dd,"\det(\psi_\et)"'] \ar[dr,"\theta_\motP"] & \\
& \det(L_2)_{\scrM_\CC}\\
\iota_\CC^* \det(\motM^\prime_2) \ar[ur,"\iota_\CC^* \theta_{\motL}"']
\end{tikzcd}
\]
is commutative. Applying $d_\scrM(\alpha)^* \pr_\scrM(\alpha)^*$ and using diagram~\eqref{diag:etalpha} then gives a commutative diagram
\[
\begin{tikzcd}[column sep=small]
d_\scrM(\alpha)^* \det(\motM_{\ZZ,\alpha} \otimes \ZZ_2) \ar[r,"\sim"] \ar[dd,"\det(\phi_{\Betti})"']
& d_\scrM(\alpha)^* \pr_\scrM(\alpha)^* \det(\motM_2)  \ar[dd,"d_\scrM(\alpha)^* \pr_\scrM(\alpha)^*\det(\psi_\et)"'] \ar[dr,"d_\scrM(\alpha)^* \pr_\scrM(\alpha)^* \theta_{\motP}"]  \\
&& \det(L_2)_{\scrM_\CC}\\
d_\scrM(\alpha)^*  \iota_\alpha^* \det(\motM^\prime_{\ZZ,\alpha} \otimes \ZZ_2) \ar[r,"\sim"]
& d_\scrM(\alpha)^* \pr_\scrM(\alpha)^*\iota_\CC^* \det(\motM^\prime_2) \ar[ur,"d_\scrM(\alpha)^* \pr_\scrM(\alpha)^*\iota_\CC^*\theta_{\motL}"']
\end{tikzcd}
\]
(Of course, $d_\scrM(\alpha)^* \pr_\scrM(\alpha)^* \det(L_2)_{\scrM_\CC} = \det(L_2)_{\scrM_\CC}$.) Because $\det(\motP_\ZZ) = d_\scrM(\alpha)^* \det(\motM_{\ZZ,\alpha})$ and $j^* \det(\motL_{\ZZ,\alpha}) = d_\scrM(\alpha)^* \iota_\alpha^* \det(\motM^\prime_{\ZZ,\alpha})$, the remarks made at the beginning of this subsection then give that the diagram
\[
\begin{tikzcd}
\det(\motP_{\ZZ,\infty} \otimes \ZZ_2) \ar[dd,"\det(\phi_{\Betti} \otimes \id)"'] \ar[dr,"\theta_\motP"] & \\
& \det(L_2)_{\scrM_\CC}\\
j^* \det(\motL_{\ZZ,\infty} \otimes \ZZ_2) \ar[ur,"j^* \theta_{\motL}"']
\end{tikzcd}
\]
is commutative, which means that $\phi$ is compatible with the isomorphisms~$\theta$.

\subsection{}
To show that $\phi$ is compatible with the isomorphisms~$\eta$, note that the discriminant groups involved are all isomorphic to $(\ZZ/2\ZZ)^2$ and that we have natural isomorphisms $\Discr(\motM_{\ZZ,\alpha}) \cong \Discr(\motM_2)$ and $\Discr(\motM^\prime_{\ZZ,\alpha}) \cong \Discr(\motM^\prime_2)$, for all~$\alpha$. With this remark, the argument is the same as in~\ref{subsec:thetacompat}, everywhere replacing ``$\det$'' by~``$\Discr$'' and ``$\theta$'' by~``$\eta$''. We leave it to the reader to write out the details.

\subsection{}
Finally, part~\ref{psi/E} of Proposition~\ref{prop:descendQ} readily follows from the fact that the group of automorphisms of the system of realizations~$\motP|_{\scrM_\CC}$ over~$\scrM_\CC$ that respect~$b_{\motP}$ and are compatible with~$\theta_{\motP}$ is $\{\pm \id\}$.

This completes the proof of Proposition~\ref{prop:descendQ}. \qed

\begin{remark}
Part~\ref{psi/E} of the proposition is one of the places where the situation for Gushel--Mukai varieties differs from the case of K3 surfaces. The point is that the ``variable'' part~$H^6(X)_{00}$ of the middle cohomology has even rank, whereas the primitive cohomology of a K3 surface has rank~$21$. In the setting of Gushel--Mukai varieties, working with $\ZZ$-coefficients and then rigidifying the determinant (through the choice of an orientation) does not suffice to kill all non-trivial automorphisms. A similar problem occurs in the case of cubic fourfolds, though this is not discussed in the brief sketch given in~\cite{MP-K3}, Section~6.
\end{remark}

\section{Matching the crystalline realizations}
\label{sec:MatchCryst}

\subsection{}
\label{subsec:PAdicNot}
Now that we have established Proposition~\ref{prop:descendQ}, it is time to study the $p$-adic extension of all objects involved. While it is tempting to develop a notion of ``families of motives'' over $p$-adic schemes, we will restrict ourselves to the ingredients that are essential for the proof of the Tate conjecture, and treat them in a somewhat ad hoc manner. At this point it will be convenient for us to pass to suitable level covers, so we shall no longer need to consider Shimura stacks.

The following notation will be in force throughout this section. We retain the notation introduced in Section~\ref{subsec:ConnectToGM}; in particular, $L$ is the lattice given in~\eqref{eq:GM6Lattice} and $\scrM$ and~$\motP$ are as defined there. We fix a prime number~$p$ with $p\geq 5$. We choose an odd integer $N > 4$ such that $p \nmid \varphi(N)$ (Euler totient function), and we consider the Shimura varieties $\tilde{\scrS}_N \to \scrS_N$ over $\Spec(\ZZ_{(p)})$ as in Remark~\ref{rem:mu2gerbe}. Define $\tilde{\scrM} = \scrM \times_{\scrS} \tilde{\scrS}_N$, and define~$\tilde{\motP}$ as the pullback of the system of realizations~$\motP$ under $\tilde{\scrM}_\QQ \to \scrM_\QQ$, which from now will be regarded as a system of realizations with $\ZZ_{(p)}$-coefficients. The notation~$\tilde{\motL}$ and~$\motH$ from now on refers to the pullbacks under the morphism $\tilde{\scrS}_{N,\QQ} \to \tilde{\scrS}_\QQ$ of the systems of realizations previously called $\tilde{\motL}$ and~$\motH$. These, too, are now viewed as systems of realizations with $\ZZ_{(p)}$-coefficients. Let $\tilde{\iota}_\QQ \colon \tilde{\scrM}_\QQ \to \tilde{\scrS}_{N,\QQ}$ be the pullback of the morphism~$\iota$ of Proposition~\ref{prop:descendQ}.

Fixing a field extension $\QQ \subset E$ of degree at most~$2$ as in part~\ref{psi/E} of Proposition~\ref{prop:descendQ}, choose a prime ideal $\frp \subset \scrO_E$ above~$p$, let $\Lambda = \scrO_{E,\frp} \twoheadrightarrow \kappa = \Lambda/\frp$ be the local ring at~$\frp$ and its residue field, and let
\[
E_\frp \supset \hat\Lambda = \hat{\scrO}_{E,\frp}
\]
denote the completions of $E\supset \scrO_E$ at~$\frp$.

With this notation we have an isomorphism $\psi \colon \tilde{\motP} \isomarrow \tilde{\iota}_\QQ^* \tilde{\motL}$ in $\Real(\tilde{\scrM}_E;\ZZ_{(p)})$.

\begin{lemma}
The scheme $\tilde{\scrM}$ is smooth over~$\ZZ_{(p)}$ and the morphism $\tilde{\iota}_\QQ$ uniquely extends to a morphism $\tilde{\iota} \colon \tilde{\scrM} \to \tilde{\scrS}_N$ over~$\ZZ_{(p)}$.
\end{lemma}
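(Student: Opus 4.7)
The plan splits into two parts: establishing smoothness of $\tilde{\scrM}$, then extending the morphism.

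\textbf{Smoothness.} I would first argue that $\scrM$ itself is smooth over~$\ZZ_{(p)}$. The scheme~$\scrM^\flat$ is an open subscheme of the projective space~$\scrQ$ of quadrics in $\PP(\ZZ_{(p)} \oplus \wedge^2 V_5)$, cut out by the open condition that $\CGr(2,V_5) \cap Q$ be smooth of dimension~$6$, hence smooth over~$\ZZ_{(p)}$. The orientation cover $\scrM \to \scrM^\flat$ is a $(\ZZ/2\ZZ)$-torsor, which is étale because $p \geq 5$; so $\scrM$ is smooth over~$\ZZ_{(p)}$. Next I would note that $\tilde{\scrS}_N \to \scrS$ is étale as a morphism of stacks over~$\ZZ_{(p)}$: by Remark~\ref{rem:mu2gerbe} it factors through the level-$N$ cover (étale since $(N,p)=1$) and a $\mu_2$-gerbe (étale since $p \neq 2$), and $p \nmid \varphi(N)$ guarantees that its degree is prime to~$p$. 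Consequently, $\tilde{\scrM}_\QQ = \scrM_\QQ \times_{\scrS_\QQ} \tilde{\scrS}_{N,\QQ}$ is finite étale over~$\scrM_\QQ$ with degree coprime to~$p$.

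\textbf{Extension of the cover and of~$\tilde\iota_\QQ$.} Since $\scrM$ is regular and the étale cover of the generic fibre is tame (of order prime to~$p$), Abhyankar's lemma / Zariski--Nagata purity on the smooth base~$\scrM$ gives a unique extension to a finite étale cover $\tilde{\scrM} \to \scrM$; hence $\tilde{\scrM}$ is smooth over~$\ZZ_{(p)}$. To produce~$\tilde{\iota}$, I would invoke the extension property enjoyed by the integral canonical model~$\tilde{\scrS}_N$ in the sense of Milne, realised by Kisin in~\cite{Kisin-IntCanMod}: for any smooth $\ZZ_{(p)}$-scheme~$T$ with dense generic fibre, a morphism $T_\QQ \to \tilde{\scrS}_{N,\QQ}$ extends uniquely to $T \to \tilde{\scrS}_N$. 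Applied to $T = \tilde{\scrM}$ and the given~$\tilde{\iota}_\QQ$, this produces the required~$\tilde{\iota}$, and uniqueness is part of the extension property.

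\textbf{Main obstacle.} The delicate step is verifying that the characteristic-$0$ étale cover $\tilde{\scrM}_\QQ \to \scrM_\QQ$ extends across the special fibre without introducing ramification; this is what forces the choice $p \nmid 2\varphi(N)$ and relies on tameness plus the smoothness of~$\scrM$. Once smoothness of~$\tilde{\scrM}$ is in hand, the extension of~$\tilde{\iota}_\QQ$ is a direct appeal to the defining property of integral canonical models.
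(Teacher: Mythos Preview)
Your overall strategy matches the paper's: argue that $\tilde{\scrS}_N \to \scrS$ is \'etale over~$\ZZ_{(p)}$ so that the fibre product $\tilde{\scrM}$ is smooth over~$\scrM$, and then invoke the extension property of the integral canonical model for~$\tilde{\iota}$. The Abhyankar/purity step, however, is a red herring. By definition (Section~\ref{subsec:PAdicNot}) $\tilde{\scrM} = \scrM \times_{\scrS} \tilde{\scrS}_N$ is already a fibre product over~$\ZZ_{(p)}$; so as soon as you know $\tilde{\scrS}_N \to \scrS$ is \'etale over~$\ZZ_{(p)}$ (which you yourself claim), base change makes $\tilde{\scrM} \to \scrM$ \'etale and hence $\tilde{\scrM}$ smooth---there is nothing left to extend. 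Your purity argument instead constructs a second finite \'etale cover of~$\scrM$ which you would still have to identify with the given fibre product, a step you do not carry out.

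For the \'etaleness itself the paper uses a slightly different factorisation: rather than $\tilde{\scrS}_N \to \tilde{\scrS} \to \scrS$, it factors as $\tilde{\scrS}_N \to \scrS_N \to \scrS$. The base change $\scrM \times_{\scrS} \scrS_N \to \scrM$ is a $G(\ZZ/N\ZZ)$-torsor, and for $\tilde{\scrS}_N \to \scrS_N$ the paper observes (via Remark~\ref{rem:mu2gerbe}) that $\scrS_N$ is the quotient of~$\tilde{\scrS}_N$ by the action of $\Delta/\{\pm 1\}$, which remains free on the integral model precisely because $p \nmid |\Delta| = \varphi(N)$; this is the genuine role of that hypothesis. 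Your ``level-$N$ cover plus $\mu_2$-gerbe'' justification is correct in spirit but relies implicitly on the compatibility of integral canonical models with prime-to-$p$ level changes, which you do not make explicit. For the extension of~$\tilde{\iota}_\QQ$, your argument and the paper's coincide.
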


\begin{proof}
For the first assertion it suffices to show that the morphism $\tilde{\scrS}_N \to \scrS_N$ is smooth, as $\scrM$ is smooth over~$\ZZ_{(p)}$ and $\scrM \times_{\scrS} \scrS_N \to \scrM$ is a $G(\ZZ/n\ZZ)$-torsor. As remarked in~\ref{rem:mu2gerbe}, $\scrS_{N,\QQ}$ is the quotient of~$\tilde{\scrS}_{N,\QQ}$ by the (free) action of the group $\Delta/\{\pm 1\}$. This action extends to an action on the integral canonical model~$\tilde{\scrS}_N$ and because $p$ does not divide the order of~$\Delta$, this extended action is again free (see Section~3.21 of~\cite{BM-Durham}), with quotient~$\scrS_N$.

The second assertion follows because~$\tilde{\scrS}_N$ is the integral canonical model of~$\tilde{\scrS}_{N,\QQ}$.
\end{proof}

\begin{proposition}
\label{prop:ladicExtend}
Let $\ell$ be a prime number with $\ell \neq p$. Then $\tilde{\motP}_\ell$ (resp.\ $\tilde{\motL}_\ell \subset \ul{\End}(\motH_\ell)$) uniquely extends to a smooth $\QQ_\ell$-sheaf on~$\tilde{\scrM}$ (resp.~$\tilde{\scrS}_N$), and $\psi_\ell$ extends to an isomorphism of $\QQ_\ell$-sheaves  $\psi_\ell \colon \tilde{\motP}_\ell \isomarrow \tilde{\iota}^* \tilde{\motL}_\ell$ over~$\tilde{\scrM}$.
\end{proposition}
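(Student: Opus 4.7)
The plan is to establish three extensions in sequence: first $\tilde{\motL}_\ell$ on~$\tilde{\scrS}_N$, then $\tilde{\motP}_\ell$ on~$\tilde{\scrM}$, and finally the isomorphism~$\psi_\ell$.

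For $\tilde{\motL}_\ell$: The universal abelian scheme $f \colon A \to \tilde{\scrS}_N$ is smooth and proper, so $\motH_\ell := R^1 f_* \QQ_\ell$ is a smooth $\QQ_\ell$-sheaf on all of~$\tilde{\scrS}_N$. On the generic fibre, $\tilde{\motL}_\ell \subset \ul{\End}(\motH_\ell)$ is cut out by the idempotent~$\bm{\pi}_\ell$. By the Kisin--Madapusi Pera theory of integral canonical models (\cite{Kisin-IntCanMod}; see also the discussion surrounding Proposition~3.11 in~\cite{MP-IntCanMod}), the monodromy of $\motH_\ell|_{\tilde{\scrS}_{N,\QQ}}$ factors through $\tilde{G}(\QQ_\ell) \subset \GL\bigl(C(L)\otimes \QQ_\ell\bigr)$, and this factorization extends to the monodromy of $\motH_\ell$ on all of~$\tilde{\scrS}_N$. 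Since $\bm{\pi}_\ell$ is $\tilde{G}$-invariant it is preserved by $\pi_1(\tilde{\scrS}_N)$, and $\tilde{\motL}_\ell$ extends uniquely to a smooth sub-$\QQ_\ell$-sheaf of~$\ul{\End}(\motH_\ell)$.

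For $\tilde{\motP}_\ell$: The tautological family $f \colon X \to \scrM$ of six-dimensional GM varieties is smooth and proper, so $R^6 f_* \QQ_\ell(3)$ is a smooth $\QQ_\ell$-sheaf on all of~$\scrM$. The morphism~$\gamma$ and the cup-product pairing are defined globally over~$\scrM$, hence $\tilde{\motP}_\ell$, which on the generic fibre is the orthogonal complement of $\gamma^* R^6 g_* \QQ_\ell(3)$, extends naturally to a smooth sub-$\QQ_\ell$-sheaf on~$\scrM$; pulling back along $\tilde{\scrM} \to \scrM$ produces the required extension. For $\psi_\ell$: both $\tilde{\motP}_\ell$ and $\tilde{\iota}^* \tilde{\motL}_\ell$ are now smooth $\QQ_\ell$-sheaves on~$\tilde{\scrM}$, and $\psi_\ell$ trivializes $\scrF := \ul{\mathrm{Isom}}(\tilde{\motP}_\ell, \tilde{\iota}^* \tilde{\motL}_\ell)$ over~$\tilde{\scrM}_E$. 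Since $\tilde{\scrM}_\Lambda$ is regular and $\tilde{\scrM}_E$ is open and dense in~$\tilde{\scrM}_\Lambda$, the natural map $\pi_1(\tilde{\scrM}_E, \bar{s}) \twoheadrightarrow \pi_1(\tilde{\scrM}_\Lambda, \bar{s})$ is surjective (connected finite \'etale covers of a normal scheme are determined by their restriction to a dense open subscheme). Hence the $\pi_1(\tilde{\scrM}_E)$-invariant vector in $\scrF_{\bar{s}}$ encoding $\psi_\ell$ is already $\pi_1(\tilde{\scrM}_\Lambda)$-invariant, yielding the unique extension of~$\psi_\ell$ over $\tilde{\scrM}_\Lambda$ (and hence over~$\tilde{\scrM}$ when $E = \QQ$). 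Uniqueness in each step follows from density of the generic fibre together with the fact that a smooth $\QQ_\ell$-sheaf on a normal connected scheme is determined by its restriction to any dense open.

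The main obstacle is the content of the first step: the extension of the $\tilde{G}$-structure, and therefore of the projector~$\bm{\pi}_\ell$, from the generic fibre to the integral canonical model. This rests on the Hodge-type Shimura variety machinery of Kisin and Madapusi Pera and is the only non-formal ingredient; once it is granted, the remaining assertions reduce to standard properties of smooth $\ell$-adic sheaves on smooth $\ZZ_{(p)}$-schemes and Zariski--Nagata purity for the \'etale fundamental group.
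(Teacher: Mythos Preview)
Your proof is correct and follows the same overall strategy as the paper, but you have misidentified where the substance lies. You single out the extension of~$\tilde{\motL}_\ell$ as ``the main obstacle'' and ``the only non-formal ingredient'', invoking the Kisin--Madapusi Pera theory to extend the $\tilde{G}$-structure to the integral model. The paper, by contrast, dismisses this step as ``automatic'': once the abelian scheme~$A$ lives over all of~$\tilde{\scrS}_N$, the sheaf~$\motH_\ell$ and hence~$\ul{\End}(\motH_\ell)$ extend, and then the very same $\pi_1$-surjectivity argument you use for~$\psi_\ell$ (normality of~$\tilde{\scrS}_N$, density of the generic fibre) shows that any $\pi_1(\tilde{\scrS}_{N,\QQ})$-stable subspace of a fibre is already $\pi_1(\tilde{\scrS}_N)$-stable. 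No appeal to the factorisation through~$\tilde{G}(\QQ_\ell)$ is needed. In other words, all three extension steps are instances of the same formal principle, and none of them is deeper than the others.

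A minor point: you are right to be careful about extending~$\psi_\ell$ only to~$\tilde{\scrM}_\Lambda$ when $E \neq \QQ$, since $\tilde{\scrM}_E$ is dense open in~$\tilde{\scrM}_\Lambda$ but not in~$\tilde{\scrM}$. The paper's phrasing ``over~$\tilde{\scrM}$'' is slightly loose on this point; for the applications (e.g.\ comparing fibres at closed points in characteristic~$p$) the extension over~$\tilde{\scrM}_\Lambda$ is what is actually used.
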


Note that, once the first assertion is proven, the notation $\tilde{\motP}_\ell$ and~$\tilde{\motL}_\ell$ will be used for the extended $\QQ_\ell$-sheaves.

\begin{proof}
For $\tilde{\motP}_\ell$ the assertion is clear, as we have a universal family $f\colon \scrX \to \tilde{\scrM}$ of Gushel--Mukai varieties together with a morphism $\gamma \colon \scrX \to \Grass(2,V_5)$ over~$\tilde{\scrM}$. Similarly, $\motH_\ell$ extends to a smooth $\QQ_\ell$-sheaf over~$\tilde{\scrS}_N$, and then it is automatic that $\tilde{\motL}_\ell \subset \ul{\End}(\motH_\ell)$ extends as well. The last assertion is clear from the fact that these extensions are unique up to isomorphism.
\end{proof}

\begin{corollary}
\label{cor:HlGMss}
Let $k$ be a field of characteristic $p \geq 5$ which is finitely generated over~$\FF_p$. Let $\kbar$ be an algebraic closure of~$k$ and $k^{\perf} \subset \kbar$ the perfect closure of~$k$ inside~$\kbar$. Let $X$ be a Gushel--Mukai variety of dimension~$6$ over~$k$, and let $\ell$ be a prime number different from~$p$. Then the $\ell$-adic Galois representation $\Gal(\kbar/k^{\perf}) \to \GL\bigl(H^\bullet(X_{\kbar},\QQ_\ell)\bigr)$ is completely reducible.
\end{corollary}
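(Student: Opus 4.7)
The plan is to reduce the question to Faltings' theorem on semisimplicity of the Tate module of an abelian variety, using the sheaf-theoretic comparison $\psi_\ell \colon \tilde{\motP}_\ell \isomarrow \tilde{\iota}^* \tilde{\motL}_\ell$ from Proposition~\ref{prop:ladicExtend}. First I would dispose of the cohomology away from the middle degree. By the Hodge diamond computation together with comparison of Betti numbers, the pullback $\gamma^* \colon H^{2i}(\Grass_{\kbar},\QQ_\ell) \to H^{2i}(X_{\kbar},\QQ_\ell)$ is injective (since $\gamma_*\gamma^*$ is multiplication by $2$) and dimensions match for $i\neq 3$, so $\gamma^*$ is an isomorphism in those degrees. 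The target is then spanned by pullbacks of Schubert classes and is a sum of Tate twists $\QQ_\ell(-i)^{h^{i,i}}$ as a $\Gal(\kbar/k)$-representation, hence trivially semisimple; odd degrees vanish. In degree $6$ the same argument gives a Galois-equivariant splitting $H^6(X_{\kbar},\QQ_\ell) = \gamma^*H^6(\Grass_{\kbar},\QQ_\ell) \oplus H^6(X_{\kbar},\QQ_\ell)_{00}$ via the cup-product pairing (which is Galois-equivariant), reducing the problem to the semisimplicity of the variable part.

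For the variable part, I would first pass to a finite extension $k'/k$ over which three data can be trivialized: a Mukai-style presentation $X_{k'}\cong \CGr \cap Q$ (providing a $k'$-point of $\scrM^\flat$), an orientation of $\det(\motP^\flat_2)$ (lifting to a $k'$-point of $\scrM$), and a level-$N$ structure on the Clifford abelian variety (lifting further to a $k'$-point $\bar{x}$ of $\tilde{\scrM}$). Each obstruction is a class in a finite set of Galois cohomology, so some finite extension suffices. Evaluating the isomorphism $\psi_\ell$ at $\bar{x}$ then yields an isomorphism of $\Gal(\kbar/k')$-representations
\[
H^6(X_{\kbar},\QQ_\ell)_{00}(3) \;\cong\; (\tilde{\motP}_\ell)_{\bar{x}} \;\xrightarrow{~\sim~}\; (\tilde{\motL}_\ell)_{\tilde{\iota}(\bar{x})} \;\subset\; \End\bigl(H^1(A_{\tilde{\iota}(\bar{x})},\QQ_\ell)\bigr),
\]
where $A$ denotes the universal abelian scheme on $\tilde{\scrS}_N$ and the image $\tilde{\iota}(\bar{x})$ is again defined over $k'$.

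The abelian variety $A_{\tilde{\iota}(\bar{x})}$ is defined over $k'$, which is finitely generated over~$\FF_p$; Faltings' theorem then gives that $H^1(A_{\tilde{\iota}(\bar{x})},\QQ_\ell)$ is a semisimple $\Gal(\kbar/k')$-representation. The endomorphism module is therefore semisimple, and so is any subrepresentation, in particular the image of $\psi_\ell$ at $\bar{x}$. Untwisting, $H^6(X_{\kbar},\QQ_\ell)_{00}$ is semisimple as a $\Gal(\kbar/k')$-representation. Since $[\Gal(\kbar/k^{\perf}) : \Gal(\kbar/k')]$ is finite and invertible in $\QQ_\ell$, a standard averaging argument (take any $\Gal(\kbar/k^{\perf})$-stable subrepresentation, choose a $\Gal(\kbar/k')$-equivariant projector, and symmetrize over coset representatives) descends semisimplicity back to $\Gal(\kbar/k^{\perf}) = \Gal(\kbar/k)$ (the equality holding because $k^{\perf}/k$ is purely inseparable).

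The main obstacle is the clean passage from the abstract $k$-point of the moduli of GM sixfolds to a genuine $k'$-point of the rigidified Shimura-theoretic base $\tilde{\scrM}$ in a way that identifies the Galois actions as claimed. Once that bookkeeping is arranged and $\psi_\ell$ is available at the fiber, Faltings does all the work. The other minor point to check is the Galois-equivariance of the splitting $H^6 = H^6_{\mathrm{Tate}} \oplus H^6_{00}$, which follows immediately from the Galois-equivariance of cup product and of $\gamma^*$.
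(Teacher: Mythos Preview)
Your approach is essentially the same as the paper's: reduce to the variable part $H^6_{00}$ via the Galois-equivariant orthogonal splitting, use the $\ell$-adic comparison $\psi_\ell$ at a point of~$\tilde\scrM$ to embed this into $\ul\End(H^1)$ of the Kuga--Satake abelian variety, and invoke semisimplicity of the Tate module. You are in fact more careful than the paper about passing to a finite extension~$k'$ to obtain a point of~$\tilde\scrM$ and then descending semisimplicity back to~$\Gal(\kbar/k^{\perf})$.

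There is, however, one genuine error: you appeal to Faltings' theorem, but Faltings' result on the semisimplicity of~$V_\ell A$ is for abelian varieties over number fields (and by extension, finitely generated fields of characteristic~$0$). Here $k'$ has characteristic~$p$, so Faltings does not apply. The correct input is Zarhin's theorem (\cite{Zarhin-AVCharp}, Corollary~1), which establishes semisimplicity of the $\ell$-adic Tate module for abelian varieties over finitely generated fields of positive characteristic; this is exactly what the paper cites. Once you replace ``Faltings'' by ``Zarhin'', your argument goes through.
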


\begin{proof}
It suffices to prove that the Galois representation on $H^6(X_{\kbar},\QQ_\ell)$ is semisimple, as $X$ has zero cohomology in odd degree and the cohomology groups $H^{2i}\bigl(X_{\kbar},\QQ_\ell(i)\bigr)$ for $i\neq 3$ are spanned by algebraic cycles and are therefore of Tate type.

Choose a point $m \in \tilde{\scrM}(k)$ such that $X$ is isomorphic to the fibre at~$m$ of the universal family of Gushel--Mukai varieties over~$\tilde{\scrM}$. We view the fibre~$\tilde{\motP}_{\ell,m}$ of the $\ell$-adic sheaf~$\tilde{\motP}_\ell$ at~$m$ as a representation of~$\Aut(\kbar/k) = \Gal(\kbar/k^{\perf})$.

We have a morphism $\gamma \colon X \to \Grass = \Grass(2,V_5)$ over~$k$ and the Galois representation~$\tilde\motP_{\ell,m}$ is, by definition, the orthogonal complement of the image of $\gamma^* \colon H^6(\Grass_{\kbar},\QQ_\ell)(3) \hookrightarrow H^6(X_{\kbar},\QQ_\ell)(3)$ with respect to the intersection pairing. As the morphism~$\gamma$ is finite and $H^*(\Grass_{\kbar},\QQ_\ell)$ is spanned by the classes of algebraic cycles, it follows that $H^6(X_{\kbar},\QQ_\ell)(3) \cong H^6(\Grass_{\kbar},\QQ_\ell)(3) \oplus \tilde\motP_{\ell,m}$.

Let $s = \tilde{\iota}(m) \in \tilde{\scrS}_N(k)$. By Proposition~\ref{prop:ladicExtend}, it now suffices to prove complete reducibility of~$\tilde{\motL}_{\ell,s}$ as a Galois representation. This follows from the results of Zarhin, see Corollary~1 in~\cite{Zarhin-AVCharp}.
\end{proof}

\subsection{}
We next turn to the de Rham and crystalline realizations. By the same argument as in the proof of Proposition~\ref{prop:ladicExtend}, the de Rham realization~$\tilde{\motP}_{\dR}$ over~$\tilde{\scrM}_\QQ$ naturally extends to a filtered flat vector bundle $\bigl(\tilde{\motP}_{\dR},\nabla,\Fil^\bullet \bigr)$ over~$\tilde{\scrM}$, and $\tilde{\motH}_{\dR}$ over~$\tilde{\scrS}_{N,\QQ}$ naturally extends to a filtered flat vector bundle $\bigl(\tilde{\motH}_{\dR},\nabla,\Fil^\bullet \bigr)$ over~$\tilde{\scrS}_N$. Rather than introducing new notation, the old notation ($\tilde{\motP}_{\dR}$, etc.) will from now on refer to these extended objects, just as we did for the $\ell$-adic realizations.

Next we claim that the de Rham realization~$\tilde{\motL}_{\dR}$ over~$\tilde{\scrS}_{N,\QQ}$ extends to a flat sub-bundle $(\tilde{\motL}_{\dR},\nabla) \subset \bigl(\ul{\End}(\tilde{\motH}_{\dR}),\nabla \bigr)$ over~$\tilde{\scrS}_N$. This is proven in \cite{MP-IntCanMod}, Corollary~4.13(i).

On the special fibre of~$\tilde{\scrM}$ we have a crystalline realization $\tilde{\motP}_{\crys}$, which is an $F$-crystal on~$\tilde{\scrM}_{\FF_p}$ relative to~$\ZZ_p$. (Just as with the other incarnations of the family of motives~$\tilde\motP$, we define~$\tilde{\motP}_{\crys}$ by taking the orthogonal complement of the image of~$\gamma^*$ inside the $F$-crystal $R^6f_{\crys,*}\scrO$, where $f$ and~$\gamma$ are as in the proof of Proposition~\ref{prop:ladicExtend} and $\scrO$ denotes the structure sheaf of the big crystalline site of~$\scrX$ over~$\ZZ_p$.) Similarly, we have an $F$-crystal $\tilde{\motH}_{\crys}$ on~$\tilde{\scrS}_{N,\FF_p}$ relative to~$\ZZ_p$, obtained by taking the relative crystalline cohomology in degree~$1$ of the Kuga--Satake abelian scheme over~$\tilde{\scrS}_N$.

At this point we will not attempt to construct a sub-$F$-crystal~$\tilde{\motL}_{\crys}$ inside $\ul{\End}(\tilde{\motH}_{\crys})$. We will only need it ``pointwise'' and for this we can rely on the following result of Madapusi Pera.

\begin{proposition}[Madapusi Pera~\cite{MP-IntCanMod}, Proposition~4.7]
Let $s_0 \in \tilde{\scrS}_N(k)$ be a $k$-valued point, where $k$ is a perfect field of characteristic~$p$ such that $W(k)$ admits an embedding into~$\CC$, and write $\tilde\motH_{\crys,s_0} = s_0^* \tilde{\motH}_{\crys}$. Then there exist an idempotent $\bm{\pi}_{\crys,s_0} \in \End\bigl(\ul{\End}(\tilde\motH_{\crys,s_0})\bigr)^{\phi=1}$ (with $\phi=$ Frobenius) which has the following properties.
\begin{enumerate}
\item\label{picrys:a} Let $K_0(k) \subset K$ be a finite field extension, where $K_0(k)$ denotes the fraction field of~$W(k)$. Let $s \in \tilde{\scrS}_N(\scrO_K)$ be a lift of~$s_0$, and let $s_K  \in \tilde{\scrS}_N(K)$ be the corresponding $K$-valued point. Write $\tilde{\motH}_{\dR,s_K} = s_K^*\tilde{\motH}_{\dR}$, and let $\bm{\pi}_{\dR,s_K}$ be the idempotent that cuts out $\tilde{\motL}_{\dR,s_K} \subset \ul{\End}(\tilde{\motH}_{\dR,s_K})$. Then $\bm{\pi}_{\crys,s_0}$ corresponds to~$\bm{\pi}_{\dR,s_K}$ under the Berthelot--Ogus comparison isomorphism $\tilde{\motH}_{\dR,s_K}  \isomarrow \tilde\motH_{\crys,s_0} \otimes_{W(k)} K$.

\item\label{picrys:b} With notation and assumptions as in~\emph{\ref{picrys:a}}, let $K \subset \Kbar$ be an algebraic closure, and view $\tilde\motH_{p,s_K} = s_K^*\tilde\motH_p$ as a $\ZZ_p$-module with $\Gal(\Kbar/K)$-action. Then $\bm{\pi}_{\crys,s_0}$ corresponds to~$\bm{\pi}_{p,s_K}$  under the comparison isomorphism $\tilde\motH_{p,s_K} \otimes_{\ZZ_p} \mathrm{B}_{\crys} \isomarrow \tilde\motH_{\crys,s_0} \otimes_{W(k)} \mathrm{B}_{\crys}$.
\end{enumerate}
\end{proposition}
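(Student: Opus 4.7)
The plan is to construct $\bm{\pi}_{\crys,s_0}$ via a chosen lift and then verify it is intrinsic. First I would use that $\tilde{\scrS}_N$ is smooth over~$\ZZ_{(p)}$ to produce a $W(k)$-valued lift $s$ of~$s_0$ (enlarging $k$ to a perfect field containing an algebraic closure of its prime field if necessary so that $W(k)$ embeds into~$\CC$). At such a lift the de Rham idempotent $\bm{\pi}_{\dR,s}$ cutting out $\tilde{\motL}_{\dR,s} \subset \ul{\End}(\tilde{\motH}_{\dR,s})$ exists, since $\tilde{\motL}_{\dR}$ was extended as a subbundle of $\ul{\End}(\tilde{\motH}_{\dR})$ across all of~$\tilde{\scrS}_N$. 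Transporting $\bm{\pi}_{\dR,s}$ to $\ul{\End}(\tilde{\motH}_{\crys,s_0}) \otimes_{W(k)} K_0(k)$ along the Berthelot--Ogus isomorphism gives an idempotent whose integrality I would then check using that $\tilde{\motH}_{\crys,s_0}$ is a $W(k)$-lattice stable under the projector in the \'etale setting (via the $p$-adic comparison below).

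Next, I would establish that the resulting projector does not depend on the lift. Given two lifts $s^{(1)}, s^{(2)} \in \tilde{\scrS}_N(W(k))$ of~$s_0$, both give rise to a Berthelot--Ogus isomorphism from $\tilde{\motH}_{\dR,s^{(i)}} \otimes K_0(k)$ to $\tilde{\motH}_{\crys,s_0} \otimes K_0(k)$, and the two isomorphisms differ by parallel transport along the formal neighbourhood $\Spf \hat{\scrO}_{\tilde{\scrS}_N,s_0}$. Since $\tilde{\motL}_{\dR}$ is a $\nabla$-stable subbundle of $\ul{\End}(\tilde{\motH}_{\dR})$, the de Rham projector is horizontal, hence invariant under this transport. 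This gives well-definedness and already proves~\ref{picrys:a} (for any finite extension $K_0(k) \subset K$ and lift $s \in \tilde{\scrS}_N(\scrO_K)$, one argues the same way using the Berthelot--Ogus comparison over $\scrO_K$ and flatness of~$\tilde{\motL}_{\dR}$).

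To prove Frobenius equivariance and the crystalline--\'etale matching~\ref{picrys:b} in one stroke, I would choose an embedding $W(k) \hookrightarrow \CC$ and bring in the characteristic~$0$ description of $\tilde{\motL}$. Pick any lift $s \in \tilde{\scrS}_N(\scrO_K)$; the universal family of Kuga--Satake abelian varieties restricts to an abelian scheme over~$\scrO_K$ with good reduction~$s_0$, so Fontaine's $p$-adic comparison applies to $\tilde{\motH}_{p,s_K}$. Under this comparison the Betti-defined projector $\bm{\pi}_\infty$ (the one used in~\cite{MP-IntCanMod} to construct $\tilde{\motL}$) induces both the $p$-adic projector $\bm{\pi}_{p,s_K}$ and, after Berthelot--Ogus, the idempotent $\bm{\pi}_{\crys,s_0}$; the compatibility of $B_{\crys}$ with the de Rham--\'etale and de Rham--crystalline comparisons then gives both that the crystalline idempotent satisfies $\phi(\bm{\pi}_{\crys,s_0}) = \bm{\pi}_{\crys,s_0}$ (because $\bm{\pi}_{p,s_K}$ is Galois-invariant, hence fixed by the crystalline Frobenius via the $B_{\crys}$-dictionary) and the identification in~\ref{picrys:b}.

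The main obstacle, both here and in the argument of Madapusi Pera, is showing $\phi$-equivariance: neither the de Rham nor the pointwise crystalline picture sees Frobenius directly, so one genuinely must pass through an $\scrO_K$-lift and invoke the $p$-adic comparison for the Kuga--Satake family. This is why the construction is essentially forced to go through the Hodge-type embedding into $\scrA_L$ rather than being purely internal to $\tilde{\scrS}_N$, and it is also the reason one needs $\tilde{\motH}_{\crys}$ to arise from an actual abelian scheme as opposed to an abstract $F$-crystal.
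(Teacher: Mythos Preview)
The paper does not give its own proof of this proposition: it is stated as a direct citation of Madapusi Pera's result (\cite{MP-IntCanMod}, Proposition~4.7) and is used as a black box. There is therefore no argument in the paper against which to compare your proposal.

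That said, your sketch is along the lines of Madapusi Pera's actual construction: transport the (horizontal) de Rham projector through the Berthelot--Ogus isomorphism, use $\nabla$-stability of~$\tilde{\motL}_{\dR}$ to get independence of the lift, and pass through the $p$-adic comparison to obtain $\phi$-invariance and the compatibility in~\ref{picrys:b}. One point you pass over quickly is integrality: the statement asserts that $\bm{\pi}_{\crys,s_0}$ lies in $\End\bigl(\ul{\End}(\tilde\motH_{\crys,s_0})\bigr)$ over~$W(k)$, not merely after inverting~$p$. The Berthelot--Ogus isomorphism alone only gives this rationally, and your appeal to the $p$-adic comparison to recover integrality is exactly where Kisin's Breuil--Kisin module theory enters (compare the later use of \cite{BMS}, Theorem~1.1(iii) in the present paper). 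So the outline is right, but the integrality step is not a routine check; it is the substantive input from integral $p$-adic Hodge theory.
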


Let $\tilde\motL_{\crys,s_0} \subset \ul{\End}(\tilde\motH_{\crys,s_0})$ be the image of~$\bm{\pi}_{\crys,s_0}$. It follows from~\ref{picrys:b} that $\tilde\motL_{\crys,s_0} \otimes K_0(k)$ is determined by the projector~$\bm{\pi}_{p,s_K}$ that cuts out the $p$-adic realization on the generic fibre. As we will see in the proof of Lemma~\ref{lem:psidRext}, this is in fact true integrally.

\subsection{}
\label{subsec:Wpoint}
Let $k$ be a perfect field of characteristic $p$ that contains the residue field~$\kappa$ of~$\scrO_{E,\frp}$. Let $W = W(k)$ be its ring of Witt vectors, and let $K_0 = K_0(k)$ be the fraction field of~$W(k)$. Choose an algebraic closure $K_0 \subset \Kbar_0$ and an embedding $E_\frp \subset \Kbar_0$, let $K = E_\frp \cdot K_0(k)$ be the compositum, and let $\scrO_K \subset K$ be the ring of integers of~$K$. (Note that in our case either $K = K_0(k)$, or $K$ is a ramified quadratic extension of~$K_0(k)$.)

Consider a $W(k)$-valued point $m \in \tilde{\scrM}(W)$, and let $s = \tilde{\iota}(m) \in \tilde{\scrS}_N(W)$. The isomorphism $\psi \colon \tilde{\motP} \isomarrow \tilde{\iota}^* \tilde{\motL}$ over~$\tilde{\scrM}_E$ induces an isomorphism
\[
\psi_{\dR,m} \colon m^*\tilde{\motP}_{\dR} \otimes_W K \isomarrow s^*\tilde{\motL}_{\dR} \otimes_W K\, .
\]

\begin{lemma}
\label{lem:psidRext}
In the above situation, $\psi_{\dR,m}$ extends to an isomorphism of $\scrO_K$-modules $m^*\tilde{\motP}_{\dR} \otimes_W \scrO_K \isomarrow s^*\tilde{\motL}_{\dR} \otimes_W \scrO_K$.
\end{lemma}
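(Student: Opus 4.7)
The plan is to translate the $\scrO_K$-integrality of $\psi_{\dR,m}$ into an integral $p$-adic Hodge-theoretic comparison, which can then be verified via the $\ZZ_p$-integral structure already present in the $p$-adic étale realization of $\psi$.

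First I would reduce the problem to a crystalline statement. Let $m_0 \in \tilde{\scrM}(k)$ and $s_0 = \tilde{\iota}(m_0) \in \tilde{\scrS}_N(k)$ be the closed fibres of $m$ and $s$. Berthelot--Ogus applied to the smooth proper $W$-scheme $\scrX_m$ (the specialisation at $m$ of the universal GM sixfold over $\tilde{\scrM}$) and to the Kuga--Satake abelian scheme at $s$ furnishes canonical $W$-linear isomorphisms $m^*\tilde{\motP}_{\dR} \isomarrow \tilde{\motP}_{\crys,m_0}$ and $s^*\ul{\End}(\tilde{\motH}_{\dR}) \isomarrow \ul{\End}(\tilde{\motH}_{\crys,s_0})$; combining the latter with Proposition~4.7\ref{picrys:a} yields $s^*\tilde{\motL}_{\dR} \isomarrow \tilde{\motL}_{\crys,s_0}$. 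Under these identifications the lemma becomes the assertion that $\psi_{\dR,m}$ restricts to an isomorphism $\tilde{\motP}_{\crys,m_0}\otimes_W \scrO_K \isomarrow \tilde{\motL}_{\crys,s_0}\otimes_W \scrO_K$.

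Next I would exploit the $p$-adic étale component of $\psi$. Since $\psi$ lives in $\Real(\tilde{\scrM}_E;\ZZ_{(p)})$, specialisation at the generic fibre $m_K \in \tilde{\scrM}_E(K)$ produces a $\Gal(\Kbar/K)$-equivariant isomorphism of $\ZZ_p$-lattices $\psi_{p,m_K} \colon \tilde{\motP}_{p,m_K} \isomarrow \tilde{\motL}_{p,s_K}$. Both of these lie inside crystalline $\Gal(\Kbar/K)$-representations, since both $\scrX_m$ and the Kuga--Satake abelian scheme have good reduction over~$\scrO_K$, and from the Hodge diamond of a GM sixfold computed in Section~\ref{sec:CohomGM6} together with the fact that $\tilde{\motH}_p$ has Hodge--Tate weights in~$\{0,1\}$, all Hodge--Tate weights that occur lie in~$\{-1,0,1\}$. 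Because $p\geq 5$, this range is strictly less than~$p-1$, and integral $p$-adic Hodge theory provides a tensor equivalence between $\ZZ_p$-lattices in such crystalline representations and integral filtered $F$-crystals over~$\scrO_K$: Fontaine--Laffaille suffices in the unramified case $K=K_0$, and Kisin's theory of Breuil--Kisin modules covers the (at most quadratic, totally ramified) case $K\neq K_0$. Applying this equivalence to $\psi_{p,m_K}$, together with the compatibility of projectors recorded in Proposition~4.7\ref{picrys:b}, yields an $\scrO_K$-linear isomorphism $\tilde{\motP}_{\crys,m_0}\otimes_W \scrO_K \isomarrow \tilde{\motL}_{\crys,s_0}\otimes_W \scrO_K$, whose $K$-linear extension is forced to agree with $\psi_{\dR,m}$ under the Berthelot--Ogus identifications.

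The main technical hurdle is the possible ramification of~$K$ over~$K_0$, which places us outside the reach of classical Fontaine--Laffaille; this is precisely the situation handled by Kisin's integral theory, and the smallness of the Hodge--Tate weights relative to~$p$ makes the application straightforward. One could alternatively sidestep the ramified case by first extending $\psi$ to an unramified base after a further choice of identifications, but invoking Kisin's theorem directly is cleaner.
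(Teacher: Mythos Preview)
Your proposal is correct and follows essentially the same route as the paper: reduce via Berthelot--Ogus to a statement about crystalline lattices, then transport the integral $p$-adic \'etale isomorphism~$\psi_{p,m_K}$ to the crystalline side using integral $p$-adic Hodge theory (Breuil--Kisin modules, with Fontaine--Laffaille as an alternative in the unramified case). The one point the paper makes explicit and you leave implicit is the identification of the Breuil--Kisin module of the geometric \'etale lattice with the geometric crystalline cohomology: the paper invokes \cite{BMS}, Theorem~1.1(iii), for which the torsion-freeness of $H^\bullet_{\crys}$ (supplied by Corollary~\ref{cor:dRCohGM}) is needed; you should cite the analogous comparison result when you say the equivalence ``yields'' the desired $\scrO_K$-isomorphism.
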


\begin{proof}
The proof is an application of integral $p$-adic Hodge theory. A reference for what we need is~\cite{BMS}.

Denote by $m_K$ and~$s_K$ the $K$-valued points induced by~$m$ and~$s$, respectively, and by $m_0$ and~$s_0$ the induced $k$-valued points. We write $\tilde\motP_{p,m_K}$ for $m_K^* \tilde{\motP}_p$, viewed as a $\ZZ_p$-module with $\Gal(\Kbar/K)$-action; analogously we have $\tilde{\motL}_{p,s_K} \subset \ul\End(\tilde{\motH}_{p,s_K})$. Note that $\tilde{\motL}_{p,s_K}$ is a direct summand of $\ul\End(\tilde{\motH}_{p,s_K})$.

As in \cite{BMS} we define $\frS = W[\![T]\!]$, and we write $\varphi$ for the endomorphism of this ring that sends~$T$ to~$T^p$ and that on~$W$ is the usual Frobenius automorphism. The homomorphism $\frS \to W$ that appears in what follows is the composition $\frS \xrightarrow{~\varphi~} \frS \xrightarrow{~T\mapsto 0~} W$. Breuil--Kisin modules are defined as in~\cite{BMS}, Definition~4.1. (Here we follow the setup as in~\cite{BMS}; in particular, we fix a uniformizer and an Eisenstein polynomial as in ibid., beginning of Section~4.) Let $\Rep^{\crys}(\Gal(\Kbar/K),\ZZ_p)$ be the category of finite free $\ZZ_p$-modules~$T$ with continuous action of~$\Gal(\Kbar/K)$ such that $T \otimes \QQ_p$ is crystalline.

We use the tensor functor
\[
M \colon \Rep^{\crys}(\Gal(\Kbar/K),\ZZ_p) \to
\left(\vcenter{\setbox0=\hbox{Breuil--Kisin modules}\hbox to\wd0{\hfill finite free\hfill}\copy0} \right)
\]
that is constructed in \cite{Kisin-IntCanMod}, Theorem~(1.2.1). (In loc.\ cit.\ this functor is called~$\mathfrak{M}$; in~\cite{BMS} the notation~$M$ is used but in the key geometric example the notation $\BK$ is used; see ibid.\ Remark~5.2.) For a discussion of some basic properties of this functor, see~\cite{BMS}, Section~4. In particular, if $V$ is a crystalline representation of~$\Gal(\Kbar/K)$ and $T \subset V$ is a Galois-stable $\ZZ_p$-lattice, there is a natural isomorphism
\[
M(T) \otimes_{\frS} W[1/p] \cong D_{\crys}(V)\, .
\]

We now apply this to the study of the families of realizations~$\tilde\motP$ and~$\tilde\motL$. The $p$-adic component of the isomorphism~$\psi$ is an isomorphism of Galois representations $\psi_{p,m} \colon \tilde{\motP}_{p,m_K} \isomarrow \tilde{\motL}_{p,s_K}$. This induces an isomorphism $M\bigl(\tilde{\motP}_{p,m_K}\bigr) \otimes_{\frS} W \isomarrow M\bigl(\tilde{\motL}_{p,s_K}\bigr) \otimes_{\frS} W$. The idea of the proof, then, is that
\begin{equation}
\label{eq:MPp=Pcrys}
M\bigl(\tilde{\motP}_{p,m_K}\bigr) \otimes_{\frS} W = \tilde{\motP}_{\crys,m_0}
\end{equation}
as submodules of $M\bigl(\tilde{\motP}_{p,m_K}\bigr) \otimes_{\frS} W[1/p] = D_{\crys}(\tilde{\motP}_{p,m_K} \otimes \QQ_p) = \tilde{\motP}_{\crys,m_0}[1/p]$, and, similarly,
\begin{equation}
\label{eq:MLp=Lcrys}
M\bigl(\tilde{\motL}_{p,s_K}\bigr) \otimes_{\frS} W = \tilde{\motL}_{\crys,s_0}
\end{equation}
as submodules of $M\bigl(\tilde{\motL}_{p,s_K}\bigr) \otimes_{\frS} W[1/p] = D_{\crys}(\tilde{\motL}_{p,s_K} \otimes \QQ_p) = \tilde{\motL}_{\crys,s_0}[1/p]$. This implies the assertion of the lemma because, since $p \geq 5$ and the ramification index of~$K/K_0$ is at most~$2$, we have natural isomorphisms $m^*\tilde{\motP}_{\dR} \otimes_W \scrO_K \cong \tilde{\motP}_{\crys,m_0} \otimes_W \scrO_K$ and $s^*\tilde{\motL}_{\dR} \otimes_W \scrO_K \cong \tilde{\motL}_{\crys,s_0} \otimes_W \scrO_K$. (In this situation, $p\scrO_K \subset \scrO_K$ has a natural PD-structure. Now use Berthelot's comparison results between crystalline and de Rham cohomology; see \cite{Berth}, Chap.~V, in particular Corollaire~2.3.7 and Th\'eor\`eme~3.5.1.)

To prove \eqref{eq:MPp=Pcrys} and \eqref{eq:MLp=Lcrys} we use \cite{BMS}, Theorem~1.1(iii). (See also the discussion in loc.\ cit.\ after Remark~1.3.) Let $X$ be the Gushel--Mukai sixfold over~$W$ obtained by taking the fibre at~$m$ of the universal family over~$\tilde{\scrM}$, and let $\gamma \colon X \to \Grass = \Grass(2,V_5)$ be its morphism to the Grassmannian. Writing~$X_0$ for the special fibre of~$X$, it follows from Corollary~\ref{cor:dRCohGM} that $H_{\crys}^\bullet(X_0/W)$ is torsion-free. Similarly, it follows from Proposition~\ref{prop:CohomGr} that the analogue of Corollary~\ref{cor:dRCohGM} is true for the Grassmannian; hence also $H_{\crys}^\bullet(\Grass/W)$ is torsion-free. Because $p\geq 5$ ($p\neq 2$ here suffices) we have
\[
H^6(X_{\Kbar},\ZZ_p) = \gamma^* H^6(\Grass_{\Kbar},\ZZ_p) \oplus \tilde{\motP}_{p,m_K}\, .
\]
Applying \cite{BMS}, Theorem~1.1(iii) twice (once to~$X$, once to~$\Grass$), we obtain~\eqref{eq:MPp=Pcrys}.

Because $\tilde\motL_{p,s_K}$ is a direct summand of $\ul\End(\tilde\motH_{p,s_K})$ and $\tilde\motL_{\crys,s_0}$ is a direct summand of $\ul\End(\tilde\motH_{\crys,s_0})$, we have
\[
M\bigl(\tilde{\motL}_{p,s_K}\bigr) \otimes_{\frS} W = \Bigl[M\bigl(\tilde{\motL}_{p,s_K}\bigr) \otimes_{\frS} W[1/p]\Bigr] \cap \Bigl[M\bigl(\ul\End(\tilde{\motH}_{p,s_K})\bigr) \otimes_{\frS} W \Bigr]
\]
and
\[
\tilde\motL_{\crys,s_0} = \tilde\motL_{\crys,s_0}[1/p] \cap \ul\End\bigl(\tilde\motH_{\crys,s_0}\bigr)\, .
\]
Because $M$ is a tensor functor, it therefore suffices to show that $M\bigl(\tilde{\motH}_{p,s_K}\bigr) \otimes_{\frS} W = \tilde{\motH}_{\crys,s_0}$ as submodules of $M\bigl(\tilde{\motH}_{p,s_K}\bigr) \otimes_{\frS} W[1/p] = D_{\crys}(\tilde{\motH}_{p,s_K} \otimes \QQ_p) = \tilde{\motH}_{\crys,s_0}[1/p]$. Because $\tilde{H}$ is obtained as the cohomology in degree~$1$ of a Kuga--Satake abelian variety (which has torsion-free cohomology), this again follows from \cite{BMS}, Theorem~1.1(iii).
\end{proof}

\begin{corollary}
\label{cor:dRMatch}
The isomorphism $\psi_{\dR}$ over~$\tilde{\scrM}_E$ extends to an isomorphism of filtered flat vector bundles
\[
\psi_{\dR} \colon (\tilde{\motP}_{\dR} \otimes \scrO_{\tilde{\scrM}_\Lambda},\nabla,\Fil^\bullet) \isomarrow \tilde{\iota}^*(\tilde{\motL}_{\dR}  \otimes \scrO_{\tilde{\scrS}_{N,\Lambda}},\nabla,\Fil^\bullet)
\]
over~$\tilde{\scrM}_\Lambda$.
\end{corollary}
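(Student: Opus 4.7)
The strategy is to use normality of $\tilde{\scrM}_\Lambda$ to reduce the extension problem to codimension-one points on the special fiber, and then to apply Lemma~\ref{lem:psidRext} at carefully chosen $\scrO_K$-valued test points whose special points map to the generic points of the special fiber.

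Since $\tilde{\scrM}_\Lambda$ is smooth (hence regular, hence normal) over $\Lambda$, and $\tilde\motP_{\dR}$ and $\tilde\iota^*\tilde\motL_{\dR}$ are vector bundles on $\tilde{\scrM}_\Lambda$, the internal $\ShHom$-sheaf $\scrV = \ShHom(\tilde\motP_{\dR},\tilde\iota^*\tilde\motL_{\dR})$ is itself a vector bundle. By reflexivity, a section of $\scrV$ defined on $\tilde{\scrM}_E$ extends (necessarily uniquely) to a section on all of $\tilde{\scrM}_\Lambda$ if and only if it extends locally at each codimension-one point; the only new such points are the generic points $\eta$ of the irreducible components of the special fiber $\tilde{\scrM}_\kappa$. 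Fix such an $\eta$. Because $\tilde{\scrM}_\kappa/\kappa$ is smooth, the uniformizer $\pi_\Lambda$ of $\Lambda$ remains a uniformizer of $\hat R_\eta := \widehat{\scrO}_{\tilde{\scrM}_\Lambda,\eta}$, whose residue field is the function field $k(\eta)$ of a component of $\tilde{\scrM}_\kappa$.

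Set $k := \overline{k(\eta)}$, a perfect field containing $\kappa$, and let $\scrO_K := W(k) \otimes_{W(\kappa)} \Lambda$, a complete discrete valuation ring with perfect residue field $k$ and uniformizer $\pi_\Lambda$, unramified over $\Lambda$. By Cohen's structure theorem for complete local rings of mixed characteristic (combined with Teichmüller lifting in the perfect field $k$ to handle a $p$-basis of the possibly imperfect residue field $k(\eta)$), the inclusion $k(\eta) \hookrightarrow k$ admits a lift to a $\Lambda$-algebra homomorphism $j \colon \hat R_\eta \to \scrO_K$; this is automatically a local injection of DVRs sending $\pi_\Lambda$ to $\pi_\Lambda$, hence unramified. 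The composite $\Spec(\scrO_K) \to \Spec(\hat R_\eta) \to \tilde{\scrM}_\Lambda \to \tilde{\scrM}$ is an $\scrO_K$-valued point of $\tilde{\scrM}$, and Lemma~\ref{lem:psidRext} applies to it: although the lemma is stated for $W(k)$-valued points, its proof via the Breuil--Kisin formalism goes through verbatim for $\scrO_K$-valued points, since $e(\scrO_K/W(k)) = e(\Lambda/W(\kappa)) \leq [E_\frp:\QQ_p] \leq 2 < p-1$. The conclusion is that the pullback of $\psi_{\dR}$ along this point extends to an isomorphism of $\scrO_K$-modules.

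Unwinding this, the matrix entries of $\psi_{\dR}$, viewed in $\operatorname{Frac}(\hat R_\eta) \subset \operatorname{Frac}(\scrO_K)$ via $j$, lie in $\scrO_K$. Since $j$ is an injection of DVRs of ramification index one, valuations are preserved and one has $\operatorname{Frac}(\hat R_\eta) \cap \scrO_K = \hat R_\eta$ inside $\operatorname{Frac}(\scrO_K)$; hence these matrix entries actually lie in $\hat R_\eta$. Running the argument at every generic point of $\tilde{\scrM}_\kappa$ produces the desired extension of $\psi_{\dR}$ to a morphism of vector bundles on $\tilde{\scrM}_\Lambda$. Applying the same procedure to $\psi_{\dR}^{-1}$ extends the inverse; the two extensions compose to the identity on the dense open $\tilde{\scrM}_E$ and hence globally, so the extension is an isomorphism. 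Compatibility with $\nabla$ and $\Fil^\bullet$ is automatic because these are closed conditions that hold on $\tilde{\scrM}_E$. The main technical point is the Cohen-theoretic construction of the lift $j \colon \hat R_\eta \to \scrO_K$ in the imperfect-residue-field case; this is precisely the step that replaces the density-of-ordinary-locus argument used by Madapusi Pera in the K3 setting, as mentioned in the introduction.
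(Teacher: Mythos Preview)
Your argument is correct and takes a genuinely different route from the paper's. The paper argues by contradiction at \emph{closed} points: working on an affine open $U=\Spec(A)\subset\tilde\scrM$ where both bundles are free, it supposes some matrix entry $a$ of~$\psi_{\dR}$ fails to lie in $A_\Lambda$, picks the minimal $\nu>0$ with $\varpi^\nu a\in A_\Lambda$, chooses a finite-field point $m_0$ where $\varpi^\nu a$ does not vanish, lifts $m_0$ over~$W(k)$ by smoothness, and invokes Lemma~\ref{lem:psidRext} to obtain a contradiction. Your approach instead localises at the \emph{generic} points~$\eta$ of the special fibre and uses reflexivity of vector bundles on the normal scheme~$\tilde\scrM_\Lambda$ to reduce the extension problem to showing the matrix entries lie in the DVR~$\hat R_\eta$; this you verify by producing an unramified test map $\hat R_\eta\to\scrO_K$ and applying (a mild extension of) Lemma~\ref{lem:psidRext}. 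Both arguments rest on the same lemma; yours is more structural, the paper's is more direct and stays within the lemma as literally stated.

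Two small remarks. First, your $\scrO_K := W(k)\otimes_{W(\kappa)}\Lambda$ should use the completion~$\hat\Lambda$, since $\Lambda=\scrO_{E,\frp}$ is not a $W(\kappa)$-algebra before completing; with that fix your description is correct. Second, the construction of the lift $j\colon\hat R_\eta\to\scrO_K$ can be obtained more cleanly than via Cohen's theorem: the $k$-point $\overline\eta$ of~$\tilde\scrM_\Lambda$ lifts to an $\scrO_K$-point by formal smoothness of $\tilde\scrM_\Lambda/\hat\Lambda$, and this automatically factors through $\Spec(\hat R_\eta)$. (Alternatively, lift $\overline\eta\in\tilde\scrM(k)$ to a $W(k)$-point of~$\tilde\scrM$ via smoothness over~$\ZZ_{(p)}$ and apply Lemma~\ref{lem:psidRext} exactly as stated, avoiding the need to extend it to $\scrO_K$-points.) Finally, your closing remark slightly misattributes where the density-of-ordinary-locus argument is replaced: the substitute is the integral $p$-adic Hodge theory (BMS) input to Lemma~\ref{lem:psidRext} itself, not the construction of~$j$ in the present corollary.
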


(Recall that $\Lambda = \scrO_{E,\frp}$.)

\begin{proof}
The argument is similar to the proof of \cite{MP-K3}, Proposition~5.11. It suffices to show that $\psi_{\dR}$ extends to an isomorphism on the underlying vector bundles; the compatibility with connections and filtrations then follows from the compatibility over~$\tilde{\scrM}_E$. Further, we may work Zariski-locally on~$\tilde\scrM$, so it suffices to consider an affine open part $U = \Spec(A) \subset \tilde\scrM$ which is faithfully flat over~$\ZZ_{(p)}$ such that the vector bundles $\tilde\motP_{\dR}$ and $\tilde\iota^* \tilde\motL_{\dR}$ are free when restricted to~$U$. Once we choose bases, the morphism~$\psi_{\dR}$ over~$U_E$ is given by an invertible matrix~$\Psi$ of size $22 \times 22$ with coefficients in $A\otimes_\QQ E$. We have to show that both~$\Psi$ and~$\Psi^{-1}$ have coefficients in $A_\Lambda = A \otimes_{\ZZ_{(p)}} \Lambda$.

Let $\varpi \in \Lambda$ be a uniformizer. Suppose some matrix coefficient $a = \Psi_{ij}$ does not lie in~$A_\Lambda$. Let $\nu >0$ be the minimal integer such that $\varpi^\nu \cdot a \in A_\Lambda$. Now choose a finite field~$k$ that contains the residue field~$\kappa$ of~$\Lambda$, and a $k$-valued point $m_0 \in U_\Lambda(k)$ that does not lie in the zero locus of~$\varpi^\nu \cdot a$. Because $\tilde\scrM$ is smooth over~$\ZZ_{(p)}$, we can lift~$m_0$ to a point $m \in U_\Lambda(\scrO_\scrE)$, where $K \supset \scrO_K$ is as in~\ref{subsec:Wpoint}. This gives a contradiction with Lemma~\ref{lem:psidRext}. The same argument shows that $\Psi^{-1}$ has coefficients in~$A_\Lambda$.
\end{proof}

\begin{corollary}
\label{cor:iotasmooth}
Assume $p \geq 5$. Then the morphism $\tilde{\iota} \colon \tilde{\scrM} \to \tilde{\scrS}_N$ is smooth.
\end{corollary}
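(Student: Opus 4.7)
The plan is to apply the Jacobian criterion for smoothness. Both $\tilde\scrM$ and $\tilde\scrS_N$ are smooth over $\ZZ_{(p)}$: the former by the lemma preceding Proposition~\ref{prop:ladicExtend}, and the latter because $\tilde\scrS_N$ is an integral canonical model of an orthogonal Shimura variety at a prime not dividing $2\cdot \discr(L)$. Since $\Lambda/\ZZ_{(p)}$ is faithfully flat, it suffices to prove smoothness of $\tilde\iota_\Lambda$ over~$\Lambda$, and by the Jacobian criterion one only needs to check that at every closed point $m \in \tilde\scrM_\Lambda$ the induced map of relative tangent spaces $d\tilde\iota_m \colon T_{\tilde\scrM/\Lambda,m} \to T_{\tilde\scrS_N/\Lambda,\tilde\iota(m)}$ is surjective.

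Next I would identify $d\tilde\iota_m$ with a Kodaira--Spencer map on the GM side. By Corollary~\ref{cor:dRMatch}, on $\tilde\scrM_\Lambda$ the isomorphism $\psi_{\dR}$ matches $(\tilde\motP_{\dR},\nabla,\Fil^\bullet)$ with $\tilde\iota^*(\tilde\motL_{\dR},\nabla,\Fil^\bullet)$ as filtered flat bundles. Applying $\nabla$ to $\Fil^1$ and using Griffiths transversality produces the Kodaira--Spencer maps on both sides, and naturality with respect to pullback yields a factorisation
\[
\KS_{\tilde\motP,m} \;=\; \KS_{\tilde\motL,\tilde\iota(m)} \circ d\tilde\iota_m \colon T_{\tilde\scrM,m} \tto \Hom\bigl(\Fil^1 \tilde\motP_{\dR,m},\; \Fil^0/\Fil^1\bigr)\, .
\]
The key input from the Shimura-variety side will be that $\KS_{\tilde\motL,s}$ is an isomorphism at every point $s \in \tilde\scrS_N$: both source and target are locally free of the same rank $\rk(L) - 2 = 20$, and this is built into the construction of the integral canonical model via the Kuga--Satake embedding (see~\cite{MP-IntCanMod}). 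Granted this, $d\tilde\iota_m$ is surjective if and only if $\KS_{\tilde\motP,m}$ is.

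Finally, I would translate the surjectivity of $\KS_{\tilde\motP,m}$ into Proposition~\ref{prop:KodSpencX}. The Tate twist by~$3$ of $H^6(X)_{00}$ gives $\Fil^1 \tilde\motP_{\dR,m} = H^{4,2}(X)$ and $\Fil^0/\Fil^1 = H^{3,3}(X)_{00}$, so
\[
\KS_{\tilde\motP,m} \colon H^1(X,\scrT_X) \tto \Hom\bigl(H^{4,2}(X),\; H^{3,3}(X)_{00}\bigr)\, .
\]
Serre duality on $X$, combined with the self-duality of $H^{3,3}(X)_{00}$ under the intersection pairing, identifies this target with $\Hom(H^{3,3}(X)_{00},\, H^{2,4}(X))$ and transports $\KS_{\tilde\motP,m}$ into the cup-product map of Proposition~\ref{prop:KodSpencX}. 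Since $p\geq 5$, that proposition supplies the required surjectivity. As a sanity check, the dimensions match: $\dim H^1(X,\scrT_X) - \dim \tilde\scrS_N = 25 - 20 = 5$, which is the expected relative dimension of~$\tilde\iota$. The main obstacle in this plan is the pointwise isomorphism claim for $\KS_{\tilde\motL,s}$ in mixed and positive characteristic; this is delicate but follows from the construction of $\tilde\scrS_N$ as an integral canonical model of a Hodge-type Shimura variety in the works of Kisin and Madapusi Pera.
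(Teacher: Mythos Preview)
Your approach is essentially the paper's: reduce to surjectivity of the tangent map at closed points (both source and target being smooth over~$\ZZ_{(p)}$), identify $T_{\tilde\scrS_N,\tilde\iota(m)}$ with $\Hom(\gr^0\tilde\motL_{\dR},\gr^{-1}\tilde\motL_{\dR})$ via the Kodaira--Spencer isomorphism on the Shimura side (this is \cite{MP-IntCanMod}, Proposition~4.16, which supplies exactly the ``pointwise isomorphism'' you flag as the main obstacle), transport to~$\tilde\motP$ via Corollary~\ref{cor:dRMatch}, and finish with Proposition~\ref{prop:KodSpencX}.

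One point needs correction. You silently identify $T_{\tilde\scrM,m}$ with $H^1(X,\scrT_X)$, but these are not the same: $\tilde\scrM$ is, up to finite \'etale covers, an open subset of the scheme of quadrics in~$\PP(\ZZ_{(p)}\oplus\wedge^2 V_5)$, so its tangent space is much larger than $25$-dimensional. The paper makes the factorisation
\[
T_{\tilde\scrM,m} \to H^1(X,\scrT_X) \to \Hom\bigl(\gr^0\tilde\motP_{\dR,m},\gr^{-1}\tilde\motP_{\dR,m}\bigr)
\]
explicit and observes that the first arrow (the Kodaira--Spencer map of the universal family) is surjective because every first-order deformation of a GM sixfold $X = \CGr(2,V_5)\cap Q$ arises by varying~$Q$. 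Your ``sanity check'' $25-20=5$ therefore does not compute the relative dimension of~$\tilde\iota$; the argument does not need it in any case.
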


\begin{proof}
Because $\tilde{\scrM}$ and $\tilde{\scrS}_N$ are smooth over~$\ZZ_{(p)}$, it suffices to show that if $m \in \tilde{\scrM}(k)$ for a field~$k$, the induced map on tangent spaces
\begin{equation}
\label{eq:TangentMap}
T_{\tilde{\scrM},m} \to T_{\tilde{\scrS}_N,\tilde\iota(m)}
\end{equation}
is surjective. (Use EGA~IV, Theorem~(17.11.1).) By \cite{MP-IntCanMod}, Proposition~4.16, the tangent sheaf of $\tilde{\scrS}_N$ over~$\ZZ_{(p)}$ can be identified with $\ShHom\bigl(\gr^0 \tilde{\motL}_{\dR}, \gr^{-1} \tilde{\motL}_{\dR}\bigr)$. (The graded quotients are of course taken with respect to the Hodge filtration~$\Fil^\bullet$). Let $X$ be the fibre of the universal family $\scrX/\tilde{\scrM}$ at~$m$. The tangent map~\eqref{eq:TangentMap} factors as
\[
T_{\tilde{\scrM},m} \to H^1(X,\scrT_X) \to \Hom\bigl(\gr^0 \tilde{\motP}_{\dR,m}, \gr^{-1} \tilde{\motP}_{\dR,m}\bigr)\, ;
\]
here we use Corollary~\ref{cor:dRMatch}. The first map is surjective by our choice of~$\scrM$ (if $X = \CGr(2,V_5) \cap Q$ then every infinitesimal deformation of~$X$ is obtained by varying the quadric~$Q$), and the second map is surjective by Proposition~\ref{prop:KodSpencX}.
\end{proof}

\section{Special endomorphisms and their liftings to characteristic~$0$}
\label{Sec:SpecEndoms}

The goal of this section is to review the notion of a ``special endomorphism'' that is introduced by Madapusi Pera in \cite{MP-IntCanMod}, Section~5. In what follows, $T$ is a $\ZZ_{(p)}$-scheme and $A/T$ is the abelian scheme corresponding to a $T$-valued point $t \colon T \to \tilde\scrS_N$. Our goal is to define a subspace $L(A) \subset \End(A)$ of special endomorphisms. The idea is that an endomorphism~$f$ of~$A$ is special if the induced endomorphism of~$\motH$ is a section of $\tilde\motL \subset \ul{\End}(\motH)$.

We follow \cite{MP-IntCanMod}, Section~5 rather closely, and we only summarise the main results proven there. There are three topics: (\romannumeral1)~the definition of the subspace $L(A) \subset \End(A)$, (\romannumeral2)~lifting special endomorphisms to characteristic~$0$, and (\romannumeral3)~realising Tate classes by special endomorphisms. Note that this entire section only deals with the family of realizations $\tilde\motL \subset \ul\End(\motH)$ over the Shimura variety~$\tilde\scrS_N$. In the next section we will see how these results lead to a proof of the Tate conjecture for Gushel--Mukai sixfolds.

\subsection{}
With notation as above, first assume $T$ has characteristic~$0$, i.e., $t$ factors through~$\tilde\scrS_{N,\QQ}$. Let $A/T$ be the abelian scheme corresponding to~$t$, and let $f \in\End(A)$. For $? \in \{\ell,\dR\}$, let $\motH_{?,t}$ be the corresponding realization of~$t^*\motH$, and let $f_? \colon \motH_{?,t} \to \motH_{?,t}$ be the endomorphism induced by~$f$, which gives a global section of $\ul\End(\motH_{?,t})$. Define $f$ to be ?-special if $f_?$ is a global section of $\tilde\motL_{?,t} \subset \ul\End(\motH_{?,t})$. Further, if $T = \Spec(\CC)$, we say that $f$ is Hodge-special if its Betti realization is a global section of $\tilde\motL_{\infty,t} \subset \ul\End(\motH_{\infty,t})$.

One easily shows (\cite{MP-IntCanMod}, Lemma~5.4) that being $?$-special is independent of which $? \in \{\ell,\dR\}$ we choose and is also equivalent to the condition that the restriction of~$f$ to any $\CC$-valued point of~$T$ is Hodge-special. We say that $f$ is special if these equivalent conditions are satisfied.

Specialness is invariant under deformations: if $T$ is connected and if $f$ is an endomorphism which is special when restricted to some fibre of~$A/T$, then $f$ is special.

\subsection{}
Next we consider the case when $p$ is locally nilpotent on~$T$. If $k$ is a perfect field of characteristic~$p$ and $s \in T(k)$, the restriction~$f_s$ of~$f$ to the fibre of~$A$ at~$s$ induces a global section of $\ul\End(\motH_{\crys,s})$, and we say that $f$ is special at~$s$ if this section lies in $\tilde\motL_{\crys,s} \subset \ul\End(\motH_{\crys,s})$. One then shows (\cite{MP-IntCanMod}, Lemma~5.9) that, for connected~$T$, if $f$ is special at some~$s$, it is special at every~$s$. Now define $f$ to be special if on every connected component of~$T$ it is pointwise special.

With this definition, it is not a priori clear if specialness implies $\ell$-specialness for primes $\ell \neq p$ (which is defined in the same way as above). This is true (see Corollary~\ref{cor:Uf0flat}\ref{spec=>lspec} below), but it requires some work to prove this.

As a bridge between characteristic~$0$ and characteristic~$p$, we have the following result.

\begin{proposition}
\label{prop:spec/dvr}
Let $R$ be a dvr of mixed characteristic $(0,p)$ with perfect residue field, and let $A/R$ be the abelian scheme corresponding to an $R$-valued point of~$\tilde\scrS_N$. If $f \in \End(A)$ then the induced endomorphism~$f_0$ of the special fibre of~$A$ is special if and only if the endomorphism~$f_\eta$ of the generic fibre of~$A$ is special.
\end{proposition}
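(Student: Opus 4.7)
The plan is to reduce the statement to a fibrewise comparison at the closed and generic points of $\Spec(R)$, and then invoke Madapusi Pera's Proposition~4.7 stated above to match the projector cutting out $\tilde\motL$ in the de Rham realization at a lift with the projector cutting out $\tilde\motL$ in the crystalline realization at the special fibre. First I would replace $R$ by its $p$-adic completion $\hat R$; since $R \to \hat R$ is faithfully flat and both fibres are preserved, this does not affect whether $f_\eta$ or $f_0$ is special. Because $k$ is perfect and $R$ has mixed characteristic, the canonical ring map $W(k) \to \hat R$ then makes $\hat R$ into a finite totally ramified extension of $W(k)$, placing us in the setup of the cited Proposition~4.7(a). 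Writing $K = \mathrm{Frac}(\hat R)$, the point $t$ yields $s_0 \in \tilde\scrS_N(k)$ and a lift $s \in \tilde\scrS_N(\hat R)$ whose generic fibre we denote $s_K$.

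The key input is that, by functoriality, the endomorphism $f \in \End(A)$ induces endomorphisms $f_{\dR, s_K}$ of $\tilde\motH_{\dR, s_K}$ and $f_{\crys, s_0}$ of $\tilde\motH_{\crys, s_0}$ that are intertwined by the Berthelot--Ogus comparison
\[
\tilde\motH_{\dR, s_K} \isomarrow \tilde\motH_{\crys, s_0} \otimes_{W(k)} K;
\]
that is, $f_{\dR, s_K}$ corresponds to $f_{\crys, s_0} \otimes 1$. By Madapusi Pera's Proposition~4.7(a), the same isomorphism (applied to $\ul\End$) carries the projector $\bm\pi_{\dR, s_K}$ to $\bm\pi_{\crys, s_0} \otimes 1$. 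Combining these compatibilities, the equality $\bm\pi_{\dR, s_K}(f_{\dR, s_K}) = f_{\dR, s_K}$ in $\ul\End(\tilde\motH_{\dR, s_K})$ translates to the equality $\bm\pi_{\crys, s_0}(f_{\crys, s_0}) \otimes 1 = f_{\crys, s_0} \otimes 1$ in $\ul\End(\tilde\motH_{\crys, s_0}) \otimes_{W(k)} K$.

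From this, the direction $f_0$ special $\Rightarrow f_\eta$ special is immediate: $\bm\pi_{\crys, s_0}(f_{\crys, s_0}) = f_{\crys, s_0}$ in the integral module implies the same equality after tensoring with $K$, which via the above correspondence is exactly the de Rham specialness of $f_\eta$. For the converse, if $f_\eta$ is special then we obtain
\[
\bm\pi_{\crys, s_0}(f_{\crys, s_0}) \otimes 1 = f_{\crys, s_0} \otimes 1
\]
in $\ul\End(\tilde\motH_{\crys, s_0}) \otimes_{W(k)} K$. Since $\ul\End(\tilde\motH_{\crys, s_0})$ is a free $W(k)$-module, hence torsion-free, the difference $\bm\pi_{\crys, s_0}(f_{\crys, s_0}) - f_{\crys, s_0}$ must vanish already in the integral module. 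Hence $f_{\crys, s_0} \in \tilde\motL_{\crys, s_0}$, so $f_0$ is special.

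The main potential obstacle I anticipate lies in the reduction to a complete dvr finite over $W(k)$: one must verify that specialness of $f_\eta$ and of $f_0$ is invariant under the base change $R \to \hat R$. This should follow from the fact that the defining condition $\bm\pi(f) = f$ lives in a torsion-free module and descends under faithfully flat base change, but the verification needs some care. Aside from this reduction, the argument is essentially formal, resting only on the Berthelot--Ogus comparison, the compatibility of the projectors provided by Madapusi Pera's Proposition~4.7(a), and the integrality of the crystalline module $\ul\End(\tilde\motH_{\crys, s_0})$.
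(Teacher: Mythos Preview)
Your proof is correct and takes essentially the same approach as the paper: pass to the completion, then use the Berthelot--Ogus comparison together with Madapusi Pera's Proposition~4.7(a) to match the de Rham and crystalline projectors, and conclude by torsion-freeness. The paper's own proof is a terse two-sentence version of exactly this argument (also noting the $p$-adic Hodge theory alternative), so you have simply spelled out the details that the paper leaves implicit.
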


\begin{proof}
Without loss of generality we may assume $R$ to be complete. The assertion then readily follows by considering the Berthelot--Ogus isomorphism between the de Rham cohomology of the generic fibre and the crystalline cohomology of the special fibre tensored with the fraction field of~$R$. Alternatively, it follows from the comparison isomorphism of $p$-adic Hodge theory; cf.\ the proof of \cite{MP-IntCanMod}, Lemma~5.13.
\end{proof}

\subsection{}
\label{subsec:A0f0}
Crucial for the proof of the Tate conjecture is a result about deformations of special endomorphisms. The situation we consider is that $k$ is a perfect field of characteristic~$p$, and $x_0 \in \tilde\scrS_N(k)$. Let $A_0$ be the corresponding abelian variety, and let $\frU$ be the formal completion of~$\tilde\scrS_N$ at~$x_0$, which is a formal scheme over~$W(k)$, non-canonically isomorphic to the formal spectrum of $W(k)[\![t_1,\ldots,t_{20}]\!]$. If $f_0$ is an endomorphism of~$A_0$, we denote by $\frU_{f_0} \subset \frU$ the formal subscheme of deformations to which the endomorphism~$f_0$ lifts. More precisely, this means that if $R$ is an artinian $W(k)$-algebra with residue field~$k$, the $R$-valued points of~$\frU_{f_0}$ are the pairs $(x,f)$ consisting of a point $x\in \tilde\scrS_N(R)$ lifting~$x_0$ and an endomorphism~$f$ of the corresponding abelian variety~$A/R$, such that $f$ reduces to~$f_0$ modulo the maximal ideal of~$R$. Note that, for a given lift~$x$, if a lifting~$f$ of~$f_0$ exists then it is unique. The forgetful morphism $(x,f) \mapsto x$ realizes~$\frU_{f_0}$ as a closed formal subscheme of~$\frU$.

\begin{theorem}[Madapusi Pera, \cite{MP-IntCanMod}, Proposition~5.21.]
In the above situation, assume $f_0$ is a special endomorphism of~$A_0$. Then the ideal defining $\frU_{f_0} \subset \frU$ is principal and $\frU_{f_0}$ is flat over~$W(k)$.
\end{theorem}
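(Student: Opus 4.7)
The plan is to apply Grothendieck--Messing deformation theory adapted to the Shimura variety $\tilde\scrS_N$ of GSpin type, following Kisin's framework for integral canonical models. First I would identify $\frU \cong \Spf W(k)[\![t_1,\ldots,t_{20}]\!]$ with the deformation functor of lifts of the Hodge filtration on $\tilde\motH_{\crys,x_0}$ that preserve the crystalline tensors cutting out $\tilde\motL_{\crys,x_0} \subset \ul\End(\tilde\motH_{\crys,x_0})$. Passing to the induced filtration on $\tilde\motL$, which is of $K3$-type $(-1,1)+(0,0)+(1,-1)$ with $\Fil^1 \tilde\motL_{\dR,x_0}$ an isotropic line, this amounts to parametrising lifts of the line $\Fil^1\tilde\motL_{\crys,x_0}$ inside the quadric cut out by $b_{\tilde\motL}$. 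Over $\frU$ the universal $\Fil^1\tilde\motL_{\dR}|_\frU$ is a line bundle in the flat bundle $(\tilde\motL_{\dR}|_\frU,\nabla)$, and the element $f_0 \in \tilde\motL_{\crys,x_0}$ extends uniquely, via the crystalline / Gauss--Manin connection, to a horizontal section $\tilde f_0$ of $\tilde\motL_{\dR}|_\frU$.

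Next, by Grothendieck--Messing theory, an artinian thickening $x \in \frU(R')$ lifts $f_0$ to an endomorphism of the associated abelian scheme if and only if $\tilde f_0|_x$ lies in $\Fil^0\tilde\motL_{\dR}|_x$. Because $\Fil^0$ is the orthogonal complement of the line $\Fil^1$ under $b_{\tilde\motL}$, this is the single equation
\[
h \;:=\; b_{\tilde\motL}\bigl(\tilde f_0,v\bigr) \;=\; 0 \quad \in \scrO_\frU\, ,
\]
where $v$ is any local trivialisation of the line bundle $\Fil^1\tilde\motL_{\dR}|_\frU$. Hence $\frU_{f_0} = V(h)$ is cut out in $\frU$ by a principal ideal, which establishes the first claim.

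The harder assertion is $W(k)$-flatness of $\frU_{f_0}$, which amounts to $h \notin p\scrO_\frU$. Since $f_0$ is an endomorphism of~$A_0$, it preserves the Hodge filtration on $\tilde\motH_{\dR,x_0}$, hence $f_0 \in \Fil^0\tilde\motL_{\dR,x_0}$ and $h(x_0) = 0$. The plan is to linearise: since $\tilde f_0$ is $\nabla$-flat, for any $\partial \in T_{x_0}\tilde\scrS_N$ one has
\[
\partial h|_{x_0} \;=\; b_{\tilde\motL}\bigl(f_0,\nabla_\partial v\bigr)\, ,
\]
and by the Kodaira--Spencer isomorphism $T_{x_0}\tilde\scrS_N \isomarrow \Hom\bigl(\Fil^1,\Fil^0/\Fil^1\bigr)$ (cf.\ \cite{MP-IntCanMod}, Proposition~4.16) the residue class $\overline{\nabla_\partial v}$ sweeps out all of $\gr^0\tilde\motL_{\dR,x_0}$ as $\partial$ varies. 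The induced pairing on $\gr^0$ being non-degenerate, if the image $\bar f_0 \in \gr^0\tilde\motL_{\dR,x_0}\otimes k$ is nonzero then some $\partial h|_{x_0}$ is not divisible by $p$, which forces $h$ to have a nonzero $t_i$-component modulo $p$ and hence $h \notin p\scrO_\frU$.

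The main obstacle is the degenerate case $\bar f_0 = 0$ in $\gr^0\tilde\motL_{\dR,x_0}\otimes k$, i.e.\ where $f_0$ falls into $\Fil^1\tilde\motL_{\dR,x_0} + p\tilde\motL_{\crys,x_0}$. After replacing $f_0$ by $f_0/p$ so long as it is $p$-divisible in $L(A_0)$, we may assume $f_0$ is primitive, reducing to the subcase $f_0 \bmod p \in \Fil^1\tilde\motL_{\dR,x_0}\otimes k$. Here the plan is to exploit the Frobenius structure on the filtered $F$-crystal $\tilde\motL_{\crys,x_0}$: the element $f_0$ is $\varphi$-invariant, while on a $K3$-type strongly divisible $F$-crystal, by the Mazur inequality, the $\varphi$-fixed part intersects $\Fil^1$ trivially modulo~$p$. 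This contradicts primitivity and settles the degenerate case. Combining these steps yields both the principality of the ideal defining $\frU_{f_0}$ and its $W(k)$-flatness.
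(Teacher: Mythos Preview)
This theorem is quoted from \cite{MP-IntCanMod}, Proposition~5.21, and the present paper gives no proof of its own. Your derivation of the single equation $h = b_{\tilde\motL}(\tilde f_0,v)$ via Grothendieck--Messing, and the linearisation showing $h \notin p\,\scrO_\frU$ whenever the image of~$f_0$ in $\gr^0\tilde\motL_{\dR,x_0}\otimes k$ is nonzero, are correct and agree with Madapusi Pera's approach.

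The gap is in the degenerate case. The assertion that on a K3-type crystal a primitive $\varphi$-fixed vector cannot reduce into~$\Fil^1$ modulo~$p$ is false. Already in the rank-$3$ model with basis $e_1,e_2,e_3$, pairing $b(e_1,e_3)=b(e_2,e_2)=1$ (others zero), filtration $\Fil^1=\langle e_1\rangle \subset \Fil^0=\langle e_1,e_2\rangle$, and $\sigma$-linear Frobenius determined by $\Phi(e_1)=pe_3$, $\Phi(e_2)=e_2$, $\Phi(e_3)=p^{-1}e_1$, the vector $f_0=e_1+pe_3$ is primitive and $\Phi$-fixed yet reduces to~$e_1\in\Fil^1$; your Mazur step yields only $f_0=\Phi(g)+p\Phi(m)$ with $p\Phi(m)$ merely integral, so no $p$-divisibility follows. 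What actually works, and is Madapusi Pera's argument, is the second-order behaviour of the period map. Via the Kodaira--Spencer isomorphism (\cite{MP-IntCanMod}, Proposition~4.16), $\frU$ is identified with the formal neighbourhood of the isotropic line~$[v_0]$ in the smooth quadric $\{b_{\tilde\motL}=0\}\subset\PP(\tilde\motL_{\crys,x_0})$ over~$W(k)$, and $\frU_{f_0}$ with the formal neighbourhood of~$[v_0]$ in the hyperplane section by~$f_0^\perp$; since $f_0$ is primitive this section is a quadric hypersurface in $f_0^\perp\cong\PP^{20}_{W(k)}$, hence flat over~$W(k)$. Equivalently, when $\bar f_0$ spans~$\Fil^1$ the degree-$2$ part of $h\bmod p$ is, up to a unit, the non-degenerate form~$b|_{\gr^0}$ and is therefore nonzero; in the toy example above one computes $h(t)=p-\tfrac{1}{2}t^2$.
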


\begin{corollary}
\label{cor:Uf0flat}
Let $x_0 \in \tilde\scrS_N(k)$ and $(A_0,f_0)$ be as in~\emph{\ref{subsec:A0f0}}.
\begin{enumerate}
\item\label{lift/dvr} Assume $f_0$ is special. Then there exist a discrete valuation ring~$R$ which is finite over~$W(k)$, an abelian variety $A/R$ and an endomorphism~$f$ of~$A$, such that $(A,f) \otimes_R R/\frm_R \cong (A_0,f_0) \otimes_k R/\frm_R$.

\item\label{spec=>lspec} If $f_0$ is a special endomorphism of~$A_0$ then it is $\ell$-special for every prime number $\ell \neq p$, i.e., the induced endomorphism $f_\ell \colon \motH_{\ell,x_0} \to \motH_{\ell,x_0}$ corresponds to a global section of $\tilde\motL_{\ell,x_0} \subset \ul\End(\motH_{\ell,x_0})$.
\end{enumerate}
\end{corollary}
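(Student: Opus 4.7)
The plan is to deduce both parts from Madapusi Pera's theorem just stated, which asserts that $\frU_{f_0} \subset \frU$ is cut out by a principal ideal and is flat over~$W(k)$.

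For part~\ref{lift/dvr}, I would start by writing $\frU = \Spf\bigl(W(k)[\![t_1,\ldots,t_{20}]\!]\bigr)$, using that $\tilde\scrS_N$ is smooth of relative dimension~$20$ over~$\ZZ_{(p)}$, so that $\frU_{f_0} = \Spf(B)$ with $B = W(k)[\![t_1,\ldots,t_{20}]\!]/(g)$ for some~$g$. Flatness over~$W(k)$ forces $g$ not to be divisible by~$p$, so $B$ is a complete Noetherian local ring of Krull dimension~$20$ in which $p$ is a non-zerodivisor. I would then cut $B$ down to a $1$-dimensional quotient by choosing $19$ elements $h_1,\ldots,h_{19}$ of the maximal ideal whose images form part of a system of parameters of $B/pB$ (available by prime avoidance). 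The quotient $C = B/(h_1,\ldots,h_{19})$ is a $1$-dimensional complete local ring in which $p$ is still a non-zerodivisor, so $C/pC$ is Artinian and $C$ is therefore finite over~$W(k)$. Its normalization decomposes as a finite product of dvrs finite over~$W(k)$; any factor yields the desired dvr~$R$ together with a continuous $W(k)$-algebra map $B \to R$. By the very definition of~$\frU_{f_0}$, this $R$-point corresponds, via the Kuga--Satake abelian scheme on~$\tilde\scrS_N$, to an abelian scheme $A/R$ lifting~$A_0$ equipped with an endomorphism $f \in \End(A)$ lifting~$f_0$.

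For part~\ref{spec=>lspec}, I would apply part~\ref{lift/dvr} to produce such a lift $(A,f)$ over a dvr~$R$ finite over~$W(k)$, and let $K$ denote the fraction field of~$R$. By Proposition~\ref{prop:spec/dvr}, the generic fibre~$f_K$ is a special endomorphism of~$A_K$, and in characteristic~$0$ this is equivalent to $\ell$-specialness (as recalled at the start of the section); hence the endomorphism of~$\motH_\ell$ induced by~$f_K$ at the point $\xi \in \tilde\scrS_N(K)$ is a section of $\tilde\motL_{\ell,\xi} \subset \ul{\End}(\motH_{\ell,\xi})$. By Proposition~\ref{prop:ladicExtend}, the inclusion $\tilde\motL_\ell \subset \ul{\End}(\motH_\ell)$ of smooth $\QQ_\ell$-sheaves is defined on all of~$\tilde\scrS_N$. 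Specializing along the trait $\Spec(R) \to \tilde\scrS_N$, and using the functoriality of $\ell$-adic cohomology applied to~$f$ itself, we conclude that the endomorphism induced by~$f_0$ on~$\motH_{\ell,x_0}$ still lies in $\tilde\motL_{\ell,x_0}$; that is, $f_0$ is $\ell$-special.

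The substantive content of the corollary lies entirely in Madapusi Pera's theorem; once that is granted, the rest is routine commutative algebra and $\ell$-adic specialization. The only mild point to check in part~\ref{lift/dvr} is that the $19$ cutting elements can be arranged so that $p$ remains a non-zerodivisor in the quotient, which is immediate from prime avoidance inside $B/pB$.
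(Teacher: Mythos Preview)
Your argument is correct and follows the same route as the paper: for part~\ref{lift/dvr} the paper simply cites Deligne's argument (\cite{Deligne-RelK3}, Corollary~1.7) for extracting a dvr from a flat formal deformation space and then invokes the polarization on the Kuga--Satake family to algebraize, whereas you spell out the commutative algebra and obtain the abelian scheme directly by pulling back along $\Spec(R)\to\tilde\scrS_N$; for part~\ref{spec=>lspec} the paper says only that it follows from~\ref{lift/dvr} together with Proposition~\ref{prop:spec/dvr}, exactly as you do. One small remark: your claim that $p$ stays a non-zerodivisor in~$C$ is indeed true, but the clean reason is that $B/pB=k[\![t_1,\dots,t_{20}]\!]/(\bar g)$ is a hypersurface, hence Cohen--Macaulay, so any system of parameters is automatically a regular sequence.
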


\begin{proof}
For \ref{lift/dvr}, the same as in the proof of \cite{Deligne-RelK3}, Corollary~1.7, gives a finite extension $W(k) \subset R$ and a formal lifting $(\hat{A},\hat{f})$ over~$\Spf(R)$. Because the Kuga--Satake abelian variety over~$\tilde\scrS_N$ admits a polarization, this formal lifting can be algebraized. Part~\ref{spec=>lspec} follows from \ref{lift/dvr} together with Proposition~\ref{prop:spec/dvr}.
\end{proof}

\subsection{}
\label{subsec:TateDef}
If $k$ is a field and $k \subset \kbar$ is an algebraic closure, we write $k^{\perf} \subset \kbar$ for the perfect closure of~$k$. If $V_\ell$ is an $\ell$-adic representation of $\Aut(\kbar/k) = \Gal(\kbar/k^{\perf})$, we write
\[
\Tate(V_\ell) \subset V_\ell
\]
for the subspace of Tate classes; by definition this means that $v \in \Tate(V_\ell)$ if and only if the stabilizer of~$v$ in~$\Gal(\kbar/k^{\perf})$ is an open subgroup.

Next suppose $k$ is perfect of characteristic~$p$ with algebraic closure~$\kbar$. We write $K_0(k)$ for the fraction field of~$W(k)$ and $\sigma$ for its Frobenius automorphism. Let $(V_p,\phi \colon V_p^{(\sigma)} \isomarrow V_p)$ be an $F$-isocrystal over~$k$; here $V_p^{(\sigma)} = V_p \otimes_{K_0(k),\sigma} K_0(k)$. The space $\Hom_\Isoc\bigl(\mathbf{1},(V_p,\phi)\bigr)$ from the unit isocrystal $\mathbf{1} = \bigl(K_0(k),\sigma\bigr)$ to~$(V_p,\phi)$ is in bijection with $V_p^{\phi=1} = \bigl\{v\in V_p\bigm| \phi(v\otimes 1) = v\bigr\}$. We could define Tate classes to be the elements in the latter space; but this would not be a natural analogue of the $\ell$-adic Tate classes because $\Hom_\Isoc\bigl(\mathbf{1},(V_p,\phi)\bigr)$ may get bigger after a finite extension of the base field. For this reason we define the $\QQ_p$-subspace $\Tate(V_p) \subset V_p \otimes_{K_0(k)} K_0(\kbar)$ by
\[
\Tate(V_p) = \bigcup_{k\subset k^\prime}\; \bigl(V_p \otimes_{K_0(k)} K_0(k^\prime)\bigr)^{(\phi\otimes \sigma) = 1}\, ,
\]
where the union is taken over all \emph{finite} field extensions $k\subset k^\prime$ inside~$\kbar$. (Caution: this is definitely not the same as the space $(V_p \otimes_{K_0(k)} K_0(\kbar))^{(\phi\otimes \sigma)=1}$, which in general is much bigger. The point is that $K_0(\kbar)$ is not the union of all~$K_0(k^\prime)$.)

\begin{remark}
\label{rem:TatepEigenval}
If $(V_p,\phi)$ is an $F$-isocrystal over a finite field~$k$ with $p^r$ elements, $\phi^r$ is a $K_0(k)$-linear automorphism of~$V_p$ and the natural map
\[
V_p^{\phi=1} \otimes_{\QQ_p} K_0(k) \to \bigl\{v\in V_p\bigm| \phi^r(v) = v\bigr\}
\]
is an isomorphism. It follows from this that the dimension of $\Tate(V_p)$ is equal to the number of eigenvalues of~$\phi^r$ (counted with multiplicities) that are roots of unities and that there exists a finite extension $k \subset k^\prime$ such that $\Tate(V_p) = \bigl(V_p \otimes_{K_0(k)} K_0(k^\prime)\bigr)^{(\phi\otimes \sigma) = 1}$.
\end{remark}

\subsection{}
\label{subsec:LtoTate}
Let $A_0/k$ be the abelian variety corresponding to a point $x_0 \in \tilde\scrS_N(k)$, where $k$ is a perfect field of characteristic~$p$. For $\ell \neq p$ we have the $\ell$-adic realization~$\tilde\motL_{\ell,x_0}$; further, we have the $F$-isocrystal $\tilde\motL_{\crys,x_0}[1/p]$. In the next results, the following condition plays a role:
\begin{equation}
\label{eq:ConditionI}
\vcenter{
\setbox0=\hbox{and equals $\dim_{\QQ_p}\bigl(\Tate(\tilde\motL_{\crys,x_0}[1/p])\bigr)$.}
\hbox to\wd0{\hfill$\dim_{\QQ_\ell}\bigl(\Tate(\tilde\motL_{\ell,x_0})\bigr)$ is independent of~$\ell$\hfill}
\copy0}
\end{equation}
As we shall see in the proof of Theorem~\ref{thm:TCGMcharp}, this condition is satisfied in the situation where we need it.

Let $L(A_{0,\kbar}) \subset \End(A_{0,\kbar})$ be the subspace of special endomorphisms of $A_0 \otimes_k \kbar$. It follows from Corollary~\ref{cor:Uf0flat}\ref{spec=>lspec} that we have natural maps
\begin{equation}
\label{eq:LtoTatel}
L(A_{0,\kbar}) \otimes \QQ_\ell \to \Tate\bigl(\tilde\motL_{\ell,x_0}\bigr)
\end{equation}
and
\begin{equation}
\label{eq:LtoTatep}
L(A_{0,\kbar}) \otimes \QQ_p \to \Tate\bigl(\tilde\motL_{\crys,x_0}[1/p]\bigr)\, ,
\end{equation}
which are injective.

A priori, the subspace $L(A_{0,\kbar})$ could be very small. One of the beautiful insights in~\cite{MP-K3} is that this does not happen, and in fact $L(A_0)$ is as big as one could hope:

\begin{theorem}[Madapusi Pera, \cite{MP-K3}, Theorem~6.4 and Corollary~6.11]
\label{thm:LtoTate}
Notation and assumptions as above.
\begin{enumerate}
\item\label{LtoTateIsomFq} Assume that $k$ is a finite field and that condition~\eqref{eq:ConditionI} is satsified. Then the maps \eqref{eq:LtoTatel} and \eqref{eq:LtoTatep} are isomorphisms.

\item\label{LtoTateIsomGen} Assume $k$ is a finitely generated field extension of~$\FF_p$ and that there exists a Zariski open $U \subset \tilde\scrS_{N,\FF_p}$ containing~$x_0$ such that condition~\eqref{eq:ConditionI} is satsified at all points of~$U$ with values in a finite field. Then the maps \eqref{eq:LtoTatel} are isomorphisms for all $\ell \neq p$.
\end{enumerate}
\end{theorem}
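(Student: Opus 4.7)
The plan is to establish both parts via Tate's theorem on endomorphisms of abelian varieties (in its different incarnations), combined with spreading-out arguments. The fundamental observation underlying the whole argument is that, by definition, $L(A_{0,\bar k})$ is the preimage of $\tilde\motL_{\crys,x_0}$ under the natural map $\End(A_{0,\bar k})\otimes \QQ \to \ul\End(\motH_{\crys,x_0})[1/p]$, and analogously for the $\ell$-adic side $L(A_{0,\bar k})$ maps into the preimage of $\tilde\motL_{\ell,x_0}$ (by Corollary~\ref{cor:Uf0flat}\ref{spec=>lspec}). The task is thus to produce enough endomorphisms and to match both preimages with $L(A_{0,\bar k})$.

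\textbf{Part~\ref{LtoTateIsomFq}.} For the crystalline statement, first apply Tate's theorem over finite fields (Tate--Waterhouse) to get $\End(A_{0,\bar k})\otimes \QQ_p \isomarrow \Tate(\ul\End(\motH_{\crys,x_0})[1/p])$. Intersecting both sides with $\tilde\motL_{\crys,x_0}[1/p]$ inside $\ul\End(\motH_{\crys,x_0})[1/p]$ transforms the LHS into $L(A_{0,\bar k})\otimes \QQ_p$ (by the very definition of specialness at a point in characteristic~$p$) and the RHS into $\Tate(\tilde\motL_{\crys,x_0}[1/p])$, giving~\eqref{eq:LtoTatep}. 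For the $\ell$-adic statement, apply the same recipe with Tate's $\ell$-adic theorem: the intersection identifies $\Tate(\tilde\motL_{\ell,x_0})$ with $\{f\in \End(A_{0,\bar k})\otimes \QQ_\ell : f_\ell\in \tilde\motL_{\ell,x_0}\}$. The injection~\eqref{eq:LtoTatel} (from Corollary~\ref{cor:Uf0flat}\ref{spec=>lspec}) then becomes equality by a dimension count: the crystalline case gives $\dim_\QQ L(A_{0,\bar k}) = \dim_{\QQ_p} \Tate(\tilde\motL_{\crys,x_0}[1/p])$, and condition~\eqref{eq:ConditionI} equates this with $\dim_{\QQ_\ell}\Tate(\tilde\motL_{\ell,x_0})$.

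\textbf{Part~\ref{LtoTateIsomGen}.} Fix $\ell\neq p$. The key input is Zarhin's Tate theorem for abelian varieties over finitely generated fields of characteristic~$p$ (\cite{Zarhin-AVCharp}), which gives $\End(A_{0,\bar k})\otimes \QQ_\ell \isomarrow \Tate(\ul\End(\motH_{\ell,x_0}))$. Intersecting with $\tilde\motL_{\ell,x_0}$, the Tate classes in $\tilde\motL_{\ell,x_0}$ correspond bijectively to elements $f\in \End(A_{0,\bar k})\otimes \QQ$ with $f_\ell\in \tilde\motL_{\ell,x_0}$; let $V\subseteq \End(A_{0,\bar k})\otimes \QQ$ be this $\QQ$-subspace, so $V\otimes \QQ_\ell = \Tate(\tilde\motL_{\ell,x_0})$. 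The inclusion $L(A_{0,\bar k})\otimes \QQ\subseteq V$ holds by Corollary~\ref{cor:Uf0flat}\ref{spec=>lspec}, so it remains to prove the reverse inclusion. Given $f\in V$, spread $(A_0,f)$ to an abelian scheme with endomorphism over a smooth integral model $S/\FF_p$ of (a sub-$\FF_p$-algebra of finite type of) $k$, such that $\bar x\colon S\to \tilde\scrS_{N,\FF_p}$ extends $x_0$ and meets the open set $U$. After possibly shrinking $S$ and taking an étale cover, the section $f_\ell$ extends to a section of $\bar x^*\tilde\motL_\ell$ over $S$. Choose a closed point $s_0\in S$ with $\bar x(s_0)\in U$. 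Then the restriction $f_{s_0}$ is $\ell$-special at $\bar x(s_0)$, so by Part~\ref{LtoTateIsomFq} it is special. By the propagation of specialness in characteristic~$p$ on connected bases (the result quoted between Proposition~\ref{prop:spec/dvr} and the preceding paragraph, from \cite{MP-IntCanMod}, Lemma~5.9), $f$ is special at every point of $S$, hence in particular at the generic point, giving $f\in L(A_{0,\bar k})\otimes \QQ$.

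The main obstacle is the implication ``$\ell$-special $\Rightarrow$ special'' at $x_0$ in Part~\ref{LtoTateIsomGen}, since $x_0$ itself need not lie over a finite-field point of $U$. The spreading-out argument sketched above resolves this: one transports the hypothesis from $x_0$ to a well-chosen closed point $s_0$ where Part~\ref{LtoTateIsomFq} applies, obtains specialness of $f_{s_0}$, and then propagates back along the connected family. One subtlety to watch is that Zarhin's theorem must be applied with $\Gal(\bar k/k^{\mathrm{perf}})$-invariants and yields endomorphisms defined over $\bar k$ rather than $k$, which is why the conclusion is phrased in terms of $L(A_{0,\bar k})$ rather than $L(A_0)$.
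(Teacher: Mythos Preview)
Your proof is correct and takes a genuinely different route from the paper's for Part~\ref{LtoTateIsomFq}. The paper follows Madapusi Pera's original argument: it introduces algebraic groups~$I$ over~$\QQ$ and~$I_\ell$ over~$\QQ_\ell$ acting on $L(A_{0,\bar k})\otimes\QQ$ and $\Tate(\tilde\motL_{\ell,x_0})$ respectively, invokes Kisin's result that $I\otimes\QQ_\ell\cong I_\ell$ for some~$\ell$, shows the $I_\ell$-action on the Tate space is irreducible (with a caveat in dimension~$2$), and handles the edge cases---where $L(A_{0,\bar k})$ might vanish or the Tate space is $2$-dimensional---by replacing~$L$ with an enlarged lattice $L^\sharp = L\perp L'$ so as to artificially inject Tate classes. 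You instead go straight to the crystalline Tate theorem for abelian varieties over finite fields (via $p$-divisible groups and Dieudonn\'e theory), which immediately yields~\eqref{eq:LtoTatep} by intersecting with~$\tilde\motL_{\crys}$; the $\ell$-adic isomorphisms~\eqref{eq:LtoTatel} then drop out of a dimension count using condition~\eqref{eq:ConditionI}. Your argument is shorter and avoids both the group-theoretic machinery and the lattice-enlargement trick; the trade-off is that it leans on the $p$-adic Tate theorem, whereas the paper's approach works with a single well-chosen~$\ell$ and is closer to the template one would follow in settings (more general Shimura data, say) where the crystalline side is less accessible.

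For Part~\ref{LtoTateIsomGen} the paper gives no outline (it simply cites~\cite{MP-K3}), but your spreading-out argument is standard and correct: Zarhin produces the ambient endomorphisms, $\ell$-specialness propagates along the connected model~$S$ because $\tilde\motL_\ell$ is a sub-local-system of $\ul\End(\motH_\ell)$, Part~\ref{LtoTateIsomFq} at a closed point of~$U$ upgrades $\ell$-special to special, and \cite{MP-IntCanMod}, Lemma~5.9 carries specialness back to the generic point.
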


\subsection{}
In view of the importance of this result, we outline the proof of part~\ref{LtoTateIsomFq}, following \cite{Kisin-Points}, Section~2 and \cite{MP-K3}, Section~6. Since we assume that condition~\eqref{eq:ConditionI} is satisfied, it suffices to show that \eqref{eq:LtoTatel} is surjective for \emph{some} choice of~$\ell$.

Possibly after replacing~$k$ by a finite extension, we may assume that all endomorphisms of~$A_0$ are defined over~$k$, i.e., $\End(A_0) = \End(A_{0,\kbar})$, and that the subgroup of~$\bar{\QQ}^*$ that is generated by the eigenvalues of Frobenius is torsion-free.

The main ingredients for the argument are the following:
\begin{enumerate}[label=---]
\item An algebraic group $I$ over~$\QQ$ that acts on the space $L(A_{0,\kbar}) \otimes \QQ$;

\item an algebraic group $I_\ell$ over~$\QQ_\ell$ that acts on the space $\Tate\bigl(\tilde\motL_{\ell,x_0}\bigr)$;

\item a homomorphism $I \otimes \QQ_\ell \hookrightarrow I_\ell$ such that the map $L(A_{0,\kbar}) \otimes \QQ_\ell \to \Tate\bigl(\tilde\motL_{\ell,x_0}\bigr)$ is $I \otimes \QQ_\ell$-equivariant.
\end{enumerate}
We will give further details below. Suppose, then, that the following conditions are satisfied:
\begin{enumerate}[label=(\alph*)]
\item\label{IlRepIrr} the representation of $I_\ell$ on $\Tate\bigl(\tilde\motL_{\ell,x_0}\bigr)$ is irreducible;

\item\label{Lneq0} $L(A_{0,\kbar}) \neq 0$;

\item\label{IIlIsom} $I \otimes \QQ_\ell \hookrightarrow I_\ell$ is an isomorphism.
\end{enumerate}
Under these assumptions, it is clear that the map~\eqref{eq:LtoTatel} is surjective.

Let us now explain how the groups $I$ and~$I_\ell$ are defined. We will use notation as in Section~\ref{subsec:SystReal}, and as before we denote by~$\infty$ the unique complex embedding of~$\QQ$. Over the complex Shimura variety~$\tilde\scrS_{N,\CC}$ we have a Variation of Hodge Structure~$\motH_\infty$ and a local system of algebraic subgroups $\tilde\scrG_\infty \subset \GL(\motH_\infty)$, the fibres of which are isomorphic to the group $\tilde{G}_\QQ = \GSpin(L\otimes \QQ)$. For $\ell \neq p$ we have the $\ell$-adic sheaf~$\motH_\ell$ over~$\tilde\scrS_N$ (the $\ell$-adic component of~$\motH_\et$) and a local system of algebraic groups $\tilde\scrG_\ell \subset \GL(\motH_\ell)$, whose restriction to~$\tilde\scrS_{N,\CC}$ is isomorphic, under the comparison isomorphism~$i_{\et,\infty}$, to $\tilde\scrG_\infty \otimes \QQ_\ell$. In particular, for $x_0 \in \tilde\scrS_N(k)$ we have $\tilde\scrG_{\ell,x_0} \subset \GL(\motH_{\ell,x_0})$. There is also a $p$-adic analogue for this. A quick way to define it is to pick a lift of~$x_0$ to a point $x \in \tilde\scrS_N(W)$ (with $W = W(k)$), and identify $\motH_{\crys,x_0}[1/p]$ with $\motH_{\dR,x} =  \motH_{\infty,x} \otimes_{\QQ} K_0$ (with $K_0 = W[1/p]$); then define $\tilde\scrG_{\crys,x_0} \subset \GL\bigl(\motH_{\crys,x_0}[1/p]\bigr)$ to be $\tilde\scrG_{\infty,x} \otimes K_0$.

In order to define the group~$I$, view the group of units in the endomorphism algebra $\End^0(A_{0,\kbar}) = \End(A_{0,\kbar}) \otimes \QQ$ as an algebraic group over~$\QQ$. Then define an algebraic subgroup $I \subset \End^0(A_{0,\kbar})^*$ over~$\QQ$ by
\[
I = \Biggl\{f \in \End^0(A_{0,\kbar})^* \Biggm|
\vcenter{
\setbox0=\hbox{$\ell \neq p$, and the induced $f_{\crys} \in \GL\bigl(\motH_{\crys,x_0}[1/p]\bigr)$}
\hbox to\wd0{\hfill the induced $f_\ell \in \GL(\motH_{\ell,x_0})$ lies in $\tilde\scrG_{\ell,x_0}$ for all\hfill}
\copy0
\hbox to\wd0{\hfill lies in $\tilde\scrG_{\crys,x_0}$\hfill}
} \Biggr\}\, .
\]
Because the representation of $\tilde\scrG_{\ell,x_0}$ on $\ul\End(\motH_{\ell,x_0})$ maps the subspace $\tilde\motL_{\ell,x_0}$ into itself, and the analogous assertion is true for the crystalline component, the subspace $L(A_{0,\kbar}) \otimes \QQ \subset \End^0(A_{0,\kbar})$ spanned by the special endomorphisms is mapped into itself under left multiplication by elements in~$I$.

Next we turn to the group~$I_\ell$. Let $q$ be the cardinality of~$k$, and let $\gamma_\ell \in \tilde\scrG_{\ell,x_0}(\QQ_\ell)$ be the image of the $q$-power Frobenius element in $\Gal(\kbar/k)$. By Corollary~\ref{cor:HlGMss}, $\gamma_\ell$ is semisimple. We define $I_\ell \subset  \tilde\scrG_{\ell,x_0}(\QQ_\ell)$ to be the centralizer of~$\gamma_\ell$. Because we have made the base field~$k$ large enough, the subspace $\Tate\bigl(\tilde\motL_{\ell,x_0}\bigr) \subset \tilde\motL_{\ell,x_0}$ is simply the eigenspace of the semisimple transformation~$\gamma_\ell$ for the eigenvalue~$1$. In particular, the action of~$I_\ell$ on~$\tilde\motL_{\ell,x_0}$ maps the subspace $\Tate\bigl(\tilde\motL_{\ell,x_0}\bigr)$ into itself. As the $q$-power Frobenius endomorphism of~$A_0$ is central in~$\End(A_0)$, we have a natural inclusion $I \otimes \QQ_\ell \hookrightarrow I_\ell$.

We now consider the conditions \ref{IlRepIrr}--\ref{IIlIsom}. Condition~\ref{IIlIsom} can be dealt with by a result of Kisin (\cite{Kisin-Points}, Corollary~2.1.7), which says that there exists a prime number $\ell \neq p$ (and in fact, infinitely many such) for which $I \otimes \QQ_\ell \hookrightarrow I_\ell$ is an isomorphism. (In loc.\ cit., this is stated for the identity components, but the argument shows that after making $k$ sufficiently big the groups involved are connected.) Further, an easy lemma (\cite{MP-K3}, Lemma~6.8) shows that the representation of~$I_\ell$ on $\Tate\bigl(\tilde\motL_{\ell,x_0}\bigr)$ is irreducible, provided that $\dim_{\QQ_\ell}\bigl( \Tate(\tilde\motL_{\ell,x_0}) \bigr) \neq 2$. (This last condition is missing in loc.\ cit., but if the space of Tate classes is $2$-dimensional, the assertion of \cite{MP-K3}, Lemma~6.8 is wrong.)

At this point we are confronted with two problems: a priori it is not clear why $L(A_{0,\kbar})$ should be nonzero, and the space of Tate classes could be $2$-dimensional. We can bypass these difficulties by redoing the entire construction of the family of motives $\tilde\motL \subset \ul\End(\motH)$ over~$\tilde\scrS$ starting with a bigger lattice~$L$. (To avoid confusion, note that Theorem~\ref{thm:LtoTate} as such is unrelated to Gushel--Mukai varieties.) Thus, one chooses a positive definite integral lattice~$L^\prime$ of rank~$>2$ with $p \nmid \discr(L^\prime)$, and one redoes the entire construction with $L$ replaced by $L^\sharp = L \perp L^\prime$. Using the notation~$?^\sharp$ for objects constructed from~$L^\sharp$, we obtain a diagram of (integral canonical models of) Shimura varieties
\[
\begin{tikzcd}[column sep={{{{3em,between origins}}}}]
& \tilde{\scrS}_N^\sharp \ar{rr} && \scrA_{L^\sharp,N}\\
\tilde{\scrS}_N \ar{ur}{i}\ar{rr} && \scrA_{L,N} \ar{ur}
\end{tikzcd}
\]
Over $\tilde{\scrS}_N^\sharp$ we have an abelian scheme~$A^\sharp$ which gives rise to a system of realizations~$\motH^\sharp$, and we have a submotive $\motL^\sharp \subset \ul\End(\motH^\sharp)$. The pullback of~$A^\sharp$ to~$\tilde\scrS_N$ (via~$i$) is isomorphic to $A \otimes C(L^\prime)$, where of course $A$ is the original (``Kuga--Satake'') abelian scheme over~$\tilde\scrS_N$, and where $A \otimes C(L^\prime)$ is Serre's tensor construction (if $r^\prime$ is the rank of the lattice~$L^\prime$ this just means that we take the sum of $2^{r^\prime}$ copies of~$A$). Correspondingly, $i^*\motH^\sharp \cong \motH \otimes C(L^\prime)$, so that $i^*\bigl(\ul\End(\motH^\sharp)\bigr) \cong \ul\End(\motH) \otimes \End\bigl(C(L^\prime)\bigr)$. In this description, $i^*(\motL^\sharp)$ becomes the submotive
\[
(\motL \otimes \ZZ) \oplus (\unitmot \otimes L^\prime)
\]
where in the second term the unit motive $\unitmot$ should be interpreted as the submotive $\unitmot \cdot \id_{\motH} \subset \ul\End(\motH)$ spanned by the identity endomorphism, and where $L^\prime$ is identified with a subspace of~$\End\bigl(C(L^\prime)\bigr)$ via left multiplication in the Clifford algebra. We now take sections at our point $x_0 \in \tilde\scrS_N(k)$ (as in~\ref{subsec:LtoTate}), and we note that the summand $(\unitmot \otimes L^\prime)$ consists of Tate classes. As explained, grace to Kisin's result that allows us to deal with condition~\ref{IIlIsom}, this puts us in a situation where conditions \ref{IlRepIrr}--\ref{IIlIsom} are satisfied. The conclusion, then, is that the map $L(A^\sharp_{0,\kbar}) \otimes \QQ_\ell \to \Tate\bigl(\tilde\motL^\sharp_{\ell,x_0}\bigr)$ is surjective, and this implies that \eqref{eq:LtoTatel} is surjective.

\section{The Tate Conjecture for Gushel--Mukai sixfolds in characteristic $p \geq 5$}
\label{sec:TCGM6}

We can now put everything together. Let $Y/k$ be a GM sixfold as in the statement of Theorem~\ref{thm:TCGMcharp}. Note that, by our definition of the space of Tate classes, $\Tate\bigl(H^{2i}_{\crys}(Y/K_0(k))(i)\bigr)$ is a subspace of $H^{2i}_{\crys}\bigl(Y_{\kbar}/K_0(\kbar)(i)\bigr)$; see Section~\ref{subsec:TateDef}.

\begin{proof}[Proof of \emph{Theorem~\ref{thm:TCGMcharp}} for $\dim(Y) =6$.]
The first assertion of~\ref{TateGMl} has already been proven in Corollary~\ref{cor:HlGMss}. For the proof of the remaining assertions, it suffices to consider the case $i=3$.

We start with a general remark. Let $k$ be a field of characteristic~$p$, and suppose we have a $k$-valued point $m_0 \in \tilde\scrM(k)$ which maps to $s_0 = \tilde\iota(m_0) \in \tilde\scrS_N(k)$. Let $X_0/k$ be the Gushel--Mukai sixfold corresponding to~$m_0$ and $A_0/k$ the Kuga--Satake abelian variety. For $\ell \neq p$ we have an isomorphism $\tilde\motP_{\ell,m_0} \isomarrow \tilde\motL_{\ell,s_0}$, and if $k$ is perfect we have $\tilde\motP_{\crys,m_0} \isomarrow \tilde\motL_{\crys,s_0}$. Further, $H^6\bigl(X_{0,\kbar},\QQ_\ell(3)\bigr) \cong \QQ_\ell^{\oplus 2} \oplus \tilde\motP_{\ell,m_0}$ as Galois representations and if $k$ is perfect then $H^6_{\crys}(X_0/K_0(k))\bigl(3\bigr) \cong \QQ_p^{\oplus 2} \oplus \tilde\motP_{\crys,m_0}$ as $F$-isocrystals over~$k$. If $k$ is a finite field then it follows from the Weil conjectures together with Corollary~\ref{cor:HlGMss} that the $\QQ_\ell$-dimension of the space $\Tate\bigl(H^6(X_{0,\kbar},\QQ_\ell(3))\bigr)$ is independent of~$\ell \neq p$. (This dimension can be read from the zeta function, which does not depend on~$\ell$.) Moreover, by \cite{KatzMessing}, Theorem~1, together with Remark~\ref{rem:TatepEigenval}, this dimension equals the $\QQ_p$-dimension of $\Tate\bigl(H^6_{\crys}\bigl(X_0/K_0(k)\bigr)(3)\bigr)$. Because $\tilde\iota \colon \tilde\scrM \to \tilde\scrS_N$ is smooth (Corollary~\ref{cor:iotasmooth}), we find that the condition in Theorem~\ref{thm:LtoTate}\ref{LtoTateIsomGen} is satisfied, and it follows that whenever $k$ is finitely generated over~$\FF_p$, the maps $L(A_{0,\kbar}) \otimes \QQ_\ell \to \Tate(\tilde\motL_{\ell,s_0})$ are isomorphisms.

We now prove the second assertion of~\ref{TateGMl}. In doing so, we may replace~$k$ by a finitely generated extension. Choose $m_0$ as above such that $Y \cong X_0$, and let $\xi_0$ be a Tate class in $\tilde\motP_{\ell,m_0}$. By the above, it suffices to prove that $\xi_0$ is algebraic if under the isomorphism $\tilde\motP_{\ell,m_0} \isomarrow \tilde\motL_{\ell,s_0}$ it maps to the $\ell$-adic realization of a special endomorphism~$f_0$ of~$A_0$. Assume this is the case. By Corollary~\ref{cor:Uf0flat}\ref{lift/dvr} there exists a finite extension of dvr $W(k^{\perf}) \subset R$ and an $R$-valued point $s \in \tilde\scrS_N(R)$ lifting~$s_0$ such that $f_0$ lifts to an endomorphism~$f$ of the corresponding abelian variety~$A/R$. By Corollary~\ref{cor:iotasmooth}, there exists $m \in \tilde\scrM(R)$ lifting~$m_0$ such that $s = \tilde\iota(m)$. Let $X/R$ be the Gushel--Mukai variety corresponding to~$m$. Since we may replace~$R$ by a finite extension, we may assume that $R$ contains the ring~$\hat\scrO_{E,\frp}$ as in Section~\ref{subsec:PAdicNot}. Let $K$ be the fraction field of~$R$, let $X_\eta$ and~$A_\eta$ be the generic fibres of~$X$ and~$A$, and let $f_\eta \in L(A_\eta)$ be the restriction of~$f$ to the generic fibre. By Proposition~\ref{prop:descendQ}\ref{psi/E} we have an isomorphism $\tilde\motP_{m_\eta} \isomarrow \tilde\motL_{s_\eta}$ in $\Real(K;\QQ)$. By construction, the endomorphism~$f_\eta$ defines an absolute Hodge class in~$\tilde\motL_{s_\eta}$. Let $\xi_\eta$ be the corresponding absolute Hodge class in~$\tilde\motP_{m_\eta}$. Because the Hodge conjecture is true for Gushel--Mukai sixfolds (see \cite{FuMoonen-GM1}, Section~5), $\xi_\eta$ is algebraic, i.e., it is the image of a class in $\CH^3(X_\eta) \otimes \QQ$. By specialization it follows that $\xi_0$ is algebraic, and we are done.

If $k$ is finite, the same argument (now starting with a Tate class in~$\tilde\motP_{\crys,m_0}$) gives part~\ref{TateGMp} of the theorem.
\end{proof}

\section{The Tate conjecture for Gushel--Mukai fourfolds in characteristic $p\geq 5$}
\label{sec:TCGM4}

In this section we prove the Tate conjecture for Gushel--Mukai fourfolds in characteristic $p\geq 5$, by reducing it to the case of Gushel--Mukai sixfolds. To this end, we show that for a GM fourfold~$X$ there exists a ``partner''~$X^\prime$, which is a GM sixfold, such that $X$ and~$X^\prime$ have isomorphic Chow motives in middle degree. The construction uses liftings to characteristic~$0$ together with a result from our paper~\cite{FuMoonen-GM1} (see Theorem~\ref{thm:genpartners}).

\subsection{}
Let $R$ be a ring which is either a field or a dvr. If $V$ is a free $R$-module of finite rank and $V^\vee = \Hom(V,R)$ is its dual, $(\wedge^i V)^\vee$ is canonically isomorphic to $\wedge^i (V^\vee)$ (see for instance~\cite{Bourbaki-Algebre}, Chap.~3, \S~11, no.~5). The notation~$\wedge^i V^\vee$ will therefore not cause any ambiguity. 

For symmetric powers one has to be more careful. We shall assume that $2$ is invertible in~$R$, and we only need to consider symmetric squares. In this situation, we can use that $\Sym^2(V)^\vee$ is canonically isomorphic to the subspace $\ST^2(V^\vee) \subset (V^\vee)^{\otimes 2}$ of symmetrical tensors (i.e., the subspace of $\frS_2$-invariants in~$(V^\vee)^{\otimes}$) and that the composition $\ST^2(V^\vee) \hookrightarrow (V^\vee)^{\otimes 2} \twoheadrightarrow \Sym^2(V^\vee)$ is an isomorphism because $2 \in R^*$. We use this to identify $\Sym^2(V)^\vee$ with~$\Sym^2(V^\vee)$.

\subsection{}
\label{subsec:Data}
An important tool in the study of GM varieties is a correspondence between GM data sets and Lagrangian data sets, which first appeared in work of O'Grady \cite{OGrady-EPW-Duke} and Iliev--Manivel  \cite{IlievManivel11}, and which was refined by Debarre--Kuznetsov in~\cite{DK-GMClassif}. We shall work with a version of this correspondence that is adapted to our needs. 

Throughout the discussion, $R$ is a ring which is either a field or a dvr, such that $2 \in R^*$.
\begin{enumerate}
\item A \emph{GM datum} of dimension $n\in \{3, 4, 5\}$ over~$R$ is defined to be a tuple $(V_6,V_5,W,q,\epsilon)$, where 
  \begin{itemize}
  \item $V_6$ is a free $R$-module of rank~$6$,
  \item $V_5\subset V_6$ is a direct summand of rank~$5$,
  \item $W\subset \wedge^2 V_5$ is a direct summand of rank~$(n+5)$,
  \item $q \colon V_6 \to \Sym^2(W^\vee)$ is an $R$-linear map, 
  \item $\epsilon\colon \det(V_5) \isomarrow R$ is an isomorphism
  \end{itemize}
such that
\begin{equation}
\label{eq:CompatibilityGMdata}
q(v)\bigl(w \cdot w^\prime\bigr) = \epsilon(v\wedge w\wedge w'), \text{ for all $v\in V_5$ and $w, w^\prime \in W$.}
\end{equation}
(Here we use the isomorphism $\Sym^2(W^\vee) \cong \Sym^2(W)^\vee$.)
	
\item A \emph{Lagrangian datum} of dimension $n\in \{3, 4, 5\}$ over~$R$ is a tuple $(V_6,V_5,A,\epsilon)$, where
  \begin{itemize}
  \item $V_6$ is a free $R$-module of rank~$6$,
  \item $V_5\subset V_6$ is a direct summand of rank~$5$,
  \item $\epsilon\colon \det(V_5) \isomarrow R$ is an isomorphism, 
  \item $A\subset \wedge^3 V_6$ is a Lagrangian submodule with respect to the $\det(V_6)$-valued symplectic form $\wedge\colon \wedge^3 V_6\otimes \wedge^3 V_6 \to \wedge^6 V_6 = \det(V_6)$, such that $A\cap \wedge^3 V_5$ is free of rank~$5-n$.
  \end{itemize}
Note that $A$ is assumed to be a direct summand of~$\wedge^3 V_6$ (necessarily of rank~$10$) and that $A\cap \wedge^3 V_5$ is then automatically a direct summand of both~$A$ and~$\wedge^3 V_5$.
\end{enumerate}

We trust it is clear what it means for two GM data (resp.\ Lagrangian data) to be isomorphic.

\begin{proposition}[GM--Lagrangian correspondence]
\label{prop:GMLagCorrespondence}
Let $R$ be a ring which is either a field or a dvr, with $2 \in R^*$, and let $n\in \{3,4,5\}$. There is a natural bijection between the set of isomorphism classes of $n$-dimensional GM data and the set of isomorphism classes of $n$-dimensional Lagrangian data.
\end{proposition}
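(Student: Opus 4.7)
The plan is to construct explicit, mutually inverse natural maps between the two sets of isomorphism classes, following the characteristic-zero correspondence of O'Grady and Debarre--Kuznetsov, and to verify that the direct-summand conditions are preserved when $R$ is a dvr. Throughout, I would set $\ell = V_6/V_5$ (a free $R$-module of rank~$1$), use $\epsilon$ to identify $\det V_5 \cong R$, and exploit the resulting perfect wedge pairing $\wedge^2 V_5 \otimes \wedge^3 V_5 \to R$ to identify $\wedge^3 V_5 \cong \wedge^2 V_5^\vee$.

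For the forward map $(V_6, V_5, W, q, \epsilon) \mapsto (V_6, V_5, A, \epsilon)$, I would begin with the observation that \eqref{eq:CompatibilityGMdata} determines $q|_{V_5}$ from $\epsilon$, so the remaining content of $q$ is recorded by its value at any lift $v_6 \in V_6$ of a generator of $\ell$. After choosing such a $v_6$ (which splits $V_6 = V_5 \oplus Rv_6$) and picking an $R$-linear lift $\tilde s \colon W \to \wedge^3 V_5$ of the map $W \xrightarrow{-q(v_6)} W^\vee \cong \wedge^3 V_5 / W^\perp$, with $W^\perp \subset \wedge^3 V_5$ the annihilator of $W$ under wedge, I would define
\[
A \;=\; W^\perp \;+\; \bigl\{v_6 \wedge w + \tilde s(w) : w \in W\bigr\} \;\subset\; \wedge^3 V_6.
\]
A short computation exploiting that $q(v_6)$ lies in $\Sym^2(W^\vee)$ shows that $A$ is isotropic for the symplectic form $\wedge \colon \wedge^3 V_6 \otimes \wedge^3 V_6 \to \det V_6$; since $A$ is free of rank $10$ in a symplectic module of rank $20$, it is Lagrangian, and by construction $A \cap \wedge^3 V_5 = W^\perp$ is free of rank $5-n$. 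Different lifts $\tilde s$ differ by maps into $W^\perp \subset A$, and a change of $v_6$ by $u \in V_5$ is absorbed by the piece of $q$ determined by~$\epsilon$, so $A$ depends only on the GM datum.

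For the inverse map, given $(V_6, V_5, A, \epsilon)$, I would take $W \subset \wedge^2 V_5$ to be the unique submodule with $\ell \otimes W = \Image\bigl(A \to \ell \otimes \wedge^2 V_5\bigr)$; the hypothesis that $A \cap \wedge^3 V_5$ is free of rank $5-n$ together with the Lagrangian property forces $A \cap \wedge^3 V_5 = W^\perp$ and $W$ to be a direct summand of $\wedge^2 V_5$ of rank $n+5$. Then I would define $q(v)(w, w') = \epsilon(v \wedge w \wedge w')$ for $v \in V_5$, and, after choosing a lift $v_6$ of a generator of $\ell$, set
\[
q(v_6)(w, w') \;=\; -\epsilon(\alpha \wedge w')\, , \qquad \text{where $v_6 \wedge w + \alpha \in A$ with $\alpha \in \wedge^3 V_5$.}
\]
The indeterminacy in $\alpha$ lies in $W^\perp$, which does not affect the right-hand side since $w' \in W$, and the Lagrangian condition on $A$ forces symmetry in $(w, w')$. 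A direct calculation shows that replacing $v_6$ by $v_6 + u$ with $u \in V_5$ shifts $q(v_6)$ by exactly $q|_{V_5}(u)$, so these prescriptions assemble into a well-defined $R$-linear map $q \colon V_6 \to \Sym^2(W^\vee)$ satisfying \eqref{eq:CompatibilityGMdata}.

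The main technical point will be verifying that the two constructions are mutually inverse and that the direct-summand conditions are preserved over a dvr. The direct-summand check reduces to two facts: if $W \subset \wedge^2 V_5$ is a direct summand then so is $W^\perp \subset \wedge^3 V_5$ (which follows from the perfect wedge pairing once $\epsilon$ is fixed), and if $A \cap \wedge^3 V_5$ has the stated rank then its image in $\ell \otimes \wedge^2 V_5$ is of the form $\ell \otimes W$ for a saturated $W$. The round-trip check is a bookkeeping exercise that reduces to the tautological relation $\epsilon(\tilde s(w) \wedge w') = -q(v_6)(w, w')$ built into both directions, with all auxiliary choices cancelling. Functoriality under isomorphisms of data is automatic, and this yields the claimed bijection on isomorphism classes.
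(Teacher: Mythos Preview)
Your proposal is correct and follows essentially the same construction as the paper. The paper defines $A$ as the kernel of a map $\wedge^3 V_5 \oplus (v_0 \wedge W) \to W^\vee$ sending $(\xi, v_0 \wedge w)$ to $w' \mapsto \epsilon(\xi \wedge w') + q(v_0)(w \cdot w')$, and defines the inverse via contraction by a linear form $\lambda \in V_6^\vee$ with $\Ker(\lambda) = V_5$; your graph-of-a-lift description of~$A$ and your explicit formula $q(v_6)(w,w') = -\epsilon(\alpha \wedge w')$ are exactly the same objects unwound (the kernel is parametrised by your $W^\perp \oplus \{v_6 \wedge w + \tilde s(w)\}$, and the paper's contraction formula specialises to yours once one writes $\xi = v_6 \wedge w + \alpha$).
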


When $R$ is a field of characteristic~$0$, this is a special case of the more elaborate version in \cite{DK-GMClassif}, Theorem~3.6. See \cite{Debarre-GMSurvey}, Theorem~2.2, for a proof in a simplified setting that is closer to the version we consider here. The proofs in loc.\ cit.\ also work in characteristic $p \neq 2$. For the reader's convenience we recall how the correspondence works.

\begin{proof}
Given a GM datum $(V_6,V_5,W,q,\epsilon)$, we want to construct a Lagrangian submodule $A \subset \wedge^3V_6$. Choose an element $v_0 \in V_6$ whose class generates~$V_6/V_5$ and define
\begin{equation}
\label{eq:ADef}
A := \Ker\Bigl(\wedge^3 V_5 \oplus (v_0\wedge W) \to W^\vee\Bigr)\, ,	
\end{equation}
where $\wedge^3 V_5\oplus (v_0\wedge W)$ is viewed as a submodule of~$\wedge^3 V_6$ and the map sends $(\xi, v_0\wedge w)$ to the linear functional $w^\prime \mapsto \epsilon(\xi\wedge w^\prime) + q(v_0)\bigl(w\cdot w^\prime\bigr)$. Using the relation \eqref{eq:CompatibilityGMdata} one shows that the submodule $A \subset \wedge^3 V_6$ thus obtained is independent of the choice of~$v_0$, and that $A$ is isotropic. As the map $\wedge^3 V_5 \to W^\vee$ that is used in~\eqref{eq:ADef} is surjective, $A$ is a direct summand of~$\wedge^3 V_6$ of rank equal to $\rk_R(\wedge^3 V_5) + \rk_R(W) - \rk_R(W) = 10 = \frac{1}{2} \rk_R(\wedge^3 V_6)$, so $A$ is  Lagrangian. Further, $\rk_R(A\cap \wedge^3 V_5) = \rk_R(\wedge^3 V_5) - \rk_R(W) = 10 - (n+5) = 5-n$.

In the opposite direction, given a Lagrangian datum $(V_6, V_5, A, \epsilon)$, we want to construct~$W$ and~$q$. Choose a primitive element $0\neq \lambda \in V_6^\vee$ with $\Ker(\lambda) = V_5$. Let $x\mapsto \lambda \iprod x \colon \wedge^\bullet V_6 \to \wedge^{\bullet-1} V_6$ be the contraction (=interior product) defined by~$\lambda$. The kernel of this map is $\wedge^\bullet V_5$ and the image is $\wedge^{\bullet -1} V_5 \subset \wedge^{\bullet-1} V_6$; in particular we have a short exact sequence
\begin{equation}
\label{eq:w3w3w2seq}
0 \tto \wedge^3 V_5 \tto \wedge^3 V_6 \xrightarrow{~ \lambda\, \iprod~} \wedge^2 V_5 \tto 0\, .
\end{equation}
Define 
\[
W := \Image\bigl(A \hookrightarrow \wedge^3 V_6  \xrightarrow{~  \lambda\, \iprod~} \wedge^2 V_5\bigr)\, ;
\]
then $W$ is a direct summand of~$\wedge^2 V_5$ and because $W \cong A/(A\cap \wedge^3 V_5)$ it is free of rank $10 - \rk_R(A\cap \wedge^3 V_5) = n+5$. For $\xi$, $\xi^\prime \in A$ we have $\xi \wedge \xi^\prime = 0$, and because $\lambda\, \iprod$ is a derivation in the graded sense (see \cite{Bourbaki-Algebre}, Chap.~3, \S~11, nos.~6--9) this gives 
\[
0 = \lambda \iprod (\xi \wedge \xi^\prime) = (\lambda \iprod \xi) \wedge \xi^\prime - \xi \wedge (\lambda \iprod \xi^\prime)\, ,
\]
so $\xi \wedge (\lambda \iprod \xi^\prime) = (\lambda \iprod \xi) \wedge \xi^\prime = \xi^\prime \wedge (\lambda \iprod \xi)$. Therefore, we obtain a well-defined homomorphism $\tilde{q} \colon V_6 \otimes \Sym^2(A) \to R$ given by
\[
\tilde{q}\bigl(v \otimes (\xi\cdot \xi^\prime)\bigr) = \epsilon\Bigl(v \wedge (\lambda \iprod \xi) \wedge (\lambda \iprod \xi^\prime) - \lambda(v) \cdot \xi \wedge (\lambda \iprod \xi^\prime)\Bigr)\, .
\]
As $\lambda \, \iprod$ is zero on~$\wedge^3 V_5$, this descends to a homomorphism $V_6 \otimes \Sym^2(W) \to R$. Let $q \colon V_6 \to \Sym^2(W^\vee)$ be the induced map. If $v \in V_5$ then $\lambda(v) = 0$, so it follows that $q(v)\bigl(w\cdot w^\prime\bigr) = \epsilon(v\wedge w \wedge w^\prime)$ (write $w = \lambda \iprod \xi$ and $w^\prime = \lambda \iprod \xi^\prime$). This shows that $(V_6, V_5, W, q, \epsilon)$ is a  GM datum. Note that $\lambda$ is unique only up to a scalar in~$R^\times$, but a routine calculation shows that $W$ and~$q$ do not change under such a rescaling. 

One checks without difficulty that the above two constructions are inverse to each other.
\end{proof}

\subsection{}
Let $(V_6,V_5,W,q,\epsilon)$ be an $n$-dimensional GM datum over a ring~$R$ as in~\ref{subsec:Data}, with $n \in \{3,4,5\}$. We may then define $X \subset \PP(W)$ over~$\Spec(R)$ by
\begin{equation}
\label{eq:GMIntersection}
X := \bigcap_{v\in V_6} Q(v)\, ,
\end{equation}
where $Q(v)$ denotes the quadric in~$\PP(W)$ defined by~$q(v)$. We refer to~$X$ as the GM intersection defined by the GM datum.

It follows from \eqref{eq:CompatibilityGMdata} that $\bigl(\PP(W) \cap \Grass(2,V_5)\bigr) \subset \PP(W)$ is cut out by the quadrics~$Q(v)$ for $v \in V_5$. Therefore, if $v_0 \in V_6$ is an element whose class generates $V_6/V_5$, we have
\[
X = \PP(W) \cap \Grass(2,V_5) \cap Q(v_0)\, .
\]

If $k$ is a field of characteristic $\neq 2$, every GM variety of Mukai type (as in Definition~\ref{def:GMVariety}) can be realised as a GM intersection~\eqref{eq:GMIntersection} over a finite extension of~$k$.

A GM intersection as above is not necessarily smooth over~$R$, but when it is so, $X$ is an $n$-dimensional GM variety of Mukai type over~$R$. Moreover, if $R=k$ is a field, the following proposition, due to Debarre and Kuznetsov, gives a useful criterion for when $X$ is smooth over~$k$. In characteristic~$0$, this is a special case of \cite{DK-GMClassif}, Theorem~3.16.

\begin{proposition}
\label{prop:smoothcrit}
Let $(V_6,V_5,W,q,\epsilon)$ be an $n$-dimensional GM datum over a field~$k$ with $\charact(k) \neq 2$. Let $(V_6,V_5,A,\epsilon)$ be the corresponding Lagrangian datum over~$k$ and $X/k$ the GM intersection as in \eqref{eq:GMIntersection}. Then $X$ is smooth of dimension~$n$ over~$k$ if and only if $A_{\kbar} \subset \wedge^3 V_{6,\kbar}$ contains no decomposable vectors (i.e., $\PP(A_{\kbar}) \cap \Grass(3,V_{6,\kbar}) = \emptyset$).
\end{proposition}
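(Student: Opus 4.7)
The strategy is to reduce to $k=\kbar$ and then match singular points of~$X$ with decomposable elements of~$A$ by analysing the Jacobian of the defining quadrics $\{Q(v)\}_{v\in V_6}$, interpreted through the construction~\eqref{eq:ADef}. Since both conditions in the statement are invariant under base change to~$\kbar$, we may assume $k=\kbar$. Applying~\eqref{eq:CompatibilityGMdata} with $w'=w$ shows that any $[w]\in X$ satisfies $\epsilon(v\wedge w\wedge w)=0$ for all $v\in V_5$, so $w\wedge w=0$ in $\wedge^4 V_5$ and $w$ is decomposable: write $w=u_1\wedge u_2$ with $U_w=\Span(u_1,u_2)\subset V_5$. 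The Jacobian criterion then says $[w]$ is a smooth point of~$X$ of dimension~$n$ iff the linear map $\psi_w\colon V_6\to W^\vee$, $v\mapsto q(v)(w\cdot -)$, has rank~$4$.

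For $v\in V_5$, \eqref{eq:CompatibilityGMdata} gives $\psi_w(v)=\epsilon((v\wedge w)\wedge -)$, which factors through $V_5\to \wedge^3 V_5$, $v\mapsto v\wedge w$ (image of dimension~$3$, kernel~$U_w$). Thus $\rk\psi_w|_{V_5}\leq 3$ and $\rk\psi_w\leq 4$ automatically, and singularity at~$[w]$ (i.e.\ $\rk\psi_w\leq 3$) falls into two cases: \textbf{(I)} $\rk\psi_w|_{V_5}\leq 2$, giving some $v\in V_5\setminus U_w$ with $\epsilon(v\wedge w\wedge w')=0$ for all $w'\in W$, so that $v\wedge u_1\wedge u_2$ is a nonzero decomposable element of $A\cap\wedge^3 V_5$ whose $3$-plane lies in~$V_5$; or \textbf{(II)} $\rk\psi_w|_{V_5}=3$ but $\psi_w(v_0)$ lies in this image, so there exists $u_3\in V_5$ with $\psi_w(v_0+u_3)=0$. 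Unwinding the latter equality via~\eqref{eq:ADef} shows that $u_1\wedge u_2\wedge(v_0+u_3)\in A$ is decomposable with $3$-plane not contained in~$V_5$.

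Conversely, let $\xi\in A$ be a nonzero decomposable element with $3$-plane $T\subset V_6$. If $T\not\subset V_5$, write $T=\Span(u_1,u_2,v_0+u_3)$ with $u_i\in V_5$, so $\xi=v_0\wedge(u_1\wedge u_2)+u_1\wedge u_2\wedge u_3$; membership in~$A$ forces $w:=u_1\wedge u_2\in W$, and evaluating the consistency condition on $w'=w$ yields $q(v_0)(w\cdot w)=0$, so $[w]\in X$. The same consistency, read for arbitrary $w'\in W$, is precisely $\psi_w(v_0+u_3)=0$, hence $T\subset\ker\psi_w$ and $[w]$ is singular. If instead $T\subset V_5$, set $T=\Span(u_1,u_2,u_3)$ and consider the linear form $\phi_\xi\colon \wedge^2 V_5\to k$, $\omega\mapsto\epsilon(\xi\wedge\omega)$. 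Then $\xi\in A\cap\wedge^3 V_5$ means $W\subset\ker\phi_\xi$, while $\wedge^2 T\subset\ker\phi_\xi$ since $\xi\wedge\wedge^2 T\subset\wedge^5 T=0$; both therefore sit inside the single $9$-dimensional hyperplane $\ker\phi_\xi$, so
\[
\dim(W\cap\wedge^2 T)\geq (n+5)+3-9=n-1\geq 2,
\]
using that $T\subset V_5$ forces $A\cap\wedge^3 V_5\neq 0$ and hence $n\leq 4$. Every nonzero vector of $\wedge^2 T$ is decomposable with $2$-plane in~$T$, so $\PP(W\cap\wedge^2 T)$ has dimension~$\geq 1$ and meets $Q(v_0)$ over~$\kbar$, producing $[w]\in X$ with $U_w\subset T$. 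For any $v\in T$, $v\wedge w\in\wedge^3 T=k\cdot\xi$, so $\psi_w(v)=0$ since $\xi\in A$; hence $T\subset\ker\psi_w$ and $[w]$ is singular.

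The principal obstacle lies in producing the singular point in the converse direction when $T\subset V_5$: a priori it is not clear that $X$ contains any point $[w]$ with $U_w\subset T$. The essential input is that $\xi\in A$ confines~$W$ to the single hyperplane $\ker\phi_\xi\subset\wedge^2 V_5$, which automatically contains the whole of $\wedge^2 T$; this forces $W\cap\wedge^2 T$ to be projectively at least a line, on which the quadric~$Q(v_0)$ necessarily has a zero over the algebraic closure.
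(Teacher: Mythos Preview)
Your argument is correct and complete. The paper's own proof is a two-line reduction to~$\kbar$ followed by a reference to Debarre's survey (\cite{Debarre-GMSurvey}, Section~2), which in turn invokes \cite{PvdV}, Corollary~1.6, on singular points of linear sections of Grassmannians. Your proof follows the same underlying strategy—a Jacobian analysis of the system $\{Q(v)\}_{v\in V_6}$—but carries it out entirely by hand, so no external input is needed.

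The only point worth making explicit is that $X$ is automatically nonempty: $\PP(W)\cap\Grass(2,V_5)$ has every component of dimension $\geq n+1$ by the projective dimension theorem, and cutting by the single quadric~$Q(v_0)$ drops this by at most one. Thus ``$X$ is not smooth of dimension~$n$'' really is equivalent to the existence of a point with $\rk\psi_w\leq 3$, which is the dichotomy your Cases (I)/(II) and your converse exhaust. In particular, your handling of the delicate case $T\subset V_5$---forcing both~$W$ and~$\wedge^2 T$ into the single hyperplane $\ker\phi_\xi$ and then intersecting $\PP(W\cap\wedge^2 T)$ with~$Q(v_0)$---is exactly the step for which the cited references supply a more abstract substitute; you have replaced it with a short direct argument using only that $\dim(A\cap\wedge^3V_5)=5-n$ forces $n\leq 4$.
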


\begin{proof}
Of course, $X/k$ is smooth if and only if $X_{\kbar}/\kbar$ is smooth; so we may assume $k = \kbar$. In this situation the same proof as in \cite{Debarre-GMSurvey}, Section~2, gives the result. (This proof uses \cite{PvdV}, Corollary~1.6, which is valid over an arbitrary field of characteristic~$\neq 2$.)
\end{proof}

\subsection{}
\label{subsec:GMLagchar0}
Let $K$ be a field of characteristic~$0$. In \cite{DK-GMClassif} it is explained that a GM variety~$X$ over~$K$ gives rise to a ``GM data set'' 
\[
\bigl(V_6(X),V_5(X),W(X),L(X),\mu_X,q_X,\epsilon_X \bigr)\, ,
\] 
and a corresponding ``Lagrangian data set'' $\bigl(V_6(X),V_5(X),A(X)\bigr)$. We shall only use this construction over fields of characteristic~$0$. We do use that if $X/K$ (still with $\charact(K) =0$) is obtained as a GM intersection~\eqref{eq:GMIntersection} given by a GM datum $(V_6,V_5,W,q,\epsilon)$, and if $(V_6,V_5,A,\epsilon)$ is the corresponding Lagrangian datum obtained as described in (the proof of) Proposition~\ref{prop:GMLagCorrespondence}, we have $\bigl(V_6(X),V_5(X),A(X)\bigr) \cong (V_6,V_5,A)$.

A configuration that arises in the study of GM varieties is that an $n$-dimensional GM variety~$X$ of Gushel type can be described as a double cover of another nonsingular variety $M_X^\prime \subset \Grass(2,V_5)$ (a linear section of the Grassmannian), with branch locus a GM variety $Y \subset M_X^\prime$ of Mukai type of dimension $n-1$; see~\cite{DK-Kyoto}, Section~2.1. In this situation, $X$ and~$Y$ are said to be ``opposite'' GM varieties. It follows from the results in Section~3 of~\cite{DK-GMClassif} that $\bigl(V_6(X),V_5(X),A(X)\bigr) \cong \bigl(V_6(Y),V_5(Y),A(Y)\bigr)$.
\bigskip

We shall make use of the following result of O'Grady.

\begin{proposition}
\label{prop:V5prime}
Let $k$ be an algebraically closed field of characteristic~$\neq 2$. Let $(V_6,V_5,A,\epsilon)$ be a Lagrangian datum over~$k$, and assume $A$ contains no decomposable vectors. Then there exists a $5$-dimensional subspace $V_5^\prime \subset V_6$ such that $A \cap \wedge^3 V_5^\prime = 0$.
\end{proposition}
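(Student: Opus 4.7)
The plan is to argue by contradiction: suppose $A \cap \wedge^3 V_5' \neq 0$ for every $5$-dimensional $V_5' \subset V_6$, and deduce that $A$ must contain a decomposable vector. First I would introduce the incidence variety
\[
I = \bigl\{(V_5', [a]) \in \Grass(5, V_6) \times \PP(A) \bigm| a \in \wedge^3 V_5'\bigr\}.
\]
The first projection $p_1 \colon I \to \Grass(5, V_6) \cong \PP(V_6^\vee) = \PP^5$ is surjective by the contradiction hypothesis, so $\dim I \geq 5$. The no-decomposable hypothesis makes the second projection $p_2 \colon I \to \PP(A)$ injective: for any $(V_5', [a]) \in I$, the nonzero trivector $a \in A \cap \wedge^3 V_5'$ is not decomposable in $\wedge^3 V_6$, hence has rank at least $5$ (rank $4$ is impossible because every element of $\wedge^3 W$ with $\dim W \leq 4$ is decomposable), and combined with $a \in \wedge^3 V_5'$ forcing $\rk(a) \leq 5$ we get $\rk(a) = 5$; the support $\mathrm{Supp}(a) = V_5'$ is then uniquely determined by $[a]$. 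Therefore $\dim p_2(I) = \dim I \geq 5$.

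Consequently, the subvariety $p_2(I)$ lies in $\PP(A) \cap R_5$, where $R_5 \subset \PP(\wedge^3 V_6) = \PP^{19}$ is the $14$-dimensional closed variety of trivectors of rank at most $5$. Its singular locus is the $9$-dimensional decomposable Grassmannian $\Grass(3, V_6)$, and its smooth stratum $R_5 \setminus \Grass(3, V_6)$ is the homogeneous $\GL(V_6)$-orbit of rank-exactly-$5$ trivectors. Since $\PP(A)$ avoids $\Grass(3, V_6)$ by hypothesis, the intersection $\PP(A) \cap R_5$ lies entirely in the smooth stratum, and has dimension at least $5$, whereas the expected dimension in $\PP^{19}$ is $9 + 14 - 19 = 4$. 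Thus there is excess intersection of at least $1$, which is the source of the sought-for contradiction.

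The hardest step is to deduce the actual contradiction from this excess intersection. My preferred route is to reinterpret the $5$-dimensional subvariety $p_2(I) \subset \PP(A)$ as a section of the support morphism $R_5 \setminus \Grass(3, V_6) \to \Grass(5, V_6)$, equivalently a line subbundle $\scrL \hookrightarrow A \otimes \scrO_{\PP^5}$ whose image at each point $V_5' \in \PP^5$ lies inside the corresponding fibre $\wedge^3 V_5'$ of the tautological rank-$10$ bundle $\wedge^3 \scrV_5$. A Chern-class calculation using $c_1(\scrV_5) = -H$ (hence $c_1(\wedge^3 \scrV_5) = -6H$, where $H$ is the hyperplane class on $\PP^5$), combined with the Lagrangian condition on $A$, should show that such a section must specialise to a rank-$3$ (that is, decomposable) trivector somewhere on $\PP^5$, producing a nonzero decomposable element of $A$ and contradicting the hypothesis. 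A more hands-on alternative, which would bypass the Chern-class analysis, is to follow O'Grady's original approach and compute the degree-$6$ section $s_A \in H^0(\PP^5, \scrO(6))$ defining $Y_A$ directly in Plücker-type coordinates, showing that its identical vanishing imposes linear dependences among the coefficients of $A$ that force the existence of a rank-$3$ element.
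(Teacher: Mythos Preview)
Your setup is sound: the incidence variety~$I$, the surjectivity of~$p_1$ under the contradiction hypothesis, and the injectivity of~$p_2$ (via the support of a rank-$5$ trivector) are all correct. You have also correctly identified that the vanishing locus of the determinant of the map $A \otimes \scrO_{\PP^5} \to (\wedge^3 V_6 \otimes \scrO)/\wedge^3\scrV_5$ is a section of~$\scrO(6)$ on~$\PP^5$, and that the contradiction hypothesis says this sextic vanishes identically. This is exactly the EPW sextic~$Y_{A^\perp}$ (or a variant of it), and your approach is essentially the same as O'Grady's.

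However, the final step---deducing that identical vanishing forces a decomposable vector in~$A$---is not actually carried out, and this is where the real content lies. Your preferred Chern-class route has a problem: the ``line subbundle $\scrL \hookrightarrow A \otimes \scrO_{\PP^5}$'' is not well-defined, because the intersection $A \cap \wedge^3 V_5'$ can jump in dimension (equivalently, $p_1$ need not be an isomorphism, only surjective). So there is no canonical section of the support morphism, and the argument you sketch cannot be made precise as stated. Your alternative route simply defers to O'Grady, which is fine but is not a self-contained proof.

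The paper's proof takes a cleaner path: it first dualizes, showing that $A^\perp \subset \wedge^3 V_6^\vee$ is again Lagrangian with no decomposable vectors, and then rewrites the condition $A \cap \wedge^3 V_5' = 0$ as $A^\perp \cap F_u = 0$ where $F_u = u \wedge (\wedge^2 V_6^\vee)$ and $[u] \in \PP(V_6^\vee)$ corresponds to~$V_5'$. In this form the statement is exactly O'Grady's \cite{OGrady-EPW-taxonomy}, Corollary~2.5, whose proof (a Bertini-type argument) is valid over any algebraically closed field of characteristic~$\neq 2$. So rather than redoing O'Grady's analysis, the paper reduces to it directly.
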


\begin{proof}
Define $A^\perp \subset \wedge^3 V_6^\vee$ by $A^\perp = \bigl\{\xi \in \wedge^3 V_6^\vee \bigm| \xi(A) = 0\bigr\}$; this is a Lagrangian subspace of~$\wedge^3 V_6^\vee$. First we note that $A^\perp$ contains no decomposable vectors. Indeed if $A^\perp$ contains a decomposable vector, we can choose a basis $e_1,\ldots,e_6$ for~$V_6$, with dual basis $\check{e}_1,\ldots,\check{e}_6$ for~$V_6^\vee$, such that $\check{e}_{123} = \check{e}_1 \wedge \check{e}_2 \wedge \check{e}_3 \in A^\perp$; because $A^\perp$ is Lagrangian this implies that the $\check{e}_{456}$-coefficient of every vector $\alpha \in A^\perp$ is zero; hence the decomposable vector $e_{456} = e_4 \wedge e_5 \wedge e_6$ lies in~$A$, contradicting the assumptions.

Let $U \subset V_6$ be a $5$-dimensional subspace and let $[u] \in \PP(V_6^\vee)$ be the corresponding line in~$V_6^\vee$. Then $\wedge^3 U \subset \wedge^3 V_6$ is a Lagrangian subspace and
\[
\bigl(\wedge^3 U \bigr)^\perp = u \wedge \bigl(\wedge^2 V_6^\vee\bigr) = F_u\, ,\qquad \text{where}\quad 
F_u := \bigl\{\xi \in \wedge^3 V_6^\vee \bigm| \xi \wedge u = 0\bigr\}\, .
\]
Because 
\[
A \cap \wedge^3 U = 0 \iff A^\perp + F_u = \wedge^3 V_6^\vee \iff A^\perp \cap F_u = 0\, ,
\]
the proposition is equivalent to the assertion that there exists a $u \in \PP(V_6^\vee)$ such that $A^\perp \cap F_u = 0$. This is proven by O'Grady in \cite{OGrady-EPW-taxonomy}, Corollary~2.5; the proof that is given there works over an arbitrary algebraically closed field~$k$ with $\charact(k) \neq 2$. (The assumption that $k = \kbar$ is needed because the proof uses a Bertini argument.)
\end{proof}

\subsection{}
If $k$ is a field, we denote by $\CHM(k)$ the category of Chow motives over~$k$ with rational coefficients. In what follows, the notation $\CH$ is used for Chow groups with $\QQ$-coefficients.
 
If $X/k$ is smooth projective we write $\frh(X)$ for its Chow motive. (Let us specify that we use the cohomological version of the theory, i.e., the functor~$\frh$ that we use is the contravariant one.) Further, if $M$ is a Chow motive we have Tate twists $M(n) = M \otimes \unitmot(n)$, for $n \in \ZZ$.

Let now $X$ be a GM variety of dimension~$n$ over~$k$. Then $X$ admits a natural Chow--K\"unneth decomposition, that is, a decomposition $[\Delta_X] = \sum_{i=0}^{2n}\, \pi^i_X$ of the diagonal class as a sum of mutually orthogonal projectors $\pi^i_X \in \CH^n(X \times_k X)$, such that for any classical Weil cohomology theory~$H^*$, the endomorphism of~$H^*(X)$ induced by~$\pi^i_X$ is the projector onto the summand $H^i(X)$ in degree~$i$. These projectors can be given explictly; we here recall the formulas in the two cases that are most relevant for us, namely GM varieties of dimension~$4$ or~$6$; see \cite{FuMoonen-GM1}, Section~7, for more details. In both cases, all odd projectors are zero: $\pi_X^1 = \pi_X^3 = \cdots = \pi_X^{2n-1} = 0$; so it only remains to describe the even projectors.

\begin{itemize}
\item Let $X/k$ be a GM fourfold. Let $H = -\frac{1}{2} \cdot [K_X] \in \CH^1(X)$ be the class of an ample generator. Then $\pt = \frac{1}{10} \cdot H^4 \in \CH^4$ is the class of a point. (The given formulas show that these classes make sense over any field, even if for instance $X$ has no $k$-rational point.) The even Chow--K\"unneth projectors are given by
\begin{align*}
\pi_X^0 &= \pt \times X  & \pi_X^8 &= X \times \pt\\
\pi_X^2 &= \tfrac{1}{10} \cdot H^3 \times H  & \pi_X^6 &= \tfrac{1}{10} \cdot H \times H^3
\end{align*}
and
\[
\pi_X^4 = [\Delta_X] - \pi_X^0 - \pi_X^2 - \pi_X^6 - \pi_X^8\, .
\]

\item Next consider a GM sixfold~$X$. For simplicity of exposition, assume the Gushel morphism $\gamma \colon X \to \Grass(2,V_5)$ is defined over~$k$. (The projectors we shall describe can in fact be defined over~$k$ without this assumption, but we shall not need this.) Let $\scrU_X$ be the Gushel bundle on~$X$, i.e., the pullback under~$\gamma$ of the tautological rank~$2$ bundle on~$\Grass(2,V_5)$. Let $\sigma_{i,j} \in \CH^{i+j}\bigl(\Grass(2,V_5)\bigr)$ (for $3\geq i\geq j\geq 0$) be the Schubert classes. Similar to the previous case we have the class $H = -\frac{1}{4} \cdot [K_X] = \gamma^*(\sigma_{1,0}) \in \CH^1(X)$ of an ample generator and the class $\pt = \frac{1}{10} \cdot H^6 = \frac{1}{2}\cdot \gamma^*(\sigma_{3,3}) \in \CH^6(X)$ of a point. Then
\[
e_1 = H^2 \, ,\qquad e_2 = \gamma^*(\sigma_{1,1}) = c_2(\scrU_X)
\]
are in $\CH^2(X)$. The classes $f_1$, $f_2 \in \CH^4(X)$ given by
\[
f_1 = \tfrac{1}{2}\cdot H^4 - H^2 e_2\, ,\qquad f_2 = -H^4 + \tfrac{5}{2}\cdot H^2 e_2
\]
have the property that $\int_X e_i \cdot f_j = \delta_{i,j}$. The even Chow--K\"unneth projectors of~$X$ are then given by
\begin{align*}
\pi_X^0 &= \pt \times X  & \pi_X^{12} &= X \times \pt\\
\pi_X^2 &= \tfrac{1}{10} \cdot H^5 \times H  & \pi_X^{10} &= \tfrac{1}{10} \cdot H \times H^5\\
\pi_X^4 &= f_1 \times e_1 + f_2 \times e_2 & \pi_X^8 &= e_1 \times f_1 + e_2 \times f_2
\end{align*}
and
\[
\pi_X^6= [\Delta_X] - \pi_X^0 - \pi_X^2 - \pi_X^4  - \pi_X^8 - \pi_X^{10}-\pi_X^{12}\, .
\]
\end{itemize}
\bigskip

Theorem~\ref{thm:TCGMcharp} for GM fourfolds follows from the case of sixfolds together with the following result. (Note that it suffices to prove Theorem~\ref{thm:TCGMcharp} after replacing the base field by a finite extension.)

\begin{theorem}
\label{thm:MotivePartner}
Let $k$ be a field of characteristic $p>2$. Let $X/k$ be a GM fourfold. Then there exists a finite field extension $k \subset k^\prime$ and a GM sixfold $X^\prime/k^\prime$ such that
\begin{equation}
\label{eq:h4h6isom}
\frh^4(X_{k^\prime})\bigl(2\bigr) \cong \frh^6(X^\prime)\bigl(3\bigr)
\end{equation}
in $\CHM(k^\prime)$.
\end{theorem}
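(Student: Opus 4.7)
The plan, following the outline in Section~\ref{subsec:GM4proof}, is to lift $X$ to a GM fourfold $\scrX$ over a DVR $R$ of mixed characteristic $(0,p)$, construct a companion GM sixfold $\scrX^\prime/R$ whose generic fibre is a generalised partner of~$\scrX_K$, and then specialize the characteristic-zero motivic isomorphism between generalised partners supplied by~\cite{FuMoonen-GM1}. After replacing $k$ by a finite extension, present $X$ as a GM intersection~\eqref{eq:GMIntersection} arising from a GM datum $(V_6,V_5,W,q,\epsilon)$ (Proposition~\ref{prop:GMLagCorrespondence}). Fix a DVR $R$ of mixed characteristic $(0,p)$ with residue field~$k$, and lift the datum componentwise to a GM datum over~$R$, using that the compatibility~\eqref{eq:CompatibilityGMdata} is a finite system of equations that can be solved over~$R$ starting from the special-fibre solution. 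The resulting GM intersection $\scrX/R$ is smooth (smoothness is open on a proper scheme over~$R$) and hence a GM fourfold lifting~$X$. Via Proposition~\ref{prop:GMLagCorrespondence} extract the Lagrangian datum $(V_{6,R},V_{5,R},A_R,\epsilon_R)$.

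Apply Proposition~\ref{prop:V5prime} to~$A_{\kbar}$ (which contains no decomposable vectors by Proposition~\ref{prop:smoothcrit}) to produce a $5$-dimensional direct summand $V_5^\prime \subset V_{6,\kbar}$ with $A_{\kbar} \cap \wedge^3 V_5^\prime = 0$; after a further finite extension of~$k$ we may assume $V_5^\prime$ is defined over~$k$, and we lift it to an $R$-direct summand $V_{5,R}^\prime \subset V_{6,R}$. The $R$-linear map $A_R \oplus \wedge^3 V_{5,R}^\prime \to \wedge^3 V_{6,R}$ is injective, as fibrewise injectivity on the special fibre suffices for maps of free modules over a DVR, so $A_R \cap \wedge^3 V_{5,R}^\prime = 0$. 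Choosing a trivialization $\epsilon_R^\prime$ of $\det(V_{5,R}^\prime)$ gives a $5$-dimensional Lagrangian datum $(V_{6,R},V_{5,R}^\prime,A_R,\epsilon_R^\prime)$ which, by Proposition~\ref{prop:GMLagCorrespondence} combined with Proposition~\ref{prop:smoothcrit}, corresponds to a smooth GM fivefold $Y^\prime/R$ of Mukai type cut out by a single quadric in $\Grass(2,V_{5,R}^\prime)$. Define $\scrX^\prime/R$ as the corresponding quadric section of $\CGr(2,V_{5,R}^\prime)$, equivalently the double cover of $\Grass(2,V_{5,R}^\prime)$ branched along~$Y^\prime$; then $\scrX^\prime$ is a smooth GM sixfold over~$R$ with the same Lagrangian pair $(V_{6,R},A_R)$ as~$\scrX$.

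In particular, $\scrX_K$ and $\scrX^\prime_K$ are generalised partners in characteristic~$0$, and the theorem of~\cite{FuMoonen-GM1} cited in Section~\ref{subsec:GM4proof} yields, after a finite extension $K \subset K^\prime$, an isomorphism $\frh^4(\scrX_{K^\prime})(2) \cong \frh^6(\scrX^\prime_{K^\prime})(3)$ in $\CHM(K^\prime)$, realized by mutually inverse correspondences $\alpha_K, \beta_K$ of codimension~$5$ composing to the Chow--K\"unneth projectors $\pi^4_{\scrX_{K^\prime}}$ and $\pi^6_{\scrX^\prime_{K^\prime}}$. Let $R^\prime$ be the normalization of~$R$ in~$K^\prime$, a DVR with residue field $k^\prime$ a finite extension of~$k$, and extend $\alpha_K, \beta_K$ to cycles on the total spaces $\scrX_{R^\prime} \times_{R^\prime} \scrX^\prime_{R^\prime}$ and $\scrX^\prime_{R^\prime} \times_{R^\prime} \scrX_{R^\prime}$ by Zariski closure. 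The explicit formulas in Section~\ref{sec:TCGM4} show that $\pi^4_{\scrX}$ and $\pi^6_{\scrX^\prime}$ are defined over~$R^\prime$ and specialize to $\pi^4_{X_{k^\prime}}$ and~$\pi^6_{X^\prime}$. By Fulton's specialization for Chow groups of smooth proper schemes over a DVR (\cite{Fulton}, Chapter~20), which commutes with intersection products and hence with composition of correspondences, the reductions $\alpha_0, \beta_0$ modulo the maximal ideal of~$R^\prime$ still compose to the relevant projectors, yielding the desired isomorphism $\frh^4(X_{k^\prime})(2) \cong \frh^6(X^\prime)(3)$ in~$\CHM(k^\prime)$, where $X^\prime = \scrX^\prime_{R^\prime} \otimes_{R^\prime} k^\prime$. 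The main technical point will be controlling this specialization: the identities $\alpha_K \circ \beta_K = \pi^6_{\scrX^\prime_{K^\prime}}$ and $\beta_K \circ \alpha_K = \pi^4_{\scrX_{K^\prime}}$ must persist after reduction, which, since the projectors extend canonically and specialize to themselves, reduces to choosing the extensions of $\alpha_K, \beta_K$ so that they intersect properly with the divisors used in Fulton's specialization---a routine but careful application of the machinery.
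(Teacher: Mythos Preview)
Your argument is essentially the paper's own proof for the Mukai-type case, and the steps you outline (lifting the datum to a mixed-characteristic dvr, producing~$V_5^\prime$ via Proposition~\ref{prop:V5prime}, building the sixfold as a double cover branched along the resulting GM fivefold, and specialising the motivic isomorphism from Theorem~\ref{thm:genpartners}) are all correct. One cosmetic difference: the paper lifts on the Lagrangian side (Lemma~\ref{lem:LagrLift}), which is cleaner than your ``lift the GM datum componentwise'', though your version works too once you note that $q|_{V_5}$ is forced by~\eqref{eq:CompatibilityGMdata} and only $q(v_0)$ for a single $v_0 \notin V_5$ needs to be lifted.

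There is, however, a genuine gap: you implicitly assume $X$ is of Mukai type when you write ``present $X$ as a GM intersection~\eqref{eq:GMIntersection}''. A GM fourfold of Gushel type is not such an intersection---it passes through the vertex of the cone and arises as a double cover of a linear section $\PP(W)\cap\Grass(2,V_5)$ branched along a GM \emph{threefold}~$Y$ of Mukai type. The paper handles this as a separate case: one associates to~$X$ the $3$-dimensional GM datum of~$Y$, lifts \emph{that} Lagrangian datum to~$R$, reconstructs a lift~$\scrX$ of~$X$ as the double cover of the lifted linear section branched along~$\scrY$, and only then carries out the $V_5^\prime$ construction. The identification $(V_6(X),A(X))\cong(V_6(Y),A(Y))$ between ``opposite'' GM varieties (Section~\ref{subsec:GMLagchar0}) is what makes the generic fibres generalised partners. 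Without this extra paragraph your proof does not cover all GM fourfolds.
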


The proof of this result will take up the rest of this section.

\subsection{}
In our proof of Theorem~\ref{thm:MotivePartner}, the GM sixfold~$X^\prime$ will be constructed in terms of Lagrangian data. To obtain the desired relation~\eqref{eq:h4h6isom}, we will construct lifts of $X$ and~$X^\prime$ to characteristic~$0$ and use a result that was proven in our paper~\cite{FuMoonen-GM1}, which gives the analogue of~\eqref{eq:h4h6isom} in characteristic~$0$. The result in characteristic~$p$ will then be deduced by a specialisation argument. 

Before stating the result from~\cite{FuMoonen-GM1} that we need, let us first recall the notion of generalised partners, following~\cite{KuzPerry}, Definition~3.5. For this, consider GM varieties $X$ and~$X^\prime$ over a field~$K$ of characteristic~$0$, of dimensions $n$ and~$n^\prime$, respectively, such that $n$, $n^\prime \in \{3,4,5,6\}$ have the same parity. Then we say that $X$ and~$X^\prime$ are \emph{generalised partners} if there exists an isomorphism $f \colon V_6(X) \isomarrow V_6(X^\prime)$ such that $\wedge^3 f \colon \wedge^3 V_6(X) \isomarrow \wedge^3 V_6(X^\prime)$ restricts to an isomorphism $A(X) \isomarrow A(X^\prime)$.

\begin{theorem}
\label{thm:genpartners}
Let $X$ and~$X^\prime$ be generalised partners over an algebraically closed field~$K$ of characteristic~$0$, of dimensions $n$ and~$n^\prime$, respectively. Then there exists an isomorphism
\[
\frh^n(X) \cong \frh^{n^\prime}(X^\prime)\bigl(\tfrac{n^\prime-n}{2}\bigr)
\]
in $\CHM(K)$.
\end{theorem}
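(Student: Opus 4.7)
The plan is to realise the desired isomorphism of Chow motives through an explicit algebraic correspondence on $X \times X^\prime$ constructed from the common Lagrangian datum $(V_6, A)$. I would first use the explicit Chow--K\"unneth projectors recorded at the start of this section to split off the middle components of $\frh(X)$ and $\frh(X^\prime)$ from their Tate summands; the Tate parts of both sides match up trivially (being controlled only by Euler-characteristic data), so the heart of the matter is the identification of the variable motives $\frh^n(X)$ and $\frh^{n^\prime}(X^\prime)$ up to the appropriate Tate twist.

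The natural common geometric object encoding $(V_6, A)$ is the Eisenbud--Popescu--Walter sextic $Y_A \subset \PP(V_6)$ together with its double cover $\widetilde{Y}_{A^\perp}$, which, since $A$ contains no decomposable vectors (by Proposition~\ref{prop:smoothcrit}, as we may assume $X$ is smooth), is a smooth hyperk\"ahler fourfold of K3$^{[2]}$-type. The classical incidence constructions of O'Grady, Iliev--Manivel, and Debarre--Kuznetsov produce, for any GM variety $Z$ defined by a datum $(V_6, V_5, A)$, an algebraic correspondence between $Z$ and $\widetilde{Y}_{A^\perp}$ that identifies the variable part of $\frh^{\dim Z}(Z)$ (after a suitable Tate twist) with a direct summand of $\frh(\widetilde{Y}_{A^\perp})(1)$. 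Since the target of this embedding depends only on $(V_6, A)$, applying the construction to both $X$ and $X^\prime$ and composing the two correspondences produces a cycle $\Gamma \in \CH^\bullet(X \times X^\prime)$ of the required degree, which induces the claimed map on each cohomological realisation. Because only finitely many pairs $(n, n^\prime)$ of the same parity arise in $\{3, 4, 5, 6\}$, it will also be possible, as an alternative, to chain partners through the opposite (double cover) construction and Proposition~\ref{prop:V5prime}, which lets one freely modify the auxiliary subspace $V_5 \subset V_6$.

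The hard part will be to prove that $\Gamma$ induces an \emph{isomorphism} of Chow motives, not merely of cohomology. Cohomologically, the assertion that the variable cohomology of a GM variety is controlled by its Lagrangian datum is classical (Debarre--Kuznetsov), and the Kuznetsov--Perry analysis of the Kuznetsov component of $D^b(X)$ (see~\cite{KuzPerry}) identifies this variable part with the transcendental motive of $\widetilde{Y}_{A^\perp}$. Promoting this to an isomorphism of Chow motives requires that the variable motive, being of K3 type with Hodge numbers $(1, 20, 1)$, be ``small'' enough that cohomological invertibility of an algebraic correspondence implies motivic invertibility. This will be achieved via Kimura--O'Sullivan finite-dimensionality for motives of K3 type, combined with the fact that on $\widetilde{Y}_{A^\perp}$ the Hodge conjecture on middle cohomology is known. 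This last step is where characteristic zero is essential, since specialisation from characteristic zero (rather than a direct construction in characteristic~$p$) is precisely how the theorem is used in Section~\ref{sec:TCGM4} to deduce the Tate conjecture for GM fourfolds.
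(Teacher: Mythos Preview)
This theorem is not proved in the present paper; after stating it the text simply points to Section~8 of the companion paper~\cite{FuMoonen-GM1}. So there is no in-paper argument against which to compare your sketch. That said, your outline has two genuine gaps that would have to be closed before it could stand as a proof.

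First, your main route passes through the double EPW sextic~$\widetilde{Y}_{A^\perp}$, which you take to be a smooth hyperk\"ahler fourfold ``since $A$ contains no decomposable vectors (by Proposition~\ref{prop:smoothcrit})''. But Proposition~\ref{prop:smoothcrit} only tells you that the GM intersection is smooth; smoothness of~$\widetilde{Y}_{A^\perp}$ requires further genericity conditions on~$A$ (in particular that $A^\perp$ also contains no decomposable vectors, and more). The paper itself flags exactly this obstruction in the introduction (Section~1.5): ``in general the corresponding double EPW sextic is singular'', which is precisely why the authors do \emph{not} use this route. So your construction of the correspondence~$\Gamma$ via a smooth hyperk\"ahler intermediary does not cover all pairs~$(X,X^\prime)$.

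Second, your endgame appeals to ``Kimura--O'Sullivan finite-dimensionality for motives of K3 type'' to upgrade a cohomological isomorphism to one of Chow motives. Finite-dimensionality is not known as a consequence of having a Hodge structure of K3 type; what is known is finite-dimensionality for motives of \emph{abelian} type. That the variable motives of GM varieties are of abelian type is itself one of the main results of~\cite{FuMoonen-GM1} (it is what justifies the appeal to Deligne's theorem on absolute Hodge classes in Section~\ref{sec:FamMotivesChar0}, where the present paper cites~\cite{FuMoonen-GM1}, Section~6). Likewise your claim that ``on~$\widetilde{Y}_{A^\perp}$ the Hodge conjecture on middle cohomology is known'' is not obvious: middle cohomology of a fourfold is degree~$4$, not degree~$2$, so Lefschetz~$(1,1)$ does not apply. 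In short, the step where you pass from a cohomological statement to a motivic one is assuming input of the same depth as the theorem itself. Your alternative of chaining through opposite GM varieties and varying~$V_5$ is closer in spirit to what is actually done, but it still needs that same finite-dimensionality input to conclude at the level of Chow motives.
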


See~\cite{FuMoonen-GM1}, Section~8. (There is also a version of this result for so-called generalised dual GM varieties, but we shall not need it here.)

\begin{lemma}
\label{lem:LagrLift}
Let $k$ be a field of characteristic $p>2$. Let $(V_6,V_5,A,\epsilon)$ be a Lagrangian datum of dimension $n \in \{3,4,5\}$ over~$k$. Then there exists a dvr~$R$ with residue field~$k$ and fraction field~$K$ of characteristic~$0$, and a Lagrangian datum $(\scrV_6,\scrV_5,\scrA,\epsilon)$ over~$R$ whose reduction modulo the maximal ideal $\frm \subset R$ is isomorphic to $(V_6,V_5,A,\epsilon)$.
\end{lemma}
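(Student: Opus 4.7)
The plan is to reduce the problem to the easier task of lifting a GM datum via the GM--Lagrangian correspondence (Proposition~\ref{prop:GMLagCorrespondence}). In a GM datum $(V_6,V_5,W,q,\epsilon)$, once $V_5 \subset V_6$ and $\epsilon$ are fixed, the remaining data consists only of the direct summand $W \subset \wedge^2 V_5$ and a single element $q(v_0) \in \Sym^2(W^\vee)$ for a chosen lift $v_0$ of a generator of $V_6/V_5$: the compatibility relation~\eqref{eq:CompatibilityGMdata} forces the restriction $q|_{V_5}$ to be determined by $\epsilon$ alone. Both of these pieces of data admit obvious lifts, and the inverse direction of the correspondence then hands us the Lagrangian datum we want.

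Concretely, first I would choose $R$ to be a complete discrete valuation ring of mixed characteristic $(0,p)$ with residue field~$k$ (a Cohen ring, or $W(k)$ when $k$ is perfect); set $\scrV_6 = R^6$ with its standard basis and $\scrV_5 \subset \scrV_6$ the span of the first five basis vectors, so that a trivialization of $\det(V_5)$ extends canonically to an $\epsilon\colon \det(\scrV_5) \isomarrow R$. Applying Proposition~\ref{prop:GMLagCorrespondence} to the given Lagrangian datum produces a GM datum $(V_6,V_5,W,q,\epsilon)$. Next, lift $W$ to a direct summand $\scrW \subset \wedge^2 \scrV_5$ (a matrix representing $W$ has some $(n+5)\times(n+5)$ minor that is a unit in~$k$, and any lift of that minor is still a unit in~$R$), and lift $q(v_0) \in \Sym^2(W^\vee)$ to an arbitrary element $\tilde q_0 \in \Sym^2(\scrW^\vee)$, using that $2 \in R^*$ identifies $\Sym^2(\scrW)^\vee$ with $\Sym^2(\scrW^\vee)$.

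Now define $\tilde q\colon \scrV_6 \to \Sym^2(\scrW^\vee)$ by $\tilde q(v_0) = \tilde q_0$ together with the forced formula $\tilde q(v)(w \cdot w') = \epsilon(v \wedge w \wedge w')$ for $v \in \scrV_5$; this is $R$-linear in $v$ and symmetric in $(w, w')$, so it is well-defined, and the compatibility condition~\eqref{eq:CompatibilityGMdata} holds on the nose. Thus $(\scrV_6, \scrV_5, \scrW, \tilde q, \epsilon)$ is an $n$-dimensional GM datum over $R$ reducing modulo $\frm$ to the original GM datum. Applying the construction~\eqref{eq:ADef} in the proof of Proposition~\ref{prop:GMLagCorrespondence} yields $\scrA := \Ker\bigl(\wedge^3 \scrV_5 \oplus (v_0 \wedge \scrW) \to \scrW^\vee\bigr)$, which is Lagrangian by the argument given there (the derivation property of interior product is valid over any base ring with $2$ invertible). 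The map defining $\scrA$ is surjective over $k$ (with kernel $A$ of rank $10$), so surjectivity lifts to $R$ by Nakayama; consequently $\scrA$ is a direct summand free of rank~$10$, the formation of $\scrA$ commutes with base change to $k$, and $\scrA \cap \wedge^3 \scrV_5 \cong (\wedge^2 \scrV_5 / \scrW)^\vee$ is free of rank $5-n$. Hence $(\scrV_6, \scrV_5, \scrA, \epsilon)$ is a Lagrangian datum over $R$ reducing to $(V_6, V_5, A, \epsilon)$, completing the proof. There is no serious obstacle: the only point worth care is the verification that the intersection condition on $\scrA \cap \wedge^3 \scrV_5$ is automatically preserved, and this follows transparently from the explicit description of $A$ via the correspondence rather than from a direct deformation-theoretic argument on the Lagrangian Grassmannian (where a semicontinuity issue might otherwise have to be addressed).
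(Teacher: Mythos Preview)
Your proof is correct and takes a genuinely different route from the paper. The paper lifts the Lagrangian datum directly: it first lifts the subspace $L = A \cap \wedge^3 V_5$ to a direct summand $\scrL \subset \wedge^3 \scrV_5$, passes to the symplectic quotient $\scrT = \scrL^\perp/\scrL$, and then invokes smoothness of the Lagrangian Grassmannian over~$R$ to lift $\bar A = A/L$ to a Lagrangian $\overline{\scrA} \subset \scrT$; the preimage~$\scrA$ of~$\overline{\scrA}$ is the desired lift, and a Nakayama argument shows $\scrA \cap \wedge^3 \scrV_5 = \scrL$. By contrast, you sidestep the Lagrangian Grassmannian entirely by passing through the GM--Lagrangian correspondence: lifting a GM datum amounts to lifting a direct summand $W$ and a single quadratic form, which is elementary, and the explicit kernel description~\eqref{eq:ADef} then makes both the rank of~$\scrA$ and the identity $\scrA \cap \wedge^3 \scrV_5 \cong (\wedge^2 \scrV_5/\scrW)^\vee$ transparent. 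Your approach is arguably cleaner for this specific lemma, since it replaces a geometric smoothness input by pure linear algebra and leverages the correspondence that is already central to the section; the paper's approach has the virtue of being self-contained on the Lagrangian side and would adapt more readily to settings without such a correspondence.
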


\begin{proof}
Let $R$ be a Cohen ring for~$k$. Let $\scrV_6 = R^6$ and fix an isomorphism $\scrV_6 \otimes_R k \isomarrow V_6$, via which we identify $\scrV_6 \otimes_R k$ and~$V_6$. Choose any $\epsilon \colon \det(\scrV_6) \isomarrow R$ that lifts the given isomorphism $\det(V_6) \isomarrow k$. Choose a lifting of~$V_5$ to a direct summand $\scrV_5 \subset \scrV_6$. By assumption, $L = A\cap \wedge^3 V_5 \subset \wedge^3 V_6$ is $(5-n)$-dimensional. Choose a direct summand $\scrL \subset \wedge^3 \scrV_5$ lifting~$L$. Let $\scrL^\perp \subset \wedge^3 \scrV_6$ be the orthogonal complement of~$\scrL$ with respect to the symplectic form $\psi \colon \wedge^3 \scrV_6 \times \wedge^3 \scrV_6 \to \det(\scrV_6)$. Then $\scrT = \scrL^\perp/\scrL$ is free of rank~$10+2n$ over~$R$ and $\psi$ induces a symplectic form~$\bar\psi$ on~$\scrT$. Writing $T = \scrT \otimes_R k = L^\perp/L$, the subspace $\bar{A} = A/L \subset T$ is Lagrangian with respect to the pairing~$\bar\psi$. Because the scheme of Lagrangian subspaces in~$\scrT$ is smooth over~$R$ we can lift~$\bar{A}$ to a Lagrangian submodule $\overline{\scrA} \subset \scrT$. Define $\scrA \subset \scrL^\perp \subset \wedge^3 \scrV_6$ to be the inverse image of~$\overline{\scrA}$. By construction, the intersection of~$\bar{A}$ and the image of~$\wedge^3 V_5$ in~$T$ is zero; this implies that $\scrA \cap \wedge^3 \scrV_5 = \scrL$, and therefore $(\scrV_6,\scrV_5,\scrA,\epsilon)$ is a Lagrangian datum of dimension~$n$ over~$R$.
\end{proof}

\subsection{}
We are now ready to prove Theorem~\ref{thm:MotivePartner}, thereby completing the proof of Theorem~\ref{thm:TCGMcharp}. There are several steps in the proof where we may need to replace the base field by a finite extension. In each such step, the notation that was already in place will be retained and will from then on refer to the varieties that are obtained by extension of scalars to the bigger field. 

We first consider a GM fourfold~$X$ of Mukai type. Possibly after replacing~$k$ by a finite extension, there exists a $4$-dimensional GM datum over~$k$ such that $X$ is isomorphic to the associated GM intersection as in~\eqref{eq:GMIntersection}. Let $(V_6,V_5,A,\epsilon)$ be the $4$-dimensional Lagrangian datum over~$k$ that corresponds to this GM datum. By Proposition~\ref{prop:V5prime}, there exists a $5$-dimensional subspace $V_5^\prime \subset V_6$ such that $A \cap \wedge^3 V_5^\prime = 0$. As in Lemma~\ref{lem:LagrLift}, let $(\scrV_6,\scrV_5,\scrA,\epsilon)$ be a lift of $(V_6,V_5,A,\epsilon)$ over a dvr~$R$ of mixed characteristic~$(0,p)$. Choose an arbitrary lift $\scrV_5^\prime \subset \scrV_6$ of~$V_5^\prime$ to a direct summand of~$\scrV_6$, and note that $\scrA \cap \wedge^3 \scrV_5^\prime = 0$. By Proposition~\ref{prop:smoothcrit}, $\PP(A) \cap \Grass(3,V_6) = \emptyset$. If $K$ is the fraction field of~$R$ this implies that $\PP(\scrA_K) \cap \Grass(3,\scrV_{6,K}) = \emptyset$; so neither $\scrA_{\kbar} = A_{\kbar}$ nor~$\scrA_{\Kbar}$ contains decomposable vectors.  

Let $\scrX/R$ be the GM intersection defined by the GM datum that corresponds, via the bijection of Proposition~\ref{prop:GMLagCorrespondence}, with the Lagrangian datum $(\scrV_6,\scrV_5,\scrA,\epsilon)$. By the fiberwise criterion for smoothness together with Proposition~\ref{prop:smoothcrit} we see that $\scrX$ is smooth over~$R$, and of course its special fibre is~$X$. We can do exactly the same with $\scrV_5^\prime$ instead of~$\scrV_5$. If $(\scrV_6,\scrV_5^\prime,\scrW^\prime,q^\prime,\epsilon)$ is the GM datum over~$R$ corresponding with $(\scrV_6,\scrV_5^\prime,\scrA,\epsilon)$, we obtain a GM fivefold 
\begin{equation}
\label{eq:scrYprime}
\scrY^\prime = \bigcap_{v \in \scrV_6}\; \scrQ(v) \quad \subset \PP(\scrW)
\end{equation}
over~$R$. Let $\scrM^\prime =  \cap_{v \in \scrV_5^\prime}\; \scrQ(v) = \Grass(2,\scrV_5^\prime) \cap \PP(\scrW)$, and let $\scrX^\prime \to \scrM^\prime$ be a double cover branched along~$\scrY^\prime$. (There is, in general, not a unique double cover over~$R$, though any two such become isomorphic over an extension of~$R$. For the argument we may just choose any double cover of~$\scrM^\prime$ branched along~$\scrY^\prime$.)

By construction, the generic fibres $\scrX_K$ and~$\scrX^\prime_K$ are generalised partners. (To pass from~$\scrY^\prime_K$ to~$\scrX^\prime_K$, use what was explained in Section~\ref{subsec:GMLagchar0}.) It follows from Theorem~\ref{thm:genpartners} that there is a finite extension $K \subset K^\prime$ and an isomorphism $\alpha \colon \frh^4(\scrX_{K^\prime})\bigl(2\bigr) \isomarrow \frh^6(\scrX^\prime_{K^\prime})\bigl(3\bigr)$ in~$\CHM(K^\prime)$. Let $R^\prime$ be the normalisation of~$R$ in~$K^\prime$ (which is again a dvr), and let $k^\prime$ be the residue field of~$R^\prime$. By specialisation of the cycles that give the isomorphism~$\alpha$ and its inverse we obtain the desired isomorphism $\frh^4(X_{k^\prime})\bigl(2\bigr) \isomarrow \frh^6(\scrX^\prime_{k^\prime})\bigl(3\bigr)$ in~$\CHM(k^\prime)$.

\subsection{}
The argument in case of a GM fourfold~$X/k$ of Gushel type is similar but involves one extra step. In this case there exists, possibly after replacing~$k$ by a finite extension, a $3$-dimensional GM datum $(V_6,V_5,W,q,\epsilon)$ over~$k$ with associated GM intersection
\[
Y = \bigcap_{v \in V_6}\; Q(v) \quad \subset \PP(W)\, ,
\]
such that $X$ can be realised as a double cover of $\cap_{v \in V_5}\; Q(v) = \PP(W) \cap \Grass(2,V_5)$ branched along~$Y$. Let $(V_6,V_5,A,\epsilon)$ be the corresponding $3$-dimensional Lagrangian datum over~$k$. Now we proceed as in the previous case: by Proposition~\ref{prop:V5prime} there exists a $5$-dimensional subspace $V_5^\prime \subset V_6$ such that $A \cap \wedge^3 V_5^\prime = 0$, using Lemma~\ref{lem:LagrLift} we can lift $(\scrV_6,\scrV_5,\scrA,\epsilon)$ to a Lagrangian datum $(V_6,V_5,A,\epsilon)$ over a dvr~$R$ of mixed characteristic~$(0,p)$, and we choose an arbitrary lift $\scrV_5^\prime \subset \scrV_6$ of~$V_5^\prime$ to a direct summand of~$\scrV_6$. 

Let $(\scrV_6,\scrV_5,\scrW,q,\epsilon)$ be the GM datum over~$R$ that corresponds to the Lagrangian datum $(\scrV_6,\scrV_5,\scrA,\epsilon)$. Let $\scrY/R$ be the lift of~$Y$ given by this GM datum, and let~$\scrX$ be the lift of~$X$ to a GM fourfold over~$R$ by taking a double cover of $\PP(\scrW) \cap \Grass(2,\scrV_5)$ with branch divisor~$\scrY$. 

Similarly, let $(\scrV_6,\scrV_5^\prime,\scrW^\prime,q^\prime,\epsilon)$ be the GM datum over~$R$ that corresponds to $(\scrV_6,\scrV_5^\prime,\scrA,\epsilon)$, and define the GM fivefold~$\scrY^\prime$ over~$R$ as in~\eqref{eq:scrYprime}. Let~$\scrX^\prime$ be a double cover of~$\Grass(2,\scrV_5)$ branched along~$\scrY^\prime$. 

The generic fibres $\scrX_K$ and~$\scrX^\prime_K$ are again generalised partners, because $\scrY_K$ is a generalised partner of~$\scrY_K^\prime$ and $\scrX_K$ (resp.\ $\scrX^\prime_K$) is opposite to~$\scrY_K$ (resp.\ $\scrY^\prime_K$). (See Section~\ref{subsec:GMLagchar0} for this notion.) After passing to a finite extension $K \subset K^\prime$, Theorem~\ref{thm:genpartners} gives us an isomorphism $\frh^4(\scrX_{K^\prime})\bigl(2\bigr) \cong \frh^6(\scrX^\prime_{K^\prime})\bigl(3\bigr)$, and by specialisation we then obtain the desired isomorphism $\frh^4(X_{k^\prime})\bigl(2\bigr) \cong \frh^6(\scrX^\prime_{k^\prime})\bigl(3\bigr)$.

This completes the proof of Theorem~\ref{thm:MotivePartner} and thereby also the proof of Theorem~\ref{thm:TCGMcharp}.

\end{document}